\begin{document}
 \title{Discrete energy estimates for the $abcd$-systems\thanks{received on \today, and accepted date (The correct dates will be entered by the editor).}}


          \author{Cosmin Burtea\thanks{Université Paris-Diderot,  Institut de Math\'ematiques de Jussieu-Paris Rive Gauche, UMR 7586, Paris, France, (cburtea@math.univ-paris-diderot.fr). \url{http://webusers.imj-prg.fr/\~cosmin.burtea/}}
          \and Cl\'{e}mentine Court\`{e}s\thanks{Institut de Math\'ematiques de Toulouse, UMR 5219, Universit\'e de Toulouse, CNRS, INSA, F-31077 Toulouse, France, (clementine.courtes@math.univ-toulouse.fr). \url{http://www.math.univ-toulouse.fr/\~ccourtes}}}

         \pagestyle{myheadings} \markboth{Discrete energy estimates for the $abcd$-systems}{Cosmin Burtea and Cl\'ementine Court\`es} \maketitle

          \begin{abstract}
               In this article, we propose finite volume schemes for the $abcd$-systems and we establish stability and error estimates. The order of accuracy depends on the so-called BBM-type dispersion coefficients $b$ and $d$. If $bd>0$, the numerical schemes are $O(\Delta t+(\Delta x)^2)$ accurate, while if $bd=0$, we obtain an $O(\Delta t+\Delta x)$ -order of convergence. The analysis covers a broad range of the parameters $a,b,c,d$. In the second part of the paper, numerical experiments validating the theoretical results as well as head-on collision of traveling waves are investigated.
          \end{abstract}
\begin{keywords}  system $abcd$; numerical convergence; error estimates
\end{keywords}

 \begin{AMS} 65M12; 35Q35
\end{AMS}
          \section{Introduction}\label{intro}
          
Consider a layer of incompressible, irrotational, perfect fluid flowing through a channel with flat bottom represented by the plane:%
\[
\{\left(  x,y,z\right)  :z=-h\},
\]
with $h>0$, the depth of the channel. We assume that the fluid at rest occupies all the region $\{\left(  x,y,z\right)  : -h \leq z \leq 0\}$. We suppose also that the free surface resulting after a perturbation of the rest state can be described by the graph of some function. Let $A$ be the maximum amplitude of the wave and $l$ a typical wavelength.
Furthermore, consider:%
\[
\epsilon=\frac{A}{h},\text{ }\mu=\left(  \frac{h}{l}\right)  ^{2},\text{
}S=\frac{\epsilon}{\mu}.
\]
The Boussinesq regime
\[
\epsilon\ll1,\text{ }\mu\ll1\text{ and }S\approx1
\]
describes small amplitude, long wavelength water waves. In order to study waves that fit into the Boussinesq regime, Bona, Chen and Saut,
\cite{BonaChenSaut2002}, derived the so-called $abcd$-systems:

\begin{equation}
\left\{
\begin{array}
[c]{l}%
\left(  I-\mu b\Delta\right)  \partial_{t}\eta+\operatorname{div}%
V+a\mu\operatorname{div}\Delta V+\epsilon\operatorname{div}\left(  \eta
V\right)  =0,\\
\left(  I-\mu d\Delta\right)  \partial_{t}V+\nabla\eta+c\mu\nabla
\Delta\eta+\epsilon V\cdot\nabla V=0,
\end{array}
\right.  \label{eq1}%
\end{equation}
with $I$ the identity operator.
In the system \eqref{eq1}, $\eta=\eta\left(  t,x\right)  \in\mathbb{R}$ represents the deviation of the free surface from the rest position, while $V=V\left(  t,x\right)  \in\mathbb{R}^{n}$ is the fluid velocity. The above family of systems is derived in \cite{BonaChenSaut2002} from the classical water waves problem by a formal series expansion and by neglecting the second and higher order terms. In fact, one may regard the zeros on the right hand side of \eqref{eq1} as the second order terms (i.e. having one of $\epsilon^{2}$, $\epsilon\mu$ or $\mu^{2}$ as a prefactor) neglected in the modeling process in order to establish \eqref{eq1}. In \cite{BonaColinLannes2005}, Bona, Colin and Lannes prove rigorously that the systems \eqref{eq1} approximate the water waves problem in the sense that the error estimate between the solution of \eqref{eq1} and the solution of the water waves system at time $t$ is of order $O\left( \epsilon^{2}t\right)  $ (see also \cite{Lannes2013livre}).

The parameters $a,b,c,d$ are restricted by the following relation:%
\begin{equation}
a+b+c+d=\frac{1}{3}. \label{suma}%
\end{equation}
Nevertheless, if surface tension is taken into account then, the previous relation rewrites 
$$a+b+c+d=\frac{1}{3}-\tau$$
where $\tau\geq0$ is the Bond number which characterizes the surface tension parameter, see \cite{Daripa_Dash_2003} or \cite{SautWangXu2015}. 

 In \cite{Chazel2007}, models taking into account more general topographies of
the bottom of the channel are obtained. One has to furthermore distinguish between two
different regimes: small respectively strong topography variations. Time-changing bottom-topographies are considered in \cite{Chen2003}. For a systematic study of approximate models for the water waves problem along with their rigorous justification, we refer the reader to the work of Lannes \cite{Lannes2013livre}. We point out that the only values of $n$ for which \eqref{eq1} is physically relevant are $n=1,2$.

The $abcd$ systems are well-posed locally in time in the following cases:
\begin{align}
a  &  \leq0,\text{ }c\leq0,\text{ }b\geq0,\text{ }d\geq0\label{1111}\\
\text{or }a  &  =c\geq0\text{ and }b\geq0,\text{ }d\geq0, \label{2222}%
\end{align}
see for instance Bona, Chen and Saut \cite{BonaChenSaut2004}, Anh
\cite{Anh2010} and Linares, Pilod and Saut \cite{LinaresPilodSaut2012}. Global
existence is known to hold true in dimension $1$ for the "classical"
Boussinesq system:
\[
a=b=c=0,\text{ }d>0,
\]
which was studied by Amick in \cite{Amick1984} and Schonbek in
\cite{Schonbek1981} and for the so-called Bona-Smith systems:%
\[
b=d>0,\text{ }a\leq0,~c<0,
\]
assuming some smallness condition on the initial data, see
\cite{BonaChenSaut2004} and the work of Bona and Smith \cite{BonaSmith1976}. In the remaining cases, the problem is still open.

Lower bounds on the time of existence of solutions of systems \eqref{eq1} in terms of the physical parameters are obtained in \cite{SautXu2012}, \cite{SautWangXu2015}, \cite{MingSautZhang2012}, \cite{Burtea2016a} for initial data lying in Sobolev classes, respectively in \cite{Burtea2016b} for initial data manifesting nontrivial values at infinity.

In the following lines, let us recall some important \textit{numerical results} obtained for the systems \eqref{eq1}. 
 
One of the earliest papers in this sense is the work of Peregrine \cite{Peregrine1966}, who numerically investigates undular bore propagation using the ``classical'' Boussinesq system corresponding to the values $a=b=c=0$, $d=1/3$.

In \cite{BonaChen1998}, Bona and Chen study the boundary value problem for the BBM-BBM case (which corresponds to $a=c=0,$ $b=d=1/6$). Using an integral reformulation of this problem, they construct a numerical scheme, which is proven to be fourth order accurate in both time and space. They employ it in order to track down generalized solitary waves and to study the collision between this numerical generalized solitary waves.

In \cite{BonaDougalisMitsotakis2007} and \cite{BonaDougalisMitsotakis2008}, Bona, Dougalis and Mitsotakis investigate the periodic value problem for the so-called KdV-KdV case, which corresponds to $b=d=0,$ $a=c=1/6$. They propose a numerical scheme constructed by using an implicit Runge-Kutta method for the time discretization and a Galerkin method with periodic splines scheme for the space discretization. They use this scheme in order to track down generalized solitary wave solutions and they simulate head-on collision of two such waves.

In \cite{AntonopoulusDugalisMitsotakis2009}, Antonopoulus, Dougalis and Mitsotakis study the periodic initial value problem for a large class of values of the $a,b,c,d$ parameters. More precisely, they obtain error estimates for semi-discretization schemes (in space) obtained via Galerkin approximation. In \cite{AntonopoulusDugalisMitsotakis2010}, the same authors obtain error estimates for a full-discretization scheme of the initial value problem with non-homogeneous Dirichlet and reflection boundary conditions for the Bona-Smith system (corresponding to the case $a=0$, $b=d>0$, $c\leq0$). They use their numerical algorithm to study soliton interactions. 

 In \cite{AntonopoulosDougalis2012}, (see also \cite{AntonopoulosDougalis2012n} for more details) Antonopoulos and Dougalis obtain error estimates for a semi-discrete and fully-discrete Galerkin-finite element scheme for the initial boundary value problems (ibvp) for the ``classical'' Boussinesq system.

 Regarding the two dimensional case \cite{DougalisMitsotakisSaut2007}, Dougalis, Mitsotakis and Saut study a space semi-discretization scheme using a Galerkin method. In \cite{Chen2009}, Chen, using a formal second-order semi-implicit Crank-Nicolson scheme along with spectral method, studies the $2D$ case of the BBM-BBM system for $3$D water waves over an uneven bottom.

 A very recent result of Bona and Chen \cite{BonaChen2016} provides numerical evidence of finite time blow-up for the BBM-BBM system. The blow-up phenomena seems to occur on head-on collision of some particular traveling waves solutions of the BBM system.

\subsection{Statement of the main results.}
To the authors knowledge, \textit{fully-discretized schemes} for various ibvp for $abcd$-systems \textit{along with error estimates} are available in only four cases :
\begin{itemize}
\item the BBM-BBM system $a=c=0$, $b=d>0$ treated by Bona and Chen in \cite{BonaChen1998};
\item the Bona-Smith system  $a=0$, $b=d>0$, $c\leq0$ in \cite{AntonopoulusDugalisMitsotakis2010};
\item the ``classical'' Boussinesq system $a=b=c=0$, $d>0$, obtained in \cite{AntonopoulosDougalis2012};
\item the shallow water case $a=b=c=d=0$, see for instance \cite{AntonopoulosDougalis2016}.
\end{itemize}

In this paper, we implement fully-discrete finite volume schemes, and we prove convergence and stability estimates, using a discrete variant of the "natural" energy functional associated to these systems. One practical advantage of our approach is that it allows us to treat explicitly the nonlinear terms.

Let us recall that the Cauchy problem associated with the one dimensional
$abcd$-systems reads:%
\begin{equation}
\left\{
\begin{array}
[c]{l}%
\left(  I-b\partial_{xx}^{2}\right)  \partial_{t}\eta+\left(  I+a\partial
_{xx}^{2}\right)  \partial_{x}u+\partial_{x}\left(  \eta u\right)  =0,\\
\left(  I-d\partial_{xx}^{2}\right)  \partial_{t}u+\left(  I+c\partial
_{xx}^{2}\right)  \partial_{x}\eta+\frac{1}{2}\partial_{x}u^{2}=0,\\
\eta_{|t=0}=\eta_{0},\text{ }u_{|t=0}=u_{0}.
\end{array}
\right.  \tag{$S_{abcd}$} \label{1neliniar_cont}%
\end{equation}
We will treat the cases where the parameters verify:
\begin{equation}
a\leq0,\text{ }c\leq0,\text{ }b\geq0,\text{ }d\geq0,\label{11111}%
\end{equation}
excluding the five cases:%
\begin{equation}
\left\{
\begin{array}
[c]{l}%
a=b=0,\ d>0,\ c<0,\\
a=b=c=d=0,\\
a=d=0,\ b>0,\ c<0,\\
a=b=d=0,\ c<0,\\
b=d=0,\ c<0,\ a<0.
\end{array}
\right.  \label{excluding}%
\end{equation}
\begin{remark}
The Shallow Water case $a=b=c=d=0$ corresponds to a classical non-symmetric hyperbolic system. A systematic method to construct appropriate semi-discrete schemes is presented in \cite{Tadmor_2003}. More recently, this case has been studied in \cite{AntonopoulosDougalis2016}. The authors construct a finite-volume-Galerkin numerical scheme, where discrete estimates in $L^2\times L^2$ are obtained.
\end{remark}

Given $s\in\mathbb{R}$, we will consider the following set of indices:%

\begin{equation}
\left\{
\begin{array}
[c]{l}%
s_{bc}=s+\operatorname*{sgn}\left(  b\right)  -\operatorname*{sgn}\left(  c\right)  ,\\
s_{ad}=s+\operatorname*{sgn}\left(  d\right)  -\operatorname*{sgn}\left(  a\right)  ,
\end{array}
\right.  \label{relatie}%
\end{equation}
where the sign function $\operatorname*{sgn}$ is given by:%
\[
\operatorname*{sgn}\left(  x\right)  =\left\{
\begin{array}
[c]{rrr}%
1 & \text{if} & x>0,\\
0 & \text{if} & x=0,\\
-1 & \text{if} & x<0.
\end{array}
\right.
\]

We will consider the set of indices defined by
\eqref{relatie}. The notation, $\mathcal{C}_{T}(H^{s_{bc}}\times H^{s_{ad}})$
stands for the space of continuous functions $(\eta,u)$ on $\left[  0,T\right]  $ with
values in the space $H^{s_{bc}}\left(  \mathbb{R}\right)  \times H^{s_{ad}%
}\left(  \mathbb{R}\right)  $. Let us recall in the following lines, the
existence result of regular solution that can be found for instance in 
\cite{Burtea2016a} and \cite{SautWangXu2015}:

\begin{theorem}
\label{Teorema_existenta_abcd}Consider $a,c\leq0$ and $b,d\geq0$ excluding the
cases $\left(  \text{\ref{excluding}}\right)  $. Also, consider an integer $s$
such that%
\[
s>\frac{5}{2}-\operatorname*{sgn}(b+d),\text{ }%
\]
and $s_{bc}$, $s_{ad}$ defined by \eqref{relatie}. Let us consider $\left(
\eta_{0},u_{0}\right)  \in H^{s_{bc}}\left(  \mathbb{R}\right)  \times
H^{s_{ad}}\left(  \mathbb{R}\right)  $. Then, there exists a positive time $T$
and a unique solution%
\[
\left(  \eta,u\right)  \in\mathcal{C}_{T}(H^{s_{bc}}\times H^{s_{ad}})\text{ }%
\]
of \eqref{1neliniar_cont}.
\end{theorem}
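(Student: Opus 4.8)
The plan is to prove the theorem by the classical energy method, the key being to construct a norm under which the linear part of \eqref{1neliniar_cont} becomes skew-adjoint. First I would invert the BBM-type operators: since $b,d\geq0$, the operators $\Lambda:=I-b\partial_{xx}^2$ and $\Gamma:=I-d\partial_{xx}^2$ are positive Fourier multipliers with symbols $1+b\xi^2$ and $1+d\xi^2$, hence invertible, and $\Lambda^{-1},\Gamma^{-1}$ gain two derivatives wherever $b$, resp.\ $d$, is positive. On the Fourier side the linearized system reads $(1+b\xi^2)\partial_t\hat\eta=-i\xi(1-a\xi^2)\hat u$ and $(1+d\xi^2)\partial_t\hat u=-i\xi(1-c\xi^2)\hat\eta$, where $1-a\xi^2>0$ and $1-c\xi^2>0$ because $a,c\leq0$. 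Testing the $\eta$-equation against the multiplier $(1-c\xi^2)$ and the $u$-equation against $(1-a\xi^2)$ makes the two cross terms coincide and cancel; this dictates the weighted energy
\[
E_s(\eta,u)=\big\langle\langle D\rangle^{s}\eta,\langle D\rangle^{s}(I-b\partial_{xx}^2)(I+c\partial_{xx}^2)\eta\big\rangle_{L^2}+\big\langle\langle D\rangle^{s}u,\langle D\rangle^{s}(I-d\partial_{xx}^2)(I+a\partial_{xx}^2)u\big\rangle_{L^2}.
\]
Since $1+b\xi^2\geq1$, $1-c\xi^2\geq1$ (and likewise for the $u$-symbol), $E_s$ is positive and, by counting the orders of the symbols, equivalent to $\|\eta\|_{H^{s_{bc}}}^2+\|u\|_{H^{s_{ad}}}^2$ with the indices \eqref{relatie}; this is precisely why those indices appear.

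Next I would set up the a priori estimate. Differentiating $E_s$ in time and substituting the equations, the crucial algebraic point is that the factors $\Lambda^{\pm1}$, $\Gamma^{\pm1}$ cancel exactly (the weight was designed for this), so the linear contributions vanish by the skew-symmetry arranged above, leaving only the two nonlinear terms
\[
\tfrac12\tfrac{d}{dt}E_s=-\big\langle\partial_x(\eta u),\langle D\rangle^{2s}(I+c\partial_{xx}^2)\eta\big\rangle-\tfrac12\big\langle\partial_x(u^2),\langle D\rangle^{2s}(I+a\partial_{xx}^2)u\big\rangle.
\]
The second term is a standard Burgers-type quantity: writing $\partial_x(u^2)=2u\partial_xu$ and commuting $\langle D\rangle^{s+1}$ through $u$ via the Kato--Ponce commutator and Moser product estimates, the top-order transport term integrates by parts into a harmless factor $\|\partial_xu\|_{L^\infty}E_s$, while the commutator remainder is $\lesssim\|u\|_{W^{1,\infty}}\|u\|_{H^{s_{ad}}}^2$. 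The Sobolev embeddings needed here ($H^{s_{ad}}\hookrightarrow W^{1,\infty}$ and its analogues) are exactly guaranteed by the hypothesis $s>\tfrac52-\operatorname{sgn}(b+d)$, which lowers the required threshold by one full derivative precisely when a BBM smoothing ($b+d>0$) is available.

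The main obstacle is the first, mixed nonlinear term, because $\eta$ and $u$ live in different spaces. A naive product estimate on $\partial_x(\eta u)$ paired against an operator of order $2s+2$ would demand $u\in H^{s+2}$, which is never available. I would instead integrate by parts repeatedly to redistribute the derivatives: the dangerous piece $\langle\eta\,\partial_xu,\langle D\rangle^{2s}(I+c\partial_{xx}^2)\eta\rangle$ is rewritten, after moving derivatives off $u$ and onto $\eta$, as a term controlled by $\|\eta\|_{L^\infty}\|u\|_{H^{s+1}}\|\eta\|_{H^{s+2}}$, which closes because $s_{bc}=s+2$ when $b>0,\,c<0$ and $s_{ad}\geq s+1$ in that same regime. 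Checking that this rebalancing succeeds in every admissible sign configuration is exactly what rules out the five degenerate cases \eqref{excluding}: each of them makes one of these index inequalities fail, so a top-order derivative can no longer be absorbed by the energy norm.

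Finally I would make the argument rigorous through a regularization scheme — Friedrichs mollifiers, or a vanishing $\varepsilon$-parabolic term — producing approximate solutions that satisfy the same inequality $\tfrac{d}{dt}E_s\leq C(E_s)\,E_s$ uniformly; a continuation argument then yields a common existence time $T=T(\|\eta_0\|_{H^{s_{bc}}},\|u_0\|_{H^{s_{ad}}})>0$. The uniform bounds give weak-$\ast$ limits and, by compactness, a solution; uniqueness follows from an $L^2$-type energy estimate on the difference of two solutions one regularity level below $s$; and the continuity in time with values in $H^{s_{bc}}\times H^{s_{ad}}$, rather than mere weak continuity, is recovered by the Bona--Smith approximation argument. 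The existence time and all constants depend only on the initial norms and on $a,b,c,d$, as claimed.
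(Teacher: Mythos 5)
The paper offers no proof of this theorem --- it is explicitly recalled from \cite{Burtea2016a} and \cite{SautWangXu2015} --- and your proposal reconstructs precisely the energy-method argument of those references, so it is essentially the same approach as the one the paper relies on. In particular, your weighted energy $E_s$ coincides with the functional $\mathcal{E}_s$ recorded in the paper's remark following the theorem, since $(1+b\xi^2)(1-c\xi^2)=1+(b-c)\xi^2+b(-c)\xi^4$ and $(1+d\xi^2)(1-a\xi^2)=1+(d-a)\xi^2+d(-a)\xi^4$, and your remaining ingredients (skew-symmetrization of the weighted linear part, tame commutator and product estimates, the index rebalancing whose failure corresponds to the excluded cases \eqref{excluding}, and the Bona--Smith regularization) are the standard ones and are correctly deployed.
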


\begin{remark}
Note that energy $\mathcal{E}_s$ is an energy for the system  \eqref{1neliniar_cont}, see for exemple \cite{Burtea2016a},
\begin{multline*}
\mathcal{E}_s(\eta,u)=||\eta||_{H^s}^2+(b-c)||\partial_x \eta||_{H^s}^2+(-c)b||\partial_x^2\eta||_{H^s}^2\\+||u||_{H^s}^2+(d-a)||\partial_x u||_{H^s}^2+(-a)d||\partial_x^2u||_{H^s}^2.
\end{multline*}
\end{remark}

Consider $\Delta t$ and $\Delta x$ two positive real numbers. We endow
$\ell^{2}\left(  \mathbb{Z}\right)  $ with the following scalar product and
norm%
\[
\left\langle v,w\right\rangle :=\Delta x\sum_{j\in\mathbb{Z}}v_{j}w_{j}\text{,
}\left\Vert v\right\Vert _{\ell_{\Delta}^{2}}:=\left\langle v,v\right\rangle
^{\frac{1}{2}}.
\]
We sometimes denote this space $\ell^{2}_{\Delta}\left(  \mathbb{Z}\right)  $.\\
For all $v=\left(  v_{j}\right)  _{j\in\mathbb{Z}}\in\ell^{\infty}\left(
\mathbb{Z}\right)  $, we introduce the spatial shift operators:%
\[
\left(  S_{\pm}v\right)  _{j}:=v_{j\pm1}.
\]
If $v=\left(  v_{j}\right)  _{j\in\mathbb{Z}}\in\ell^{\infty}\left(
\mathbb{Z}\right)  $, we denote by $D_{+}v,$ $D_{-}v,$ $Dv$ the discrete
derivation operators:
\[
\left\{
\begin{array}
[c]{c}%
D_{+}v=\frac{1}{\Delta x}\left(  S_{+}v-v\right)  ,\\
\\
D_{-}v=\frac{1}{\Delta x}\left(  v-S_{-}v\right)  ,\\
\\
Dv=\frac{1}{2}\left(  D_{+}v+D_{-}v\right)  =\frac{1}{2\Delta x}\left(
S_{+}v-S_{-}v\right)  .
\end{array}
\right.
\]
Also, we consider the following discrete energy functional%
\begin{align}
  \mathcal{E}\left(  e,f\right)  \overset{def.}{=}&\left\Vert e\right\Vert
_{\ell_{\Delta}^{2}}^{2}+\left(  b-c\right)  \left\Vert D_{+}e\right\Vert
_{\ell_{\Delta}^{2}}^{2}+b\left(  -c\right)  \left\Vert D_{+}D_{-}e\right\Vert
_{\ell_{\Delta}^{2}}^{2}\nonumber\\
  +&\left\Vert f\right\Vert _{\ell_{\Delta}^{2}}^{2}+(d-a)\left\Vert
D_{+}f\right\Vert _{\ell_{\Delta}^{2}}^{2}+d\left(  -a\right)  \left\Vert
D_{+}D_{-}f\right\Vert _{\ell_{\Delta}^{2}}^{2}. \label{energy_discrete}%
\end{align}

We will now discuss the main results of the paper. From an initial datum $\left(  \eta_{0},u_{0}\right)  \in H^{s_{bc}}(\mathbb{R})\times H^{s_{ad}}(\mathbb{R})$ with $s$ large enough, there exists a solution $(\eta,u)$ of \eqref{1neliniar_cont} which enables to define
$\eta_{\Delta}^{n},u_{\Delta}^{n}\in\ell_{\Delta}^{2}\left(  \mathbb{Z}%
\right)  $ given by:%
\begin{equation}
\left\{
\begin{array}
[c]{c}%
\eta_{\Delta j}^{n}=\frac{1}{[\inf(t^{n+1},T)-t^n]\Delta x}\int_{t_{n}}^{\inf(t^{n+1},T)}%
\int_{x_{j}}^{x_{j+1}}\eta\left(  s,y\right)  dyds,\\
\\
u_{\Delta j}^{n}=\frac{1}{[\inf(t^{n+1},T)-t^n]\Delta x}\int_{t_{n}}^{\inf(t^{n+1},T)}\int_{x_{j}%
}^{x_{j+1}}u\left(  s,y\right)  dyds,
\end{array}
\right.  \ \ \ \ \ \text{for\ all\ } j\in\mathbb{Z}, \text{\ and\ for\ }n\geq1.\label{troncature}%
\end{equation}
Moreover, we define
\begin{equation}
\left\{
\begin{array}
[c]{c}%
\eta_{\Delta j}^{0}=\frac{1}{\Delta x}\int_{x_{j}}^{x_{j+1}}\eta_{0}\left(
y\right)  dy,\\
\\
u_{\Delta j}^{0}=\frac{1}{\Delta x}\int_{x_{j}}^{x_{j+1}}u_{0}\left( y \right)
dy,
\end{array}
\right.  \ \ \ \ \ \text{for\ all\ } j\in\mathbb{Z}.\label{conditie_ini_oct}%
\end{equation}

The results of the paper are gathered in two cases according to the values of the parameters $b$ and $d$ in \eqref{1neliniar_cont}.

\subsubsection*{The case when $b>0$ and $d>0$}

The assumption $b,d>0$ assures an $\ell^{2}$-control on the discrete
derivatives: this makes it possible to implement an energy method that mimics
the one from the continuous case. Moreover, it allows us to close the
estimates even without considering numerical viscosity. 

We consider the following
numerical scheme:%
\begin{equation}
\left\{
\begin{array}
[c]{l}%
\frac{1}{\Delta t}\left(  I-bD_{+}D_{-}\right)  (\eta^{n+1}-\eta^{n})+\left(  I+aD_{+}D_{-}\right)  D\left( (1-\theta) u^{n}+\theta u^{n+1}\right)  +D\left(
\eta^{n}u^{n}\right)  =0,\\
\\
\frac{1}{\Delta t}\left(  I-dD_{+}D_{-}\right)  (u^{n+1}-u^{n})+\left(  I+cD_{+}D_{-}\right)  D\left((1- \theta) \eta^{n}+\theta\eta^{n+1}\right)
+\frac{1}{2}D\left(  \left(  u^{n}\right)  ^{2}\right)  =0,
\end{array}
\right.  \label{schema_bd>0}%
\end{equation}
for all $n\geq0$ and $\theta=\frac{1}{2}$ or $\theta=1$, with the discrete initial datum
\begin{equation}\label{conditie_ini}
\left(  \eta^{0},u^{0}\right)  =\left(  \eta_{\Delta}^{0},u_{\Delta
}^{0}\right).
\end{equation}

The convergence error is defined as:%
\begin{equation}
e^{n}=\eta^{n}-\eta_{\Delta}^{n}\text{\ \ \ \  and\ \ \ \  }f^{n}=u^{n}-u_{\Delta}^{n}%
.\label{convergence_error1}%
\end{equation}
\begin{remark}
For the linear part of the system, when $\theta=\frac{1}{2}$, we consider the Crank-Nicolson discretization whereas for $\theta=1$, we consider the implicit discretization.
\end{remark}\\

We are now in the position of stating our first result.

\begin{theorem}
\label{Teorema_Numerica1} Let $a\leq0$, $b>0,$ $c\leq0$ and $d>0$. Consider
$s>6$,  $\left(  \eta_{0},u_{0}\right)  \in H^{s_{bc}}\left(
\mathbb{R}\right)  \times H^{s_{ad}}\left(  \mathbb{R}\right)  $ and $T>0$ such that $\left(
\eta,u\right)  \in \mathcal{C}_{T}(H^{s_{bc}}\times H^{s_{ad}})$ is the solution on $[0,T]$ of the
system \eqref{1neliniar_cont}. Let $N\in\mathbb{N}$,
there exists a positive
constant $\delta_{0}$ depending on $\sup\limits_{t\in\left[  0,T\right]
}\left\Vert \left(  \eta(t),u(t)\right)  \right\Vert _{H^{s_{bc}}\times
H^{s_{ad}}}$ such that if the number of time steps $N$ and the space
discretization step $\Delta x$ are chosen such that%
\[
\Delta t=T/N\leq\delta_{0}\text{ and }\Delta x\leq\delta_{0},%
\]
if we consider the numerical scheme \eqref{schema_bd>0} with $\theta=\frac{1}{2}$ or $\theta=1$ along with the initial data \eqref{conditie_ini} as well as the approximation $\left(
\eta_{\Delta}^{n},u_{\Delta}^{n}\right)  _{n\in\overline{1,N}}$, defined by
\eqref{troncature}-\eqref{conditie_ini_oct}, where $\overline{1,N}=\{1,...N\}$, then, the numerical scheme \eqref{schema_bd>0} is first order convergent in time and second order convergent in space i.e. the convergence
error defined in \eqref{convergence_error1}
satisfies:
\begin{equation}
\sup_{n\in\overline{0,N}}\mathcal{E}\left(  e^{n},f^{n}\right)  \leq
C_{abcd}\left\{ \left(  \Delta t\right)  ^{2}+\left(  \Delta x\right)
^{4}\right\}  ,\label{order_numeric1}%
\end{equation}
where $C_{abcd}$ depends on the parameters $a$,$b$,$c$,$d$, on
$\sup\limits_{t\in\left[  0,T\right]  }\left\Vert \left(  \eta(t),u(t)\right)
\right\Vert _{H^{s_{bc}}\times H^{s_{ad}}}$ and on $T$.
\end{theorem}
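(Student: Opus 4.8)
The plan is to run a discrete version of the energy method that underlies the existence Theorem~\ref{Teorema_existenta_abcd}, comparing the numerical iterate $(\eta^n,u^n)$ with the space--time averages $(\eta_\Delta^n,u_\Delta^n)$ of the exact solution. First I would insert $(\eta_\Delta^n,u_\Delta^n)$ into the scheme \eqref{schema_bd>0} and collect the residual, i.e.\ define the consistency errors $R_\eta^n,R_u^n$ as the (nonzero) right-hand sides produced by the averaged exact solution. Since $s>6$, Theorem~\ref{Teorema_existenta_abcd} together with Sobolev embedding provides enough bounded space--time derivatives of $(\eta,u)$ to Taylor expand. Two mechanisms enter: the centered operators $D$ and $D_+D_-$ approximate $\partial_x$ and $\partial_{xx}^2$ to order $(\Delta x)^2$, and the cell average over $[x_j,x_{j+1}]$ contributes the same order, while the $\theta$-scheme together with the \emph{explicit} treatment of the nonlinear fluxes $D(\eta^n u^n)$ and $\tfrac12 D((u^n)^2)$ is consistent only to order $\Delta t$. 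Hence $\|R_\eta^n\|_{\ell^2_\Delta}+\|R_u^n\|_{\ell^2_\Delta}=O(\Delta t+(\Delta x)^2)$, uniformly in $n$. Subtracting the scheme satisfied exactly by $(\eta^n,u^n)$ yields the error system, whose linear part has the same structure as \eqref{schema_bd>0} in $(e^n,f^n)$, while the nonlinear contribution is $D(\eta^n u^n-\eta_\Delta^n u_\Delta^n)=D(u_\Delta^n e^n+\eta_\Delta^n f^n+e^n f^n)$ and $\tfrac12 D((u^n)^2-(u_\Delta^n)^2)=\tfrac12 D((u^n+u_\Delta^n)f^n)$.

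Next I would reproduce the continuous energy identity at the discrete level. A short computation shows that, in the continuous setting, the multipliers reproducing the energy $\mathcal{E}_s$ recalled before the statement are $(I+c\partial_{xx}^2)\eta$ for the $\eta$-equation and $(I+a\partial_{xx}^2)u$ for the $u$-equation: with these, the first-order coupling $(\partial_x u)\,\eta+(\partial_x\eta)\,u$ and every dispersive cross term integrate to zero, leaving exactly $\tfrac12\tfrac{d}{dt}\mathcal{E}_s$. Accordingly I would test the $e$-equation with $(I+cD_+D_-)(e^{n+1}+e^n)$ and the $f$-equation with $(I+aD_+D_-)(f^{n+1}+f^n)$, sum, and use discrete summation by parts. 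The identities $\langle D_+v,w\rangle=-\langle v,D_-w\rangle$, the antisymmetry $\langle Dv,w\rangle=-\langle v,Dw\rangle$, the symmetry of $D_+D_-=D_-D_+$, and the commutation of these constant-coefficient operators are the discrete substitutes for integration by parts. They force the discrete coupling terms to cancel as in the continuous case; and, writing $M=(I+cD_+D_-)(I-bD_+D_-)$, the relation $\langle Mv,v\rangle=\|v\|_{\ell^2_\Delta}^2+(b-c)\|D_+v\|_{\ell^2_\Delta}^2+b(-c)\|D_+D_-v\|_{\ell^2_\Delta}^2$ shows that the term $\tfrac1{\Delta t}(I-bD_+D_-)(e^{n+1}-e^n)$ against the multiplier produces exactly $\mathcal{E}(e^{n+1},f^{n+1})-\mathcal{E}(e^n,f^n)$. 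This is precisely where the coefficients $b-c$, $b(-c)$, $d-a$, $d(-a)$ of \eqref{energy_discrete} originate. For $\theta=\tfrac12$ the dispersive flux is time-centered and the coupling cancellation is exact; for $\theta=1$ I would split $f^{n+1}=\tfrac12(f^{n+1}+f^n)+\tfrac12(f^{n+1}-f^n)$, cancel the symmetric half as before, and bound the antisymmetric remainder by $C\Delta t\,(\mathcal{E}(e^{n+1},f^{n+1})+\mathcal{E}(e^n,f^n))$ after re-using the error equation to estimate $f^{n+1}-f^n=O(\Delta t)$; this affects only the constant and is consistent with the claimed first order in time.

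It remains to absorb the nonlinear and consistency contributions. Because $b,d>0$, the energy controls the first discrete derivatives, so the discrete Gagliardo--Nirenberg/Agmon inequality $\|v\|_{\ell^\infty}\lesssim\|v\|_{\ell^2_\Delta}^{1/2}\|D_+v\|_{\ell^2_\Delta}^{1/2}$ gives $\|e^n\|_{\ell^\infty}+\|f^n\|_{\ell^\infty}\lesssim\mathcal{E}(e^n,f^n)^{1/2}$; this is exactly where the hypothesis $b,d>0$ is used. The terms linear in the error, with the bounded coefficients $\eta_\Delta^n,u_\Delta^n$, are estimated by $C\,\mathcal{E}(e^n,f^n)$ after moving $D$ onto the energy-bounded multiplier by summation by parts; the genuinely quadratic terms $D(e^nf^n)$ and $D((f^n)^2)$ are bounded by $C\,\mathcal{E}(e^n,f^n)^{3/2}$; and the residual pairing is controlled by $C\Delta t\,(\|R_\eta^n\|_{\ell^2_\Delta}+\|R_u^n\|_{\ell^2_\Delta})\,\mathcal{E}(e^{n+1},f^{n+1})^{1/2}$ via Young's inequality. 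Collecting everything yields, for $\Delta t$ small, a per-step inequality of the form
\[
\mathcal{E}(e^{n+1},f^{n+1})-\mathcal{E}(e^n,f^n)\le C\Delta t\big(1+\mathcal{E}(e^n,f^n)^{1/2}\big)\mathcal{E}(e^n,f^n)+C\Delta t\big(\Delta t+(\Delta x)^2\big)^2 .
\]
I would then close the argument by a continuation (bootstrap) argument: assuming $\mathcal{E}(e^n,f^n)\le1$ up to some index makes the cubic term subordinate, after which the discrete Gronwall lemma gives $\sup_n\mathcal{E}(e^n,f^n)\le C_{abcd}\{(\Delta t)^2+(\Delta x)^4\}$; choosing $\delta_0$ small enough that the right-hand side stays below $1$ verifies the bootstrap hypothesis and delivers \eqref{order_numeric1}. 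The main obstacle is the second step: arranging the discrete dispersive cross terms to cancel \emph{exactly}, so that the natural discrete energy \eqref{energy_discrete} — rather than a larger, non-closing quantity — reappears, and handling both the $\theta=1$ remainder and the quadratic nonlinearity without degrading the order.
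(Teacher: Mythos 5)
Your plan for $\theta=\tfrac12$ is, step by step, the paper's own argument: the same consistency decomposition, the same multipliers $(I+cD_{+}D_{-})E^{+}(n)$ and $(I+aD_{+}D_{-})F^{+}(n)$, the same exact cancellation of the dispersive cross terms producing the telescoping of the energy \eqref{energy_discrete}, comparable bounds on the nonlinear terms, and the same closure by an $\ell^{\infty}$ induction using $\left\Vert v\right\Vert _{\ell^{\infty}}\leq C\left\Vert v\right\Vert _{\ell_{\Delta}^{2}}^{1/2}\left\Vert D_{+}v\right\Vert _{\ell_{\Delta}^{2}}^{1/2}$ and the discrete Gronwall lemma \ref{Granwall_discret_numerica}.

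The implicit case $\theta=1$ is where your argument has a genuine gap. Write $A=I+aD_{+}D_{-}$, $C=I+cD_{+}D_{-}$. After cancelling the symmetric halves, the remainder you must control is
\[
R=\Delta t\left[\left\langle ACDf^{n+1},e^{n}\right\rangle-\left\langle ACDf^{n},e^{n+1}\right\rangle\right]
=\Delta t\left[\left\langle ACD(f^{n+1}-f^{n}),e^{n+1}\right\rangle-\left\langle ACDf^{n+1},e^{n+1}-e^{n}\right\rangle\right].
\]
The operator $ACD$ carries five discrete derivatives while $\mathcal{E}$ controls only two derivatives of each of $e$ and $f$, so $R$ cannot be estimated by redistributing derivatives. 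Your fix, ``$f^{n+1}-f^{n}=O(\Delta t)$ from the error equation,'' is true only with a loss of derivatives: $f^{n+1}-f^{n}=-\Delta t\,(I-dD_{+}D_{-})^{-1}\left[CDe^{n+1}+\cdots\right]$, where the smoothing operator absorbs two derivatives but $CD$ costs three. Substituting this into $R$, the leading term is $\Delta t^{2}\left\langle (I-dD_{+}D_{-})^{-1}AC^{2}De^{n+1},De^{n+1}\right\rangle$ (plus the analogous term in $f$); already for $a<0$, $c<0$ its size relative to $\mathcal{E}(e^{n+1},f^{n+1})$ is of order $\Delta t^{2}/(\Delta x)^{2}$ at the highest frequencies, not $O(\Delta t)$. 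Hence the asserted bound $|R|\leq C\Delta t\,\bigl(\mathcal{E}(e^{n+1},f^{n+1})+\mathcal{E}(e^{n},f^{n})\bigr)$ is false unless one imposes $\Delta t\lesssim\Delta x$, i.e.\ exactly the CFL-type restriction that Theorem \ref{Teorema_Numerica1} (and the remark following it) claims is unnecessary.

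The step can be repaired, but only with an idea absent from your write-up: the two leading substituted terms are \emph{positive} quadratic forms (the numerical dissipation of the implicit discretization), so they must be kept with their favorable sign rather than bounded, and the remaining mixed terms, e.g.\ $\Delta t^{2}\left\langle ACDf^{n+1},(I-bD_{+}D_{-})^{-1}(N_{1}+\epsilon_{1}^{n})\right\rangle$, are then absorbed into these dissipation terms by Cauchy--Schwarz, using that $(I-bD_{+}D_{-})^{-1}(I+cD_{+}D_{-})$ and $(I-dD_{+}D_{-})^{-1}(I+aD_{+}D_{-})$ are bounded on $\ell_{\Delta}^{2}$ uniformly in $\Delta x$ (this is where $b>0$, $d>0$ enters). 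The paper sidesteps the issue entirely: for $\theta=1$ it abandons the multiplier identity, replaces \eqref{energy_discrete} by the weighted energy \eqref{discret_energy_10_NOv}, namely $(-c)d\left\Vert (I-bD_{+}D_{-})e\right\Vert _{\ell_{\Delta}^{2}}^{2}+(-a)b\left\Vert (I-dD_{+}D_{-})f\right\Vert _{\ell_{\Delta}^{2}}^{2}$, and takes $\ell_{\Delta}^{2}$-norms of the error equations \eqref{SYS_1_10_NOv} directly; squaring puts every $O(\Delta t^{2})$ term on the left with a favorable sign, and the weights $(-c)d$ and $(-a)b$ are chosen precisely so that the two worst cross terms, $\left\langle D_{+}D_{-}e^{n+1},D_{+}D_{-}Df^{n+1}\right\rangle$ and $\left\langle D_{+}D_{-}f^{n+1},D_{+}D_{-}De^{n+1}\right\rangle$, cancel exactly by the antisymmetry of $D$. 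As written, your $\theta=1$ argument does not close; you must either supply the dissipation argument above or adopt the paper's weighted-squaring route.
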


\begin{remark}
Note that no Courant-Friedrichs-Lewy-type condition (CFL-type condition hereafter) is needed. This is due to the regularity properties of the operators $\left(I-bD_+D_-\right)^{-1}$ and $\left(I-dD_+D_-\right)^{-1}$ which ensure stability even if a centered scheme is used to discretize the hyperbolic part of the system.
\end{remark}

\begin{remark}\label{asymptotic_preserving_scheme_10_NOv}
If we consider the Cauchy problem with small parameter $\epsilon$:
\begin{equation*}
\left\{
\begin{array}
[c]{l}%
\left(  I-b\epsilon \partial_{xx}^{2}\right)  \partial_{t}\eta+\left(  I+a\epsilon\partial
_{xx}^{2}\right)  \partial_{x}u+\epsilon\partial_{x}\left(  \eta u\right)  =0,\\
\left(  I-d\epsilon\partial_{xx}^{2}\right)  \partial_{t}u+\left(  I+c\epsilon\partial
_{xx}^{2}\right)  \partial_{x}\eta+\frac{\epsilon}{2}\partial_{x}u^{2}=0,\\
\eta_{|t=0}=\eta_{0},\text{ }u_{|t=0}=u_{0}.
\end{array}
\right.
\end{equation*}
The previous scheme is transformed into
\begin{small}
\begin{equation*}
\left\{
\begin{array}
[c]{l}%
\frac{1}{\Delta t}\left(  I-b\epsilon D_{+}D_{-}\right)  (\eta^{n+1}-\eta^{n})+\left(  I+a\epsilon D_{+}D_{-}\right)  D\left( (1-\theta) u^{n}+\theta u^{n+1}\right)  +\epsilon D\left(
\eta^{n}u^{n}\right)  =0,\\
\\
\frac{1}{\Delta t}\left(  I-d\epsilon D_{+}D_{-}\right)  (u^{n+1}-u^{n})+\left(  I+c\epsilon D_{+}D_{-}\right)  D\left((1- \theta) \eta^{n}+\theta\eta^{n+1}\right)
+\frac{\epsilon}{2}D\left(  \left(  u^{n}\right)  ^{2}\right)  =0,
\end{array}
\right. %
\end{equation*}
\end{small}
for all $n\geq0$ and $\theta=\frac{1}{2}$ or $\theta=1$. The conditions on the time step and space discretization in Theorem \ref{Teorema_Numerica1} become then $\frac{\Delta t}{\epsilon}\leq \delta_0$ and $\frac{\Delta x}{\epsilon}\leq \delta_0$ and the energy inequality rewrites
\begin{equation*}
\sup_{n\in\overline{0,N}}\mathcal{E}\left(  e^{n},f^{n}\right)  \leq
C_{abcd}\left\{  \left(\frac{  \Delta t}{\epsilon}\right)  ^{2}+\left(  \frac{\Delta x}{\epsilon}\right)
^{4}\right\},
\end{equation*}
which impose a strong restriction on $\Delta x$ and $\Delta t$.

 In order to avoid this inconvenience, one should design an \emph{asymptotic preserving} scheme, for which the error estimates are uniform with respect to $\epsilon$. In such a scheme, both limits $\epsilon\rightarrow0$ and $\Delta x, \Delta t\rightarrow 0$ may be commuted without affecting the accuracy of the scheme. Moreover, in such a scheme, the limit scheme when $\epsilon\rightarrow 0$ is consistent with the limit continuous system when $\epsilon\rightarrow0$. In the case of the $abcd$-systems, the limit system when $\epsilon \rightarrow0$ is the acoustic wave system.

 However, designing an asymptotic preserving scheme is not the aim of this paper and we will focus only on System \eqref{1neliniar_cont} with $\epsilon=1$.

\end{remark}

\subsubsection*{The case when $bd=0$}

We consider the following numerical scheme:%
\begin{equation}
\left\{
\begin{array}
[c]{l}%
\frac{1}{\Delta t}\left(  I-bD_{+}D_{-}\right)  (\eta^{n+1}-\eta^{n})+\left(
I+aD_{+}D_{-}\right)  D\left(  u^{n+1}\right)  +D\left(  \eta^{n}u^{n}\right)
\\
=\frac{1}{2}\left(  1-\operatorname*{sgn}\left(  b\right)  \right)  \tau
_{1}\Delta xD_{+}D_{-}\left(  \eta^{n}\right)  ,\\
\\
\frac{1}{\Delta t}\left(  I-dD_{+}D_{-}\right)  (u^{n+1}-u^{n})+\left(
I+cD_{+}D_{-}\right)  D\left(  \eta^{n+1}\right)  +\frac{1}{2}D\left(  \left(
u^{n}\right)  ^{2}\right) \\
=\frac{1}{2}\left(  1-\operatorname*{sgn}\left(  d\right)  \right)  \tau
_{2}\Delta xD_{+}D_{-}\left(  u^{n}\right)  ,
\end{array}
\right.  \label{schema_bd=0}%
\end{equation}
for all $n\geq0$, with
\begin{equation}\label{conditie_initiala2}
\left(  \eta^{0},u^{0}\right)  =\left(  \eta_{\Delta}^{0},u_{\Delta
}^{0}\right)  .
\end{equation}

\begin{remark}
For the case $bd=0$, we consider only an implicit time discretization for the linear part of the system.
\end{remark}\\

The convergence error is defined by $\left(\text{\ref{convergence_error1}}\right)  $. We are now in the position of
stating our second main result.

\begin{theorem}
\label{Teorema_Numerica2} Let $a\leq0$, $b\geq0,$ $c\leq0$ and $d\geq0$ with
$bd=0$, excluding the cases \eqref{excluding}.
Consider $s>8$, $\left(  \eta_{0},u_{0}\right)  \in H^{s_{bc}}\left(
\mathbb{R}\right)  \times H^{s_{ad}}\left(  \mathbb{R}\right)  $ and $T>0$ such that $\left(
\eta,u\right)  \in \mathcal{C}_{T}(H^{s_{bc}}\times H^{s_{ad}})$ is the solution on $[0,T]$ of the
system \eqref{1neliniar_cont}.

Choose
$\alpha>0,\ \tau_{1}>0$ and $\tau_{2}>0$ such
that:
\[
\left\Vert u\right\Vert _{L^{\infty}_tL^{\infty}_x}+\alpha\leq\tau_{1}%
\text{ and }\left\Vert u\right\Vert _{L^{\infty}_tL^{\infty}_x}%
+\alpha\leq\tau_{2}.
\]

There exists $\delta_0>0$ (depending on $\alpha
,\ \tau_{1}$, $\tau_{2}$ and on $\sup\limits_{t\in\left[
0,T\right]  }\left\Vert \left(  \eta(t),u(t)\right)  \right\Vert _{H^{s_{bc}%
}\times H^{s_{ad}}}$) such that if the number of time steps $N\in\mathbb{N}$ and the space
discretization step are chosen in order to
verify%
\[
\Delta t=T/N\leq\delta_{0}\text{, }\Delta x\leq\delta_{0},
\]
and%
\begin{equation}\label{CFL_cond_oct}
\max\{\left(  1-\operatorname*{sgn}(b)\right)  \tau_{1},\left(
1-\operatorname*{sgn}(d)\right)  \tau_{2}\}\Delta t\leq \Delta x,
\end{equation}
if we consider the numerical scheme \eqref{schema_bd=0} along with the initial data \eqref{conditie_initiala2} with the numerical viscosities $\tau_1$ and $\tau_2$ as well as the approximation
$\left(  \eta_{\Delta}^{n},u_{\Delta}^{n}\right)  _{n\in\overline{1,N}}$
defined by \eqref{troncature}, then, the numerical scheme \eqref{schema_bd=0} is first order convergent i.e. the convergence error
defined in \eqref{convergence_error1} satisfies:
\[
\sup_{n\in\overline{0,N}}\mathcal{E}\left(  e^{n},f^{n}\right)  \leq
C_{abcd}\left(  \Delta x\right)  ^{2},
\]
where $C_{abcd}$ depends on the parameters $a$,$b$,$c$,$d$, on
$\sup\limits_{t\in\left[  0,T\right]  }\left\Vert \left(  \eta(t),u(t)\right)
\right\Vert _{H^{s_{bc}}\times H^{s_{ad}}}$ and on $T$.
\end{theorem}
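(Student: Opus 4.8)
The plan is to run a discrete energy estimate on the error system, parallel to the $b,d>0$ case treated in Theorem \ref{Teorema_Numerica1}, but now compensating for the \emph{loss} of the $\ell^2$-control on the discrete derivatives $D_+e$ (when $b=0$) or $D_+f$ (when $d=0$) by means of the artificial viscosity terms on the right-hand side of \eqref{schema_bd=0}. First I would subtract the exact averaged quantities from the scheme. By construction \eqref{troncature}, the cell-averages $(\eta_\Delta^n,u_\Delta^n)$ of the smooth solution satisfy the scheme \eqref{schema_bd=0} up to a \emph{consistency error}: plugging $(\eta_\Delta^n,u_\Delta^n)$ into the scheme produces residual source terms $R_\eta^n,R_u^n$. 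Using the regularity $(\eta,u)\in\mathcal{C}_T(H^{s_{bc}}\times H^{s_{ad}})$ with $s>8$ and standard Taylor expansions in time (width $\Delta t$) and space (width $\Delta x$), together with the fact that the chosen viscosity terms $\tfrac12(1-\operatorname{sgn}(b))\tau_1\Delta x\,D_+D_-$ are themselves $O(\Delta x)$, I expect $\|R_\eta^n\|,\|R_u^n\|$ (in the relevant discrete norms) to be $O(\Delta t+\Delta x)$. The errors $(e^n,f^n)$ from \eqref{convergence_error1} then solve the same scheme with these residuals as forcing.

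Next I would derive the energy inequality. Testing the $e$-equation against the natural discrete multiplier $(I-bD_+D_-)^{-1}$-type combination dictated by $\mathcal{E}$ — concretely pairing with $e^{n+1}+e^n$ and with the $D_+$, $D_+D_-$ images so as to reconstruct the quadratic form \eqref{energy_discrete} — and doing the same for $f$, I would form $\mathcal{E}(e^{n+1},f^{n+1})-\mathcal{E}(e^n,f^n)$. The skew-symmetry of the centered operator $D$ against the $\ell^2_\Delta$ scalar product makes the linear coupling terms $\langle (I+aD_+D_-)Du^{n+1},\cdot\rangle$ and $\langle (I+cD_+D_-)D\eta^{n+1},\cdot\rangle$ cancel in pairs, exactly as in the continuous energy estimate; this is why no CFL condition enters the linear part. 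The crucial algebraic identity I would exploit is that for a centered advection $D(u^nv)$ the viscous term $\tau\Delta x\,D_+D_-v$ supplies a coercive contribution $\tfrac{\Delta t}{2}\big(\tau-\|u\|_{L^\infty}\big)\|D_+v\|^2_{\ell^2_\Delta}$ (after a discrete summation by parts and Cauchy–Schwarz on the frozen-coefficient transport), which is nonnegative precisely because $\tau_1,\tau_2\geq\|u\|_{L^\infty_tL^\infty_x}+\alpha$. This upwind-viscosity coercivity is the substitute, on the dimensions where $b$ or $d$ vanishes, for the missing dispersive $\ell^2$-bound on $D_+e$ or $D_+f$, and it is what forces the half-order loss (overall $O(\Delta x)$ rather than $O(\Delta x^2)$).

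I would then bound the nonlinear error contributions from the explicit terms $D(\eta^nu^n)-D(\eta_\Delta^nu_{\Delta}^n)$ and $\tfrac12 D((u^n)^2-(u_\Delta^n)^2)$ by writing the differences bilinearly, e.g. $\eta^nu^n-\eta_\Delta^nu_\Delta^n=e^n u^n+\eta_\Delta^n f^n$, and estimating each piece via discrete Cauchy–Schwarz and the uniform $L^\infty$ bounds on the exact solution (controlled by $\sup_{[0,T]}\|(\eta,u)\|_{H^{s_{bc}}\times H^{s_{ad}}}$ through a discrete Sobolev embedding). Here I must be careful to absorb the factors of $\Delta x^{-1}$ coming from $D$ either into the coercive viscous terms just produced or into a $\frac{C}{\Delta x}\Delta t$ factor that the CFL-type constraint \eqref{CFL_cond_oct} keeps bounded — this is where condition \eqref{CFL_cond_oct} is genuinely used. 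Collecting everything yields a recursion of the form
\[
\mathcal{E}(e^{n+1},f^{n+1})\leq (1+C\Delta t)\,\mathcal{E}(e^n,f^n)+C\Delta t\,(\Delta t+\Delta x)^2,
\]
and a discrete Gr\"onwall lemma, together with $\mathcal{E}(e^0,f^0)=O(\Delta x^2)$ from the initialization \eqref{conditie_initiala2}, gives $\sup_n\mathcal{E}(e^n,f^n)\leq C_{abcd}(\Delta t+\Delta x)^2$; under the constraint $\Delta t\lesssim\Delta x$ imposed by \eqref{CFL_cond_oct} this collapses to the claimed $C_{abcd}(\Delta x)^2$. \emph{The main obstacle} I anticipate is the bookkeeping of the degenerate directions: when $b=0$ the energy $\mathcal{E}$ carries no $\|D_+e\|$ term, so closing the loop on the $\eta$-equation requires that every stray $\|D_+e\|$ generated by the nonlinearity be matched against the viscous coercive term or controlled by the $\Delta x$ weight, and one must verify case by case (over the admissible parameter ranges, \emph{excluding} \eqref{excluding}) that the remaining dispersive structure, plus the single viscosity on the degenerate equation, suffices — the delicate point being that the viscosity must smallness-dominate the centered flux uniformly in $n$, which is exactly the role of the gap $\alpha>0$.
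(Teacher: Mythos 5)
Your global architecture (consistency error of size $O(\Delta t+\Delta x)$, an energy recursion, discrete Gr\"onwall, and the CFL condition \eqref{CFL_cond_oct} collapsing $(\Delta t+\Delta x)^2$ to $(\Delta x)^2$) matches the paper, but the central stability step is not correct as you state it. You propose testing the error equations against $e^{n+1}+e^n$ (and its $D_+$, $D_+D_-$ images), claiming the linear couplings cancel ``exactly as in the continuous case'' and that the viscosity then supplies a coercive term $\tfrac{\Delta t}{2}(\tau-\|u\|_{L^\infty})\|D_+v\|^2_{\ell^2_\Delta}$. Two problems. First, the scheme \eqref{schema_bd=0} is fully implicit in its linear part, so pairing with $e^{n+1}+e^n$ leaves cross-time terms such as $\Delta t\langle Df^{n+1},e^n\rangle$ that do not cancel; worse, the viscosity and the centered flux are explicit (time level $n$), so their pairing with $e^{n+1}$ produces terms like $\Delta x\Delta t\langle D_+e^{n+1},D_+e^n\rangle$, involving exactly the derivative the energy does \emph{not} control when $b=0$. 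Second, even at a single time level the coercivity is not of the form you claim: by the identity \eqref{IPP1}, the transport term $\langle e^n,D(u\,e^n)\rangle=\tfrac12\langle D_+u,e^nS_+e^n\rangle$ costs only $O(\Delta t)\|e^n\|^2_{\ell^2_\Delta}$, and the genuine danger is the $O(\Delta t^2)$ square of the explicit flux, $\Delta t^2\|D(u\,e^n)\|^2_{\ell^2_\Delta}\approx\Delta t^2\|u\|^2_{\ell^\infty}\|De^n\|^2_{\ell^2_\Delta}$, which carries no $\Delta x$ factor. The paper's remedy (Proposition \ref{Burgers1}, following Court\`es--Lagouti\`ere--Rousset) is to take the $\ell^2_\Delta$-norm of the whole one-step explicit operator $a\mapsto a-\Delta tD\left(a\left(u+\lambda\frac{a}{2}\right)\right)+\frac{\tau}{2}\Delta x\Delta tD_+D_-a$ and use the exact identities \eqref{IPP1}--\eqref{EQ_4}: by \eqref{IPP4} the square of the viscosity yields $+\tau^2\Delta t^2\|D_+a\|^2_{\ell^2_\Delta}-\tau^2\Delta t^2\|Da\|^2_{\ell^2_\Delta}$; the CFL condition $\tau\Delta t\leq\Delta x$ absorbs the positive piece into the linear coercive term $-\tau\Delta t\Delta x\|D_+a\|^2_{\ell^2_\Delta}$, while the gap $\tau\geq\|u\|_{\ell^\infty}+\alpha$ absorbs $\Delta t^2\|u\|^2_{\ell^\infty}\|Da\|^2_{\ell^2_\Delta}$ into the negative piece. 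Your sketch contains no mechanism controlling these $\Delta t^2\|Da\|^2_{\ell^2_\Delta}$ terms, and that is precisely where an unstabilized centered scheme blows up.

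Beyond this, several ingredients the proof cannot do without are missing. When $a<0$ (and in the symmetric cases), the energy contains $(-a)\|D_+f\|^2_{\ell^2_\Delta}$; to propagate it one must apply $\sqrt{-a}\,D_+$ to the degenerate equation and invoke a derivative-level stability estimate (the paper's Proposition \ref{Burgers3}), which in turn requires an $\ell^\infty$ bound on $D_+f^n$ itself and bounds on $D_+\epsilon_2^n$ in the consistency analysis --- none of which appear in your proposal. Relatedly, the quadratic-in-error terms $D(e^nf^n)$ and $D((f^n)^2)$ cannot be estimated using $L^\infty$ bounds on the exact solution alone: the paper runs a bootstrap asserting $\|e^n\|_{\ell^\infty},\|f^n\|_{\ell^\infty},|\operatorname{sgn}(a)|\,\|D_+f^n\|_{\ell^\infty}\leq\Delta x^{\frac12-\gamma}$ (hypothesis \eqref{HYP_AbcD}), which is also a hypothesis of Propositions \ref{Burgers1}--\ref{Burgers3}. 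Finally, the argument is genuinely case-by-case, with a differently adapted (equivalent) energy in each parameter regime and structure-specific cancellations --- e.g.\ for $a<0$, $b>0$, $c<0$, $d=0$ the term $2acb\Delta t\langle D_+D_-e^{n+1},D_+D_-Df^{n+1}\rangle$ can only be removed by applying $\sqrt{(-a)b}\,D_+$ to the second equation --- so ``verify case by case'' conceals most of the actual work. In short, you have identified the right global strategy, but the stability core and the derivative/bootstrap machinery on which it rests are absent.
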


\begin{remark}
In this case, one of the two equations of \eqref{schema_bd=0} does not contain the operator $(I-bD_+D_-)^{-1}$ or $(I-dD_+D_-)^{-1}$. Artificial viscosity together with a hyperbolic CFL-type condition are thus needed in order to stabilize the numerical scheme. We use a centered scheme for the first-order derivatives combined with a Rusanov-type diffusion coefficient and a CFL-type condition (which, of course, corresponds to Relation \eqref{CFL_cond_oct}).
\end{remark}

\begin{remark} In order to have less diffusive schemes, we may in fact update the Rusanov coefficient at each time step i.e. take $\tau_i=\tau_i^n$ depending on $n$ such that 
$$||u_{\Delta}^n||_{\ell^{\infty}}+\alpha\leq \tau_i^n \text{\ \ \ with\ \ \ } i\in\{1,2\}.$$
\end{remark}

\begin{remark}
As mentioned in Remark \ref{asymptotic_preserving_scheme_10_NOv}, our numerical scheme \eqref{schema_bd=0} is not suitable if we consider System \eqref{1neliniar_cont} with the small parameter $\epsilon$. Indeed, the energy inequality changes in this case in 
\[
\sup_{n\in\overline{0,N}}\mathcal{E}\left(  e^{n},f^{n}\right)  \leq
C_{abcd}\left( \frac{ \Delta x}{\epsilon}\right)  ^{2},
\]
provided $\frac{\Delta x}{\epsilon}\leq \delta_0$ and $\frac{\Delta t}{\epsilon}\leq \delta_0$. 

\end{remark}

Our results owe much to the technics developed recently by Court\`{e}s, Lagouti\`{e}re and Rousset in \cite{CourtesLagoutiereRousset2017}. Let us give some more details. In order to study the Korteweg-de-Vries equation%
\[
\partial_{t}v+v\partial_{x}v+\partial_{xxx}^{3}v=0,
\]
they employ the following $\theta$-scheme, with $\theta\in[0,1]$,%
\[
\frac{1}{\Delta t}\left(  v^{n+1}-v^{n}\right)  +D_{+}D_{+}D_{-}%
((1-\theta)v^{n}+\theta v^{n+1})+D\left(  \frac{(v^{n})^{2}}{2}\right)
=\frac{\tau\Delta x}{2}D_{+}D_{-}v^{n}.
\]
With the aim to study the order of convergence, they consider the finite
volume discrete operators:%
\[
\left(  v_{\Delta}\right)^{n}  _{j}=\frac{1}{[\inf(t^{n+1},T)-t^n]\Delta x}\int_{t^{n}%
}^{\inf(t^{n+1}, T)}\int_{x_{j}}^{x_{j+1}}v\left(  s,y\right)  dsdy
\]
and the convergence error%
\[
e^{n}=v^{n}-v_{\Delta}^{n},
\]
which obeys the following equation:%
\[
e^{n+1}+\theta\Delta tD_{+}D_{+}D_{-}(e^{n+1})=\mathscr{B}e^{n},
\]
with%
\begin{small}
\begin{align*}
\mathscr{B}e^{n}  &  =e^{n}-\left(  1-\theta\right)  \Delta tD_{+}D_{+}%
D_{-}(e^{n})-\Delta tD\left(  e^{n}v_{\Delta}^{n}\right)   -\Delta tD\left(  \frac{(e^{n})^{2}}{2}\right)  +\frac{\tau\Delta
x\Delta t}{2}D_{+}D_{-}e^{n}-\Delta t\epsilon^{n},
\end{align*}
\end{small}
where $\epsilon^{n}$ is the consistency error. In order to establish $\ell^{2}_{\Delta}%
$-estimates, they show that under the CFL condition%
\[
\left\{
\begin{array}
[c]{l}%
\tau>\left\Vert v_{\Delta}
^{n}\right\Vert _{\ell^{\infty}},\\
\left[\tau+\frac{1}{2}\right]\Delta t<\Delta x,\\
\left(  1-2\theta\right)  \Delta t<\frac{\left(  \Delta x\right)  ^{3}}{4},
\end{array}
\right.
\]
the following holds true:%
\begin{equation}
\left\Vert e^{n+1}+\theta\Delta tD_{+}D_{+}D_{-}(e^{n+1})\right\Vert _{\ell_{\Delta}^{2}}^{2}\leq
\Delta tC\left\Vert \epsilon^{n}\right\Vert _{\ell_{\Delta}^{2}}^{2}+\left(  1+C\Delta
t\right)  \left\Vert e^{n}+\theta\Delta tD_{+}D_{+}D_{-}(e^{n})\right\Vert
_{\ell_{\Delta}^{2}}^{2}, \label{estimation}%
\end{equation}
provided that $\left\Vert e^{n}\right\Vert _{\ell^{\infty}}$ is sufficiently
small. Thus, they are able to use the discrete Gronwall lemma and close their
estimates. The proof of $\left(  \text{\ref{estimation}}\right)  $ is rather
technical and tricky. Loosely speaking, using some clever identities when computing
$\left\Vert \mathscr{B}e^{n}\right\Vert _{\ell_{\Delta}^{2}}^{2}$, they get
several negative terms wich, under the above CFL and smallness conditions, are
used to balance the "bad" terms.

In Section \ref{Burgers_type_estimates} we study a discrete operator which
appears in hyperbolic systems and we provide a bound for its $\ell_{\Delta}%
^{2}$-norm, the proof being essentially inspired from \cite{CourtesLagoutiereRousset2017}. As a consequence, we establish a higher-order estimate which proves crucial in the analysis of some of the $abcd$-systems. Among the systems in view, we distinguish three situations:

\begin{itemize}
\item when $bd>0$, establishing energy estimates for the convergence error
can be done by imitating the approach from the continuous case. The structure
of the equations provides enough control such that we do not need to impose
numerical viscosity or CFL-type conditions.

\item when at least one of the weakly dispersive operators does not appear, we
have to work only with estimations established in Section
\ref{Burgers_type_estimates} (see for instance the case $a<0$,\ $b=c=d=0$) either

\item combine the technics of Section \ref{Burgers_type_estimates} with
"continuous-type" estimates like those established for the case $bd>0$
(see for instance the case $a=b=c=0,$ $d>0$).
\end{itemize}

In order to illustrate the theoretical order of convergence (see Theorem \ref{Teorema_Numerica1} and Theorem \ref{Teorema_Numerica2}), we compare the numerical solutions computed with our schemes with the exact traveling wave solutions which were computed by Chen in \cite{Chen1998} and by Bona and Chen in \cite{BonaChen1998}.

Finally, we use our results in order to study exact traveling waves interactions. Our experiments are inspired from \cite{AntonopoulosDougalis2012}, \cite{AntonopoulusDugalisMitsotakis2009}, \cite{BonaChen1998} and \cite{BonaChen2016}. We perform two such numerical experiments. Recently, in \cite{BonaChen2016} Bona and Chen pointed out that finite time blow-up seems
to occur at the head-on collision of the two exact solutions:%
\[
\left\{
\begin{array}
[c]{l}%
\eta_{\pm}\left(  t,x\right)  =\frac{15}{2}\operatorname{sech}^{2}\left(
\frac{3}{\sqrt{10}}\left(  x-x_{0}\mp\frac{5}{2}t\right)  \right)  -\frac
{45}{4}\operatorname{sech}^{4}\left(  \frac{3}{\sqrt{10}}\left(  x-x_{0}%
\mp\frac{5}{2}t\right)  \right)  ,\\
\\
u_{\pm}\left(  t,x\right)  =\pm\frac{15}{2}\operatorname{sech}^{2}\left(
\frac{3}{\sqrt{10}}(x-x_{0}\mp\frac{5}{2}t)\right)  .
\end{array}
\right.
\]
 In order to build confidence in our codes, we repeat their experiment, in the same conditions and we show that we
obtain roughly the same results. We observed that the BBM-BBM system is not the only one having traveling waves of the above form. In particular, we provide numerical evidence that a blow-up phenomenon might occur on the head-on collision of the traveling waves for the case when the parameters are $a=c=d=0$, $b=3/5$.  

The plan of the rest of this paper is the following. In the next section, we
give a list of the main notations and identities that we use all along in this
manuscript. Section \ref{section b,d>0} is devoted to the proof of Theorem
\ref{Teorema_Numerica1}. The proof of Theorem \ref{Teorema_Numerica2} is more
involved as we cannot treat in a uniformly manner all the cases when $bd=0$.
First, we establish in Section \ref{Burgers_type_estimates} some estimates for
discrete operators appearing in hyperbolic systems. The rest of Section
\ref{section bd=0} is dedicated to the proof corresponding to the different
values of the $abcd$ parameters appearing in Theorem \ref{Teorema_Numerica2}.
Finally, in Sections \ref{Simulari} and \ref{travel_waves_collision}, we present our numerical simulations.

\subsection{Notations.\label{Notations_numerica}}
In the following we present the main notations that will be used throughout the
rest of the paper. Let us fix $\Delta x>0$. For all $v=\left(  v_{j}\right)
_{j\in\mathbb{Z}}\in\ell^{\infty}\left(  \mathbb{Z}\right)  $, we introduce the
spatial shift operators:%
\[
\left(  S_{\pm}v\right)  _{j}:=v_{j\pm1},
\]
For $v=\left(  v_{j}\right)  _{j\in\mathbb{Z}},$ $w=\left(  w_{j}\right)
_{j\in\mathbb{Z}}\in\ell^{2}\left(  \mathbb{Z}\right)  $, we define the product
operator:%
\[
vw=\left(  v_{j}w_{j}\right)  _{j\in\mathbb{Z}}\in\ell^{1}\left(
\mathbb{Z}\right)  .
\]
Also, we denote by%
\[
v^{2}=vv.
\]

\noindent We list below some basic formulas, whose proofs can be found in \cite{CourtesLagoutiereRousset2017}.\\
\noindent The following identities describe the derivation law of a product, for $v, w\in\ell^{2}\left(  \mathbb{Z}\right)  $ %
\begin{align}
D\left(  vw\right)   &  =vDw+\frac{1}{2}\left(  S_{+}wD_{+}v+S_{-}%
wD_{-}v\right),  \label{product_rule1}\\
D\left(  vw\right)   &  =S_{+}vDw+DvS_{-}w,\label{product_rule2}\\
D_{+}(vw) &  =S_{+}vD_{+}w+wD_{+}v.\label{product_rule3}%
\end{align}
We observe that we dispose of the following basic integration by parts rules, for $w,v\in\ell^2_{\Delta}(\mathbb{Z})$:%
\begin{align}
\left\langle D_{+}v,w\right\rangle  &  =-\left\langle v,D_{-}w\right\rangle
\label{integr_by_part_1},\\
\left\langle Dv,w\right\rangle  &  =-\left\langle v,Dw\right\rangle .\label{integr_by_part_2}
\end{align}
In particular, we see that, for $v\in\ell^2_{\Delta}(\mathbb{Z})$:
\[
\left\langle Dv,v\right\rangle =0,
\]
and
\[
\left\langle v,D_{+}v\right\rangle =-\frac{\Delta x}{2}||D_{+}v||_{\ell
_{\Delta}^{2}}^{2}.
\]
More elaborate integration by parts identities are the following, for $w,v\in\ell^2_{\Delta}(\mathbb{Z})$,%
\begin{equation}
\left\langle v,D\left(  vw\right)  \right\rangle =\frac{1}{2}\left\langle
D_{+}w,vS_{+}v\right\rangle \label{IPP1},%
\end{equation}
respectively%
\begin{equation}
\left\langle D_{+}D_{-}v,D\left(  vw\right)  \right\rangle =-\frac{1}{\Delta
x^{2}}\left\langle D_{+}w,vS_{+}v\right\rangle +\frac{1}{\Delta x^{2}%
}\left\langle Dw,S_{-}vS_{+}v\right\rangle .\label{IPP2}%
\end{equation}
When $v=w\in\ell^2_{\Delta}(\mathbb{Z})$ in the previous equation, some additional simplifications occur and one has
\begin{equation}
\left\langle D_{+}D_{-}v,D\left(  v^2\right)  \right\rangle =\frac{1}{3}\left\langle D_{+}v, (D_+v)^2\right\rangle-\frac{4}{3
}\left\langle Dv, (Dv)^2\right\rangle .\label{IPP2BiS}%
\end{equation}
Also, it holds true that, for $v\in\ell^2_{\Delta}(\mathbb{Z})$,%
\begin{equation}
\left\langle v,D\left(  v^{2}\right)  \right\rangle =-\frac{\Delta x^2}%
{6}\left\langle D_+v,\left(  D_{+}v\right)  ^{2}\right\rangle ,\label{IPP3}%
\end{equation}
\begin{equation}\label{Norm_D_v_carre}
||D(v^2)||_{\ell^2_{\Delta}}^2=\left\langle (Dv)^2, \left(\frac{S_+v+S_-v}{2}\right)^2 \right\rangle
\end{equation}
and
\begin{equation}
||D_{+}D_{-}v||_{\ell_{\Delta}^{2}}^{2}=\frac{4}{\Delta x^{2}}||D_{+}%
v||_{\ell_{\Delta}^{2}}^{2}-\frac{4}{\Delta x^{2}}||Dv||_{\ell_{\Delta}^{2}%
}^{2}.\label{IPP4}%
\end{equation}
Finally, for $v,w\in\ell^2_{\Delta}(\mathbb{Z})$, we recall Lemma 5 of \cite{CourtesLagoutiereRousset2017}
\begin{equation}
||D(vw)||_{\ell_{\Delta}^{2}}^{2}\leq \left\{||w||_{\ell^{\infty}}^2+\Delta t||D_+w||_{\ell^{\infty}}^2\right\}||Dv||_{\ell^2_{\Delta}}^2+\left\{\frac{1}{\Delta t}||w||^2_{\ell^{\infty}}+\frac{3}{4}||D_+w||_{\ell^{\infty}}^2\right\}||v||_{\ell^2_{\Delta}}^2\label{IPP5}%
\end{equation}
and Lemma 8 of \cite{CourtesLagoutiereRousset2017}
\begin{equation}
\left\langle D(vw), D(v^2)\right\rangle\leq 2||v||_{\ell^{\infty}}||w||_{\ell^{\infty}}||Dv||_{\ell^{2}_{\Delta}}^2
-\frac{8\Delta x^2}{3}\left\langle Dw, \left(Dv\right)^3\right\rangle-\frac{2}{3}\left\langle DDw,v^3\right\rangle\label{EQ_4}.\end{equation}
In the following, we will frequently use the notations:%
\begin{equation}
\left\{
\begin{array}
[c]{c}%
e^{n}=\eta^{n}-\eta_{\Delta}^{n},\text{\ \ \ \ \ }E^{\pm}\left(  n\right)  =e^{n+1}\pm
e^{n},\\
f^{n}=u^{n}-u_{\Delta}^{n},\text{\ \ \ \ \ }F^{\pm}\left(  n\right)  =f^{n+1}\pm f^{n}.
\end{array}
\right.  \label{eroare}%
\end{equation}

We end this section with the following discrete version of Gronwall's lemma, a
proof of which can be found in \cite{Clark1987}:

\begin{lemma}
\label{Granwall_discret_numerica}Let $v=\left(  v^{n}\right)  _{n\in
\mathbb{N}},$ $a=\left(  a^{n}\right)  _{n\in\mathbb{N}},$ $b=\left(
b^{n}\right)  _{j\in\mathbb{N}}$ be sequences of real numbers with $b^{n}%
\geq0$ for all $n\in\mathbb{N}$ which satisfies%
\[
v^{n}\leq a^{n}+\sum_{j=0}^{n-1}b^{j}v^{j},
\]
for all $n\in\mathbb{N}$. Then, for any $n\in\mathbb{N}$, we have:%
\[
v^{n}\leq\max_{k\in\overline{1,n}}a^{k}\prod_{j=0}^{n-1}\left(  1+b^{j}%
\right)  .
\]

\end{lemma}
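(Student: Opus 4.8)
The plan is to argue by induction on $n$, the only genuine ingredient being a telescoping identity for the product $\prod_{j=0}^{n-1}(1+b^j)$. I would first introduce the two monotone quantities
\[
A_n := \max_{k\in\overline{1,n}} a^k, \qquad P_n := \prod_{j=0}^{n-1}(1+b^j),
\]
with the convention $P_0=1$ (empty product). Since $b^j\geq 0$, the sequence $(P_n)_n$ is nondecreasing with $P_n\geq 1$, and $(A_n)_n$ is clearly nondecreasing as well. The inequality to be proved is then exactly $v^n\leq A_n P_n$, so everything reduces to showing that the accumulated sum $\sum_{j=0}^{n-1}b^j v^j$ in the hypothesis can be repackaged as a product.

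The key step is the telescoping identity that drives the whole argument. From $P_{j+1}=(1+b^j)P_j$ one obtains $b^j P_j = P_{j+1}-P_j$, and summing over $j$ yields
\[
\sum_{j=0}^{n-1} b^j P_j = \sum_{j=0}^{n-1}\big(P_{j+1}-P_j\big) = P_n - P_0 = P_n - 1.
\]
This is the discrete analogue of the elementary relation $\int_0^t b(s)\exp\!\big(\int_0^s b\big)\,ds = \exp\!\big(\int_0^t b\big)-1$, and it is precisely what converts the linear recurrence into the closed product bound.

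For the inductive step I would assume $v^j\leq A_j P_j$ for every $j$ with $0\leq j\leq n-1$, substitute into the hypothesis $v^n\leq a^n+\sum_{j=0}^{n-1}b^j v^j$, and use $b^j\geq 0$ together with the monotonicity $A_j\leq A_n$:
\[
v^n\leq a^n+\sum_{j=0}^{n-1} b^j A_j P_j \leq a^n+A_n\sum_{j=0}^{n-1} b^j P_j = a^n+A_n(P_n-1).
\]
Since $a^n\leq A_n$ and $P_n\geq 1$, the right-hand side is at most $A_n+A_n(P_n-1)=A_n P_n$, which closes the induction. I do not expect any serious obstacle here: the lemma is elementary and the substance is entirely contained in the monotonicity of $A_n$, $P_n$ and the telescoping identity. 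The only point requiring a little care is the bottom of the induction, namely the treatment of the initial term $v^0$, for which the hypothesis degenerates (empty sum) to $v^0\leq a^0$; one must ensure this initial contribution is absorbed into the same maximum. In the application made in this paper this is automatic, since the discrete initial data are chosen so that $e^0=f^0=0$, and hence $v^0=0$ validates the base case trivially.
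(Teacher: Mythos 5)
Your core mechanism is correct and is the standard one: the telescoping identity $\sum_{j=0}^{n-1}b^jP_j=P_n-1$ obtained from $b^jP_j=P_{j+1}-P_j$, combined with induction and the monotonicity of $A_n$ and $P_n$, is exactly what makes the lemma work. Note that the paper itself contains no proof of this statement — it only refers the reader to Clark~\cite{Clark1987} — so there is no in-paper argument to compare yours against; your proof supplies the argument the paper omits.

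There is, however, one genuine issue, and you brushed against it without resolving it inside the lemma itself. With $A_n:=\max_{k\in\overline{1,n}}a^k$, as in the statement (the maximum starts at $k=1$), the base case of your induction reads $v^0\leq A_0P_0$, where $A_0$ is a maximum over the empty set, so the induction cannot start. This is not a removable technicality: the lemma as literally stated is false. Take $a^0=v^0=10$, $a^1=1$, $b^0=1$ and $v^1=11$; the hypothesis holds ($v^0\leq a^0$ and $v^1\leq a^1+b^0v^0=11$), yet the claimed bound gives $v^1\leq a^1(1+b^0)=2$. The statement one actually needs (and the one consistent with its use in the proof of Theorem \ref{Teorema_Numerica1}, where the quantity $\max_{k\in\overline{0,n}}\Vert\epsilon^k\Vert_{\ell^2_\Delta}^2$ appears) takes the maximum over $k\in\overline{0,n}$; with $A_n:=\max_{k\in\overline{0,n}}a^k$ your induction closes cleanly, base case included, since then $v^0\leq a^0=A_0P_0$. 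Your appeal to the application ($e^0=f^0=0$, hence $v^0=0$) does make the base case harmless where the lemma is used, but a proof of the lemma should not lean on how it is later applied; the clean fix is to enlarge the maximum to include $k=0$, or equivalently to add the hypothesis $v^0\leq\max_{k\in\overline{1,n}}a^k$.
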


\section{The proof of the main results}

This section is devoted to the proofs of Theorems \ref{Teorema_Numerica1} and \ref{Teorema_Numerica2}.

\subsection{The proof of Theorem \ref{Teorema_Numerica1}.\label{section b,d>0}}
Let us recall the energy functional:%
\begin{align}
  \mathcal{E}\left(  e,f\right)  \overset{def.}{=}&\left\Vert e\right\Vert
_{\ell_{\Delta}^{2}}^{2}+\left(  b-c\right)  \left\Vert D_{+}e\right\Vert
_{\ell_{\Delta}^{2}}^{2}+b\left(  -c\right)  \left\Vert D_{+}D_{-}e\right\Vert
_{\ell_{\Delta}^{2}}^{2}\nonumber\\
  +&\left\Vert f\right\Vert _{\ell_{\Delta}^{2}}^{2}+(d-a)\left\Vert
D_{+}f\right\Vert _{\ell_{\Delta}^{2}}^{2}+d\left(  -a\right)  \left\Vert
D_{+}D_{-}f\right\Vert _{\ell_{\Delta}^{2}}^{2}.\label{E=mc^2}%
\end{align}

\noindent  For all $n\geq0$, we will
consider $\left(  \epsilon_{1}^{n},\epsilon_{2}^{n}\right)  \in\left(\ell_{\Delta
}^{2}(\mathbb{Z})\right)^2$ the consistency error defined as:%
\begin{equation}
\left\{
\begin{array}
[c]{l}%
\frac{1}{\Delta t}\left(  I-bD_{+}D_{-}\right)  (\eta_{\Delta}^{n+1}%
-\eta_{\Delta}^{n})+\frac{1}{2}\left(  I+aD_{+}D_{-}\right)  D\left(
u_{\Delta}^{n}+u_{\Delta}^{n+1}\right)  +D\left(  \eta_{\Delta}^{n}u_{\Delta
}^{n}\right)  =\epsilon_{1}^{n},\\
\\
\frac{1}{\Delta t}\left(  I-dD_{+}D_{-}\right)  (u_{\Delta}^{n+1}-u_{\Delta
}^{n})+\frac{1}{2}\left(  I+cD_{+}D_{-}\right)  D\left(  \eta_{\Delta}%
^{n}+\eta_{\Delta}^{n+1}\right)  +\frac{1}{2}D\left(  \left(  u_{\Delta}%
^{n}\right)  ^{2}\right)  =\epsilon_{2}^{n}.
\end{array}
\right.  \label{consistency_error1}%
\end{equation}

\begin{proposition}\label{_prop_1_avcd}
For all $n\in\overline{0,N-1}$, there exist two constants $C_{1}$ and $C_{2}$ depending on $a, b, c, d$, on the
$\ell^{\infty}$-norm of $\left(  \eta_{\Delta}^{n},u_{\Delta}^{n}\right)  $
and $\left(  D(\eta_{\Delta}^{n}),D(u_{\Delta}^{n})\right)  $
and proportional to $\max\left\{||e^n||_{\ell^{\infty}}, ||f^n||_{\ell^{\infty}}\right\}$, such that, 
\begin{equation}
\mathcal{E}\left(  e^{n+1},f^{n+1}\right)  \leq 2\Delta t\left\Vert \epsilon^{n}\right\Vert _{\ell^{2}}^{2}+(1+\Delta tC_{1})
\mathcal{E}\left(  e^{n},f^{n}\right)  +\Delta tC_{2}\mathcal{E}\left(  e^{n+1},f^{n+1}\right),\label{ineg_inducti_totala}%
\end{equation}
with $(e^n,f^n)$ the two convergence errors defined by \eqref{eroare} and $
\left\Vert \epsilon^{n}\right\Vert _{\ell_{\Delta}^{2}}=\max\left\{
\left\Vert \epsilon_{1}^{n}\right\Vert _{\ell_{\Delta}^{2}},\left\Vert
\epsilon_{2}^{n}\right\Vert _{\ell_{\Delta}^{2}}\right\},
$ defined by \eqref{consistency_error1}.\\
\end{proposition}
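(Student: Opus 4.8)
The plan is to derive the equation governing the pair of convergence errors $(e^{n},f^{n})$ and then run a discrete energy estimate that mirrors, at the level of finite differences, the ``natural'' energy computation associated with \eqref{1neliniar_cont}. I would start by subtracting the scheme \eqref{schema_bd>0} from the consistency relations \eqref{consistency_error1}. Writing $\eta^{n}=e^{n}+\eta_{\Delta}^{n}$ and $u^{n}=f^{n}+u_{\Delta}^{n}$, the nonlinear contributions split as
\begin{align*}
\eta^{n}u^{n}-\eta_{\Delta}^{n}u_{\Delta}^{n}&=e^{n}u_{\Delta}^{n}+\eta_{\Delta}^{n}f^{n}+e^{n}f^{n}=:N_{1}^{n},\\
(u^{n})^{2}-(u_{\Delta}^{n})^{2}&=2f^{n}u_{\Delta}^{n}+(f^{n})^{2}=:N_{2}^{n},
\end{align*}
that is, into terms that are \emph{linear} in the error, with the smooth quantities $\eta_{\Delta}^{n},u_{\Delta}^{n}$ as coefficients, plus terms that are \emph{quadratic} in the error. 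With the notation \eqref{eroare}, for $\theta=\tfrac12$ the two error equations read
\begin{align*}
\tfrac{1}{\Delta t}(I-bD_{+}D_{-})E^{-}(n)+\tfrac12(I+aD_{+}D_{-})DF^{+}(n)+D(N_{1}^{n})&=-\epsilon_{1}^{n},\\
\tfrac{1}{\Delta t}(I-dD_{+}D_{-})F^{-}(n)+\tfrac12(I+cD_{+}D_{-})DE^{+}(n)+\tfrac12 D(N_{2}^{n})&=-\epsilon_{2}^{n}.
\end{align*}

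The crucial choice is that of the multipliers: I would test the first equation against $\Delta t\,(I+cD_{+}D_{-})E^{+}(n)$ and the second against $\Delta t\,(I+aD_{+}D_{-})F^{+}(n)$, and add them. These are the discrete analogues of multiplying the $\eta$-equation of \eqref{1neliniar_cont} by $(I+c\partial_{xx}^{2})\eta$ and the $u$-equation by $(I+a\partial_{xx}^{2})u$. Since $I-bD_{+}D_{-}$, $I+cD_{+}D_{-}$ and $D_{+}D_{-}$ are self-adjoint and mutually commuting, the factor $\tfrac1{\Delta t}$ cancels the outer $\Delta t$ and the time-difference part telescopes: with $R:=(I+cD_{+}D_{-})(I-bD_{+}D_{-})$ one has $\langle (I-bD_{+}D_{-})E^{-}(n),(I+cD_{+}D_{-})E^{+}(n)\rangle=\langle Re^{n+1},e^{n+1}\rangle-\langle Re^{n},e^{n}\rangle$, and a short computation using \eqref{integr_by_part_1}--\eqref{integr_by_part_2} gives $\langle Re,e\rangle=\|e\|_{\ell_{\Delta}^{2}}^{2}+(b-c)\|D_{+}e\|_{\ell_{\Delta}^{2}}^{2}+b(-c)\|D_{+}D_{-}e\|_{\ell_{\Delta}^{2}}^{2}$; the weight $c$ in the multiplier is tuned precisely so that the three quadratic forms of $\mathcal{E}$ appear with the right coefficients, and likewise for the $f$-block with $(I+aD_{+}D_{-})(I-dD_{+}D_{-})$. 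Hence the time-difference contributions reconstruct exactly $\mathcal{E}(e^{n+1},f^{n+1})-\mathcal{E}(e^{n},f^{n})$. Moreover, setting $P=I+aD_{+}D_{-}$, $Q=I+cD_{+}D_{-}$ and using the skew-adjointness of $D$ from \eqref{integr_by_part_2} together with $[P,Q]=[P,D]=[Q,D]=0$, the linear advective/dispersive cross terms cancel identically, $\langle PDF^{+}(n),QE^{+}(n)\rangle+\langle QDE^{+}(n),PF^{+}(n)\rangle=0$; this exact cancellation is what allows us to dispense with any CFL-type condition.

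It then remains to estimate the right-hand side coming from the nonlinear and consistency terms, for which I would use \eqref{integr_by_part_1}--\eqref{integr_by_part_2}, the product rules \eqref{product_rule1}--\eqref{product_rule3}, the product estimate \eqref{IPP5}, and Young's inequality. The consistency contributions $-\Delta t\langle\epsilon_{i}^{n},\cdot\rangle$ are bounded by $2\Delta t\|\epsilon^{n}\|_{\ell_{\Delta}^{2}}^{2}$ plus a remainder of the form $\Delta t\,C\,(\mathcal{E}(e^{n},f^{n})+\mathcal{E}(e^{n+1},f^{n+1}))$, where the $D_{+}D_{-}$-part of the multiplier $(I+cD_{+}D_{-})E^{+}(n)$ is absorbed by the energy precisely because $b(-c)\ge0$ and $d(-a)\ge0$ (and when $c=0$ or $a=0$ the multiplier reduces to the identity and no second derivative ever appears). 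The linear-in-error terms of $N_{1}^{n},N_{2}^{n}$ yield $\Delta t$ times the energy with a constant depending on $\|(\eta_{\Delta}^{n},u_{\Delta}^{n})\|_{\ell^{\infty}}$ and $\|(D\eta_{\Delta}^{n},Du_{\Delta}^{n})\|_{\ell^{\infty}}$, while the genuinely quadratic terms $e^{n}f^{n}$ and $(f^{n})^{2}$ produce, through \eqref{IPP5}, $\Delta t$ times the energy with a constant carrying the extra factor $\max\{\|e^{n}\|_{\ell^{\infty}},\|f^{n}\|_{\ell^{\infty}}\}$. Collecting all level-$n$ contributions into $(1+\Delta tC_{1})\mathcal{E}(e^{n},f^{n})$ and all level-$(n+1)$ contributions, which enter only through the factors $e^{n+1},f^{n+1}$ hidden in the multipliers $E^{+}(n),F^{+}(n)$, into $\Delta tC_{2}\,\mathcal{E}(e^{n+1},f^{n+1})$ produces exactly \eqref{ineg_inducti_totala}.

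The main obstacle is the control of the terms carrying the largest number of discrete derivatives, namely the nonlinear terms tested against the two-derivative part $cD_{+}D_{-}E^{+}(n)$, resp.\ $aD_{+}D_{-}F^{+}(n)$, of the multipliers: computed naively these involve three discrete derivatives falling on the error, more than $\mathcal{E}$ controls. The remedy is to shift one derivative onto the smooth coefficient via \eqref{product_rule1}--\eqref{product_rule3} and an integration by parts, after which the surviving second-order factor is absorbed by the $b(-c)\|D_{+}D_{-}e\|_{\ell_{\Delta}^{2}}^{2}$ and $d(-a)\|D_{+}D_{-}f\|_{\ell_{\Delta}^{2}}^{2}$ parts of $\mathcal{E}$; keeping track of the $\tfrac1{\Delta t}$ weight in \eqref{IPP5} is precisely what dictates the explicit (rather than implicit) treatment of the nonlinearity and makes the quadratic contributions appear with the small factor $\max\{\|e^{n}\|_{\ell^{\infty}},\|f^{n}\|_{\ell^{\infty}}\}$, a smallness later propagated by a bootstrap in the proof of Theorem \ref{Teorema_Numerica1}. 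Finally, for $\theta=1$ the decomposition $(1-\theta)f^{n}+\theta f^{n+1}=\tfrac12F^{+}(n)+(\theta-\tfrac12)F^{-}(n)$ shows that the only new term is proportional to $F^{-}(n)$, resp.\ $E^{-}(n)$; using the error equations to write these differences as $O(\Delta t)$, this term contributes only at the orders already present in $C_{1},C_{2}$ and in the consistency error, so \eqref{ineg_inducti_totala} persists.
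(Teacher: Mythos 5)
Your treatment of the Crank--Nicolson case $\theta=\tfrac12$ is correct and is essentially the paper's own proof: the same multipliers $(I+cD_{+}D_{-})E^{+}(n)$ and $(I+aD_{+}D_{-})F^{+}(n)$, the same telescoping identity producing $\mathcal{E}(e^{n+1},f^{n+1})-\mathcal{E}(e^{n},f^{n})$, the same exact cancellation of the linear cross terms by skew-adjointness and commutation, and the same Cauchy--Schwarz/Young treatment of the nonlinear and consistency terms. One caveat there: for the quadratic terms $D(e^{n}f^{n})$ and $D((f^{n})^{2})$ you should use the product rule \eqref{product_rule2} as the paper does, not \eqref{IPP5}; the $\tfrac{1}{\Delta t}$ weight inside \eqref{IPP5} would leave an $O(1)$, rather than $O(\Delta t)$, multiple of $\mathcal{E}(e^{n},f^{n})$ and break the Gr\"onwall structure.

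The genuine gap is in the implicit case $\theta=1$, which the proposition also covers. You keep the multipliers $E^{+}(n)$, $F^{+}(n)$ and treat the extra contribution coming from $f^{n+1}=\tfrac12 F^{+}(n)+\tfrac12 F^{-}(n)$ (resp.\ $e^{n+1}$) perturbatively, claiming that since $F^{-}(n)$, $E^{-}(n)$ are $O(\Delta t)$ by the error equations, this contribution lands at orders already present in $C_{1},C_{2}$. This step fails on derivative counting. The new term is, up to constants, $\Delta t\left[\left\langle \Lambda f^{n+1},e^{n}\right\rangle-\left\langle \Lambda f^{n},e^{n+1}\right\rangle\right]$ with $\Lambda=(I+aD_{+}D_{-})(I+cD_{+}D_{-})D$, i.e.\ three discrete derivatives distributed over two error factors, while $\mathcal{E}$ controls only two per factor; estimating it directly costs a factor $1/\Delta x$ and yields $O(\Delta t/\Delta x)\,\mathcal{E}$. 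Substituting $F^{-}(n)=-\Delta t\,(I-dD_{+}D_{-})^{-1}\left[(I+cD_{+}D_{-})De^{n+1}+\cdots\right]$ gains a factor $\Delta t$ and two derivatives of smoothing, but the resulting expression pairs, in total, six discrete derivatives against the four the energy controls, giving $O(\Delta t^{2}/\Delta x^{2})\,\mathcal{E}$. Either way, closing \eqref{ineg_inducti_totala} would require a CFL-type restriction ($\Delta t\lesssim\Delta x$ or $\Delta t\lesssim\Delta x^{2}$), which is precisely what Theorem \ref{Teorema_Numerica1} excludes. The paper handles $\theta=1$ by a genuinely different argument: it switches to the equivalent energy $(-c)d\Vert(I-bD_{+}D_{-})e\Vert_{\ell^{2}_{\Delta}}^{2}+(-a)b\Vert(I-dD_{+}D_{-})f\Vert_{\ell^{2}_{\Delta}}^{2}$, takes the squared $\ell^{2}_{\Delta}$-norm of both error equations, and observes that with these particular weights the two worst cross terms, $2(-a)b(-c)d\Delta t\left\langle D_{+}D_{-}e^{n+1},D_{+}D_{-}Df^{n+1}\right\rangle$ and $2(-a)b(-c)d\Delta t\left\langle D_{+}D_{-}f^{n+1},D_{+}D_{-}De^{n+1}\right\rangle$, cancel exactly by skew-adjointness of $D$, while the $\Delta t^{2}$ terms appear squared with favorable sign; every remaining cross term involves at most two derivatives per factor and is absorbed as $-C\Delta t\,\mathcal{E}(e^{n+1},f^{n+1})$. (An alternative fix closer to your approach would be to change multipliers in the implicit case to $(I+cD_{+}D_{-})e^{n+1}$ and $(I+aD_{+}D_{-})f^{n+1}$: the linear cross terms then cancel exactly and the time-difference terms dominate $\tfrac12\left[\mathcal{E}(e^{n+1},f^{n+1})-\mathcal{E}(e^{n},f^{n})\right]$ by positivity of the quadratic form; but as written, your perturbation argument does not close.)
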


\begin{proof}
As we announced in the introduction, we will establish energy estimates
imitating the approach from the continuous case. \\
\textbf{For the Crank-Nicolson case ($\theta=\frac{1}{2}$).}\ \ \ \ 
Using the notations introduced in $\left(  \text{\ref{convergence_error1}%
}\right)  $\ and $\left(  \text{\ref{eroare}}\right)  $, we see that the
equations governing the convergence error $\left(  e^{n},f^{n}\right)  $ are
the following:
\begin{equation}
\left\{
\begin{array}
[c]{r}%
\left(  I-bD_{+}D_{-}\right)  (E^{-}\left(  n\right)  )+\frac{\Delta t}%
{2}\left(  I+aD_{+}D_{-}\right)  D\left(  F^{+}\left(  n\right)  \right)
+\Delta tD\left(  e^{n}u_{\Delta}^{n}\right)  \hspace*{2cm}\\
+\Delta tD\left(  \eta_{\Delta}^{n}f^{n}\right)  +\Delta tD\left(  e^{n}%
f^{n}\right)  =-\Delta t\epsilon_{1}^{n},\hspace*{-0cm}\\
\\
\left(  I-dD_{+}D_{-}\right)  (F^{-}\left(  n\right)  )+\frac{\Delta t}%
{2}\left(  I+cD_{+}D_{-}\right)  D\left(  E^{+}\left(  n\right)  \right)
+\Delta tD\left(  f^{n}u_{\Delta}^{n}\right)  \hspace*{2cm}\\
+\frac{\Delta t}{2}D\left(  \left(  f^{n}\right)  ^{2}\right)  =-\Delta
t\epsilon_{2}^{n}.\hspace*{-0cm}
\end{array}
\right.  \label{abcd_discret3}%
\end{equation}
Let $n\in\overline{0,N-1}$
and observe that by multiplying the first equation of \eqref{abcd_discret3} by $\left(  I+cD_{+}%
D_{-}\right)  E^{+}\left(  n\right)  $, the second by $\left(  I+aD_{+}%
D_{-}\right)  F^{+}\left(  n\right)  $ and adding up the results, we find
that
\begin{align}
& \left\langle(I-bD_+D_-)(E^-(n)), (I+cD_+D_-)E^+(n)\right\rangle\nonumber\\
&+\frac{\Delta t}{2}\left\langle(I+aD_+D_-)D(F^+(n)),  (I+cD_+D_-)E^+(n)\right\rangle\nonumber\\
&+\left\langle(I-dD_+D_-)(F^-(n)), (I+aD_+D_-)F^+(n)\right\rangle\nonumber\\
&+\frac{\Delta t}{2}\left\langle(I+cD_+D_-)D(E^+(n)),  (I+aD_+D_-)F^+(n)\right\rangle \nonumber\\
  &=-\Delta t\left\langle \epsilon_{1}^{n},\left(  I+cD_{+}D_{-}\right)
E^{+}\left(  n\right)  \right\rangle -\Delta t\left\langle \epsilon_{2}%
^{n},\left(  I+aD_{+}D_{-}\right)  F^{+}\left(  n\right)  \right\rangle
\nonumber\\
&  -\Delta t\left\langle D\left(  e^{n}u_{\Delta}^{n}\right)  +D\left(
\eta_{\Delta}^{n}f^{n}\right)  ,\left(  I+cD_{+}D_{-}\right)  E^{+}\left(
n\right)  \right\rangle \nonumber\\
&  -\Delta t\left\langle D\left(  e^{n}f^{n}\right)  ,\left(  I+cD_{+}%
D_{-}\right)  E^{+}\left(  n\right)  \right\rangle \nonumber\\
&  -\Delta t\left\langle D\left(  f^{n}u_{\Delta}^{n}\right)  ,\left(
I+aD_{+}D_{-}\right)  F^{+}\left(  n\right)  \right\rangle \label{b,d>0}\\
&  -\frac{\Delta t}{2}\left\langle D\left(  \left(  f^{n}\right)  ^{2}\right)
,\left(  I+aD_{+}D_{-}\right)  F^{+}\left(  n\right)  \right\rangle
\overset{not.}{=}\sum_{i=1}^{5}T_{i}.\text{ }\nonumber
\end{align}

We begin by treating the left hand side of \eqref{b,d>0}. Notice that
\begin{equation*}
\left\langle\mathcal{L}\left(E^-(n)\right),\mathcal{L}\left(E^+(n)\right)\right\rangle=||\mathcal{L}e^{n+1}||_{\ell^2_{\Delta}}^2-||\mathcal{L}e^{n}||_{\ell^2_{\Delta}}^2.
\end{equation*}
with $\mathcal{L}$ any linear operator. With this in mind together with Relations \eqref{integr_by_part_1} and \eqref{integr_by_part_2}, it gives, 
\begin{align}
& \left\langle(I-bD_+D_-)(E^-(n)), (I+cD_+D_-)E^+(n)\right\rangle\nonumber\\
&+\frac{\Delta t}{2}\left\langle(I+aD_+D_-)D(F^+(n)),  (I+cD_+D_-)E^+(n)\right\rangle\nonumber\\
&+\left\langle(I-dD_+D_-)(F^-(n)), (I+aD_+D_-)F^+(n)\right\rangle\nonumber\\
&+\frac{\Delta t}{2}\left\langle(I+cD_+D_-)D(E^+(n)),  (I+aD_+D_-)F^+(n)\right\rangle\nonumber\\
&=\mathcal{E}(e^{n+1}, f^{n+1})-\mathcal{E}(e^n,f^n).\label{LHS_abcd}
\end{align}

\noindent $\bullet$ Let us now focus on $T_{1}$. We recall that
\[
\left\Vert \epsilon^{n}\right\Vert _{\ell_{\Delta}^{2}}=\max\left\{
\left\Vert \epsilon_{1}^{n}\right\Vert _{\ell_{\Delta}^{2}},\left\Vert
\epsilon_{2}^{n}\right\Vert _{\ell_{\Delta}^{2}}\right\}  ,
\]
and we write that, thanks to Cauchy-Schwarz inequality (we recall that $c\leq 0$ and $a\leq 0$)
\begin{align*}
 & -\Delta t\left\langle \epsilon_{1}^{n},\left(  I+cD_{+}D_{-}\right)
E^{+}\left(  n\right)  \right\rangle -\Delta t\left\langle \epsilon_{2}%
^{n},\left(  I+aD_{+}D_{-}\right)  F^{+}\left(  n\right)  \right\rangle
  \\&\leq\Delta t\left\Vert \epsilon_{1}^{n}\right\Vert _{\ell_{\Delta}^{2}%
}\left(  \left\Vert E^{+}\left(  n\right)  \right\Vert _{\ell_{\Delta}^{2}%
}-c\left\Vert D_{+}D_{-}E^{+}\left(  n\right)  \right\Vert _{\ell_{\Delta}%
^{2}}\right)   \\
&+\Delta t\left\Vert \epsilon_{2}^{n}\right\Vert _{\ell_{\Delta}^{2}}\left(
\left\Vert F^{+}\left(  n\right)  \right\Vert _{\ell_{\Delta}^{2}}-a\left\Vert
D_{+}D_{-}F^{+}\left(  n\right)  \right\Vert _{\ell_{\Delta}^{2}}\right).
\end{align*}
By applied Young inequality, we recover the $\ell^2_{\Delta}$-norm of $e^n$ and $f^n$.
\begin{align}
 &-\Delta t\left\langle \epsilon_{1}^{n},\left(  I+cD_{+}D_{-}\right)
E^{+}\left(  n\right)  \right\rangle -\Delta t\left\langle \epsilon_{2}%
^{n},\left(  I+aD_{+}D_{-}\right)  F^{+}\left(  n\right)  \right\rangle\nonumber\\
&  \leq2\Delta t\left\Vert \epsilon\right\Vert _{\ell^{2}_{\Delta}}^{2}+\Delta t\left(
\left\Vert e^{n}\right\Vert _{\ell^{2}_{\Delta}}^{2}+\left\Vert f^{n}\right\Vert
_{\ell^{2}_{\Delta}}^{2}+c^{2}||D_{+}D_{-}e^{n}||_{\ell_{\Delta}^{2}}^{2}+a^{2}%
||D_{+}D_{-}f^{n}||_{\ell_{\Delta}^{2}}^{2}\right)  \nonumber\\
& +\Delta t\left(  \left\Vert e^{n+1}\right\Vert _{\ell^{2}_{\Delta}}^{2}+\left\Vert
f^{n+1}\right\Vert _{\ell^{2}_{\Delta}}^{2}+c^{2}||D_{+}D_{-}e^{n+1}||_{\ell_{\Delta
}^{2}}^{2}+a^{2}||D_{+}D_{-}f^{n+1}||_{\ell_{\Delta}^{2}}^{2}\right)
\nonumber\\
&  \leq2\Delta t\left\Vert \epsilon^{n}\right\Vert _{\ell^{2}_{\Delta}}^{2}+\Delta
t\max\left\{  1,\frac{-c}{b},\frac{-a}{d}\right\}  \mathcal{E}\left(
e^{n},f^{n}\right)  +\Delta t\max\left\{  1,\frac{-c}{b},\frac{-a}%
{d}\right\}  \mathcal{E}\left(  e^{n+1},f^{n+1}\right)  .\label{bdpoz12}%
\end{align}
$\bullet$ Let us treat $T_{2}$. Using $\left(  \text{\ref{product_rule2}}\right)  $, we
first write%
\begin{small}
\begin{align*}
  \Delta t\left\Vert D\left(  e^{n}u_{\Delta}^{n}\right)  \right\Vert
_{\ell^{2}_{\Delta}} &=\Delta t\left\Vert S_{-}e^{n}Du_{\Delta}^{n}+S_{+}u_{\Delta}^{n}%
De^{n}\right\Vert _{\ell^{2}_{\Delta}}\\
&  \leq\Delta t\left\Vert Du_{\Delta}^{n}\right\Vert _{\ell^{\infty}%
}\left\Vert S_{-}e^{n}\right\Vert _{\ell^{2}_{\Delta}}+\Delta t\left\Vert
S_{+}u_{\Delta}^{n}\right\Vert _{\ell^{\infty}}\left\Vert De^{n}\right\Vert
_{\ell^{2}_{\Delta}}\\
&  \leq\Delta t\max\left\{  1,\frac{1}{\sqrt{b-c}}\right\}  \left(  \left\Vert
Du_{\Delta}^{n}\right\Vert _{\ell^{\infty}}\left\Vert e^{n}\right\Vert
_{\ell^{2}_{\Delta}}+\left\Vert u_{\Delta}^{n}\right\Vert _{\ell^{\infty}}\sqrt
{b-c}\left\Vert De^{n}\right\Vert _{\ell^{2}_{\Delta}}\right)  \\
&  \leq\Delta t\max\left\{  \left\Vert u_{\Delta}^{n}\right\Vert
_{\ell^{\infty}},\left\Vert Du_{\Delta}^{n}\right\Vert _{\ell^{\infty}%
}\right\}  \max\left\{  1,\frac{1}{\sqrt{b-c}}\right\}  \left(  \left\Vert
e^{n}\right\Vert _{\ell^{2}_{\Delta}}+\sqrt{b-c}\left\Vert De^{n}\right\Vert _{\ell
^{2}_{\Delta}}\right)  .
\end{align*}
\end{small}
Proceeding in a similar fashion with the other terms we arrive, thanks to the Cauchy-Schwarz inequality, at (we recall that $c\leq0$)%
\begin{multline*} -\Delta t\left\langle D\left(  e^{n}u_{\Delta}^{n}\right)  +D\left(
\eta_{\Delta}^{n}f^{n}\right)  ,\left(  I+cD_{+}D_{-}\right)  E^{+}\left(
n\right)  \right\rangle  \\\leq\Delta t\left(  \left\Vert D\left(  e^{n}u_{\Delta}^{n}\right)
\right\Vert _{\ell^{2}_{\Delta}}+\left\Vert D\left(  \eta_{\Delta}^{n}f^{n}\right)
\right\Vert _{\ell^{2}_{\Delta}}\right)  \left(  \left\Vert E^{+}\left(  n\right)
\right\Vert _{\ell^{2}_{\Delta}}-c\left\Vert D_{+}D_{-}E^{+}\left(  n\right)
\right\Vert _{\ell^{2}_{\Delta}}\right).
\end{multline*}
By Definition \eqref{eroare} of $E^+(n)$, one has
\begin{align*}
& \left\Vert E^{+}\left(  n\right)
\right\Vert _{\ell^{2}_{\Delta}}-c\left\Vert D_{+}D_{-}E^{+}\left(  n\right)
\right\Vert _{\ell^{2}_{\Delta}}\\
&\leq ||e^{n+1}||_{\ell^2_{\Delta}}+||e^n||_{\ell^2_{\Delta}}-c||D_+D_-(e^{n+1})||_{\ell^2_{\Delta}}-c||D_+D_-(e^{n})||_{\ell^2_{\Delta}}\\
&\leq \max\left\{1, \sqrt{-c/b}\right\}2\left[\sqrt{\mathcal{E}(e^{n+1}, f^{n+1})}+\sqrt{\mathcal{E}(e^{n}, f^{n})}\right].
\end{align*}
These together give
\begin{small}
\begin{align}
-\Delta t\left\langle D\left(  e^{n}u_{\Delta}^{n}\right)  +D\left(
\eta_{\Delta}^{n}f^{n}\right)  ,\left(  I+cD_{+}D_{-}\right)  E^{+}\left(
n\right)  \right\rangle
&  \leq\Delta tC_{1,1}\mathcal{E}\left(  e^{n},f^{n}\right)  +\Delta
tC_{2,1}\mathcal{E}\left(  e^{n+1},f^{n+1}\right)  \label{bdpoz2}%
\end{align}
\end{small}
where $C_{1,1}$ and $C_{2,1}$ can be written, for example, as a numerical
constants multiplied with:
\begin{align}
&\max\left\{  1,\sqrt{-\frac{c}{b}}\right\}  \left[ \max\left\{  \left\Vert
u_{\Delta}^{n}\right\Vert _{\ell^{\infty}},\left\Vert Du_{\Delta}%
^{n}\right\Vert _{\ell^{\infty}}\right\}  \max\left\{  1,\frac{1}{\sqrt{b-c}}\right\}\right.\nonumber\\
&\left.+\max\left\{  \left\Vert \eta_{\Delta}^{n}\right\Vert _{\ell^{\infty}%
},\left\Vert D\eta_{\Delta}^{n}\right\Vert _{\ell^{\infty}}\right\}
\max\left\{  1,\frac{1}{\sqrt{d-a}}\right\}  \right]. \label{proportional1}
\end{align}
$\bullet$ We treat $T_{4}$ in same spirit as above in order to obtain that
\begin{equation}
-\Delta t\left\langle D\left(  f^{n}u_{\Delta}^{n}\right)  ,\left(
I+aD_{+}D_{-}\right)  F^{+}\left(  n\right)  \right\rangle \leq\Delta
tC_{1,2}\mathcal{E}\left(  e^{n},f^{n}\right)  +\Delta tC_{2,2}\mathcal{E}%
\left(  e^{n+1},f^{n+1}\right),  \label{bdpoz3}%
\end{equation}
where $C_{1,2}$ and $C_{2,2}$\ are multiples of :
\begin{equation}
\max\left\{  \left\Vert u_{\Delta}^{n}\right\Vert _{\ell^{\infty}},\left\Vert
Du_{\Delta}^{n}\right\Vert _{\ell^{\infty}}\right\}  \max\left\{
1,1/\sqrt{d-a}\right\}  \max\left\{  1,\sqrt{-a/d}\right\}
.\label{proportional2}%
\end{equation}
$\bullet$ In order to treat $T_{3}$, we first observe that%
\begin{align*}
&\left\Vert D\left(  e^{n}f^{n}\right)  \right\Vert _{\ell^{2}_{\Delta}} \\
&  =\left\Vert
S_{-}e^{n}Df^{n}+S_{+}f^{n}De^{n}\right\Vert _{\ell^{2}_{\Delta}}\\
&\leq\left\Vert
e^{n}\right\Vert _{\ell^{\infty}}\left\Vert Df^{n}\right\Vert _{\ell^{2}_{\Delta}%
}+\left\Vert f^{n}\right\Vert _{\ell^{\infty}}\left\Vert De^{n}\right\Vert
_{\ell^{2}_{\Delta}}\\
&  \leq\max\left\{  1/\sqrt{b-c},1/\sqrt{d-a}\right\}  \left(||e^n||_{\ell^{\infty}}  \sqrt
{d-a}\left\Vert Df^{n}\right\Vert _{\ell^{2}_{\Delta}}+||f^n||_{\ell^{\infty}}\sqrt{b-c}\left\Vert
De^{n}\right\Vert _{\ell^{2}_{\Delta}}\right)  .
\end{align*}
Thus, we obtain%
\begin{align}
-\Delta t\left\langle D\left(  e^{n}f^{n}\right)  ,\left(
I+cD_{+}D_{-}\right)  E^{+}\left(  n\right)  \right\rangle  \leq\Delta tC_{1,3}\mathcal{E}\left(  e^{n},f^{n}\right)  +\Delta
tC_{2,3}\mathcal{E}\left(  e^{n+1},f^{n+1}\right),  \label{bdpoz4}%
\end{align}
where $C_{1,3}$ respectively $C_{2,3}$ are proportional with
\begin{equation}
\max\left\{  1/\sqrt{b-c},1/\sqrt{d-a}\right\}  \max\left\{  1,\sqrt{-c/b}\right\}\max\left\{||e^n||_{\ell^{\infty}}, ||f^n||_{\ell^{\infty}}\right\}
\text{.}\label{proportional3}%
\end{equation}
$\bullet$ The same holds for $T_{5}$, namely, we get that%
\begin{equation}
-\frac{\Delta t}{2}\left\langle D\left(  \left(  f^{n}\right)  ^{2}\right)
,\left(  I+aD_{+}D_{-}\right)  F^{+}\left(  n\right)  \right\rangle \leq\Delta
tC_{1,4}\mathcal{E}\left(  e^{n},f^{n}\right)  +\Delta tC_{2,4}\mathcal{E}%
\left(  e^{n+1},f^{n+1}\right),  \label{bdpoz5}%
\end{equation}
where $C_{1,4}$ respectively $C_{2,4}$ are proportional with
\begin{equation}
\frac{1}{\sqrt{d-a}}\max\left\{  1,\sqrt{-a/d}\right\}||f^n||_{\ell^{\infty}}  \text{.}%
\label{proportional4}%
\end{equation}
Gathering the informations from $\left(  \text{\ref{b,d>0}}\right)  $, \eqref{LHS_abcd},
$\left(  \text{\ref{bdpoz12}}\right)  $, $\left(  \text{\ref{bdpoz2}}\right)
$, $\left(  \text{\ref{bdpoz3}}\right)  $, $\left(  \text{\ref{bdpoz4}%
}\right)  $ and $\left(  \text{\ref{bdpoz5}}\right)  $ we obtain the existence
of two constants $C_{1}$ and $C_{2}$ that depend on $a,b,c,d,$ and the
$\ell^{\infty}$-norm of $\left(  \eta_{\Delta}^{n},u_{\Delta}^{n}\right)  $
and $\left(  D\eta_{\Delta}^{n},Du_{\Delta}^{n}\right)  $ (dependence which we
can track using relations $\left(  \text{\ref{proportional1}}\right)  $,
$\left(  \text{\ref{proportional2}}\right)  $, $\left(
\text{\ref{proportional3}}\right)  $ respectively $\left(
\text{\ref{proportional4}}\right)  $) and that are proportional to $\max\{||e^n||_{\ell^{\infty}}, ||f^n||_{\ell^{\infty}}\}$ such that
\[
\mathcal{E}\left(  e^{n+1},f^{n+1}\right)  -\mathcal{E}\left(  e^{n}%
,f^{n}\right)  \leq2\Delta t\left\Vert \epsilon^{n}\right\Vert _{\ell^{2}_{\Delta}}%
^{2}+\Delta tC_{1}\mathcal{E}\left(  e^{n},f^{n}\right)  +\Delta
tC_{2}\mathcal{E}\left(  e^{n+1},f^{n+1}\right).
\]

\textbf{For the implicit case ($\theta=1$).}\ \ \ \ 
The equations governing the convergence error $(e^n, f^n)$ are the following
\begin{equation}\label{SYS_1_10_NOv}
\left\{
\begin{split}
&(I-bD_+D_-)e^{n+1}+\Delta t(I+aD_+D_-)Df^{n+1}\\
&=(I-bD_+D_-)e^{n}-\Delta tD(e^nu_{\Delta}^n)-\Delta tD(\eta_{\Delta}^nf^n)-\Delta tD(e^nf^n)-\Delta t\epsilon_1^n,\\
&\\
&(I-dD_+D_-)f^{n+1}+\Delta t(I+cD_+D_-)De^{n+1}\\
&=(I-dD_+D_-)f^{n}-\Delta tD(f^nu_{\Delta}^n)-\frac{\Delta t}{2}D((f^n)^2)-\Delta t\epsilon_2^n.
\end{split}
\right.
\end{equation}

For that case, we fix the discret energy 
\begin{equation}\label{discret_energy_10_NOv}
\begin{split}
\mathcal{E}(e,f)&=(-c)d||(I-bD_+D_-)e||_{\ell^2_{\Delta}}^2+(-a)b||(I-dD_+D_-)f||_{\ell^2_{\Delta}}^2\\
&=(-c)d||e||_{\ell^2_{\Delta}}^2+2b(-c)d||D_+e||_{\ell^2_{\Delta}}^2+b^2(-c)d||D_+D_-e||_{\ell^2_{\Delta}}^2\\
&+(-a)b||f||_{\ell^2_{\Delta}}^2+2(-a)bd||D_+f||_{\ell^2_{\Delta}}^2+(-a)bd^2||D_+D_-f||_{\ell^2_{\Delta}}^2.
\end{split}
\end{equation}
This energy is of course equivalent to the one from \eqref{E=mc^2}.\\

Let us multiply the first equation of \eqref{SYS_1_10_NOv} by $\sqrt{-cd}$ and the second one of \eqref{SYS_1_10_NOv} by $\sqrt{-ab}$. The sum of $\ell^2_{\Delta}$-norm gives in that case
\begin{equation}\label{Left_right_hand_side_10_NOv}
\begin{split}
&-cd||(I-bD_+D_-)e^{n+1}+\Delta t(I+aD_+D_-)Df^{n+1}||^2_{\ell^2_{\Delta}}\\
&-ab||(I-dD_+D_-)f^{n+1}+\Delta t(I+cD_+D_-)De^{n+1}||_{\ell^2_{\Delta}}^2\\
&=-cd||(I-bD_+D_-)e^{n}-\Delta tD(e^nu_{\Delta}^n)-\Delta tD(\eta_{\Delta}^nf^n)-\Delta tD(e^nf^n)-\Delta t\epsilon_1^n||^2_{\ell^2_{\Delta}}\\
&-ab||(I-dD_+D_-)f^{n}-\Delta tD(f^nu_{\Delta}^n)-\frac{\Delta t}{2}D((f^n)^2)-\Delta t\epsilon_2^n||^2_{\ell^2_{\Delta}}.
\end{split}
\end{equation}
The left hand side of the previous equality gives
\begin{equation*}
\begin{split}
&-cd||(I-bD_+D_-)e^{n+1}+\Delta t(I+aD_+D_-)Df^{n+1}||^2_{\ell^2_{\Delta}}\\
&-ab||(I-dD_+D_-)f^{n+1}+\Delta t(I+cD_+D_-)De^{n+1}||_{\ell^2_{\Delta}}^2\\
&=-cd||(I-bD_+D_-)e^{n+1}||_{\ell^2_{\Delta}}^2-2cd\Delta t\left\langle (I-bD_+D_-)e^{n+1}, (I+aD_+D_-)Df^{n+1}\right\rangle\\
&-cd\Delta t^2||(I+aD_+D_-)Df^{n+1}||^2_{\ell^2_{\Delta}}-ab||(I-dD_+D_-)f^{n+1}||_{\ell^2_{\Delta}}^2\\
&-2ab\Delta t\left\langle (I-dD_+D_-)f^{n+1}, (I+cD_+D_-)De^{n+1}\right\rangle-ab\Delta t^2||(I+cD_+D_-)De^{n+1}||^2_{\ell^2_{\Delta}}.
\end{split}
\end{equation*}
For both cross products, it gives
\begin{equation*}
\begin{split}
&-2cd\Delta t\left\langle (I-bD_+D_-)e^{n+1}, (I+aD_+D_-)Df^{n+1}\right\rangle\\
&-2ab\Delta t\left\langle (I-dD_+D_-)f^{n+1}, (I+cD_+D_-)De^{n+1}\right\rangle\\
&= 2(-c)d\Delta t\left\langle e^{n+1}, Df^{n+1}\right\rangle+2(-a)(-c)d\Delta t\left\langle De^{n+1}, D_+D_-f^{n+1}\right\rangle\\
&+2b(-c)d\Delta t\left\langle De^{n+1}, D_+D_-f^{n+1}\right\rangle+2(-a)b(-c)d\Delta t\left\langle D_+D_-e^{n+1}, D_+D_-Df^{n+1}\right\rangle\\
&+2(-a)b\Delta t\left\langle f^{n+1}, De^{n+1}\right\rangle+2(-a)b(-c)\Delta t\left\langle Df^{n+1}, D_+D_-e^{n+1}\right\rangle\\
&+2(-a)bd\Delta t\left\langle Df^{n+1}, D_+D_-e^{n+1}\right\rangle+2(-a)b(-c)d\Delta t\left\langle D_+D_-f^{n+1}, D_+D_-De^{n+1}\right\rangle.
\end{split}
\end{equation*}
Thanks to integration by parts, Young's inequality together with Cauchy-Schwarz inequality, the previous equality simplifies into 
\begin{equation*}
\begin{split}
&-2cd\Delta t\left\langle (I-bD_+D_-)e^{n+1}, (I+aD_+D_-)Df^{n+1}\right\rangle\\
&-2ab\Delta t\left\langle (I-dD_+D_-)f^{n+1}, (I+cD_+D_-)De^{n+1}\right\rangle\\
&\geq - (-c)d\Delta t|| e^{n+1}||_{\ell^2_{\Delta}}^2 -(-a)b\Delta t|| f^{n+1}||_{\ell^2_{\Delta}}^2\\&-\left[(-a)(-c)d+b(-c)d+(-a)b\right]\Delta t|| De^{n+1}||_{\ell^2_{\Delta}}^2\\
&- \left[(-c)d+(-a)b(-c)+(-a)bd\right]\Delta t||Df^{n+1}||_{\ell^2_{\Delta}}^2\\
&-\left[(-a)b(-c)+(-a)bd\right]\Delta t|| D_+D_-e^{n+1}||_{\ell_{\Delta}^2}^2\\
&-\left[(-a)(-c)d +b(-c)d\right]\Delta t||D_+D_-f^{n+1}||_{\ell^2_{\Delta}}^2.
\end{split}
\end{equation*}

The left hand side of \eqref{Left_right_hand_side_10_NOv} becomes
\begin{equation*}
\begin{split}
&-cd||(I-bD_+D_-)e^{n+1}+\Delta t(I+aD_+D_-)Df^{n+1}||^2_{\ell^2_{\Delta}}\\
&-ab||(I-dD_+D_-)f^{n+1}+\Delta t(I+cD_+D_-)De^{n+1}||_{\ell^2_{\Delta}}^2\\
&\geq (-c)d||(I-bD_+D_-)e^{n+1}||_{\ell^2_{\Delta}}^2+(-a)b||(I-dD_+D_-)f^{n+1}||^2_{\ell^2_{\Delta}}\\
&- (-c)d\Delta t|| e^{n+1}||_{\ell^2_{\Delta}}^2 -(-a)b\Delta t|| f^{n+1}||_{\ell^2_{\Delta}}^2-\left[(-a)(-c)d+b(-c)d+(-a)b\right]\Delta t|| De^{n+1}||_{\ell^2_{\Delta}}^2\\
&- \left[(-c)d+(-a)b(-c)+(-a)bd\right]\Delta t||Df^{n+1}||_{\ell^2_{\Delta}}^2\\
&-\left[(-a)b(-c)+(-a)bd\right]\Delta t|| D_+D_-e^{n+1}||_{\ell_{\Delta}^2}^2
-\left[(-a)(-c)d +b(-c)d\right]\Delta t||D_+D_-f^{n+1}||_{\ell^2_{\Delta}}^2\\
&+(-c)d\Delta t^2||(I+aD_+D_-)Df^{n+1}||^2_{\ell^2_{\Delta}}+(-a)b\Delta t^2||(I+cD_+D_-)De^{n+1}||^2_{\ell^2_{\Delta}}.
\end{split}
\end{equation*}
Due to the definition of the energy \eqref{discret_energy_10_NOv}, one has
\begin{equation}\label{eq_fin_LHS_10_NOv}
\begin{split}
&-cd||(I-bD_+D_-)e^{n+1}+\Delta t(I+aD_+D_-)Df^{n+1}||^2_{\ell^2_{\Delta}}\\
&-ab||(I-dD_+D_-)f^{n+1}+\Delta t(I+cD_+D_-)De^{n+1}||_{\ell^2_{\Delta}}^2\\
&\geq \mathcal{E}(e^{n+1}, f^{n+1})- C_1\Delta t\mathcal{E}(e^{n+1}, f^{n+1}),
\end{split}
\end{equation}
with $$C_1=\max\left\{1, \frac{-a}{2b}+\frac{1}{2}+\frac{-a}{2(-c)d}, \frac{-c}{2(-a)b}+\frac{-c}{2d}+\frac{1}{2}, \frac{-a}{bd}+\frac{-a}{b(-c)}, \frac{-c}{bd}+\frac{-c}{(-a)d}\right\}.$$

Let us now focus on the right hand side of \eqref{Left_right_hand_side_10_NOv}. The triangular inequality together with Young's inequality give the existence of a constant $C_0$ independent of $\Delta t$, such that
\begin{equation*}
\begin{split}
&-cd||(I-bD_+D_-)e^{n}-\Delta tD(e^nu_{\Delta}^n)-\Delta tD(\eta_{\Delta}^nf^n)-\Delta tD(e^nf^n)-\Delta t\epsilon_1^n||^2_{\ell^2_{\Delta}}\\
&-ab||(I-dD_+D_-)f^{n}-\Delta tD(f^nu_{\Delta}^n)-\frac{\Delta t}{2}D((f^n)^2)-\Delta t\epsilon_2^n||^2_{\ell^2_{\Delta}}\\
&\leq (-c)d||(I-bD_+D_-)e^n||^2_{\ell^2_{\Delta}}(1+C_0\Delta t)\\
&+(-c)dC_0(\Delta t+\Delta t^2)||D(e^nu_{\Delta}^n)||_{\ell^2_{\Delta}}^2+(-c)dC_0(\Delta t+\Delta t^2)||D(\eta_{\Delta}^nf^n)||_{\ell^2_{\Delta}}^2\\
&+(-c)dC_0(\Delta t+\Delta t^2)||D(e^nf^n)||_{\ell^2_{\Delta}}^2+(-c)dC_0(\Delta t+\Delta t^2)||\epsilon_1^n||_{\ell^2_{\Delta}}^2\\
&+(-a)b||(I-dD_+D_-)f^n||^2_{\ell^2_{\Delta}}(1+C_0\Delta t)+(-a)b\,C_0(\Delta t+\Delta t^2)||D(f^nu_{\Delta}^n)||_{\ell^2_{\Delta}}^2\\
&+(-a)b\,C_0(\Delta t+\Delta t^2)||D((f^n)^2)||_{\ell^2_{\Delta}}^2+(-a)b\,C_0(\Delta t+\Delta t^2)||\epsilon_2^n||_{\ell^2_{\Delta}}^2.
\end{split}
\end{equation*}
Since 
$$||D(e^nf^n)||_{\ell^2_{\Delta}}^2\leq 2||e^n||_{\ell^{\infty}}^2||Df^n||_{\ell^2_{\Delta}}^2+2||f^n||_{\ell^{\infty}}^2||De^n||_{\ell^2_{\Delta}}^2,$$
it holds
\begin{equation}\label{eq_1_10_NOv}
||D(e^nf^n)||_{\ell^2_{\Delta}}^2\leq \max\left\{\frac{1}{(-a)bd}, \frac{1}{b(-c)d}\right\}\max\left\{||e^n||_{\ell^{\infty}}^2, ||f^n||_{\ell^{\infty}}^2\right\}\mathcal{E}(e^n,f^n).
\end{equation}

\noindent The same holds for other terms to obtain
\begin{equation}\label{eq_2_10_NOv}
||D(e^nu_{\Delta}^n)||_{\ell^2_{\Delta}}^2\leq \max\left\{\frac{2}{(-c)d},\frac{1}{b(-c)d}\right\}\max\left\{||u_{\Delta}^n||_{\ell^{\infty}}^2, ||Du_{\Delta}^n||_{\ell^{\infty}}^2\right\}\mathcal{E}(e^n,f^n),
\end{equation}
and
\begin{equation}\label{eq_3_10_NOv}
||D(\eta_{\Delta}^nf^n)||_{\ell^2_{\Delta}}^2\leq \max\left\{\frac{2}{(-a)b},\frac{1}{(-a)bd}\right\}\max\left\{||\eta_{\Delta}^n||_{\ell^{\infty}}^2, ||D\eta_{\Delta}^n||_{\ell^{\infty}}^2\right\}\mathcal{E}(e^n,f^n),
\end{equation}
and
\begin{equation}\label{eq_4_10_NOv}
||D(u_{\Delta}^nf^n)||_{\ell^2_{\Delta}}^2\leq \max\left\{\frac{2}{(-a)b},\frac{1}{(-a)bd}\right\}\max\left\{||u_{\Delta}^n||_{\ell^{\infty}}^2, ||Du_{\Delta}^n||_{\ell^{\infty}}^2\right\}\mathcal{E}(e^n,f^n),
\end{equation}
and finally 
\begin{equation}\label{eq_5_10_NOv}
||D((f^n)^2)||_{\ell^2_{\Delta}}^2\leq  ||f^n||_{\ell^{\infty}}^2 \frac{2}{(-a)bd}\mathcal{E}(e^n,f^n).
\end{equation}

Thus, there exists a constant $C_2$  (which can be tracked by \eqref{eq_1_10_NOv}-\eqref{eq_5_10_NOv}), such that 
\begin{equation}\label{eq_fin_RHS_10_NOv}
\begin{split}
&-cd||(I-bD_+D_-)e^{n}-\Delta tD(e^nu_{\Delta}^n)-\Delta tD(\eta_{\Delta}^nf^n)-\Delta tD(e^nf^n)-\Delta t\epsilon_1^n||^2_{\ell^2_{\Delta}}\\
&-ab||(I-dD_+D_-)f^{n}-\Delta tD(f^nu_{\Delta}^n)-\frac{\Delta t}{2}D((f^n)^2)-\Delta t\epsilon_2^n||^2_{\ell^2_{\Delta}}\\
&\leq \mathcal{E}(e^n,f^n)(1+C_0\Delta t)+C_2(\Delta t+\Delta t^2)\mathcal{E}(e^n,f^n)+(-c)dC_0(\Delta t+\Delta t^2)||\epsilon_1^n||_{\ell^2_{\Delta}}^2\\
&+(-a)b\,C_0(\Delta t+\Delta t^2)||\epsilon_2^n||_{\ell^2_{\Delta}}^2.
\end{split}
\end{equation}

Gathering the informations from \eqref{eq_fin_LHS_10_NOv} and \eqref{eq_fin_RHS_10_NOv}, there exists constants $C_3$ and $C_4$ such that 
$$(1-C_1\Delta t)\mathcal{E}(e^{n+1}, f^{n+1})\leq (1+C_3\Delta t)\mathcal{E}(e^{n}, f^{n})+C_4\Delta t||\epsilon^n||_{\ell^2_{\Delta}}^2,$$
with $||\epsilon^n||_{\ell^2_{\Delta}}^2=\max\left\{||\epsilon_1^n||_{\ell^2_{\Delta}}^2, ||\epsilon_2^n||_{\ell^2_{\Delta}}^2\right\}.$
Proposition \ref{_prop_1_avcd} is a straighforward consequence.

\end{proof}

\begin{proof}({\bf Proof of Theorem \ref{Teorema_Numerica1}}.)
Let us arbitrary fix $n\in\overline{0,N-1}$. Suppose the strong induction hypothesis%
\begin{equation}
\left\Vert e^{k}\right\Vert _{\ell^{\infty}}\leq1\text{ and }\left\Vert
f^{k}\right\Vert _{\ell^{\infty}}\leq1,\label{inductie}%
\end{equation}
for all $k\in\overline{0,n}$.\\
This is obviously true for $n=0$, since $e^0_j=f^0_j$, for all $j\in\mathbb{Z}$. Let us prove that $||e^{n+1}||_{\ell^{\infty}}\leq 1$ and $||f^{n+1}||_{\ell^{\infty}}\leq 1$.
Inequality \eqref{ineg_inducti_totala} is thus available for any $k\in\overline{0,n}$ and constants $C_{1}$ and $C_{2}$ may be upper bounded by $C_3$ and $C_4$ independent of $\left\Vert e^{k}\right\Vert _{\ell^{\infty}}$ and $\left\Vert f^{k}\right\Vert _{\ell^{\infty}} .$
One has, for all $k\in\overline{0,n}$
\begin{equation*}
\mathcal{E}\left(  e^{k+1},f^{k+1}\right)  \leq\frac{2\Delta t}{1-\Delta
tC_{4}}\left\Vert \epsilon^{k}\right\Vert _{\ell^{2}_{\Delta}}^{2}+\left(
1+\frac{\Delta t\left(  C_{3}+C_{4}\right)  }{1-\Delta tC_{4}}\right)
\mathcal{E}\left(  e^{k},f^{k}\right)  .
\end{equation*}
Namely, it becomes
\begin{equation*}
\mathcal{E}\left(  e^{k+1},f^{k+1}\right) -\mathcal{E}\left(  e^{k},f^{k}\right) \leq\frac{2\Delta t}{1-\Delta
tC_{4}}\left\Vert \epsilon^{k}\right\Vert _{\ell^{2}_{\Delta}}^{2}+\frac{\Delta t\left(  C_{3}+C_{4}\right)  }{1-\Delta tC_{4}}
\mathcal{E}\left(  e^{k},f^{k}\right)  .
\end{equation*}
Thus, taking the sum of all these inequalities, and noticing that $e^0=f^0=0$, we end up with%
\[
\mathcal{E}\left(  e^{n+1},f^{n+1}\right)  \leq\frac{2\left(  n+1\right)
\Delta t}{1-\Delta tC_{4}}\max_{k\in\overline{0,n}}\left\Vert \epsilon
^{k}\right\Vert _{\ell^{2}_{\Delta}}^{2}+\frac{\Delta t\left(  C_{3}+C_{4}\right)
}{1-\Delta tC_{4}}\sum_{k=1}^{n}\mathcal{E}\left(  e^{k},f^{k}\right)  .
\]
Applying the discrete Gr\"{o}nwall lemma \ref{Granwall_discret_numerica} and
using the fact that the consistency error is first-order accurate in time and
second-order accurate in space, see Appendix \ref{Section_consistency}, we get%
\begin{align*}
\mathcal{E}\left(  e^{n+1},f^{n+1}\right)   &  \leq\frac{2\left(  n+1\right)
\Delta t}{1-\Delta tC_{4}}\exp\left(  \frac{\left(  n+1\right)  \Delta
t\left(  C_{3}+C_{4}\right)  }{1-\Delta tC_{4}}\right)  \max_{k\in
\overline{0,n}}\left\Vert \epsilon^{k}\right\Vert _{\ell^{2}_{\Delta}}^{2}\\
&  \leq\frac{TC\left(  \eta_{0},u_{0}\right)  }{1-\Delta tC_{4}}\exp\left(
\frac{T\left(  C_{3}+C_{4}\right)  }{1-\Delta tC_{4}}\right)  \left\{  \left(
\Delta x\right)  ^{4}+\left(  \Delta t\right)  ^{2}\right\}  ,
\end{align*}
where $C\left(  \eta_{0},u_{0}\right)  $ is some constant depending on the
initial data $\left(  \eta_{0},u_{0}\right)  $. Thus, for $\Delta x,\Delta t$
small enough and using the inequality:%
\[
\left\Vert e^{n+1}\right\Vert _{\ell^{\infty}}\leq C\left\Vert e^{n+1}\right\Vert
_{\ell_{\Delta}^{2}}^{\frac{1}{2}}\left\Vert D_{+}e^{n+1}\right\Vert
_{\ell_{\Delta}^{2}}^{\frac{1}{2}},
\]
with $C$ a constant, we get that for sufficient small $\Delta x$ and $\Delta t$ :
\[
||e^{n+1}||_{\ell^{\infty}}\leq 1\text{\ \ \ and\ \ \ }||f^{n+1}||_{\ell^{\infty}}\leq 1.
\]
We can assure that the inductive hypothesis $\left(  \text{\ref{inductie}%
}\right)  $ holds for all $k\in\overline{0,n+1}$.

Obviously, this allows to
close the estimates and provide the desired bound. This concludes the proof of Theorem \ref{Teorema_Numerica1}.
\end{proof}

\subsection{The proof of Theorem \ref{Teorema_Numerica2}.\label{section bd=0}}
As announced in the introduction, in this section, we aim at providing a proof
for our second main result. As opposed to the previous result, the proof of
Theorem \ref{Teorema_Numerica2} is rather sensitive to the different values of
the $abcd$ parameters.

\noindent We recall that we will treat the case where the parameters verify:
\begin{equation*}
a\leq0,\text{ }c\leq0,\text{ }b\geq0,\text{ }d\geq0,\ \text{and}\ bd=0,%
\end{equation*}
excluding the five cases:%
\begin{equation*}
\left\{
\begin{array}
[c]{l}%
a=b=0,\ d>0,\ c<0,\\
a=b=c=d=0,\\
a=d=0,\ b>0,\ c<0,\\
a=b=d=0,\ c<0,\\
b=d=0,\ c<0,\ a<0.
\end{array}
\right.  %
\end{equation*}
\noindent For all $n\geq0$, we will
consider $\left(  \epsilon_{1}^{n},\epsilon_{2}^{n}\right)  \in\left(\ell_{\Delta
}^{2}(\mathbb{Z})\right)^2$ the consistency error defined as:%
\begin{equation}
\left\{
\begin{array}
[c]{l}%
\frac{1}{\Delta t}\left(  I-bD_{+}D_{-}\right)  (\eta_{\Delta}^{n+1}%
-\eta_{\Delta}^{n})+\left(  I+aD_{+}D_{-}\right)  D\left(  u_{\Delta}%
^{n+1}\right)  +D\left(  \eta_{\Delta}^{n}u_{\Delta}^{n}\right) \\
=\epsilon_{1}^{n}+\frac{1}{2}\left(  1-\operatorname*{sgn}(b)\right)  \tau
_{1}\Delta xD_{+}D_{-}\left(  \eta_{\Delta}^{n}\right)  ,\\
\\
\frac{1}{\Delta t}\left(  I-dD_{+}D_{-}\right)  (u_{\Delta}^{n+1}-u_{\Delta
}^{n})+\left(  I+cD_{+}D_{-}\right)  D\left(  \eta_{\Delta}^{n+1}\right)
+\frac{1}{2}D\left(  \left(  u_{\Delta}^{n}\right)  ^{2}\right) \\
=\epsilon_{2}^{n}+\frac{1}{2}\left(  1-\operatorname*{sgn}(d)\right)  \tau
_{2}\Delta xD_{+}D_{-}\left(  u^{n}_{\Delta}\right)  .
\end{array}
\right.  \label{consistency_error2}%
\end{equation}

In this section, we only detail the derivation of the energy inequality for $\mathcal{E}$, the equivalent of \eqref{ineg_inducti_totala}. This inequality is summarized as follows.\\

\begin{proposition}\label{Prop_enegry_EST_AbcD}
Assume $||e^n||_{\ell^{\infty}}, ||f^n||_{\ell^{\infty}}, |\operatorname*{sgn}(a)|||D_+(f)^n||_{\ell^{\infty}}\leq \Delta x^{\frac{1}{2}-\gamma}$, with $\gamma\in(0,\frac{1}{2})$, for all $n\in\overline{0,N}$. Then the following energy estimate holds true, for $n\in\overline{0,N-1}$
\begin{equation}\label{eq_Prop_enegry_EST_AbcD}
\begin{split}
&\left(1-\max\left\{\operatorname*{sgn}(b), \operatorname*{sgn}(d)\right\}C\Delta t\right)\mathcal{E}\left(e^{n+1}, f^{n+1}\right)\leq (1+C\Delta t)\mathcal{E}\left(e^n, f^n\right)\\
&+\left(\Delta t+\max\left\{|\operatorname*{sgn}(c)|,1-\operatorname*{sgn}(b)\right\}\Delta t^2\right)C||\epsilon^n_1||_{\ell^2_{\Delta}}^2\\
&+\left(\Delta t+\max\left\{|\operatorname*{sgn}(a)|,1-\operatorname*{sgn}(d)\right\}\Delta t^2\right)C||\epsilon^n_2||_{\ell^2_{\Delta}}^2\\
&+\left(\Delta t+\Delta t^2\right)\max\left\{|\operatorname*{sgn}(cd)|, |\operatorname*{sgn}(c)|\left(1-\max\left\{\operatorname*{sgn}(b), \operatorname*{sgn}(d)\right\}\right)\right\}C||D_+\left(\epsilon_1\right)^n||_{\ell^2_{\Delta}}^2\\
&+\left(\Delta t+\Delta t^2\right)\max\left\{|\operatorname*{sgn}(ab)|, |\operatorname*{sgn}(a)|\left(1-\max\left\{\operatorname*{sgn}(b), \operatorname*{sgn}(d)\right\}\right)\right\}C||D_+\left(\epsilon_2\right)^n||_{\ell^2_{\Delta}}^2,
\end{split}
\end{equation}
with $C$ a positive constant depending on $||\eta_{\Delta}^n||_{\ell^{\infty}}, ||D(\eta_{\Delta})^n||_{\ell^{\infty}}, ||u_{\Delta}^n||_{\ell^{\infty}}$, $||D(u_{\Delta})^n||_{\ell^{\infty}}$ and $||D_+D(u_{\Delta})^n||_{\ell^{\infty}}$.\\
\end{proposition}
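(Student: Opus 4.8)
The plan is to run the energy method directly on the scheme \eqref{schema_bd=0}, as in the implicit part of the proof of Proposition \ref{_prop_1_avcd}, but now compensating for the dispersive operator that degenerates to the identity (since $bd=0$) by means of the Rusanov viscosity and the CFL condition \eqref{CFL_cond_oct}. First I would subtract the consistency relations \eqref{consistency_error2} from \eqref{schema_bd=0}, using $\eta^nu^n-\eta_\Delta^nu_\Delta^n=e^nu_\Delta^n+\eta_\Delta^nf^n+e^nf^n$ and $\tfrac{1}{2}\big((u^n)^2-(u_\Delta^n)^2\big)=f^nu_\Delta^n+\tfrac{1}{2}(f^n)^2$, to obtain the error equations
\begin{align*}
(I-bD_+D_-)E^-(n)+\Delta t(I+aD_+D_-)Df^{n+1}&=-\Delta t\,R_1^n+\tfrac{\Delta t}{2}(1-\operatorname{sgn}(b))\tau_1\Delta x\,D_+D_-e^n,\\
(I-dD_+D_-)F^-(n)+\Delta t(I+cD_+D_-)De^{n+1}&=-\Delta t\,R_2^n+\tfrac{\Delta t}{2}(1-\operatorname{sgn}(d))\tau_2\Delta x\,D_+D_-f^n,
\end{align*}
where $R_1^n=D(e^nu_\Delta^n)+D(\eta_\Delta^nf^n)+D(e^nf^n)+\epsilon_1^n$ and $R_2^n=D(f^nu_\Delta^n)+\tfrac{1}{2}D((f^n)^2)+\epsilon_2^n$.

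Since the reformulation \eqref{discret_energy_10_NOv} degenerates when $bd=0$, I work with the original energy \eqref{energy_discrete}. Viewing $-D_+D_-$ as a nonnegative self-adjoint operator, the multipliers $I-bD_+D_-$ and $I+cD_+D_-$ (with $b,-c\geq0$) commute and are positive, and a direct expansion gives $\mathcal{E}(e,f)=\langle(I-bD_+D_-)e,(I+cD_+D_-)e\rangle+\langle(I-dD_+D_-)f,(I+aD_+D_-)f\rangle$. The heart of the argument is to test the first error equation against $2(I+cD_+D_-)e^{n+1}$ and the second against $2(I+aD_+D_-)f^{n+1}$, and to add. By \eqref{integr_by_part_2} and the commutation of $D$ with the dispersive multipliers, the two linear cross terms cancel identically, which reflects the skew-symmetry of the hyperbolic part; no CFL condition is consumed here. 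The time-difference terms are handled by the polarization identity $2\langle W(x-y),x\rangle=\langle Wx,x\rangle-\langle Wy,y\rangle+\langle W(x-y),x-y\rangle$ with $W=(I-bD_+D_-)(I+cD_+D_-)\geq0$ (and its $f$-analogue), producing exactly $\mathcal{E}(e^{n+1},f^{n+1})-\mathcal{E}(e^n,f^n)$ plus the nonnegative numerical dissipation $\langle(I-bD_+D_-)E^-(n),(I+cD_+D_-)E^-(n)\rangle$ and its counterpart.

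Next I would absorb the Rusanov terms. Writing $e^{n+1}=e^n+E^-(n)$, the frozen part $\tfrac{\Delta t}{2}(1-\operatorname{sgn}(b))\tau_1\Delta x\langle D_+D_-e^n,(I+cD_+D_-)e^n\rangle=-\tfrac{\Delta t}{2}(1-\operatorname{sgn}(b))\tau_1\Delta x\big(\|D_+e^n\|_{\ell^2_\Delta}^2-c\|D_+D_-e^n\|_{\ell^2_\Delta}^2\big)$ is nonpositive (as $c\leq0$) and is simply discarded, whereas the remainder $\langle D_+D_-e^n,(I+cD_+D_-)E^-(n)\rangle$ is absorbed, after summation by parts, into the positive dissipation $\langle(I-bD_+D_-)E^-(n),(I+cD_+D_-)E^-(n)\rangle$; this absorption is possible precisely because the CFL condition \eqref{CFL_cond_oct}, $(1-\operatorname{sgn}(b))\tau_1\Delta t\le\Delta x$, trades the prefactor $\tau_1\Delta t$ for a power of $\Delta x$. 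The same computation treats the $f$-equation, and these are the only places \eqref{CFL_cond_oct} intervenes, active exactly when $b=0$ (resp. $d=0$).

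It remains to estimate the nonlinear and consistency contributions $-2\Delta t\langle R_i^n,\cdot\rangle$. The transport terms $D(e^nu_\Delta^n),D(\eta_\Delta^nf^n),D(f^nu_\Delta^n)$ are bounded via \eqref{IPP5} by $C\Delta t\,\mathcal{E}$, the constant involving $\|u_\Delta^n\|_{\ell^\infty},\|\eta_\Delta^n\|_{\ell^\infty}$ and their discrete derivatives, and the quadratic error $D(e^nf^n)$ is controlled by the smallness $\|e^n\|_{\ell^\infty},\|f^n\|_{\ell^\infty}\le\Delta x^{1/2-\gamma}$. The genuinely delicate piece is $\tfrac{1}{2}\langle D((f^n)^2),(I+aD_+D_-)f^{n+1}\rangle$ when $a<0$: testing against $aD_+D_-f^n$ yields a cubic quantity that must be rewritten through \eqref{IPP2BiS}--\eqref{EQ_4} into terms of the form $\langle Dw,(Dv)^3\rangle$ and $\langle DDw,v^3\rangle$, which are then exactly absorbed using the third bootstrap hypothesis $|\operatorname{sgn}(a)|\,\|D_+f^n\|_{\ell^\infty}\le\Delta x^{1/2-\gamma}$. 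Finally, in $-2\Delta t\langle\epsilon_1^n,(I+cD_+D_-)e^{n+1}\rangle$ the identity part produces $\|\epsilon_1^n\|_{\ell^2_\Delta}^2$ while the $cD_+D_-$ part, after one application of \eqref{integr_by_part_1}, produces the $\|D_+(\epsilon_1)^n\|_{\ell^2_\Delta}^2$ contribution, explaining why $D_+\epsilon_i^n$ enters \eqref{eq_Prop_enegry_EST_AbcD} only when $c$ (resp. $a$) is nonzero. Collecting all bounds and keeping track of each sign factor case by case --- with the contributions estimated at level $n+1$ in a dispersive equation ($b>0$ or $d>0$) moved to the left as $-\max\{\operatorname{sgn}(b),\operatorname{sgn}(d)\}C\Delta t\,\mathcal{E}(e^{n+1},f^{n+1})$ --- yields \eqref{eq_Prop_enegry_EST_AbcD}. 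The main obstacle I anticipate is this final bookkeeping: covering uniformly all admissible sign patterns of $(a,c)$ under $bd=0$ while simultaneously balancing the cubic terms against the bootstrap hypothesis and the viscosity remainders against the CFL condition.
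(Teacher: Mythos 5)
Your proposal contains a genuine gap, and it sits exactly where the real difficulty of the $bd=0$ case lies. The strategy of testing the error equations against $2(I+cD_+D_-)e^{n+1}$ and $2(I+aD_+D_-)f^{n+1}$ destroys the same-time-level structure on which all the usable cancellation identities rest. Concretely, take the case $a<0$, $b=c=d=0$, where the energy is $\|e\|_{\ell^2_\Delta}^2+\|f\|_{\ell^2_\Delta}^2+(-a)\|D_+f\|_{\ell^2_\Delta}^2$ and therefore gives \emph{no} control of $\|D_+e\|_{\ell^2_\Delta}$. Your method produces the transport term $\Delta t\langle D(e^nu_\Delta^n),e^{n+1}\rangle$: Cauchy--Schwarz needs $\|De^n\|_{\ell^2_\Delta}$, integration by parts needs $\|De^{n+1}\|_{\ell^2_\Delta}$, and neither is controlled. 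The identity \eqref{IPP1}, which moves the derivative onto $u_\Delta^n$ and is what saves the day in the paper, requires \emph{both} factors at level $n$; it is unavailable for your mixed-level pairing. The fallback you propose --- bound $\langle e^nu_\Delta^n,DE^-(n)\rangle$ by $\frac{1}{\Delta x}\|u_\Delta^n\|_{\ell^\infty}\|e^n\|_{\ell^2_\Delta}\|E^-(n)\|_{\ell^2_\Delta}$, apply Young, and invoke the CFL condition \eqref{CFL_cond_oct} --- trades $\tau_1\Delta t/\Delta x$ for a constant of order $1$, not of order $\Delta t$: after absorption of $\|E^-(n)\|^2_{\ell^2_\Delta}$ into the dissipation you are left with $O(1)\cdot\mathcal{E}(e^n,f^n)$ on the right-hand side instead of $C\Delta t\,\mathcal{E}(e^n,f^n)$, so the recursion compounds like $(1+O(1))^N$ and Gr\"onwall's lemma (Lemma \ref{Granwall_discret_numerica}) no longer closes. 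The same objection applies to your treatment of the cubic term: \eqref{IPP2BiS} and \eqref{EQ_4} are identities for $\langle D_+D_-v,D(v^2)\rangle$ with a \emph{single} sequence $v$, and they say nothing about the mixed-level quantity $\langle D((f^n)^2),D_+D_-f^{n+1}\rangle$ that your multiplier generates.

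What your shortcut skips is precisely the content of Propositions \ref{Burgers1}--\ref{Burgers3}, which are not perturbative absorption statements but exact algebraic balances: expanding the full square $\|a-\Delta tD(a(u+\lambda a/2))+\frac{\tau}{2}\Delta x\Delta tD_+D_-a\|_{\ell^2_\Delta}^2$ at a single time level, the dangerous pieces $\tau^2\Delta t^2\|D_+a\|^2_{\ell^2_\Delta}$, $\Delta t^2\|u\|_{\ell^\infty}^2\|Da\|^2_{\ell^2_\Delta}$ and the advection--viscosity cross terms cancel against the negative terms $-\tau\Delta t\Delta x\|D_+a\|^2_{\ell^2_\Delta}$ and $-\tau^2\Delta t^2\|Da\|^2_{\ell^2_\Delta}$ produced by \eqref{IPP4}, under exactly the CFL condition $\tau\Delta t\le\Delta x$ and the sub-characteristic condition $\|u\|_{\ell^\infty}+\alpha\le\tau$. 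This is why the paper's proof of Proposition \ref{Prop_enegry_EST_AbcD} proceeds by \emph{squaring} the explicit side of each non-smoothed equation (and, when $a<0$ or $c<0$, first applying $\sqrt{-a}D_+$ or $\sqrt{-cd}D_+$ and using Proposition \ref{Burgers3} to recover the $H^1$-type component of the energy), case by case with an energy functional adapted to each sign pattern of $(a,b,c,d)$, reserving multiplier-type ``continuous'' estimates only for the equations that retain a smoothing operator $(I-bD_+D_-)^{-1}$ or $(I-dD_+D_-)^{-1}$. To repair your argument you would have to reintroduce this machinery, at which point you are back to the paper's case-by-case proof rather than a genuinely unified one.
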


In order to close the estimates and ensure the convergence proof (as the one made in Subsection \ref{section b,d>0}, for the case $b>0$ and $d>0$) we perform as usual an induction hypothesis on the smallness of $||e^n||_{\ell^{\infty}}, ||f^n||_{\ell^{\infty}}, ||D_+(f)^n||_{\ell^{\infty}}$ according to the cases. It is sufficient to assume by induction
\begin{equation}\label{HYP_AbcD}
||e^n||_{\ell^{\infty}}, ||f^n||_{\ell^{\infty}}, |\mathrm{sgn}(a)|||D_+(f)^n||_{\ell^{\infty}}\leq\Delta x^{\frac{1}{2}-\gamma}, \text{\ \ \ with\ }\gamma\in(0,\frac{1}{2}).
\end{equation}
Hypothesis \eqref{HYP_AbcD} is sufficient to assure the hypothesis of Proposition \ref{Prop_enegry_EST_AbcD}. The energy estimate \eqref{eq_Prop_enegry_EST_AbcD} is thus satisfied and the convergence rate (Theorem \ref{Teorema_Numerica2}) is a consequence of the discrete strong Gr\"onwall inequality, Lemma \ref{Granwall_discret_numerica} and the study of the consistency error \eqref{consistency_error2} detailed in Appendix \ref{Section_consistency} (all the previous guidelines are detailed in Subsection \ref{section b,d>0} for the case $b>0$ and $d>0$).

First, we establish a technical result that interfers
in a crucial manner in establishing the \emph{a priori} estimates.

\subsubsection{Burgers-type estimates.\label{Burgers_type_estimates}}
Let us state the first result of this subsection:

\begin{proposition}
Let $u\in\ell^{\infty}(\mathbb{Z})$ such that $(D_+(u)_j)_{j\in\mathbb{Z}} \in\ell^{\infty}(\mathbb{Z})$ and $\lambda\geq0$. Fix $\alpha>0$ and $\tau$ such that%
\[
\left\Vert u\right\Vert _{\ell^{\infty}}+\alpha<\tau.
\]
Then, there exists a sufficiently small positive number $\delta
_{0}$ such that the following holds true. Consider two positive
reals $\Delta t,\Delta x$ such that
\[
\frac{\text{ }\tau\Delta t}{\Delta x}\leq1\text{,\ \ \ \  }\Delta x\leq
\delta_{0}%
\]

and $a\in\ell^{2}\left(  \mathbb{Z}\right)  $, such that%
\[
\lambda\left\Vert a\right\Vert _{\ell^{\infty}}\leq\Delta x^{\frac{1}{2}-\gamma}, \text{\ with\ }\gamma\in(0,\frac{1}{2}).
\]
Then, there exists a positive constant $C$ depending on the $\ell^{\infty}%
$-norm of $u$ and $D_{+}\left(  u\right)  $ such that
\[
\left\Vert a-\Delta tD\left(  \left(  u+\lambda \frac{a}{2}\right)  a\right)
+\frac{\tau}{2}\Delta x\Delta tD_{+}D_{-}a\right\Vert _{\ell_{\Delta}^{2}}%
\leq\left(  1+C\Delta t\right)  \left\Vert a\right\Vert _{\ell_{\Delta}^{2}}.
\]
\label{Burgers1}
\end{proposition}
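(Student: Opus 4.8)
The plan is to set $\mathscr{A}a:=a-\Delta tD\bigl((u+\lambda\tfrac{a}{2})a\bigr)+\tfrac{\tau}{2}\Delta x\Delta tD_+D_-a$ and to write it as $\mathscr{A}a=a+\Delta t\,B$, with $B:=-D\bigl((u+\lambda\tfrac{a}{2})a\bigr)+\tfrac{\tau}{2}\Delta x\,D_+D_-a$. Expanding the square,
\[
\left\Vert \mathscr{A}a\right\Vert_{\ell^2_\Delta}^2=\left\Vert a\right\Vert_{\ell^2_\Delta}^2+2\Delta t\left\langle a,B\right\rangle+\Delta t^2\left\Vert B\right\Vert_{\ell^2_\Delta}^2,
\]
so it suffices to prove $2\Delta t\langle a,B\rangle+\Delta t^2\|B\|_{\ell^2_\Delta}^2\le C\Delta t\|a\|_{\ell^2_\Delta}^2$ and then take square roots ($\sqrt{1+C\Delta t}\le 1+C\Delta t$). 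Conceptually this is the discrete $\ell^2$-stability of a Rusanov (Lax--Friedrichs-type) scheme for the linear transport equation $\partial_t a+\partial_x(ua)=0$ with numerical viscosity $\tfrac{\tau}{2}\Delta x$: the hypotheses $\tau>\|u\|_{\ell^\infty}+\alpha$ and $\tau\Delta t\le\Delta x$ are exactly the von Neumann conditions, and the variable coefficient $u$ and the nonlinear term $\lambda a^2/2$ merely perturb the contraction into a $(1+C\Delta t)$ growth.

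For the linear cross term I would first use \eqref{integr_by_part_1} to get $\langle a,D_+D_-a\rangle=-\|D_+a\|_{\ell^2_\Delta}^2$, so the viscosity contributes the favourable negative term $-\tau\Delta x\Delta t\|D_+a\|_{\ell^2_\Delta}^2$. For the transport part I split $D\bigl((u+\lambda a/2)a\bigr)=D(ua)+\tfrac{\lambda}{2}D(a^2)$. The first piece is handled by \eqref{IPP1}, giving $\langle a,D(ua)\rangle=\tfrac12\langle D_+u,aS_+a\rangle$, bounded by $\tfrac12\|D_+u\|_{\ell^\infty}\|a\|_{\ell^2_\Delta}^2$ and thus absorbed into $C\Delta t\|a\|_{\ell^2_\Delta}^2$. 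The genuinely nonlinear piece is controlled by \eqref{IPP3}: $\tfrac{\lambda}{2}\langle a,D(a^2)\rangle=-\tfrac{\lambda\Delta x^2}{12}\langle D_+a,(D_+a)^2\rangle$, which via $\|D_+a\|_{\ell^\infty}\le\tfrac{2}{\Delta x}\|a\|_{\ell^\infty}$ and the standing hypothesis $\lambda\|a\|_{\ell^\infty}\le\Delta x^{1/2-\gamma}$ is bounded by $\tfrac16\Delta x^{3/2-\gamma}\|D_+a\|_{\ell^2_\Delta}^2$, a strictly higher power of $\Delta x$ than the viscosity and hence negligible for $\Delta x\le\delta_0$.

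For the quadratic term I would expand $\|B\|_{\ell^2_\Delta}^2=\|B_1\|^2+2\langle B_1,B_2\rangle+\|B_2\|^2$ with $B_1=-D\bigl((u+\lambda a/2)a\bigr)$ and $B_2=\tfrac{\tau}{2}\Delta x\,D_+D_-a$. By \eqref{IPP4}, $\|B_2\|_{\ell^2_\Delta}^2=\tau^2\bigl(\|D_+a\|_{\ell^2_\Delta}^2-\|Da\|_{\ell^2_\Delta}^2\bigr)$, while the product rule \eqref{product_rule2} together with \eqref{Norm_D_v_carre} yields $\|B_1\|_{\ell^2_\Delta}^2\le(\|u\|_{\ell^\infty}^2+\varepsilon)\|Da\|_{\ell^2_\Delta}^2+C\|a\|_{\ell^2_\Delta}^2$ for any $\varepsilon>0$ (the $\lambda$-part again carrying surplus powers of $\Delta x$). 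The cross term is rewritten with \eqref{IPP2}, whose $\tfrac1{\Delta x^2}$ prefactors are compensated by the factor $\Delta x$ in $B_2$ and the CFL bound $\tfrac{\tau\Delta t^2}{\Delta x}\le\Delta t$, so it reduces to $C\Delta t\|a\|_{\ell^2_\Delta}^2$ plus gradient terms of higher order in $\Delta x$. Collecting the $\|Da\|^2$ terms gives the coefficient $-(\tau^2-\|u\|_{\ell^\infty}^2-\varepsilon)<0$ thanks to $\tau>\|u\|_{\ell^\infty}+\alpha$, whereas the principal $\|D_+a\|^2$ terms combine as $(-\tau\Delta x\Delta t+\tau^2\Delta t^2)\|D_+a\|^2=-\tau\Delta x\Delta t\bigl(1-\tfrac{\tau\Delta t}{\Delta x}\bigr)\|D_+a\|^2\le0$ by the CFL condition.

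The main obstacle is precisely that this last balance is \emph{sharp}: the CFL constant $1$ is the borderline von Neumann threshold, so none of the principal gradient terms may be estimated by a lossy Young or Cauchy--Schwarz splitting. In particular the transport--viscosity cross term inside $\|B\|^2$ must be kept exactly, and every nonlinear correction must be shown to carry a \emph{strictly positive} surplus power of $\Delta x$. This is exactly why one invokes the summation-by-parts identities \eqref{IPP1}, \eqref{IPP2} and \eqref{IPP3} (which expose an extra $\Delta x^2$) rather than a direct Cauchy--Schwarz bound, which would spuriously produce a divergent factor $\Delta x^{-2\gamma}$; the assumption $\gamma\in(0,\tfrac12)$ guarantees $\Delta x^{3/2-\gamma}\ll\Delta x$ and is what forces all such corrections to vanish as $\Delta x\le\delta_0\to0$. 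Once these estimates are assembled, the net coefficients of $\|D_+a\|_{\ell^2_\Delta}^2$ and $\|Da\|_{\ell^2_\Delta}^2$ are nonpositive, leaving $2\Delta t\langle a,B\rangle+\Delta t^2\|B\|_{\ell^2_\Delta}^2\le C\Delta t\|a\|_{\ell^2_\Delta}^2$ with $C$ depending only on $\|u\|_{\ell^\infty}$ and $\|D_+u\|_{\ell^\infty}$, which is the desired inequality.
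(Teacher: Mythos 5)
Your framework is the same as the paper's: expand the square of $\mathscr{A}a$, handle the first-order terms with \eqref{integr_by_part_1}, \eqref{IPP1}, \eqref{IPP3}, the quadratic ones with \eqref{IPP4} and \eqref{product_rule2}/\eqref{Norm_D_v_carre}, the viscosity cross terms by summation by parts, and close under the CFL condition. Most steps are sound, but there is a genuine gap in your treatment of the cubic gradient terms, precisely at the sharp-CFL point you yourself identify as the crux. After your principal balance, the entire negative $\left\Vert D_{+}a\right\Vert_{\ell^{2}_{\Delta}}^{2}$ budget is $-\tau\Delta x\Delta t+\tau^{2}\Delta t^{2}=-\tau\Delta t\left(\Delta x-\tau\Delta t\right)$, which \emph{vanishes} when the CFL condition is saturated, $\tau\Delta t=\Delta x$, a case the proposition allows. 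Yet you estimate the two cubic corrections separately in absolute value: the term $\frac{\lambda\Delta x^{2}\Delta t}{6}\left\langle D_{+}a,\left(D_{+}a\right)^{2}\right\rangle$ coming from \eqref{IPP3}, and the term $-\frac{\lambda\tau\Delta x\Delta t^{2}}{6}\left\langle D_{+}a,\left(D_{+}a\right)^{2}\right\rangle$ coming from the nonlinearity--viscosity cross product (note that this one requires \eqref{IPP2BiS}, not \eqref{IPP2}: applying \eqref{IPP2} with $w=\lambda a/2$ and bounding its two $1/\Delta x^{2}$ pieces separately produces exactly the divergent $\Delta x^{-2\gamma}$ factor you warn against, so it is this cross term, not the transport one, that must be kept exactly). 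Each of these is indeed $O\left(\Delta t\,\Delta x^{3/2-\gamma}\right)\left\Vert D_{+}a\right\Vert_{\ell^{2}_{\Delta}}^{2}$, but ``negligible compared to the viscosity'' is meaningless once the net viscosity surplus is zero; nor can such a term be pushed into $C\Delta t\left\Vert a\right\Vert_{\ell^{2}_{\Delta}}^{2}$, since the inverse inequality $\left\Vert D_{+}a\right\Vert_{\ell^{2}_{\Delta}}^{2}\leq\frac{4}{\Delta x^{2}}\left\Vert a\right\Vert_{\ell^{2}_{\Delta}}^{2}$ costs the divergent factor $\Delta x^{-1/2-\gamma}$. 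So, as written, your estimate does not close whenever $1-\tau\Delta t/\Delta x\lesssim\Delta x^{1/2-\gamma}$.

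The paper closes exactly this hole by never splitting the two cubic terms: their sum carries the same factor $\left(\Delta x-\tau\Delta t\right)$ as the viscosity surplus, so all four $D_{+}a$-contributions group pointwise as
\[
\Delta t\left(\Delta x-\tau\Delta t\right)\Delta x\sum_{j\in\mathbb{Z}}\left(\frac{\lambda\Delta x}{6}\left(D_{+}a\right)_{j}-\tau\right)\left(D_{+}a\right)_{j}^{2},
\]
and since $\frac{\lambda\Delta x}{6}\left\vert\left(D_{+}a\right)_{j}\right\vert\leq\frac{\lambda\left\Vert a\right\Vert_{\ell^{\infty}}}{3}\leq\frac{\Delta x^{1/2-\gamma}}{3}<\tau$ for $\Delta x\leq\delta_{0}$, each summand is nonpositive for \emph{every} admissible ratio $\tau\Delta t/\Delta x\leq1$, including the saturated one. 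Your argument becomes correct after this regrouping (the $\left\Vert Da\right\Vert_{\ell^{2}_{\Delta}}^{2}$ budget, by contrast, does have the genuine surplus $\tau^{2}-\left\Vert u\right\Vert_{\ell^{\infty}}^{2}>0$ and your absorption argument works there); without it, the proof is incomplete.
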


\begin{proof}
We define
\[
\mathcal{B}a=a-\Delta tD\left(  a\left(  u+\lambda\frac{a}{2}\right)  \right)
+\frac{\tau}{2}\Delta x\Delta tD_{+}D_{-}a.
\]
We compute the $\ell_{\Delta}^{2}$-norm of $\mathcal{B}a$ :
\begin{equation}\label{norm_of_Ba}
\begin{split}
||\mathcal{B}a||_{\ell_{\Delta}^{2}}^{2}-   ||a||_{\ell_{\Delta}^{2}}%
^{2}&-\Delta t^{2}||D(au)||_{\ell_{\Delta}^{2}}^{2}%
-\Delta t^{2}\lambda^2||D\left(  \frac{a^{2}}%
{2}\right)  ||_{\ell_{\Delta}^{2}}^{2} -\frac{\tau^{2}}{4}\Delta t^{2}\Delta x^{2}||D_{+}D_{-}a||_{\ell
_{\Delta}^{2}}^{2}\\
=&-2\Delta t\left\langle a,D\left(  au+\lambda\frac{a^{2}}{2}\right)
\right\rangle +\tau\Delta x\Delta t\left\langle a,D_{+}D_{-}a\right\rangle \\
+&2\lambda\Delta t^{2}\left\langle D\left(
au\right)  ,D\left(  \frac{a^{2}}{2}\right)  \right\rangle -\tau\Delta x\Delta
t^{2}\left\langle D\left(  au\right)  ,D_{+}D_{-}a\right\rangle\\
-&\tau\lambda\Delta x\Delta
t^{2}\left\langle D\left(  \frac{a^{2}}{2}\right)  ,D_{+}D_{-}a\right\rangle\overset{not.}{=}\sum_{i=1}^3R_i.
\end{split}
\end{equation}
$\bullet$ For $R_1$, Relations \eqref{IPP1}, \eqref{IPP3} and \eqref{integr_by_part_1}
 give
\begin{align*}
&-2\Delta t\left\langle a,D\left(  au+\lambda\frac{a^{2}}{2}\right)
\right\rangle +\tau\Delta x\Delta t\left\langle a,D_{+}D_{-}a\right\rangle\\
&=-\Delta t\left\langle D_{+}u,aS_{+}a\right\rangle
+\frac{\Delta x^{2}\Delta t}{6}\lambda\left\langle D_{+}a,\left(  D_{+}a\right)
^{2}\right\rangle -\tau\Delta t\Delta x||D_{+}a||_{\ell_{\Delta}^{2}}^{2}\\
&\leq \Delta t||D_+u||_{\ell^{\infty}}||a||_{\ell^2_{\Delta}}^2+\frac{\Delta x^{2}\Delta t}{6}\lambda\left\langle D_{+}a,\left(  D_{+}a\right)
^{2}\right\rangle -\tau\Delta t\Delta x||D_{+}a||_{\ell_{\Delta}^{2}}^{2}.
\end{align*}
$\bullet$ For the $R_2$-term, one has, thanks to Relations
\eqref{EQ_4} and \eqref{IPP2}, \newline%
\begin{align*}
&2\lambda\Delta t^{2}\left\langle D\left(
au\right)  ,D\left(  \frac{a^{2}}{2}\right)  \right\rangle -\tau\Delta x\Delta
t^{2}\left\langle D\left(  au\right)  ,D_{+}D_{-}a\right\rangle\\
& \leq2\lambda\Delta t^{2}||a||_{\ell^{\infty}%
}||u||_{\ell^{\infty}}||Da||_{\ell_{\Delta}^{2}}^{2}-\frac{8\Delta t^{2}\Delta
x^{2}}{3}\lambda\left\langle Du,\left(  Da\right)  ^{3}\right\rangle -\frac{2\Delta
t^{2}}{3}\lambda\left\langle DDu,a^{3}\right\rangle \\
& +\frac{\tau\Delta t^{2}}{\Delta x}||D_{+}u||_{\ell^{\infty}}||a||_{\ell
_{\Delta}^{2}}^{2}+\frac{\tau\Delta t^{2}}{\Delta x}||Du||_{\ell^{\infty}%
}||a||_{\ell_{\Delta}^{2}}^{2}\\
&\leq 2\lambda\Delta t^{2}||a||_{\ell^{\infty}%
}||u||_{\ell^{\infty}}||Da||_{\ell_{\Delta}^{2}}^{2} +\frac{8\Delta x^2\Delta t^2}{3}\lambda||Du||_{\ell^{\infty}}||Da||_{\ell^{\infty}}||Da||_{\ell^2_{\Delta}}^2\\
&+\frac{2\Delta t^2}{3\Delta x}\lambda||a||_{\ell^2_{\Delta}}^2||Du||_{\ell^{\infty}}||a||_{\ell^{\infty}} +\frac{\tau\Delta t^{2}}{\Delta x}||D_{+}u||_{\ell^{\infty}}||a||_{\ell
_{\Delta}^{2}}^{2}+\frac{\tau\Delta t^{2}}{\Delta x}||Du||_{\ell^{\infty}%
}||a||_{\ell_{\Delta}^{2}}^{2}.
\end{align*}
$\bullet$ Eventually, for $R_3$, one has, thanks to \eqref{IPP2BiS}
\begin{small}
\[
-\tau\lambda\Delta x\Delta
t^{2}\left\langle D\left(  \frac{a^{2}}{2}\right)  ,D_{+}D_{-}a\right\rangle=-\lambda\frac{\Delta t^{2}\Delta
x\tau}{6}\left\langle D_{+}a,\left(  D_{+}a\right)  ^{2}\right\rangle
+\frac{2\Delta t^{2}\Delta x\tau}{3}\lambda\left\langle Da,\left(  Da\right)
^{2}\right\rangle.
\]
\end{small}
For the left hand side of the $\ell^2_{\Delta}$-norm of $\mathcal{B}a$, Equation \eqref{norm_of_Ba}, we know, thanks to \eqref{IPP5}
\begin{small}
\[
\Delta t^{2}||D\left(  au\right)  ||_{\ell_{\Delta}^{2}}^{2}\leq\Delta
t^{2}\left\{  ||u||_{\ell^{\infty}}^{2}+\Delta t||D_{+}u||_{\ell^{\infty}}%
^{2}\right\}  ||Da||_{\ell_{\Delta}^{2}}^{2}+\Delta t\left\{  ||u||_{\ell
^{\infty}}^{2}+\frac{3\Delta t}{4}||D_{+}u||_{\ell^{\infty}}^{2}\right\}
||a||_{\ell_{\Delta}^{2}}^{2}.
\]
\end{small}
Thanks to \eqref{IPP4}, one has
\[
\frac{\tau^{2}}{4}\Delta t^{2}\Delta x^{2}||D_{+}D_{-}a||_{\ell_{\Delta}^{2}%
}^{2}=\tau^{2}\Delta t^{2}||D_{+}a||_{\ell_{\Delta}^{2}}^{2}-\tau^{2}\Delta
t^{2}||Da||_{\ell_{\Delta}^{2}}^{2},
\]
and, thanks to \eqref{Norm_D_v_carre}, 
\[
\Delta t^2\lambda^2||D\left(\frac{a^2}{2}\right)||_{\ell^2_{\Delta}}^2\leq \Delta t^2\lambda^2||Da||_{\ell^2_{\Delta}}^2||a||_{\ell^{\infty}}^2.
\]
Finally, we gather all these results
\[%
\begin{split}
&  ||\mathcal{B}a||_{\ell_{\Delta}^{2}}^{2}\leq||a||_{\ell_{\Delta}^{2}}%
^{2}\left\{  1+\Delta t||D_{+}u||_{\ell^{\infty}}+\Delta t||u||_{\ell^{\infty
}}^{2}+\frac{3\Delta t^{2}}{4}||D_{+}u||_{\ell^{\infty}}^{2}+\frac{2\Delta
t^{2}}{3\Delta x}\lambda||Du||_{\ell^{\infty}}||a||_{\ell^{\infty}}\right.  \\
&  \left.  +\frac{\tau\Delta t^{2}}{\Delta x}||D_{+}u||_{\ell^{\infty}}%
+\frac{\tau\Delta t^{2}}{\Delta x}||Du||_{\ell^{\infty}}\right\}
+\frac{\Delta x^{2}\Delta t}{6}\lambda\left\langle D_{+}a,\left(  D_{+}a\right)
^{2}\right\rangle -\tau\Delta t\Delta x||D_{+}a||_{\ell_{\Delta}^{2}}^{2}\\
&  -\frac{\Delta t^{2}\Delta x\tau}{6}\lambda\left\langle D_{+}a,\left(
D_{+}a\right)  ^{2}\right\rangle +\tau^{2}\Delta t^{2}||D_{+}a||_{\ell
_{\Delta}^{2}}^{2}+\Delta t^{2}||Da||_{\ell_{\Delta}^{2}}^{2}\left\{
||u||_{\ell^{\infty}}^{2}+\Delta t||D_{+}u||_{\ell^{\infty}}^{2}\right.  \\
&  \left.  +2\lambda||a||_{\ell^{\infty}}||u||_{\ell^{\infty}}+\frac{8\Delta x^{2}%
}{3}\lambda||Du||_{\ell^{\infty}}||Da||_{\ell^{\infty}}+\lambda^2||a||_{\ell^{\infty}}%
^{2}+\frac{2\tau\Delta x}{3}\lambda||Da||_{\ell^{\infty}}-\tau^{2}\right\}.
\end{split}
\]
Thus
\[%
\begin{split}
&  ||\mathcal{B}a||_{\ell_{\Delta}^{2}}^{2}\leq||a||_{\ell_{\Delta}^{2}}%
^{2}\left\{  1+\Delta t||D_{+}u||_{\ell^{\infty}}+\Delta t||u||_{\ell^{\infty
}}^{2}+\frac{3\Delta t^{2}}{4}||D_{+}u||_{\ell^{\infty}}^{2}+\frac{2\Delta
t^{2}}{3\Delta x}\lambda||Du||_{\ell^{\infty}}||a||_{\ell^{\infty}}\right.  \\
&  \left.  +\frac{\tau\Delta t^{2}}{\Delta x}||D_{+}u||_{\ell^{\infty}}%
+\frac{\tau\Delta t^{2}}{\Delta x}||Du||_{\ell^{\infty}}\right\}\\
&
+\left\langle \frac{\Delta x^{2}\Delta t}{6}\lambda D_{+}a-\tau\Delta t\Delta
x\mathbf{1}-\frac{\Delta t^{2}\Delta x\tau}{6}\lambda D_{+}a+\tau^{2}\Delta
t^{2}\mathbf{1},\left(  D_{+}a\right)  ^{2}\right\rangle \\
&  +\Delta t^{2}||Da||_{\ell_{\Delta}^{2}}^{2}\left\{  ||u||_{\ell^{\infty}%
}^{2}+\Delta t||D_{+}u||_{\ell^{\infty}}^{2}+2\lambda||a||_{\ell^{\infty}}%
||u||_{\ell^{\infty}}+\frac{8\Delta x}{3}\lambda||Du||_{\ell^{\infty}}||a||_{\ell
^{\infty}}%
\right.\\
&\left.+\lambda^2||a||_{\ell^{\infty}}^{2}+\frac{2\tau}{3}\lambda||a||_{\ell^{\infty}}-\tau^{2}\right\}  ,
\end{split}
\]
where%
\[
\mathbf{1=}\left(  ...1,1,1...\right)  .
\]
However,
\[
\frac{\Delta x^{2}\Delta t}{6}\lambda D_{+}a-\tau\Delta t\Delta x\mathbf{1}%
-\frac{\Delta t^{2}\Delta x\tau}{6}\lambda D_{+}a+\tau^{2}\Delta t^{2}\mathbf{1}%
=\left(  \frac{\Delta x}{6}\lambda D_{+}a-\tau\mathbf{1}\right)  \left(  \Delta
x-\tau\Delta t\right) \Delta t .
\]
By hypothesis, $\tau\Delta t\leq\Delta x$ and $ ||u||_{\ell^{\infty}}+\alpha<\tau$,
thus for $\delta_0$ such that 
\begin{equation}\label{condition1_sur_delta_0}
\delta_0\leq \left(3||u||_{\ell^{\infty}}\right)^{\frac{2}{1-2\gamma}},
\end{equation}
 one has $$\frac{\Delta x}{6}\lambda D_{+}a\leq
\frac{\lambda ||a||_{\ell^{\infty}}}{3}\leq\frac{\Delta x^{\frac{1}{2}-\gamma}}{3}%
\leq \frac{\delta_0^{\frac{1}{2}-\gamma}}{3}\leq||u||_{\ell^{\infty}}<\tau.$$ This implies
\[
\left\langle \frac{\Delta x^{2}\Delta t}{6}\lambda D_{+}a-\tau\Delta t\Delta
x\mathbf{1}-\frac{\Delta t^{2}\Delta x\tau}{6}\lambda D_{+}a+\tau^{2}\Delta
t^{2}\mathbf{1},\left(  D_{+}a\right)  ^{2}\right\rangle \leq0.
\]
In the same way, for $\Delta t$ and $\Delta x$ small enough and $||a||_{\ell^{\infty}}$ small enough, for $\delta_0$ satisfying
\begin{equation}\label{condition2_sur_delta_0}
||D_+(u)||_{\ell^{\infty}}^2\frac{\delta_0}{\tau}+2\delta_0^{\frac{1}{2}-\gamma}||u||_{\ell^{\infty}}+\frac{8}{3}||D(u)||_{\ell^{\infty}}\delta_0^{\frac{3}{2}-\gamma}+\delta_0^{1-2\gamma}+\frac{2\tau}{3}\delta_0^{\frac{1}{2}-\gamma}\leq \alpha^2,
\end{equation}

\noindent the condition
$ ||u||_{\ell^{\infty}}+\alpha\leq \tau$ implies%
\begin{multline*}
||u||_{\ell^{\infty}}^{2}+\Delta t||D_{+}u||_{\ell^{\infty}}^{2}%
+2\lambda||a||_{\ell^{\infty}}||u||_{\ell^{\infty}}+\frac{8\Delta x}{3}\lambda%
||Du||_{\ell^{\infty}}||a||_{\ell^{\infty}}\\+\lambda^2||a||_{\ell^{\infty}}^{2}%
+\frac{2\tau}{3}\lambda||a||_{\ell^{\infty}}-\tau^{2}\leq0.
\end{multline*}
Proposition \ref{Burgers1} results from the fact that there exists $C_1$ such that $\sqrt{1+C\Delta t}\leq 1+C_1\Delta t$ with 
\begin{multline*}
C=||D_{+}u||_{\ell^{\infty}}+||u||_{\ell^{\infty}}^{2}+\frac{3\Delta t}%
{4}||D_{+}u||_{\ell^{\infty}}^{2}+\frac{2\Delta t}{3\Delta x}\lambda||Du||_{\ell
^{\infty}}||a||_{\ell^{\infty}}\\+\frac{\tau\Delta t}{\Delta x}||D_{+}%
u||_{\ell^{\infty}}+\frac{\tau\Delta t}{\Delta x}||Du||_{\ell^{\infty}}.
\end{multline*}
The upper bound $\delta_0$ must be chosen such that Conditions \eqref{condition1_sur_delta_0} and \eqref{condition2_sur_delta_0} be satisfied.
\end{proof}

The next result is an immediate consequence of the preceding one.
\begin{proposition}
\label{Burgers2}Consider $u\in\ell^{\infty}\left(  \mathbb{Z}\right)  $ such that $(D_+(u)_j)_{j\in\mathbb{Z}}\in\ell^{\infty}(\mathbb{Z})$ and 
$\lambda\geq0$. Fix $\tau$ such
that%
\[
\left\Vert u\right\Vert _{\ell^{\infty}}<\tau.
\]
Then, there exist two sufficiently small positive numbers $\delta
_{0},\delta_{1}$ such that the following holds true. Consider two positive
reals $\Delta t,\Delta x$ such that
\[
\frac{\text{ }\tau\Delta t}{\Delta x}\leq1\text{,\ \ \  }\Delta x\leq
\delta_{0}%
\]
and $a\in\ell^{2}\left(  \mathbb{Z}\right)  $, $b\in\ell^{\infty}\left(
\mathbb{Z}\right)  $ such that%
\[
\lambda\left\Vert a\right\Vert _{\ell^{\infty}}\leq \Delta x^{\frac{1}{2}-\gamma}\text{,\ with\ }\gamma\in(0,\frac{1}{2}),\ \ \ \left\Vert b\right\Vert
_{\ell^{\infty}}\leq\delta_{1}\text{\ and\ }\left\Vert D_{+}b\right\Vert
_{\ell^{\infty}}\leq1.
\]
Then, there exists a positive constant $C$ depending on the $\ell^{\infty}%
$-norm of $u$ and $D_{+}\left(  u\right)  $ such that:
\begin{equation}
\left\Vert a-\Delta tD\left(  a\left(  u+b+\lambda \frac{a}{2}\right)  \right)
+\frac{\tau}{2}\Delta x\Delta tD_{+}D_{-}a\right\Vert _{\ell_{\Delta}^{2}}%
\leq\left(  1+C\Delta t\right)  \left\Vert a\right\Vert _{\ell_{\Delta}^{2}}.
\label{B21}%
\end{equation}
\end{proposition}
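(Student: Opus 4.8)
The plan is to deduce Proposition \ref{Burgers2} from Proposition \ref{Burgers1} by absorbing the extra field $b$ into the transport velocity. First I would set $\tilde{u}=u+b$ and observe that, since $u+b+\lambda\tfrac{a}{2}=\tilde{u}+\lambda\tfrac{a}{2}$, the quantity appearing inside the norm in \eqref{B21} is
\[
a-\Delta tD\!\left(a\left(\tilde{u}+\lambda\tfrac{a}{2}\right)\right)+\tfrac{\tau}{2}\Delta x\Delta t\,D_{+}D_{-}a=\mathcal{B}a,
\]
which is exactly the operator studied in the proof of Proposition \ref{Burgers1}, but with the velocity field $u$ replaced by $\tilde{u}$. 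Hence it suffices to verify that $\tilde{u}$ satisfies all the hypotheses of Proposition \ref{Burgers1} and to check that the resulting constant $C$ and threshold $\delta_0$ can be chosen uniformly in $b$.

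Next I would check those hypotheses. Since $\|u\|_{\ell^{\infty}}<\tau$, I set $\alpha=\tfrac{1}{2}\left(\tau-\|u\|_{\ell^{\infty}}\right)>0$ and impose $\delta_{1}<\alpha$. Then $\|\tilde{u}\|_{\ell^{\infty}}\leq\|u\|_{\ell^{\infty}}+\|b\|_{\ell^{\infty}}\leq\|u\|_{\ell^{\infty}}+\delta_{1}$, so that $\|\tilde{u}\|_{\ell^{\infty}}+\alpha<\|u\|_{\ell^{\infty}}+2\alpha=\tau$, which is precisely the strict gap condition required by Proposition \ref{Burgers1}. Moreover $D_{+}\tilde{u}=D_{+}u+D_{+}b\in\ell^{\infty}(\mathbb{Z})$ with $\|D_{+}\tilde{u}\|_{\ell^{\infty}}\leq\|D_{+}u\|_{\ell^{\infty}}+1$, using the assumption $\|D_{+}b\|_{\ell^{\infty}}\leq1$. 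The CFL condition $\tau\Delta t\leq\Delta x$, the smallness $\Delta x\leq\delta_{0}$, and the constraint $\lambda\|a\|_{\ell^{\infty}}\leq\Delta x^{\frac{1}{2}-\gamma}$ are identical to those of Proposition \ref{Burgers1}, so nothing needs to be changed there. Applying Proposition \ref{Burgers1} to $\tilde{u}$ then yields $\|\mathcal{B}a\|_{\ell_{\Delta}^{2}}\leq(1+C\Delta t)\|a\|_{\ell_{\Delta}^{2}}$, which is \eqref{B21}.

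The only point requiring care, which I regard as the main (and mild) obstacle, is to ensure that $C$, $\delta_{0}$ and $\delta_{1}$ depend only on $\|u\|_{\ell^{\infty}}$ and $\|D_{+}u\|_{\ell^{\infty}}$ and not on $b$ itself. This is guaranteed by the two uniform bounds $\|\tilde{u}\|_{\ell^{\infty}}\leq\|u\|_{\ell^{\infty}}+\delta_{1}$ and $\|D_{+}\tilde{u}\|_{\ell^{\infty}}\leq\|D_{+}u\|_{\ell^{\infty}}+1$: substituting $\|u\|_{\ell^{\infty}}+\delta_{1}$ for $\|u\|_{\ell^{\infty}}$ and $\|D_{+}u\|_{\ell^{\infty}}+1$ for $\|D_{+}u\|_{\ell^{\infty}}$ in the expression for $C$ and in the conditions \eqref{condition1_sur_delta_0}--\eqref{condition2_sur_delta_0} defining $\delta_0$ produces thresholds that are valid for every admissible $b$, and shrinking $\delta_0$ further if necessary so that $\tfrac{1}{3}\delta_0^{\frac{1}{2}-\gamma}<\tau$ handles the one inequality in the proof of Proposition \ref{Burgers1} that was stated in terms of $\|u\|_{\ell^{\infty}}$. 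Since the algebraic identities and sign arguments are inherited verbatim from Proposition \ref{Burgers1}, no new computation is needed, which is exactly why this result is an immediate consequence of the preceding one.
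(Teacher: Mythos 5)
Your proposal is correct and is essentially the paper's own proof: the paper likewise applies Proposition \ref{Burgers1} with $u+b$ in place of $u$, after choosing $\delta_{1}$ small enough that $\left\Vert u+b\right\Vert _{\ell^{\infty}}\leq\left\Vert u\right\Vert _{\ell^{\infty}}+\delta_{1}<\tau$. Your additional care in fixing $\alpha=\tfrac{1}{2}\left(\tau-\left\Vert u\right\Vert _{\ell^{\infty}}\right)$ and tracking that $C$, $\delta_{0}$, $\delta_{1}$ are uniform in $b$ only makes explicit what the paper leaves implicit.
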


\begin{proof}
Let us consider $\tau>\left\Vert u\right\Vert _{\ell^{\infty}}$. Let us
suppose that $\delta_{1}$ is chosen small enough such that%
\[
\left\Vert u+b\right\Vert _{\ell^{\infty}}\leq\delta_{1}+\left\Vert
u\right\Vert _{\ell^{\infty}}<\tau.
\]
Then, taking a smaller $\Delta t$ and $\Delta x$ if neccesary, we may apply
Proposition \ref{Burgers1} with $u+b$ instead of $u$ in order to establish the estimate $\left(
\text{\ref{B21}}\right)  $.
\end{proof}

\begin{proposition}
\label{Burgers3}Consider $u\in\ell^{\infty}\left(  \mathbb{Z}\right)  $ such that $(D_+(u)_j)_{j\in\mathbb{Z}} \in\ell^{\infty}(\mathbb{Z})$ and 
$\lambda\geq0$. Fix $\tau$ such
that%
\[
\left\Vert u\right\Vert _{\ell^{\infty}}<\tau.
\]
Then, there exist two sufficiently small positive numbers $\delta
_{0},\delta_{1}$ such that the following holds true. Consider two positive
reals $\Delta t,\Delta x$ such that
\[
\frac{\text{ }\tau\Delta t}{\Delta x}\leq1\text{,\ \ \  }\Delta x\leq
\delta_{0}%
\]
and $a\in\ell^{2}\left(  \mathbb{Z}\right)  $, $b\in\ell^{\infty}\left(
\mathbb{Z}\right)  $ such that%
\begin{align}
\lambda\left\Vert a\right\Vert _{\ell^{\infty}},\lambda\left\Vert
D_{+}a\right\Vert _{\ell^{\infty}}\leq \Delta x^{\frac{1}{2}-\gamma}\text{,\ with\ }\gamma\in(0,\frac{1}{2}),\label{H_avant}\\
\left\Vert b\right\Vert _{\ell^{\infty}%
}<\delta_{1}\text{ and }\left\Vert D_{+}b\right\Vert _{\ell^{\infty}%
},\left\Vert D_{+}D_{-}\left(  b\right)  \right\Vert _{\ell^{\infty}}\left\Vert D_{+}D_{-}\left(  u\right)  \right\Vert _{\ell^{\infty}}\leq1.
\label{H}%
\end{align}
Then, there exist positive constants $C_{1}$,$C_{2}$ depending on the
$\ell^{\infty}$-norm of $u,$ $D_{+}\left(  u\right)  $ and $DD_{+}\left(
u\right)  $ such that:
\begin{equation*}
\begin{split}
\left\Vert D_{+}a-\Delta tD_{+}D\left(  a\left(  u+b+\lambda \frac{a}{2}\right)
\right)  +\frac{\tau}{2}\Delta x\Delta tD_{+}D_{+}D_{-}a\right\Vert
_{\ell_{\Delta}^{2}}\\\leq C_{1}\Delta t\left\Vert a\right\Vert _{\ell_{\Delta
}^{2}}+\left(  1+C_{2}\Delta t\right)  \left\Vert D_{+}a\right\Vert
_{\ell_{\Delta}^{2}}.
\end{split}
\end{equation*}

\end{proposition}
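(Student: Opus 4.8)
The plan is to deduce this ``differentiated'' bound from Proposition \ref{Burgers2} by applying the forward difference $D_{+}$ to the Burgers-type operator and absorbing the terms produced by the discrete Leibniz rule into a controllable remainder. Writing $w:=u+b+\lambda\frac a2$ and using that $D_{+}$ commutes with $D$ and with $D_{+}D_{-}$, the only genuine computation is the expansion of $D_{+}(aw)$. By the product rule \eqref{product_rule3} one has $D_{+}(aw)=w\,D_{+}a+(S_{+}a)\,D_{+}w$; inserting $S_{+}a=a+\Delta x\,D_{+}a$ together with $D_{+}w=D_{+}u+D_{+}b+\frac\lambda2 D_{+}a$ and regrouping gives
\[
D_{+}(aw)=(D_{+}a)\Bigl(u+\widetilde b+\tfrac{\widetilde\lambda}2\,D_{+}a\Bigr)+(S_{+}a)(D_{+}u+D_{+}b),\qquad \widetilde b:=b+\lambda a,\ \ \widetilde\lambda:=\lambda\Delta x.
\]
Consequently the vector whose norm must be bounded splits as $\Bigl[D_{+}a-\Delta t\,D\bigl((D_{+}a)(u+\widetilde b+\tfrac{\widetilde\lambda}2D_{+}a)\bigr)+\tfrac{\tau}2\Delta x\Delta t\,D_{+}D_{-}(D_{+}a)\Bigr]-\Delta t\,D\bigl((S_{+}a)(D_{+}u+D_{+}b)\bigr)$, whose bracketed part is exactly the operator of Proposition \ref{Burgers2} evaluated at $D_{+}a$ with data $(u,\widetilde b,\widetilde\lambda)$.

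I would then verify that $(u,\widetilde b,\widetilde\lambda)$ and $D_{+}a$ meet the hypotheses of Proposition \ref{Burgers2}. The requirements $\|u\|_{\ell^\infty}<\tau$, $\tau\Delta t/\Delta x\le1$ and $\Delta x\le\delta_{0}$ are inherited. The smallness conditions follow from \eqref{H_avant} and \eqref{H}: since $\lambda\|D_{+}a\|_{\ell^\infty}\le\Delta x^{1/2-\gamma}$ we get $\widetilde\lambda\|D_{+}a\|_{\ell^\infty}=\Delta x\,\lambda\|D_{+}a\|_{\ell^\infty}\le\Delta x^{3/2-\gamma}\le\Delta x^{1/2-\gamma}$, while $\|\widetilde b\|_{\ell^\infty}\le\|b\|_{\ell^\infty}+\lambda\|a\|_{\ell^\infty}\le\delta_{1}+\Delta x^{1/2-\gamma}$ and $\|D_{+}\widetilde b\|_{\ell^\infty}\le\|D_{+}b\|_{\ell^\infty}+\lambda\|D_{+}a\|_{\ell^\infty}\le1+\Delta x^{1/2-\gamma}$; after shrinking $\delta_{0},\delta_{1}$ these fall in the admissible range, the harmless excess over $1$ only enlarging the constant. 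Proposition \ref{Burgers2} then bounds the bracketed term by $(1+C_{2}\Delta t)\|D_{+}a\|_{\ell^2_\Delta}$.

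It remains to estimate the remainder $\Delta t\,D((S_{+}a)W)$ with $W:=D_{+}u+D_{+}b$. Applying \eqref{product_rule2} yields $D((S_{+}a)W)=S_{+}(S_{+}a)\,DW+D(S_{+}a)\,S_{-}W$, hence
\[
\|D((S_{+}a)W)\|_{\ell^2_\Delta}\le\|DW\|_{\ell^\infty}\|a\|_{\ell^2_\Delta}+\|W\|_{\ell^\infty}\|D_{+}a\|_{\ell^2_\Delta},
\]
where I use $\|S_{+}(S_{+}a)\|_{\ell^2_\Delta}=\|a\|_{\ell^2_\Delta}$, $\|D(S_{+}a)\|_{\ell^2_\Delta}=\|Da\|_{\ell^2_\Delta}\le\|D_{+}a\|_{\ell^2_\Delta}$ and $\|S_{-}W\|_{\ell^\infty}=\|W\|_{\ell^\infty}$. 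The first contribution feeds the $C_{1}\Delta t\|a\|_{\ell^2_\Delta}$ term and the second is absorbed into $(1+C_{2}\Delta t)\|D_{+}a\|_{\ell^2_\Delta}$, giving the announced inequality.

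The main obstacle is the bookkeeping of the $b$-dependence of this remainder. The factor $\|W\|_{\ell^\infty}\le\|D_{+}u\|_{\ell^\infty}+\|D_{+}b\|_{\ell^\infty}$ is controlled by $\|D_{+}b\|_{\ell^\infty}\le1$, but the more delicate factor is $\|DW\|_{\ell^\infty}\le\|DD_{+}u\|_{\ell^\infty}+\|DD_{+}b\|_{\ell^\infty}$, which forces a bound on a \emph{second} difference of $b$. Here one writes $\|DD_{+}b\|_{\ell^\infty}\le\|D_{+}D_{-}b\|_{\ell^\infty}$ (the two sup-norms agreeing up to a shift), and this is precisely where the mixed hypothesis $\|D_{+}D_{-}b\|_{\ell^\infty}\|D_{+}D_{-}u\|_{\ell^\infty}\le1$ from \eqref{H} is genuinely used; it is what ultimately allows the constants $C_{1},C_{2}$ to be expressed through $\|u\|_{\ell^\infty}$, $\|D_{+}u\|_{\ell^\infty}$ and $\|DD_{+}u\|_{\ell^\infty}$ alone, as claimed.
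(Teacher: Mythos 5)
Your proof is correct and follows essentially the same route as the paper: expand $D_{+}D\left(a\left(u+b+\lambda\frac{a}{2}\right)\right)$ by the discrete Leibniz rule, apply Proposition \ref{Burgers2} to $D_{+}a$ with perturbed coefficients, and bound the remainder $-\Delta t D\left(S_{+}(a)\left(D_{+}u+D_{+}b\right)\right)$ via \eqref{product_rule2}, which reproduces the paper's estimate $C_{1}\Delta t\left\Vert a\right\Vert_{\ell^{2}_{\Delta}}+C\Delta t\left\Vert D_{+}a\right\Vert_{\ell^{2}_{\Delta}}$. The only (immaterial) difference is the regrouping of the quadratic term: the paper folds it entirely into the modified $b$, taking $b+\frac{\lambda}{2}S_{+}a+\frac{\lambda}{2}a$ with $\lambda$ replaced by $0$, whereas you keep a residual nonlinearity with $\tilde{\lambda}=\lambda\Delta x$ and $\tilde{b}=b+\lambda a$; both regroupings satisfy the hypotheses of Proposition \ref{Burgers2} for the same reasons, so the two arguments are equivalent.
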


\begin{proof}
Let us observe that
\begin{align*}
&  D_{+}a-\Delta tD_{+}D\left(  a\left(  u+b+\lambda \frac{a}{2}\right)  \right)
+\frac{\tau}{2}\Delta x\Delta tD_{+}D_{+}D_{-}\left(  a\right) \\
&  =D_{+}a-\Delta tD\left(  D_{+}\left(  a\right)  \left(  u+b+\lambda
\frac{a}{2}\right)  \right)  -\Delta tD\left(  S_{+}\left(  a\right)  D_{+}\left(
u+b+\lambda \frac{a}{2}\right)  \right)   \\
&+\frac{\tau}{2}\Delta x\Delta tD_{+}D_{-}\left(  D_{+}\left(  a\right)
\right) \\
&  =D_{+}a-\Delta tD\left(  D_{+}\left(  a\right)  \left(  u+b+\frac{\lambda}{2}
S_{+}\left(  a\right)  +\lambda \frac{a}{2}\right)  \right)  +\frac{\tau}{2}\Delta
x\Delta tD_{+}D_{-}\left(  D_{+}\left(  a\right)  \right)  \\
&-\Delta tD\left(  S_{+}\left(  a\right)  D_{+}\left(  u\right)  \right)-\Delta tD\left(  S_{+}\left(  a\right)  D_{+}\left(  b\right)  \right).
\end{align*}
Owing to the hypothesis  $\left(  \text{\ref{H_avant}}\right)  $-$\left(  \text{\ref{H}}\right)  $ and Proposition
\ref{Burgers2} \ with $b+\frac{\lambda}{2} S_{+}a+\lambda \frac{a}{2}$ instead of $b$ and $0$ instead of
$\lambda$, we may choose $\delta_{0}$ small
enough which ensures the existence of a positive constant $C_{2}$ such that:%
\begin{equation}
\begin{split}
\left\Vert D_{+}a-\Delta tD\left(  D_{+}\left(  a\right)  \left(  u+b+\frac{\lambda}{2}
S_{+}\left(  a\right)  +\frac{\lambda}{2} a\right)  \right)  +\frac{\tau}{2}\Delta
x\Delta tD_{+}D_{-}\left(  D_{+}\left(  a\right)  \right)  \right\Vert
_{\ell_{\Delta}^{2}}\\
\leq\left(  1+C_{2}\Delta t\right)  \left\Vert
D_{+}\left(  a\right)  \right\Vert _{\ell_{\Delta}^{2}}. \label{B31}%
\end{split}
\end{equation}
Moreover, using the derivation formula \eqref{product_rule2}, it transpires
that%
\begin{align}
&  \left\Vert -\Delta tD\left(  S_{+}\left(  a\right)  D_{+}\left(  u\right)
\right)  -\Delta tD\left(  S_{+}\left(  a\right)  D_{+}\left(  b\right)
\right)  \right\Vert _{\ell_{\Delta}^{2}}\nonumber\\
&  \leq\Delta t\left(  \left\Vert DD_{+}\left(  u\right)  \right\Vert
_{\ell^{\infty}}+\left\Vert DD_{+}\left(  b\right)  \right\Vert _{\ell
^{\infty}}\right)  \left\Vert a\right\Vert _{\ell_{\Delta}^{2}}+\Delta
t\left(  \left\Vert D_{+}\left(  u\right)  \right\Vert _{\ell^{\infty}%
}+\left\Vert D_{+}\left(  b\right)  \right\Vert _{\ell^{\infty}}\right)
\left\Vert D_{+}\left(  a\right)  \right\Vert _{\ell_{\Delta}^{2}}.
\label{B32}%
\end{align}
The conclusion follows from estimates $\left(  \text{\ref{B31}}\right)  $ and
$\left(  \text{\ref{B32}}\right)  $.
\end{proof}

\subsubsection{The case $b=d=0$.} We distinguish many different settings for $a,b,c$ and $d$.\\
\textbf{The case $a<0$, $b=c=d=0$.}\ \ \ \ 
The convergence error satisfies:%

\begin{equation}\label{sys_a<0_b=c=d=0}
\left\{
\begin{array}
[c]{l}%
e^{n+1}+\Delta tDf^{n+1}+a\Delta tD_{+}D_{-}Df^{n+1}\\
=e^{n}-\Delta tD\left(  e^{n}u_{\Delta}^{n}\right)  -\Delta tD\left(
e^{n}f^{n}\right)  -\Delta tD\left(  \eta_{\Delta}^{n}f^{n}\right)
+\frac{\tau_{1}}{2}\Delta t\Delta xD_{+}D_{-}e^{n}-\Delta t\epsilon
_{1}^{n},\\
\\
f^{n+1}+\Delta tDe^{n+1}=f^{n}-\Delta tD\left(  f^{n}\left(  u_{\Delta}%
^{n}+\frac{1}{2}f^{n}\right)  \right)  +\frac{\tau_{2}}{2}\Delta t\Delta
xD_{+}D_{-}f^{n}-\Delta t\epsilon_{2}^{n},
\end{array}
\right.
\end{equation}
with
\[
\left\{
\begin{array}
[c]{l}%
\max\left\{  \tau_{1},\tau_{2}\right\}  \Delta t<\Delta x,\\
\left\Vert u_{\Delta}^{n}\right\Vert _{\ell^{\infty}}<\min\left\{  \tau
_{1}\text{ },\tau_{2}\right\}  ,
\end{array}
\right. \ \ \ \ n\in\overline{0,N}.
\]
We consider the energy functional%
\[
\mathcal{E}\left(  e,f\right)  \overset{def.}{=}\left\Vert e\right\Vert
_{\ell_{\Delta}^{2}}^{2}+\left\Vert f\right\Vert _{\ell_{\Delta}^{2}}%
^{2}+\left(  -a\right)  \left\Vert D_{+}f\right\Vert _{\ell_{\Delta}^{2}}^{2}.
\]

\begin{proof}{\bf (Proof of Proposition \ref{Prop_enegry_EST_AbcD} in the case $a<0$, $b=c=d=0$).}  In order to recover a $H^{1}%
$-type control for $f^{n}$ (necessary for the control of $\mathcal{E}(e,f)$), let us apply $\sqrt{-a}D_{+}$ to the second
equation of \eqref{sys_a<0_b=c=d=0}. We get that:
\begin{align}
 \sqrt{-a}D_{+}f^{n+1}+\Delta t\sqrt{-a}D_{+}De^{n+1} & =\sqrt{-a}D_{+}f^{n}-\sqrt{-a}\Delta tDD_{+}\left(  f^{n}\left(  u_{\Delta
}^{n}+\frac{1}{2}f^{n}\right)  \right) \nonumber \\
&  +\sqrt{-a}\frac{\tau_{2}}{2}\Delta t\Delta xD_{+}D_{+}D_{-}f^{n}%
-\sqrt{-a}\Delta tD_{+}\epsilon_{2}^{n}.\label{eq_en_plus_a<0_b=c=d=0}
\end{align}
\noindent We consider now the three equations system comprised of system \eqref{sys_a<0_b=c=d=0} with added equation \eqref{eq_en_plus_a<0_b=c=d=0}. As previously, we square the equations and add them together. Young's inequality enables us to obtain, with $C_0$ a constant 
\begin{small}
\begin{align}
&  \left\Vert e^{n+1}+\Delta t\left(  I+aD_{+}D_{-}\right)  Df^{n+1}%
\right\Vert _{\ell_{\Delta}^{2}}^{2}+\left\Vert f^{n+1}+\Delta tDe^{n+1}%
\right\Vert _{\ell_{\Delta}^{2}}^{2} \nonumber\\
&+\left\Vert \sqrt{-a}D_{+}f^{n+1}+\Delta t\sqrt{-a}D_{+}D(e^{n+1})\right\Vert
_{\ell_{\Delta}^{2}}^{2}\nonumber\\
&  \leq\left(  \Delta t+  \Delta t  ^{2}\right) C_0 \left\Vert
\epsilon_{1}^{n}\right\Vert _{\ell_{\Delta}^{2}}^{2}+\left(  \Delta t+
\Delta t  ^{2}\right) C_0 \left\Vert \epsilon_{2}^{n}\right\Vert
_{\ell_{\Delta}^{2}}^{2}+\left(  -a\right)  \left(  \Delta t+\left(  \Delta
t\right)  ^{2}\right) C_0 \left\Vert D_{+}(\epsilon_{2})^{n}\right\Vert
_{\ell_{\Delta}^{2}}^{2}\nonumber\\
& + \left(  1+C_0\Delta t\right) \left\Vert e^{n}-\Delta tD\left(
e^nu_{\Delta}^n\right)  -\Delta tD\left(  e^nf^n\right)  +\frac{\tau_{1}}{2}\Delta t\Delta xD_{+}D_{-}e^{n}\right\Vert _{\ell_{\Delta}^{2}}^{2}\nonumber\\
&+\left(  \Delta t+\left(  \Delta t\right)  ^{2}\right)  C_0\left\Vert D\left(\eta_{\Delta}^nf^n\right) \right\Vert _{\ell_{\Delta}^{2}}^{2}\nonumber\\
&+\left(  1+C_0\Delta t\right)  \left\Vert f^{n}-\Delta tD\left(  f^n\left(u_{\Delta}^n+\frac{1}{2}f^n\right)  \right)  +\frac{\tau_{2}}{2}\Delta t\Delta xD_{+}D_{-}f^{n}\right\Vert _{\ell_{\Delta}^{2}}^{2}\nonumber\\
&+\left(  1+C_0\Delta t\right)  (-a)\left\Vert D_{+}f^{n}-\Delta tDD_{+}\left(f^n\left(  u_{\Delta}^n+\frac{1}{2}f^n\right)  \right) +\frac{\tau_{2}}{2}\Delta t\Delta xD_{+}D_{+}D_{-}f^{n}\right\Vert _{\ell_{\Delta
}^{2}}^{2}.\label{a>0 1}
\end{align}
\end{small}
Let us consider the right hand side of \eqref{a>0 1}. Owing to Proposition \eqref{Burgers2} and Proposition
\eqref{Burgers1} we have that, thanks hypotheses of Proposition \ref{Prop_enegry_EST_AbcD} on $||e^n||_{\ell^{\infty}}, ||f^n||_{\ell^{\infty}},||D_+f^n||_{\ell^{\infty}}\leq\Delta x^{\frac{1}{2}-\gamma}$%
\begin{equation}
\left\Vert e^{n}-\Delta tD\left(  e^nu_{\Delta}^n\right)  -\Delta
tD\left(  e^nf^n\right)  +\frac{\tau_{1}}{2}\Delta t\Delta xD_{+}%
D_{-}e^{n}\right\Vert _{\ell_{\Delta}^{2}}^{2}\leq\left(  1+C_{1}\Delta
t\right)  \left\Vert e^{n}\right\Vert _{\ell_{\Delta}^{2}}^{2},\label{a>0 2}%
\end{equation}
respectively %
\begin{equation}
\left\Vert f^{n}-\Delta tD\left(  f^n\left(  u_{\Delta}^n+\frac{1}{2}%
f^n\right)  \right)  +\frac{\tau_{2}}{2}\Delta t\Delta xD_{+}D_{-}%
f^{n}\right\Vert _{\ell_{\Delta}^{2}}^{2}\leq\left(  1+C_{2}\Delta t\right)
\left\Vert f^{n}\right\Vert _{\ell_{\Delta}^{2}}^{2}.\label{a>0 3}%
\end{equation}
Proposition \ref{Burgers3} ensures the existence of
constants $C_{3},C_{4}$ such that:%
\begin{multline}
  \left(  -a\right)  \left\Vert D_{+}f^{n}-\Delta tD_{+}D\left(  f^n\left(
u_{\Delta}^n+\frac{f^n}{2}\right)  \right)  +\frac{\tau_{2}}{2}\Delta
t\Delta xD_{+}D_{+}D_{-}f^{n}\right\Vert _{\ell_{\Delta}^{2}}^{2}%
\label{a>0 4}\\
 \leq\left(  1+C_{3}\Delta t\right)  (-a)\left\Vert D_{+}f^n\right\Vert
_{\ell_{\Delta}^{2}}^{2}+C_{4}(-a)\Delta t\left\Vert f^{n}\right\Vert
_{\ell_{\Delta}^{2}}^{2}.
\end{multline}
Finally, we have, thanks Relation \eqref{product_rule2},%
\begin{align}
\left\Vert D\left(  \eta_{\Delta}^nf^n\right)  \right\Vert _{\ell_{\Delta
}^{2}}^{2} &  \leq2\left\Vert D  \eta^n_{\Delta}  \right\Vert
_{\ell^{\infty}}^{2}\left\Vert f^{n}\right\Vert _{\ell_{\Delta}^{2}}%
^{2}+2\left\Vert \eta_{\Delta}^{n}\right\Vert _{\ell^{\infty}}^{2}\left\Vert
D  f^n  \right\Vert _{\ell_{\Delta}^{2}}^{2}\nonumber\\
&  \leq2\max\left\{  \left\Vert D \eta_{\Delta}^n  \right\Vert
_{\ell^{\infty}}^{2},\frac{\left\Vert \eta_{\Delta}^{n}\right\Vert
_{\ell^{\infty}}^{2}}{-a}\right\}  \mathcal{E}\left(  e^{n},f^{n}\right)
.\label{a>0 5}%
\end{align}

\noindent Next, we compute the left hand side of \eqref{a>0 1}. First of all, we observe that
\begin{multline*}
  \left\Vert \sqrt{-a}D_{+}f^{n+1}+\Delta t\sqrt{-a}D_{+}De^{n+1}\right\Vert
_{\ell_{\Delta}^{2}}^{2}\\
  =-a\left\Vert D_{+}f^{n+1}\right\Vert _{\ell_{\Delta}^{2}}^{2}-a\left(
\Delta t\right)  ^{2}\left\Vert D_{+}De^{n+1}\right\Vert _{\ell_{\Delta}^{2}%
}^{2}+2a\Delta t\left\langle D_{+}D_{-}f^{n+1},De^{n+1}\right\rangle .
\end{multline*}
Moreover, one has
\begin{small}
\begin{align*}
&  \left\Vert e^{n+1}+\Delta t\left(  I+aD_{+}D_{-}\right)  Df^{n+1}%
\right\Vert _{\ell_{\Delta}^{2}}^{2}  +\left\Vert f^{n+1}+\Delta tDe^{n+1}\right\Vert _{\ell_{\Delta}^{2}}%
^{2}=||e^{n+1}||_{\ell^2_{\Delta}}^2\nonumber\\
&  +2a\Delta t\left\langle e^{n+1}, D_+D_-Df^{n+1}\right\rangle+\Delta t^2||(I+aD_+D_-)Df^{n+1}||_{\ell^2_{\Delta}}^2+||f^{n+1}||_{\ell^2_{\Delta}}^2+\Delta t^2||De^{n+1}||_{\ell^2_{\Delta}}^2.
\end{align*}
\end{small}

\noindent Eventually, we get, for the left hand side,%
\begin{align}
&  \left\Vert e^{n+1}+\Delta t\left(  I+aD_{+}D_{-}\right)  Df^{n+1}%
\right\Vert _{\ell_{\Delta}^{2}}^{2}+\left\Vert f^{n+1}+\Delta tDe^{n+1}%
\right\Vert _{\ell_{\Delta}^{2}}^{2}  \\
&+\left\Vert \sqrt{-a}D_{+}f^{n+1}+\Delta t\sqrt{-a}D_{+}De^{n+1}\right\Vert
_{\ell_{\Delta}^{2}}^{2}\nonumber\\
&  =\mathcal{E}\left(  e^{n+1},f^{n+1}\right)  + \Delta t^{2}\left\Vert \left(  I+aD_{+}D_{-}\right)  Df^{n+1}\right\Vert
_{\ell_{\Delta}^{2}}^{2}  + \Delta t  ^{2}\left\Vert De^{n+1}\right\Vert _{\ell
_{\Delta}^{2}}^{2}\\
&-a \Delta t ^{2}\left\Vert D_{+}%
De^{n+1}\right\Vert _{\ell_{\Delta}^{2}}^{2}\nonumber\\
&\geq\mathcal{E}\left(  e^{n+1},f^{n+1}\right) .\label{a<0_CaSE_1}
\end{align}
Gathering relations $\left(  \text{\ref{a>0 1}}\right)  $, $\left(
\text{\ref{a>0 2}}\right)  $, $\left(  \text{\ref{a>0 3}}\right)  $, $\left(
\text{\ref{a>0 4}}\right)  $, $\left(  \text{\ref{a>0 5}}\right)  $ and \eqref{a<0_CaSE_1} yields%
\begin{small}
\begin{align*}
\mathcal{E}\left(  e^{n+1},f^{n+1}\right)   &  \leq\left(  \Delta t+\left(
\Delta t\right)  ^{2}\right)  C_0\left(  \left\Vert \epsilon_{1}^{n}\right\Vert
_{\ell_{\Delta}^{2}}^{2}+\left\Vert \epsilon_{2}^{n}\right\Vert _{\ell
_{\Delta}^{2}}^{2}+\left(  -a\right)  \left\Vert D_{+}\epsilon_{2}%
^{n}\right\Vert _{\ell_{\Delta}^{2}}^{2}\right)   +\left(  1+C_1\Delta t\right)  \mathcal{E}\left(  e^{n},f^{n}\right).
\end{align*}
\end{small}
\end{proof}
\subsubsection{The case $b=0,d>0$.} Three configurations are studied in this subsubsection.\\
\textbf{The case $a=b=c=0,d>0$\label{classicAL_BOUSSinesq}.}\ \ \ \ 
In this case, without any difficulties, we are able to prove a more general
result. Indeed, we will show that the following general $\theta$-scheme:%
\begin{equation}
\left\{
\begin{array}
[c]{l}%
\frac{1}{\Delta t}(\eta^{n+1}-\eta^{n})+D\left(  (1-\theta)u^{n}+\theta
u^{n+1}\right)  +D\left(  \eta^{n}u^{n}\right)  =\frac{\tau_{1}\Delta x}%
{2}D_{+}D_{-}\left(  \eta^{n}\right)  ,\\
\\
\frac{1}{\Delta t}\left(  I-dD_{+}D_{-}\right)  (u^{n+1}-u^{n})+D\left(
(1-\theta)\eta^{n}+\theta\eta^{n+1}\right)  +\frac{1}{2}D\left(  \left(
u^{n}\right)  ^{2}\right)  =0
\end{array}
\right.
\end{equation}
is adapted for studying the classical Boussinesq system, with $\theta\in[0,1]$. The convergence error verifies:%

\begin{equation}
\left\{
\begin{array}
[c]{r}%
E^{-}\left(  n\right)  +\Delta tD\left(  \left(  1-\theta\right)  f^{n}+\theta
f^{n+1}\right)  +\Delta tD\left(  e^{n}u_{\Delta}^{n}\right)  +\Delta
tD\left(  \eta_{\Delta}^{n}f^{n}\right)  \\
+\Delta tD\left(  e^{n}f^{n}\right)  =\frac{\tau_{1}}{2}\Delta t\Delta
xD_{+}D_{-}\left(  e^{n}\right)  -\Delta t\epsilon_{1}^{n},\hspace*{-2cm}\\
\\
\left(  I-dD_{+}D_{-}\right)  F^{-}\left(  n\right)  +\Delta tD\left(  \left(
1-\theta\right)  e^{n}+\theta e^{n+1}\right)  +\Delta tD\left(  f^{n}%
u_{\Delta}^{n}\right)  \\
+\frac{\Delta t}{2}D\left(  \left(  f^{n}\right)  ^{2}\right)  =-\Delta
t\epsilon_{2}^{n}.\hspace*{-2cm}
\end{array}
\right.  \label{a=b=c=0,d>0}%
\end{equation}
Recall that in this case:%
\begin{equation}
\mathcal{E}\left(  e,f\right)  =\left\Vert e\right\Vert _{\ell_{\Delta}^{2}%
}^{2}+\left\Vert f\right\Vert _{\ell_{\Delta}^{2}}^{2}+d\left\Vert
D_{+}f\right\Vert _{\ell_{\Delta}^{2}}^{2}.
\end{equation}
\begin{proof}{\bf (Proof of Proposition \ref{Prop_enegry_EST_AbcD} in the case $a=b=c=0,d>0$).}
Multiply the second equation of $\left(  \text{\ref{a=b=c=0,d>0}}\right)  $
with $F^{+}\left(  n\right)  $ and proceeding as we did in Section
\ref{section b,d>0} (Identity \eqref{bdpoz5} with $a=0$), we obtain that there exist two constants $C_{1,1}$ and
$C_{1,2}$ depending on $d$, $\left\Vert u_{\Delta}^{n}\right\Vert
_{\ell^{\infty}}$, $\left\Vert Du_{\Delta}^{n}\right\Vert _{\ell^{\infty}}$
and $\theta$, such that:%
\begin{align}
&  \left\Vert f^{n+1}\right\Vert _{\ell_{\Delta}^{2}}^{2}+d\left\Vert
D_{+}f^{n+1}\right\Vert _{\ell_{\Delta}^{2}}^{2}-\left\Vert f^{n}\right\Vert
_{\ell_{\Delta}^{2}}^{2}-d\left\Vert D_{+}f^{n}\right\Vert _{\ell_{\Delta}%
^{2}}^{2}\nonumber\\
&  =-\left\langle F^{+}\left(  n\right)  ,\Delta tD\left(  \left(
1-\theta\right)  e^{n}+\theta e^{n+1}\right)  +\Delta tD\left(  f^{n}%
u_{\Delta}^{n}\right)  +\frac{\Delta t}{2}D\left(  \left(  f^{n}\right)
^{2}\right)  -\Delta t\epsilon_{2}^{n}\right\rangle\label{d>0 1}\\
&  \leq\Delta t\left\Vert \epsilon_{2}^{n}\right\Vert _{\ell_{\Delta}^{2}}%
^{2}+\Delta tC_{1,1}\mathcal{E}\left(  e^{n},f^{n}\right)  +\Delta
tC_{1,2}\mathcal{E}\left(  e^{n+1},f^{n+1}\right)  \nonumber \\
&-\Delta t(1-\theta)\left\langle De^n,F^+\left(n\right)\right\rangle-\theta\Delta t\left\langle De^{n+1},F^+(n)\right\rangle. \nonumber%
\end{align}
Notice that Relation \eqref{integr_by_part_1} combined with the Cauchy-Schwarz inequality, the Young's inequality and the upper bound $||D(.)||_{\ell^2_{\Delta}}\leq||D_+(.)||_{\ell^2_{\Delta}}$  simplifies the previous last term
\begin{equation*}
\begin{split}
\left\langle De^n,F^+\left(n\right)\right\rangle&\leq||e^n||_{\ell^2_{\Delta}}^2+\frac{1}{2}||Df^n||_{\ell^2_{\Delta}}^2+\frac{1}{2}||Df^{n+1}||_{\ell^2_{\Delta}}^2\\
&\leq \max\left\{1+\frac{1}{2d}\right\}\mathcal{E}\left(e^n,f^n\right)+\frac{1}{2d}\mathcal{E}\left(e^{n+1}, f^{n+1}\right).
\end{split}
\end{equation*}
A similar inequality holds true for $\left\langle De^{n+1}, F^+\left(n\right)\right\rangle$.\\
Next, we rewrite the first equation of $\left(  \text{\ref{a=b=c=0,d>0}%
}\right)  $ as
\begin{align*}
e^{n+1} &  =e^{n}-\Delta tD\left(  e^{n}\left(  u_{\Delta}^{n}+f^{n}\right)
\right)  +\frac{\tau_{1}}{2}\Delta t\Delta xD_{+}D_{-}\left(
e^{n}\right)    -\Delta tD\left(  \left(  1-\theta\right)  f^{n}+\theta f^{n+1}\right)\\
&
-\Delta tD\left(  \eta_{\Delta}^{n}f^{n}\right)  -\Delta t\epsilon_{1}^{n}.
\end{align*}
Thus, using the Proposition $\left(  \text{\ref{Burgers2}}\right)  $ and Relation \eqref{product_rule2}, we get
that%
\begin{align}
\left\Vert e^{n+1}\right\Vert _{\ell_{\Delta}^{2}} &  \leq\left\Vert
e^{n}-\Delta tD\left(  e^{n}\left(  u_{\Delta}^{n}+f^{n}\right)  \right)
+\frac{\tau_{1}}{2}\Delta t\Delta xD_{+}D_{-}\left(  e^{n}\right)
\right\Vert _{\ell_{\Delta}^{2}}\nonumber\\
&  +\Delta t\left\Vert D\left(  \left(  1-\theta\right)  f^{n}+\theta
f^{n+1}\right)  \right\Vert _{\ell_{\Delta}^{2}}+\Delta t\left\Vert D\left(
\eta_{\Delta}^{n}f^{n}\right)  \right\Vert _{\ell_{\Delta}^{2}}+\Delta
t\left\Vert \epsilon_{1}^{n}\right\Vert _{\ell_{\Delta}^{2}}\nonumber\\
&  \leq\Delta t\left\Vert \epsilon_{1}^{n}\right\Vert _{\ell_{\Delta}^{2}%
}+\left(  1+C\Delta t\right)  \left\Vert e^{n}\right\Vert _{\ell_{\Delta}^{2}%
}+\Delta t\left(  \left(  1-\theta\right)  \left\Vert D\left(  f^{n}\right)
\right\Vert _{\ell_{\Delta}^{2}}+\theta\left\Vert D\left(  f^{n+1}\right)
\right\Vert _{\ell_{\Delta}^{2}}\right)  \nonumber\\
&  +\Delta t\max\left\{  \left\Vert \eta_{\Delta}^{n}\right\Vert
_{\ell^{\infty}},\left\Vert D\left(  \eta_{\Delta}^{n}\right)  \right\Vert
_{\ell^{\infty}}\right\}  \left(  \left\Vert f^{n}\right\Vert _{\ell_{\Delta
}^{2}}+\left\Vert D\left(  f^{n}\right)  \right\Vert _{\ell_{\Delta}^{2}%
}\right)  \nonumber\\
&  \leq\Delta t\left\Vert \epsilon_{1}^{n}\right\Vert _{\ell_{\Delta}^{2}%
}+\left(  1+C\Delta t\right)  \left\Vert e^{n}\right\Vert _{\ell_{\Delta}^{2}%
}+\Delta tC_{1,2}\mathcal{E}^{\frac{1}{2}}\left(  e^{n},f^{n}\right)
+C_{2,2}\Delta t\mathcal{E}^{\frac{1}{2}}\left(  e^{n+1},f^{n+1}\right)
,\label{d>0 2}%
\end{align}
with $C_{1,2}$ and $C_{2,2}$ depending on $||\eta_{\Delta}^n||_{\ell^{\infty}}$ and $||D\eta_{\Delta}^n||_{\ell^{\infty}}$. From $\left(  \text{\ref{d>0 2}}\right)  $, we deduce that%
\begin{align}
\left\Vert e^{n+1}\right\Vert _{\ell_{\Delta}^{2}}^{2} &  \leq\left(  \Delta
t+\left(  \Delta t\right)  ^{2}\right) C_0 \left\Vert \epsilon_{1}^{n}\right\Vert
_{\ell_{\Delta}^{2}}^{2}+\left(  1+C_{1,3}\Delta t\right)  \left\Vert
e^{n}\right\Vert _{\ell_{\Delta}^{2}}^{2}\nonumber\\
&  +\left(  \Delta t+\left(  \Delta t\right)  ^{2}\right)  C_{1,3}%
\mathcal{E}\left(  e^{n},f^{n}\right)  +\left(  \Delta t+\left(  \Delta
t\right)  ^{2}\right)  C_{2,3}\mathcal{E}\left(  e^{n+1},f^{n+1}\right)
.\label{d>0 3}%
\end{align}
Adding up the estimates yields%
\[
\left(  1-C_{1}\Delta t\right)  \mathcal{E}\left(  e^{n+1}%
,f^{n+1}\right)  \leq\left(  \Delta t+\left(  \Delta t\right)  ^{2}\right)C_0
\left\Vert \epsilon_{1}^{n}\right\Vert _{\ell_{\Delta}^{2}}^{2}+\Delta
t\left\Vert \epsilon_{2}^{n}\right\Vert _{\ell_{\Delta}^{2}}^{2}+\left(
1+C_{2}\Delta t\right)  \mathcal{E}\left(  e^{n},f^{n}\right)  .
\]
\end{proof}

\textbf{The case $a<0,b=0,c=0,d>0$.}\ \ \ \ 
In this case, the convergence error satisfies:%

\[
\left\{
\begin{array}
[c]{l}%
e^{n+1}+\Delta tDf^{n+1}+a\Delta tD_{+}D_{-}Df^{n+1}\\
=e^{n}-\Delta tD\left(  e^{n}u_{\Delta}^{n}\right)  -\Delta tD\left(
e^{n}f^{n}\right)  -\Delta tD\left(  \eta_{\Delta}^{n}f^{n}\right)
+\frac{\tau_{1}}{2}\Delta t\Delta xD_{+}D_{-}e^{n}-\Delta t\epsilon
_{1}^{n},\\
\\
\left(  I-dD_{+}D_{-}\right)  f^{n+1}+\Delta tDe^{n+1}=\left(  I-dD_{+}D_{-}\right)  f^{n}-\Delta tD\left(  f^{n}\left(  u_{\Delta
}^{n}+\frac{1}{2}f^{n}\right)  \right)  -\Delta t\epsilon_{2}^{n}.
\end{array}
\right.
\]
In this section, we will work with the energy functional:%
\[
\mathcal{E}\left(  e,f\right)  =d\left\Vert e\right\Vert _{\ell_{\Delta}^{2}%
}^{2}+\left(  -a\right)  \left\Vert \left(  I-dD_{+}D_{-}\right)  f\right\Vert
_{\ell_{\Delta}^{2}}^{2},
\]
which is better adapted to the system corresponding to the particular values
of the parameters in view here. Of course, $\mathcal{E}$ is equivalent to the
energy from $\left(  \text{\ref{E=mc^2}}\right)  $. 

\begin{proof}{\bf (Proof of Proposition \ref{Prop_enegry_EST_AbcD} in the case $a<0,b=c=0,d>0$).}
By summing up the square of the $\ell_{\Delta}^{2}$-norm of the first equation
by $d$, the square of the $\ell_{\Delta}^{2}$ norm of the second one by
$(-a)$, we get that, thanks to Relations \eqref{integr_by_part_1} and \eqref{integr_by_part_2},
\begin{align*}
&  \mathcal{E}\left(  e^{n+1},f^{n+1}\right)  +2(d+a)\Delta t\left\langle
Df^{n+1},e^{n+1}\right\rangle +d\Delta t^2||Df^{n+1}||_{\ell^2_{\Delta}}^2\\
&+2\Delta t^2(-a)d||D_+Df^{n+1}||_{\ell^2_{\Delta}}^2+(-a)\Delta t^2||De^{n+1}||_{\ell^2_{\Delta}}^2\\
&  +\Delta t^2a^2d||D_+D_-Df^{n+1}||_{\ell^2_{\Delta}}^2\leq(\Delta t+\Delta t^{2})C_0d\left\Vert \epsilon_{1}^{n}\right\Vert
_{\ell_{\Delta}^{2}}^{2}+(\Delta t+\Delta t^{2})C_0\left(  -a\right)  \left\Vert
\epsilon_{2}^{n}\right\Vert _{\ell_{\Delta}^{2}}^{2}\\
&  +\left(  1+\Delta tC_0\right)  d\left\Vert e^{n}-\Delta tD\left(
e^{n}\left(  u_{\Delta}^{n}+f^{n}\right)  \right)  +\frac{\tau_{1}}%
{2}\Delta t\Delta xD_{+}D_{-}e^{n}\right\Vert _{\ell_{\Delta}^{2}}^{2}\\
&  +\left(  \Delta t+\Delta t^{2}\right) C_0 d\left\vert \left\vert D\left(
\eta_{\Delta}^{n}f^{n}\right)  \right\vert \right\vert _{\ell_{\Delta}^{2}%
}^{2}\\
&  +\left(  1+\Delta tC_0\right)  \left(  -a\right)  \left\Vert \left(
I-dD_{+}D_{-}\right)  f^{n}\right\Vert _{\ell_{\Delta}^{2}}^{2}\\
&  +\left(  \Delta t+\Delta t^{2}\right)C_0  \left(  -a\right)  \left\Vert
D\left(  f^{n}\left(  u_{\Delta}^{n}+\frac{1}{2}f^{n}\right)  \right)
\right\Vert _{\ell_{\Delta}^{2}}^{2}.%
\end{align*}
We notice that
\begin{align*}
2\Delta t(d+a)\left\langle f^{n+1},De^{n+1}\right\rangle  &  \geq-\Delta
t(d-a)||Df^{n+1}||_{\ell_{\Delta}^{2}}^{2}-\Delta t(d-a)||e^{n+1}%
||_{\ell_{\Delta}^{2}}^{2}\\
&  \geq-\Delta tC_1\mathcal{E}\left(  e^{n+1},f^{n+1}\right)  .
\end{align*}
Using Proposition \ref{Burgers2} we get that%
\begin{align*}
& \left(  1+\Delta tC_0\right)  d\left\Vert e^{n}-\Delta tD\left(  e^{n}\left(
u_{\Delta}^{n}+f^{n}\right)  \right)  +\frac{\tau_{1}}{2}\Delta t\Delta
xD_{+}D_{-}e^{n}\right\Vert _{\ell_{\Delta}^{2}}^{2} \leq\left(  1+C_2\Delta t\right)  d||e^n||_{\ell^2_{\Delta}}^2  .
\end{align*}
Also, we have, due to \eqref{product_rule2}
\[
\left\vert \left\vert D\left(  \eta_{\Delta}^{n}f^{n}\right)  \right\vert
\right\vert _{\ell_{\Delta}^{2}}^{2}\leq\left\vert \left\vert D\left(
\eta_{\Delta}^{n}\right)  \right\vert \right\vert _{\ell^{\infty}}^{2}%
||f^{n}||_{\ell_{\Delta}^{2}}^{2}+||\eta_{\Delta}^{n}||_{\ell^{\infty}}%
^{2}||D\left(  f^{n}\right)  ||_{\ell_{\Delta}^{2}}^{2}\leq C_3\mathcal{E}%
\left(  e^{n},f^{n}\right)  .
\]
Proceeding as above, we get that:%
\[
\left\Vert D\left(  f^{n}\left(  u_{\Delta}^{n}+\frac{1}{2}f^{n}\right)
\right)  \right\Vert _{\ell_{\Delta}^{2}}^{2}\leq\max\left\{||Du_{\Delta}^n||_{\ell^{\infty}}^2,||u_{\Delta}^n||_{\ell^{\infty}}^2, ||Df^n||_{\ell^{\infty}}^2,||f^n||_{\ell^{\infty}}^2\right\} C_4\mathcal{E}\left(
e^{n},f^{n}\right)  .
\]
Adding up the above estimate gives us:%
\begin{align*}
\left(  1-C_1\Delta t\right)  \mathcal{E}\left(  e^{n+1},f^{n+1}\right)   &
\leq\left(  \Delta t+(\Delta t)^{2}\right)  C_0d\left\Vert \epsilon_{1}%
^{n}\right\Vert _{\ell_{\Delta}^{2}}^{2}+\left(  \Delta t+(\Delta
t)^{2}\right)  C_0\left(  -a\right)  \left\Vert \epsilon_{2}^{n}\right\Vert
_{\ell_{\Delta}^{2}}^{2}\\
&  +\left(  1+C_5\Delta t\right)  \mathcal{E}\left(  e^{n},f^{n}\right)  .
\end{align*}
\end{proof}

\textbf{The case $a<0,b=0,c<0,d>0$.}\ \ \ \ 
In this case, the convergence error satisfies:%

\begin{equation}\label{syst_complet_a<0,b=0,c<0,d>0}
\hspace*{-0.5cm}\left\{
\begin{array}
[c]{l}%
e^{n+1}+\Delta tDf^{n+1}+a\Delta tD_{+}D_{-}Df^{n+1}\\
=e^{n}-\Delta tD\left(  e^{n}u_{\Delta}^{n}\right)  -\Delta tD\left(
e^{n}f^{n}\right)  -\Delta tD\left(  \eta_{\Delta}^{n}f^{n}\right)
+\frac{\tau_{1}}{2}\Delta t\Delta xD_{+}D_{-}e^{n}-\Delta t\epsilon
_{1}^{n},\\
\\
\left(  I-dD_{+}D_{-}\right)  f^{n+1}+\Delta tDe^{n+1}+c\Delta tD_{+}%
D_{-}De^{n+1}\\
=\left(  I-dD_{+}D_{-}\right)  f^{n}-\Delta tD\left(  f^{n}\left(  u_{\Delta
}^{n}+\frac{1}{2}f^{n}\right)  \right)  -\Delta t\epsilon_{2}^{n}.
\end{array}
\right.
\end{equation}
Again, in order to close the estimates and prove the convergence of the
scheme, we will be using the following energy functional:%
\[
\mathcal{E}\left(  e,f\right)  \overset{def.}{=}(-a)\left\Vert e\right\Vert
_{\ell_{\Delta}^{2}}^{2}+\left(  -cd\right)  \left\Vert D_{+}e\right\Vert
_{\ell_{\Delta}^{2}}^{2}+\left(  -a\right)  \left\Vert \left(  I-dD_{+}%
D_{-}\right)  f\right\Vert _{\ell_{\Delta}^{2}}^{2},
\]
which is equivalent to that one from $\left(  \text{\ref{E=mc^2}}\right)  $.
\begin{proof}{\bf (Proof of Proposition \ref{Prop_enegry_EST_AbcD} in the case $a<0,b=0,c<0,d>0$).} In order to derive a $H^{1}$-control on $e^{n+1}$, let us apply $\sqrt
{-cd}D_{+}$ \ in the first equation, to obtain%
\begin{align}
&  \sqrt{-cd}D_{+}e^{n+1}+\Delta t\sqrt{-cd}D_{+}Df^{n+1}+\sqrt{-cd}a\Delta tD_{+}%
D_{+}D_{-}Df^{n+1}  \nonumber\\
&=\sqrt{-cd}D_{+}e^{n}-\Delta t\sqrt{-cd}D_{+}D\left(  e^{n}u_{\Delta}%
^{n}\right) \nonumber \\
& -\Delta t\sqrt{-cd}D_{+}D\left(  e^{n}f^{n}\right)   +\frac{\tau_{1}}{2}\sqrt{-cd}\Delta t\Delta xD_{+}D_{+}D_{-}e^{n}\nonumber\\
&-\sqrt{-cd}\Delta tD_{+}D\left(  \eta_{\Delta}^{n}f^{n}\right)  -\sqrt
{-cd}\Delta tD_{+}\epsilon_{1}^{n}.\label{EQ_IN_ADDItion}
\end{align}
We focus now on the system composed of both equations of \eqref{syst_complet_a<0,b=0,c<0,d>0} with Equation \eqref{EQ_IN_ADDItion} in addition. We will multiply the first and second equations of \eqref{syst_complet_a<0,b=0,c<0,d>0} by $(-a)$ thereafter. As before, we square the three equalities to compute the $\ell^2_{\Delta}$-norm. Let us observe that%
\begin{align}
I_{1} &  =(-a)\left\Vert e^{n+1}+\Delta tDf^{n+1}+a\Delta tD_{+}D_{-}%
Df^{n+1}\right\Vert _{\ell_{\Delta}^{2}}^{2}\nonumber\\
&  =(-a)\left\Vert e^{n+1}\right\Vert _{\ell_{\Delta}^{2}}^{2}+(-a)\left(  \Delta
t\right)  ^{2}\left\Vert Df^{n+1}\right\Vert _{\ell_{\Delta}^{2}}^{2}%
-a^{3}\left(  \Delta t\right)  ^{2}\left\Vert D_{+}D_{-}Df^{n+1}\right\Vert
_{\ell_{\Delta}^{2}}^{2}\nonumber\\
&  +2a^2(\Delta t)^{2}\left\Vert D_{+}Df^{n+1}\right\Vert _{\ell_{\Delta}^{2}%
}^{2}+2(-a)\Delta t\left\langle e^{n+1},Df^{n+1}\right\rangle -2a^2\Delta
t\left\langle e^{n+1},D_{+}D_{-}Df^{n+1}\right\rangle ,\label{b=0_1}%
\end{align}
along with%
\begin{small}
\begin{align}
I_{2} &  =\left(  -cd\right)  \left\Vert D_{+}e^{n+1}+\Delta tD_{+}%
Df^{n+1}+a\Delta tD_{+}D_{+}D_{-}Df^{n+1}\right\Vert _{\ell_{\Delta}^{2}}%
^{2}\nonumber\\
&  =\left(  -cd\right)  \left\Vert D_{+}e^{n+1}\right\Vert _{\ell_{\Delta}%
^{2}}^{2}+\left(  -cd\right)  \left(  \Delta t\right)  ^{2}\left\Vert
D_{+}Df^{n+1}\right\Vert _{\ell_{\Delta}^{2}}^{2}+\left(  -cd\right)
a^{2}\left(  \Delta t\right)  ^{2}\left\Vert D_{+}D_{+}D_{-}Df^{n+1}%
\right\Vert _{\ell_{\Delta}^{2}}\nonumber\\
&  +2acd(\Delta t)^{2}\left\Vert D_{+}D_{-}Df^{n+1}\right\Vert _{\ell_{\Delta
}^{2}}^{2}-2cd\Delta t\left\langle D_{+}e^{n+1},D_{+}Df^{n+1}\right\rangle
\nonumber\\
&  -2acd\Delta t\left\langle D_{+}e^{n+1},D_{+}D_{+}D_{-}Df^{n+1}\right\rangle
,\label{b=0_2}%
\end{align}
\end{small}
and
\begin{small}
\begin{align}
I_{3} &  =\left(  -a\right)  \left\Vert \left(  I-dD_{+}D_{-}\right)
f^{n+1}+\Delta tDe^{n+1}+c\Delta tD_{+}D_{-}De^{n+1}\right\Vert _{\ell
_{\Delta}^{2}}^{2}\nonumber\\
&  =\left(  -a\right)  \left\Vert \left(  I-dD_{+}D_{-}\right)  f^{n+1}%
\right\Vert _{\ell_{\Delta}^{2}}^{2}+\left(  -a\right)  c^{2}\left(  \Delta
t\right)  ^{2}\left\Vert D_{+}D_{-}De^{n+1}\right\Vert _{\ell_{\Delta}^{2}%
}^{2}+(-a)\left(  \Delta t\right)  ^{2}\left\Vert De^{n+1}\right\Vert
_{\ell_{\Delta}^{2}}^2\nonumber\\
&  +2ac\left(  \Delta t\right)  ^{2}\left\Vert D_{+}De^{n+1}\right\Vert
_{\ell_{\Delta}^{2}}+2(-a)\Delta t\left\langle \left(  I-dD_{+}D_{-}\right)
f^{n+1},De^{n+1}\right\rangle \nonumber\\
&  -2ac\Delta t\left\langle f^{n+1},D_{+}D_{-}De^{n+1}\right\rangle
+2acd\Delta t\left\langle D_{+}D_{-}f^{n+1},D_{+}D_{-}De^{n+1}\right\rangle
.\label{b=0_3}%
\end{align}
\end{small}
Observe that the last term from $\left(  \text{\ref{b=0_2}}\right)  $ cancels
with the last term of $\left(  \text{\ref{b=0_3}}\right)  $, according to Relations \eqref{integr_by_part_1} and \eqref{integr_by_part_2}. The same is true for $2\Delta t\left\langle e^{n+1}, Df^{n+1}\right\rangle$ in \eqref{b=0_1} and $2\Delta t\left\langle f^{n+1}, De^{n+1}\right\rangle$ in \eqref{b=0_3}. Therefore, by integrating by part $2a\Delta t\left\langle e^{n+1}, D_+D_-Df^{n+1}\right\rangle$ in \eqref{b=0_1} and $-2ac\Delta t\left\langle f^{n+1}, D_+D_-De^{n+1}\right\rangle$ in \eqref{b=0_3}, it yieds
\begin{align*}
I_{1}+I_{2}+I_{3} &  \geq\mathcal{E}\left(  e^{n+1},f^{n+1}\right)  +2a^2\Delta
t\left\langle De^{n+1},D_{+}D_{-}f^{n+1}\right\rangle \\
&  -2cd\Delta t\left\langle D_{+}e^{n+1},D_{+}Df^{n+1}\right\rangle +2da\Delta
t\left\langle D_{+}D_{-}f^{n+1},De^{n+1}\right\rangle \\
&  -2ac\Delta t\left\langle D_{+}D_{-}f^{n+1},De^{n+1}\right\rangle .
\end{align*}
Young's inequality enables to lower bound the previous inequality:
\begin{small}
\begin{align*}
I_{1}+I_{2}+I_{3} &  \geq\mathcal{E}\left(  e^{n+1},f^{n+1}\right)-(-a^2-cd+d(-a)+ac)\Delta t\left\{||D_+e^{n+1}||_{\ell^2_{\Delta}}^2+||D_+D_-f^{n+1}||_{\ell^2_{\Delta}}^2\right\}\\
&\geq\left(  1-C_1\Delta t\right)  \mathcal{E}\left(  e^{n+1},f^{n+1}\right)
.
\end{align*}
\end{small}
Let us now focus on the right hand side of the squared equations.
Using Cauchy-Schwarz inequality and Proposition \ref{Burgers2}, we get that%
\begin{small}
\begin{align}
I_{1} &  =(-a)\left\Vert e^{n}-\Delta tD\left(  e^{n}u_{\Delta}%
^{n}\right)  -\Delta tD\left(  e^{n}f^{n}\right)  +\frac{\tau_{1}}%
{2}\Delta t\Delta xD_{+}D_{-}e^{n}-\Delta tD(\eta_{\Delta}^nf^n)-\Delta t\epsilon_1^n\right\Vert _{\ell_{\Delta}^{2}}^2\nonumber\\
&\leq (-a)\left\Vert e^{n}-\Delta tD\left(  e^{n}u_{\Delta}%
^{n}\right)  -\Delta tD\left(  e^{n}f^{n}\right)  +\frac{\tau_{1}}%
{2}\Delta t\Delta xD_{+}D_{-}e^{n}\right\Vert ^2_{\ell_{\Delta}^{2}}(1+C_0\Delta t)\nonumber\\
&  +(-a)(\Delta t+\Delta t^2)C_0\left\Vert D\left(  \eta_{\Delta}^{n}f^{n}\right)  \right\Vert^2
_{\ell_{\Delta}^{2}}+(-a)(\Delta t+\Delta t^2)C_0\left\Vert \epsilon_{1}^{n}\right\Vert^2
_{\ell_{\Delta}^{2}}\nonumber\\
&  \leq\left(  1+C\Delta t\right)  (-a)\left\Vert e^{n}\right\Vert ^2_{\ell^{2}%
}+(\Delta t+\Delta t^2)(-a)C_0\left\Vert D\left(  \eta_{\Delta}^{n}f^{n}\right)  \right\Vert^2
_{\ell_{\Delta}^{2}}+(\Delta t+\Delta t^2)(-a)C_0\left\Vert \epsilon_{1}^{n}\right\Vert^2
_{\ell_{\Delta}^{2}}.\label{acd1}%
\end{align}
\end{small}
Using Proposition \ref{Burgers3} and Young's inequality, we get %
\begin{align}
  &I_{2}\leq-cd\left\Vert D_{+}e^{n}-\Delta tD_{+}D\left(  e^{n}(u_{\Delta
}^{n}+f^n)\right)    +\frac{\tau_{1}%
}{2}\Delta t\Delta xD_{+}D_{+}D_{-}e^{n}-\Delta tD_+D(\eta_{\Delta}^nf^n)\right.\nonumber\\
&\left.-\Delta tD_+\left(\epsilon_1^n\right)\right\Vert _{\ell_{\Delta}^{2}%
}^2\nonumber\\
&\leq-cd(1+C_0\Delta t)\left\Vert D_{+}e^{n}-\Delta tD_{+}D\left(  e^{n}u_{\Delta
}^{n}\right)  -\Delta tD_{+}D\left(  e^{n}f^{n}\right)  +\frac{\tau_{1}%
}{2}\Delta t\Delta xD_{+}D_{+}D_{-}e^{n}\right\Vert^2 _{\ell_{\Delta}^{2}%
}\nonumber\\
&  -cd(\Delta t+\Delta t^2)C_0\left\Vert D_{+}D\left(  \eta_{\Delta}^{n}f^{n}\right)
\right\Vert ^2_{\ell_{\Delta}^{2}}-cd(\Delta t+\Delta t^2)C_0\left\Vert D_{+}%
\epsilon_{1}^{n}\right\Vert^2 _{\ell_{\Delta}^{2}}\nonumber\\
&  \leq-cd(\Delta t+\Delta t^2)C_0\left\Vert D_{+}\epsilon_{1}^{n}\right\Vert^2
_{\ell_{\Delta}^{2}}-cd\left(  \Delta tC_{1}\left\Vert e^{n}%
\right\Vert ^2_{\ell_{\Delta}^{2}}+\left(  1+C_{2}\Delta t\right)  \left\Vert
D_{+}e^{n}\right\Vert ^2_{\ell_{\Delta}^{2}}\right)  \nonumber\\
&  -cd(\Delta t+\Delta t^2)\max\left\{||\eta_{\Delta}^n||_{\ell^{\infty}}^2, ||D_+\eta_{\Delta}^n||_{\ell^{\infty}}^2, ||D_+D\eta_{\Delta}^n||_{\ell^{\infty}}^2\right\}\left(||f^n||_{\ell^2_{\Delta}}^2\right.\\
&\left.+||D_+f^n||_{\ell^2_{\Delta}}^2+||D_+Df^n||_{\ell^2_{\Delta}}^2\right)\nonumber\\
&  \leq-cd(\Delta t+\Delta t^2)C_0\left\Vert D_{+}\epsilon_{1}^{n}\right\Vert
_{\ell_{\Delta}^{2}}^2+\Delta tC_{3}\mathcal{E}\left(
e^{n},f^{n}\right) +(-cd)||D_+e^n||_{\ell^2_{\Delta}}^2. \label{acd3}%
\end{align}
Using once again the Young's inequality, it holds%
\begin{align}
  I_{3}&=-a\left\Vert \left(  I-dD_{+}D_{-}\right)
f^{n}-\Delta tD\left(f^n\left(u_{\Delta}^n+\frac{f^n}{2}\right)\right)-\Delta t\epsilon_2^n\right\Vert _{\ell_{\Delta}^{2}}^2\nonumber\\
&\leq -a(1+C_0\Delta t)\left\Vert \left(  I-dD_{+}D_{-}\right)
f^{n}\right\Vert^2 _{\ell_{\Delta}^{2}}\nonumber\\
&-a(\Delta t+\Delta t^2)C_0\left\Vert D\left(  f^{n}\left(  u_{\Delta}^{n}+\frac{1}{2}f^{n}\right)
\right)  \right\Vert ^2_{\ell_{\Delta}^{2}}-a(\Delta t+\Delta t^2)C_0\left\Vert
\epsilon_{2}^{n}\right\Vert^2 _{\ell_{\Delta}^{2}}\nonumber\\
&  \leq-a(\Delta t+\Delta t^2)C_0\left\Vert \epsilon_{2}^{n}\right\Vert ^2_{\ell_{\Delta
}^{2}}+C\Delta t \max\left\{1, ||u_{\Delta}^n||_{\ell^{\infty}}, ||D_+u_{\Delta}^n||_{\ell^{\infty}}\right\} \mathcal{E}\left(
e^{n},f^{n}\right) \nonumber\\
&+(-a)||(I-dD_+D_-)f^{n}||^2_{\ell^2_{\Delta}} .\label{acd2}%
\end{align}
From $\left(  \text{\ref{b=0_1}}\right)  $ - $\left(  \text{\ref{acd2}}\right)
$, we obtain%
\begin{align*}
\mathcal{E}\left(  e^{n+1},f^{n+1}\right)  \left(  1-C_{1}\Delta
t\right)   &  \leq\left(  \Delta t+\Delta t^{2}\right)  ((-a)\left\Vert
\epsilon_{1}^{n}\right\Vert _{\ell_{\Delta}^{2}}^{2}+\left(  -a\right)
\left\Vert \epsilon_{2}^{n}\right\Vert _{\ell_{\Delta}^{2}}^{2}%
+(-c)d\left\Vert D_{+}\epsilon_{1}^{n}\right\Vert _{\ell_{\Delta}^{2}}^{2}) \\
&+\left(  1+\Delta tC_{2}\right)  \mathcal{E}\left(  e^{n}%
,f^{n}\right).
\end{align*}
\end{proof}

\subsubsection{The case $d=0,b>0$.} Once again, three configurations are considered according to the parameters $a, b, c$ and $d$.\\
\textbf{The case $a=c=d=0,b>0$.}\ \ \ \ 
In this case, as for the classical Boussinesq system (the case $a=b=c=0,d>0$, page \pageref{classicAL_BOUSSinesq}) we derive estimates for a
more general scheme: the following $\theta$-scheme where the advection term is discretized according to a convex combination of $n$ and $n+1$%

\begin{equation}
\left\{
\begin{array}
[c]{l}%
\frac{1}{\Delta t}\left(  I-bD_{+}D_{-}\right)(\eta^{n+1}-\eta^{n})+D\left(  (1-\theta)u^{n}+\theta
u^{n+1}\right)  +D\left(  \eta^{n}u^{n}\right)  =0,\\
\\
\frac{1}{\Delta t}  (u^{n+1}-u^{n})+D\left(
(1-\theta)\eta^{n}+\theta\eta^{n+1}\right)  +\frac{1}{2}D\left(  \left(
u^{n}\right)  ^{2}\right)  =\frac{\tau_{2}\Delta x}{2}D_{+}D_{-}\left(
\eta^{n}\right)  .
\end{array}
\right.
\end{equation}
The convergence error verifies:%
\begin{equation}
\left\{
\begin{array}
[c]{l}%
\left(  I-bD_{+}D_{-}\right)E^{-}\left(  n\right)  +\Delta tD\left(  \left(  1-\theta\right)  f^{n}+\theta
f^{n+1}\right)  +\Delta tD\left(  e^{n}u_{\Delta}^{n}\right) \\ +\Delta
tD\left(  \eta_{\Delta}^{n}f^{n}\right)  +\Delta tD\left(  e^{n}f^{n}\right)
=  -\Delta
t\epsilon_{1}^{n},\\
\\
  F^{-}\left(  n\right)  +\Delta tD\left(  \left(
1-\theta\right)  e^{n}+\theta e^{n+1}\right)  +\Delta tD\left(  f^{n}%
u_{\Delta}^{n}\right)  +\frac{\Delta t}{2}D\left(  \left(  f^{n}\right)
^{2}\right)  \\=\frac{\tau_{2}}{2}\Delta t\Delta xD_{+}D_{-}\left(  e^{n}\right)-\Delta t\epsilon_{2}^{n}.
\end{array}
\right. \label{a=c=d=0,b>0}%
\end{equation}

\noindent We work with%
\begin{equation}
\mathcal{E}\left(  e,f\right)  =\left\Vert e\right\Vert _{\ell_{\Delta}^{2}%
}^{2}+b\left\Vert D_{+}\left(  e\right)  \right\Vert _{\ell_{\Delta}^{2}}%
^{2}+\left\Vert f\right\Vert _{\ell_{\Delta}^{2}}^{2}.
\end{equation}
\begin{proof}{\bf (Proof of Proposition \ref{Prop_enegry_EST_AbcD} in the case $a=c=d=0,b>0$).}
We multiply the first equation by $E^{+}\left(  n\right)  $ in order to get %
\begin{align}
&  \left(  \left\Vert e^{n+1}\right\Vert _{\ell^{2}}^{2}+b\left\Vert
D_{+}e^{n+1}\right\Vert _{\ell^{2}}^{2}\right)  -\left(  \left\Vert
e^{n}\right\Vert _{\ell^{2}}^{2}+b\left\Vert D_{+}e^{n}\right\Vert _{\ell^{2}%
}^{2}\right)  \nonumber\\
&  =-\Delta t\left\langle D\left(  \left(  1-\theta\right)  f^{n}+\theta
f^{n+1}\right)  +D\left(  e^{n}u_{\Delta}^{n}\right)  +D\left(  \eta_{\Delta
}^{n}f^{n}\right)  +D\left(  e^{n}f^{n}\right)  +\epsilon_{1}^{n},E^{+}\left(
n\right)  \right\rangle .\nonumber
\end{align}
Integration by parts \eqref{integr_by_part_1} gives
\begin{align}
& \left(  \left\Vert e^{n+1}\right\Vert _{\ell^{2}}^{2}+b\left\Vert
D_{+}e^{n+1}\right\Vert _{\ell^{2}}^{2}\right)  -\left(  \left\Vert
e^{n}\right\Vert _{\ell^{2}}^{2}+b\left\Vert D_{+}e^{n}\right\Vert _{\ell^{2}%
}^{2}\right)  \nonumber\\
&  =-\Delta t\left\langle \epsilon_{1}^{n},E^{+}\left(  n\right)
\right\rangle +\Delta t\left\langle \left(  1-\theta\right)  f^{n}+\theta
f^{n+1}+e^{n}u_{\Delta}^{n}+\eta_{\Delta}^{n}f^{n}+e^{n}f^{n},DE^{+}\left(
n\right)  \right\rangle \nonumber\\
&  \leq\Delta t\left\Vert \epsilon_{1}^{n}\right\Vert _{\ell_{\Delta}^{2}}%
^{2}+\Delta tC_{1}\mathcal{E}\left(  e^{n},f^{n}\right)  +\Delta
tC_{2}\mathcal{E}\left(  e^{n+1},f^{n+1}\right)  ,\label{b>0 1}%
\end{align}
with $C_1$ and $C_2$ two constants proportional to $$1+\frac{1}{b}\max\left\{||u_{\Delta}^n||_{\ell^{\infty}}, ||\eta_{\Delta}^n||_{\ell^{\infty}}\right\}.$$
Using the second equation of $\left(  \text{\ref{a=c=d=0,b>0}}\right)  $, the
triangle inequality along with Proposition \ref{Burgers1}, one obtains:%
\begin{align}
\left\Vert f^{n+1}\right\Vert _{\ell^{2}} &  \leq\left\Vert f^{n}-\Delta
tD\left(  f^{n}u_{\Delta}^{n}\right)  -\frac{\Delta t}{2}D\left(  \left(
f^{n}\right)  ^{2}\right)  +\frac{\tau_{2}}{2}\Delta t\Delta xD_{+}%
D_{-}\left(  f^{n}\right)  \right\Vert _{\ell^{2}}\nonumber\\
&  +\Delta t\left\Vert \epsilon_{2}^{n}\right\Vert _{\ell_{\Delta}^{2}}+\Delta
t\left(  1-\theta\right)  \left\Vert D\left(  e^{n}\right)  \right\Vert
_{\ell_{\Delta}^{2}}+\Delta t\theta\left\Vert D\left(  e^{n+1}\right)
\right\Vert _{\ell_{\Delta}^{2}}\nonumber\\
&  \leq\Delta t\left\Vert \epsilon_{2}^{n}\right\Vert _{\ell_{\Delta}^{2}%
}+\left(  1+C\Delta t\right)  \left\Vert f^{n}\right\Vert _{\ell_{\Delta}^{2}%
}+\Delta t\left(  1-\theta\right)  \left\Vert D\left(  e^{n}\right)
\right\Vert _{\ell_{\Delta}^{2}}+\Delta t\theta\left\Vert D\left(
e^{n+1}\right)  \right\Vert _{\ell_{\Delta}^{2}}.\label{b>0 2}%
\end{align}
Thus, by adding up estimate $\left(  \text{\ref{b>0 1}}\right)  $ with the
square of the estimate $\left(  \text{\ref{b>0 2}}\right)  $, we get that%
\[
\left(  1-C_3\Delta t\right)  \mathcal{E}\left(  e^{n+1},f^{n+1}\right)
\leq\Delta t\left\Vert \epsilon_{1}^{n}\right\Vert _{\ell_{\Delta}^{2}}%
^{2}+(\Delta t+\Delta t^{2})C_0\left\Vert \epsilon_{2}^{n}\right\Vert
_{\ell_{\Delta}^{2}}^{2}+\left(  1+C_4\Delta t\right)  \mathcal{E}\left(
e^{n},f^{n}\right)  .
\]
\end{proof}

\textbf{The case $a<0,b>0,c=d=0$.}\ \ \ \ 
The convergence error satisfies:%

\begin{equation}
\left\{
\begin{array}
[c]{l}%
\left(  I-bD_{+}D_{-}\right)  e^{n+1}+\Delta tDf^{n+1}+a\Delta tD_{+}%
D_{-}Df^{n+1}=\left(  I-bD_{+}D_{-}\right)  e^{n}\\
-\Delta tD\left(  e^{n}u_{\Delta}^{n}\right)  -\Delta tD\left(  e^{n}%
f^{n}\right)  -\Delta tD\left(  \eta_{\Delta}^{n}f^{n}\right)  -\Delta
t\epsilon_{1}^{n},\\
\\
f^{n+1}+\Delta tDe^{n+1}=f^{n}-\Delta tD\left(  f^{n}\left(  u_{\Delta}%
^{n}+\frac{1}{2}f^{n}\right)  \right)  +\frac{\tau_{2}}{2}\Delta t\Delta
xD_{+}D_{-}\left(  f^{n}\right)  -\Delta t\epsilon_{2}^{n}.
\end{array}
\right.  \label{ab1}%
\end{equation}
Consider the energy functional we will work with will be%
\[
\mathcal{E}\left(  e,f\right)  =\left\Vert e\right\Vert _{\ell_{\Delta}^{2}%
}^{2}+b\left\Vert D_{+}e\right\Vert _{\ell_{\Delta}^{2}}^{2}+\left\Vert
f\right\Vert _{\ell_{\Delta}^{2}}^{2}+\left(  -a\right)  \left\Vert
D_{+}f\right\Vert _{\ell_{\Delta}^{2}}^{2}.
\]

\begin{proof}{\bf (Proof of Proposition \ref{Prop_enegry_EST_AbcD} in the case $a<0,b>0,c=d=0$).} Let us multiply the first equation of \eqref{ab1} with $2e^{n+1}$ to obtain:%
\begin{align}
&  2\left\Vert e^{n+1}\right\Vert _{\ell_{\Delta}^{2}}^{2}+2b\left\Vert
D_{+}e^{n+1}\right\Vert _{\ell_{\Delta}^{2}}^{2}+2\Delta t\left\langle
Df^{n+1},e^{n+1}\right\rangle +2a\Delta t\left\langle D_{+}D_{-}%
Df^{n+1},e^{n+1}\right\rangle \nonumber\\
&  =2\left\langle \left(  I-bD_{+}D_{-}\right)  e^{n},e^{n+1}\right\rangle
-2\Delta t\left\langle D\left(  e^{n}u_{\Delta}^{n}\right)  +D\left(
e^{n}f^{n}\right)  +D\left(  \eta_{\Delta}^{n}f^{n}\right)  +\epsilon_{1}%
^{n},e^{n+1}\right\rangle \nonumber\\
&  \leq\left\Vert e^{n+1}\right\Vert _{\ell_{\Delta}^{2}}^{2}+b\left\Vert
D_{+}e^{n+1}\right\Vert _{\ell_{\Delta}^{2}}^{2}+\left\Vert e^{n}\right\Vert
_{\ell_{\Delta}^{2}}^{2}+b\left\Vert D_{+}e^{n}\right\Vert _{\ell_{\Delta}%
^{2}}^{2}\nonumber\\
&  +2\Delta t||D(e^nu_{\Delta}^n)+D(e^nf^n)+D(\eta_{\Delta}^nf^n)||_{\ell^2_{\Delta}}||e^{n+1}||_{\ell^2_{\Delta}}+\Delta t||\epsilon_1^n||_{\ell^2_{\Delta}}^2+\Delta t||e^{n+1}||_{\ell^2_{\Delta}}^2.\label{ab_cd=0}
\end{align}
The last inequality is due to Cauchy-Schwarz and Young's inequalities. Notice that 
$$||D(e^nu_{\Delta}^n)+D(e^nf^n)+D(\eta_{\Delta}^nf^n)||_{\ell^2_{\Delta}}\leq C_1\mathcal{E}^{1/2}(e^n,f^n),$$
with $C_1$ proportional to 
$$\max\left\{||u_{\Delta}^n||_{\ell^{\infty}}, ||Du_{\Delta}^n||_{\ell^{\infty}}, ||\eta_{\Delta}^n||_{\ell^{\infty}}, ||D\eta_{\Delta}^n||_{\ell^{\infty}}\right\} \max\left\{1, \frac{1}{b}, \frac{1}{-a}\right\}.$$
Thus, Equation \eqref{ab_cd=0} becomes
\begin{align}
&  \left\Vert e^{n+1}\right\Vert _{\ell_{\Delta}^{2}}^{2}+b\left\Vert
D_{+}e^{n+1}\right\Vert _{\ell_{\Delta}^{2}}^{2}+2\Delta t\left\langle
Df^{n+1},e^{n+1}\right\rangle +2a\Delta t\left\langle D_{+}D_{-}%
Df^{n+1},e^{n+1}\right\rangle \nonumber\\
&  \leq\left\Vert e^{n}\right\Vert _{\ell_{\Delta}^{2}}^{2}+b\left\Vert
D_{+}e^{n}\right\Vert _{\ell_{\Delta}^{2}}^{2}+C_1\Delta t  \mathcal{E}%
^{\frac{1}{2}}\left(  e^{n},f^{n}\right)   \left\Vert e^{n+1}\right\Vert
_{\ell_{\Delta}^{2}}    +\Delta t\left\Vert \epsilon_{1}%
^{n}\right\Vert^2 _{\ell_{\Delta}^{2}}+\Delta t  \left\Vert e^{n+1}\right\Vert^2
_{\ell_{\Delta}^{2}}.\label{ab2}%
\end{align}
Next, taking the square of the $\ell_{\Delta}^{2}$-norm of the second equation
of $\left(  \text{\ref{ab1}}\right)  $ and using Proposition \ref{Burgers2}, we
obtain%
\begin{align}
&  \left\Vert f^{n+1}\right\Vert _{\ell_{\Delta}^{2}}^{2}+2\Delta
t\left\langle f^{n+1},De^{n+1}\right\rangle +\left(  \Delta t\right)
^{2}\left\Vert De^{n+1}\right\Vert _{\ell_{\Delta}^{2}}^{2}\nonumber\\
&  \leq\left(  \Delta t+\left(  \Delta t\right)  ^{2}\right)C_0  \left\Vert
\epsilon_{2}^{n}\right\Vert _{\ell_{\Delta}^{2}}^{2}\nonumber\\
&+\left(  1+C_0\Delta
t\right)  \left\Vert f^{n}-\Delta tD\left(  f^{n}\left(  u_{\Delta}^{n}%
+\frac{1}{2}f^{n}\right)  \right)  +\frac{\tau_{2}}{2}\Delta t\Delta
xD_{+}D_{-}\left(  f^{n}\right)  \right\Vert _{\ell_{\Delta}^{2}}%
^{2}\nonumber\\
&  \leq\left(  \Delta t+\left(  \Delta t\right)  ^{2}\right) C_0 \left\Vert
\epsilon_{2}^{n}\right\Vert _{\ell_{\Delta}^{2}}^{2}+\left(  1+C_2\Delta
t\right)  \left\Vert f^{n}\right\Vert _{\ell_{\Delta}^{2}}^{2}.\label{ab3}%
\end{align}
When we will sum up Equations \eqref{ab2} and \eqref{ab3}, both terms $$2\Delta t\left\langle Df^{n+1},e^{n+1}\right\rangle \text{\ \ \ and\ \ \ } 2\Delta t\left\langle f^{n+1},De^{n+1}\right\rangle $$ will cancel each other, thanks to Relation \eqref{integr_by_part_1}. We have to cancel $2a\Delta t\left\langle D_+D_-Df^{n+1},e^{n+1}\right\rangle $ in \eqref{ab2} too. This is the aim of the following computation.\\
Applying $\sqrt{-a}D_{+}$ into the second equation of $\left(  \text{\ref{ab1}%
}\right)  $ and taking the square of the $\ell_{\Delta}^{2}$-norm of the
resulting equation yields (with Proposition \ref{Burgers3})
\begin{align}
&  \left(  -a\right)  \left\Vert D_{+}f^{n+1}\right\Vert _{\ell_{\Delta}^{2}%
}^{2}+2\left(  -a\right)  \Delta t\left\langle D_{+}f^{n+1},D_{+}%
De^{n+1}\right\rangle +\left(  -a\right)  \left(  \Delta t\right)
^{2}\left\Vert D_{+}De^{n+1}\right\Vert _{\ell_{\Delta}^{2}}^{2}\nonumber\\
&  \leq\left(  \Delta t+\left(  \Delta t\right)  ^{2}\right)C_0  \left(
-a\right)  \left\Vert D_{+}\epsilon_{2}^{n}\right\Vert _{\ell_{\Delta}^{2}%
}^{2}\nonumber\\
&  +\left(  1+C_0\Delta t\right)  \left(  -a\right)  \left\Vert D_{+}f^{n}-\Delta
tD_{+}D\left(  f^{n}\left(  u_{\Delta}^{n}+\frac{1}{2}f^{n}\right)  \right)
+\frac{\tau_{2}}{2}\Delta t\Delta xD_{+}D_{+}D_{-}\left(  f^{n}\right)
\right\Vert _{\ell_{\Delta}^{2}}^{2}\nonumber\\
&  \leq\left(  \Delta t+\left(  \Delta t\right)  ^{2}\right)  C_0\left(
-a\right)  \left\Vert D_{+}\epsilon_{2}^{n}\right\Vert _{\ell_{\Delta}^{2}%
}^{2}+\left(  1+C_3\Delta t\right)  \left(  -a\right)  \left\Vert D_{+}%
f^{n}\right\Vert _{\ell_{\Delta}^{2}}^{2}+\Delta tC_4\left(  -a\right)
\left\Vert f^{n}\right\Vert _{\ell_{\Delta}^{2}}^{2}.\label{ab4}%
\end{align}
Adding up the estimates $\left(  \text{\ref{ab2}}\right)  $, $\left(
\text{\ref{ab3}}\right)  $ and $\left(  \text{\ref{ab4}}\right)  $ leads to
\begin{align*}
(1-C_5\Delta t)\mathcal{E}\left(  e^{n+1},f^{n+1}\right)   &  \leq\Delta t\left\Vert
\epsilon_{1}^{n}\right\Vert _{\ell_{\Delta}^{2}}^{2}+\left(  \Delta t+\left(
\Delta t\right)  ^{2}\right)C_0  \left(  \left\Vert \epsilon_{2}^{n}\right\Vert
_{\ell_{\Delta}^{2}}^{2}+\left(  -a\right)  \left\Vert D_{+}\epsilon_{2}%
^{n}\right\Vert _{\ell_{\Delta}^{2}}^{2}\right) \\
& +\left(  1+C_6\Delta t\right)  \mathcal{E}\left(  e^{n},f^{n}\right)  .
\end{align*}
\end{proof}

\textbf{The case $a<0,b>0,c<0,d=0$.}\ \ \ \ 
Finally, in this case, the convergence error satisfies:%

\begin{equation}
\left\{
\begin{array}
[c]{l}%
\left(  I-bD_{+}D_{-}\right)  e^{n+1}+\Delta tDf^{n+1}+a\Delta tD_{+}%
D_{-}Df^{n+1}=\left(  I-bD_{+}D_{-}\right)  e^{n}\\
-\Delta tD\left(  e^{n}u_{\Delta}^{n}\right)  -\Delta tD\left(  e^{n}%
f^{n}\right)  -\Delta tD\left(  \eta_{\Delta}^{n}f^{n}\right)  -\Delta
t\epsilon_{1}^{n},\\
\\
f^{n+1}+\Delta tDe^{n+1}+c\Delta tD_{+}D_{-}De^{n+1}\\
=f^{n}-\Delta tD\left(
f^{n}\left(  u_{\Delta}^{n}+\frac{1}{2}f^{n}\right)  \right)  +\frac{\tau_{2}}{2}\Delta t\Delta xD_{+}D_{-}\left(  f^{n}\right)  -\Delta
t\epsilon_{2}^{n}.
\end{array}
\right.  \label{abc1}%
\end{equation}
In this case, we will use the energy functional:%
\[
\mathcal{E}\left(  e,f\right)  =\left(  -c\right)  \left\Vert \left(
I-bD_{+}D_{-}\right)  e\right\Vert _{\ell_{\Delta}^{2}}^{2}+\left\Vert
f\right\Vert _{\ell_{\Delta}^{2}}^{2}+\left(  -a\right)  b\left\Vert
D_{+}f\right\Vert _{\ell_{\Delta}^{2}}^{2}.
\]

\begin{proof}{\bf (Proof of Proposition \ref{Prop_enegry_EST_AbcD} in the case $a<0,b>0,c<0,d=0$).} Taking the square of the first
equations of $\left(  \text{\ref{abc1}}\right)  $ and multiplying the result
with $\left(  -c\right)  $ yields%
\begin{small}
\begin{align}
J_{1} &  =\left(  -c\right)  \left\Vert \left(  I-bD_{+}D_{-}\right)
e^{n+1}\right\Vert _{\ell_{\Delta}^{2}}^{2}+\left(  -c\right)  \left(  \Delta
t\right)  ^{2}\left\Vert Df^{n+1}\right\Vert _{\ell_{\Delta}^{2}}^{2}+\left(
-c\right)  a^{2}\left(  \Delta t\right)  ^{2}\left\Vert D_{+}D_{-}%
Df^{n+1}\right\Vert _{\ell_{\Delta}^{2}}^{2}\nonumber\\
&  +2ac\left(  \Delta t\right)  ^{2}\left\Vert D_{+}Df^{n+1}\right\Vert
_{\ell_{\Delta}^{2}}^{2}-2c\Delta t\left\langle \left(  I-bD_{+}D_{-}\right)
e^{n+1},Df^{n+1}\right\rangle \nonumber\\
&  -2ac\Delta t\left\langle e^{n+1},D_{+}D_{-}Df^{n+1}\right\rangle
+2acb\Delta t\left\langle D_{+}D_{-}e^{n+1},D_{+}D_{-}Df^{n+1}\right\rangle.\label{abc22}%
\end{align}
\end{small}
Taking the square of the $\ell_{\Delta}^{2}$-norm of the second equation of
$\left(  \text{\ref{abc1}}\right)  $ leads to%
\begin{align}
 J_{2}&=\left\Vert f^{n+1}\right\Vert _{\ell_{\Delta}^{2}}^{2}+\left(  \Delta
t\right)  ^{2}\left\Vert De^{n+1}\right\Vert _{\ell_{\Delta}^{2}}^{2}+\left(
c\Delta t\right)  ^{2}\left\Vert D_{+}D_{-}De^{n+1}\right\Vert _{\ell_{\Delta
}^{2}}^{2}-2c\left(  \Delta t\right)  ^{2}\left\Vert D_{+}De^{n+1}\right\Vert
_{\ell_{\Delta}^{2}}^{2}\nonumber\\
& + 2\Delta t\left\langle f^{n+1},De^{n+1}\right\rangle +2c\Delta t\left\langle
f^{n+1},D_{+}D_{-}De^{n+1}\right\rangle.\label{abc2}
\end{align}

Let us observe that the terms
\[
2\Delta t\left\langle f^{n+1},De^{n+1}\right\rangle \text{\ \ \ and\ \ \  }2c\Delta t\left\langle
f^{n+1},D_{+}D_{-}De^{n+1}\right\rangle\text{,\ \ \ in\ }\eqref{abc2}
\]
and 
\[
-2ac\Delta t\left\langle
e^{n+1},D_{+}D_{-}Df^{n+1}\right\rangle\text{\ \ \ and\ \ \  }-2c\Delta t \left\langle (I-bD_+D_-)e^{n+1}, Df^{n+1}\right\rangle \text{,\ \ \ in\ }\eqref{abc22}
\]

\noindent can be lower controlled by the energy $-\Delta tC\mathcal{E}(e^{n+1}, f^{n+1})$ (by using integration by parts \eqref{integr_by_part_1} and Cauchy-Schwarz inequality) such that they do not raise any issues. In
order to get rid of the term%
\begin{equation}
2acb\Delta t\left\langle D_{+}D_{-}e^{n+1},D_{+}D_{-}Df^{n+1}\right\rangle \text{\ \ \ in\ }\eqref{abc22}
\label{problematic}%
\end{equation}
we will apply $\sqrt{\left(  -a\right)  b}D_{+}$ into the second equation of
$\left(  \text{\ref{abc1}}\right)  $ and consider the square of the
$\ell_{\Delta}^{2}$-norm of the result. First, let us compute:
\begin{small}
\begin{align}
J_{3} &  =\left(  -a\right)  b\left\Vert D_{+}f^{n+1}+\Delta tD_{+}%
De^{n+1}+c\Delta tD_{+}D_{+}D_{-}De^{n+1}\right\Vert _{\ell_{\Delta}^{2}}%
^{2}\nonumber\\
&  =\left(  -a\right)  b\left\Vert D_{+}f^{n+1}\right\Vert _{\ell_{\Delta}%
^{2}}^{2}+\left(  -a\right)  b\left(  \Delta t\right)  ^{2}\left\Vert
D_{+}De^{n+1}\right\Vert _{\ell_{\Delta}^{2}}^{2}+\left(  -a\right)  b\left(
c\Delta t\right)  ^{2}\left\Vert D_{+}D_{+}D_{-}De^{n+1}\right\Vert
_{\ell_{\Delta}^{2}}^{2}\nonumber\\
&  +2\left(  -a\right)  b\Delta t\left\langle D_{+}f^{n+1},D_{+}%
De^{n+1}\right\rangle +2(-a)bc\Delta t\left\langle D_{+}f^{n+1},D_{+}%
D_{+}D_{-}De^{n+1}\right\rangle \nonumber\\
&  +2abc\left(  \Delta t\right)  ^{2}\left\Vert D_{+}D_{-}De^{n+1}\right\Vert
_{\ell_{\Delta}^{2}}^{2}.\label{abc_3}%
\end{align}
\end{small}
Of course the problematic term from $\left(  \text{\ref{problematic}}\right)
$ will cancel with
\[
-2abc\Delta t\left\langle D_{+}f^{n+1},D_{+}D_{+}D_{-}De^{n+1}\right\rangle
\]
appearing in $\left(  \text{\ref{abc_3}}\right)  $. The additional term $2\left(  -a\right)  b\Delta t\left\langle D_{+}f^{n+1},D_{+}D(e)^{n+1}\right\rangle$ in \eqref{abc_3} can be once again controlled by the energy $-\Delta tC\mathcal{E}(e^{n+1}, f^{n+1})$ (by using integrations by parts \eqref{integr_by_part_1} and Cauchy-Schwarz inequality).\\
Let us now interest to the right hand side.  For the first equation of $\left(  \text{\ref{abc1}}\right)  $, we obtain by Young inequality
\begin{align}
J_1&  \leq\left(  -c\right)  \left(  \Delta t+ \Delta t
^{2}\right) C_0 \left\Vert \epsilon_{1}^{n}\right\Vert _{\ell_{\Delta}^{2}}%
^{2}+\left(  1+C_0\Delta t\right)  \left(  -c\right)  \left\Vert \left(
I-bD_{+}D_{-}\right)  e^{n}\right\Vert _{\ell_{\Delta}^{2}}^{2}+C_1\Delta
t\mathcal{E}\left(  e^{n},f^{n}\right)  .\label{I1_abc}
\end{align}
In the previous inequality, we have upper bounded
$\Delta t||-D(e^nu^n_{\Delta})-D(e^nf^n)-D(\eta_{\Delta}^nf^n)||_{\ell^{2}_{\Delta }}^2$
by
$$\max\left\{||u_{\Delta}^n||_{\ell^{\infty}}, ||Du_{\Delta}^n||_{\ell^{\infty}}, ||\eta_{\Delta}^n||_{\ell^{\infty}} , ||D\eta_{\Delta}^n||_{\ell^{\infty}}, \right\}\max\left\{\frac{1}{(-a)b}, \frac{1}{2b(-c)}\right\}\mathcal{E}(e^n, f^n).$$
\noindent The second equation of $\left(  \text{\ref{abc1}}\right)  $ gives
\begin{small}
\begin{align}
J_2&  \leq \left(  \Delta t+ \Delta t  ^{2}\right) C_0 \left\Vert
\epsilon_{2}^{n}\right\Vert _{\ell_{\Delta}^{2}}^{2}\nonumber\\
&+\left(  1+C_0\Delta
t\right)  \left\Vert f^{n}-\Delta tD\left(  f^n\left(  u^n_{\Delta}
+\frac{1}{2}f^n\right)  \right)  +\frac{\tau_{2}}{2}\Delta t\Delta
xD_{+}D_{-}\left(  f^n\right)  \right\Vert _{\ell_{\Delta}^{2}}%
^{2}\nonumber\\
&  \leq\left(  \Delta t+ \Delta t ^{2}\right)C_0  \left\Vert
\epsilon_{2}^{n}\right\Vert _{\ell_{\Delta}^{2}}^{2}+\left(  1+C_1\Delta
t\right)  \left\Vert f^{n}\right\Vert _{\ell_{\Delta}^{2}}^{2}.\label{I2_abc}%
\end{align}
\end{small}

\noindent Moreover, using
Proposition $\left(  \text{\ref{Burgers3}}\right)  $, we obtain%
\begin{align}
J_{3} &  \leq\left(  -a\right)  b\left(  \Delta t+\left(  \Delta t\right)
^{2}\right)  C_0\left\Vert D_{+}\epsilon_{2}^{n}\right\Vert _{\ell_{\Delta}^{2}%
}^{2}+C_2\Delta t(-a)b\left\Vert f^{n}\right\Vert _{\ell_{\Delta}^{2}}%
^{2}\nonumber\\
&+\left(  1+C_3\Delta t\right)  \left(  -a\right)  b\left\Vert D_{+}%
f^{n}\right\Vert _{\ell_{\Delta}^{2}}^{2}\label{I3}.%
\end{align}
Putting together Estimates $\left(  \text{\ref{abc22}}\right)  $, $\left(
\text{\ref{abc2}}\right)  $ and $\left(  \text{\ref{abc_3}}\right)  $ with \eqref{I1_abc}, \eqref{I2_abc} and \eqref{I3} gives
\begin{align*}
  \left(  1-C_4\Delta t\right)  \mathcal{E}\left(  e^{n+1},f^{n+1}\right) &  \leq\left(  \Delta t+\Delta t^{2}\right) C_0 \left(  -c\right)  \left\Vert
\epsilon_{1}^{n}\right\Vert _{\ell_{\Delta}^{2}}^{2}\\
&+\left(  \Delta t+\left(
\Delta t\right)  ^{2}\right)  C_0\left(  \left\Vert \epsilon_{2}^{n}\right\Vert
_{\ell_{\Delta}^{2}}^{2}+\left(  -ab\right)  \left\Vert D_{+}\epsilon_{2}%
^{n}\right\Vert _{\ell_{\Delta}^{2}}^{2}\right)  \\
&  +\left(  1+C_5\Delta t\right)  \mathcal{E}\left(  e^{n},f^{n}\right)  .
\end{align*}
\end{proof}

\section{Experimental results\label{Simulari}}

In order to illustrate our theoretical results, we compare in this paragraph, known exact
solutions (\textit{i.e.} traveling waves) with the discrete solutions computed with the numerical schemes. We fix $[0,L]$ the space domain with $L=40$. Moreover, we use periodic boundary conditions. Those conditions are not absorbing boundary
conditions, which would mimic perfectly the behavior on $\mathbb{Z}$, but we fix the final time $T$ small enough and we take the initial
conditions localized enough in order to minimize boundary effects. \\

 In Figures \ref{fig_1}-\ref{fig_7}, we plot the exact and the numerical solutions which are computed with a space cell size $\Delta x=\frac{1}{2^6}$ and the time step $\Delta t=0.001$. \\
 The convergence results are gathered in Tables \ref{TABLE_CASES_A_B}-\ref{TABLE_CASES_E_F}. The computations are performed with a number of cells $J$ for the values $J\in\{640, 1280, 2560, 5120, 10240\}.$
 The time step is chosen to be $\Delta t^n=\frac{\Delta x}{||u_{\Delta}^n||_{\ell^{\infty}}}$ in order to verify the CFL-type condition. We perform computations up to the final time $T=2$ or $T=4$. In the case where $bd=0$, we chose the Rusanov coefficients $\tau_i=\tau_i^n$ verifying $\tau_1^n\geq||u_{\Delta}^n||_{\ell^{\infty}}$ and $\tau_2^n\geq||u_{\Delta}^n||_{\ell^{\infty}}$, when they are needed. 
 
\subsection{Linear case.}
We begin by a test for the linear case:

\begin{equation}
\left\{
\begin{array}
[c]{l}%
\left(  I-b\partial_{xx}^{2}\right)  \partial_{t}\eta+\left(  I+a\partial
_{xx}^{2}\right)  \partial_{x}u =0,\\
\left(  I-d\partial_{xx}^{2}\right)  \partial_{t}u+\left(  I+c\partial
_{xx}^{2}\right)  \partial_{x}\eta=0,\\
\eta_{|t=0}=\eta_{0},\text{ }u_{|t=0}=u_{0}.
\end{array}
\right.  \tag{$L_{abcd}$}
\end{equation}

More precisely, we take $a=-\frac{7}{30}$, $b=\frac{7}{15}$, $c=-\frac{2}{5}$ and $d=\frac{1}{2}$ and initialize the scheme with the initial datum
\[
\left\{
\begin{split}
&  \eta_0(x)=\frac{3}{8}\mathrm{sech}^{2}\left(  \frac{1}{2}\sqrt{\frac{5}{7}%
}\left(  x-\frac{L}{2}\right)  \right)  ,\\
&  u_0(x)=\frac{1}{2\sqrt{2}}\mathrm{sech}^{2}\left(  \frac{1}{2}\sqrt
{\frac{5}{7}}\left(  x-\frac{L}{2}\right)  \right)  .
\end{split}
\right.
\]
In this case, the discrete energy must be conserved if we use the Crank-Nicolson scheme (\textit{i.e.} $\theta=\frac{1}{2}$).
We perform computation up to the final time $T=2$ with space and time steps $\Delta x=\frac{1}{2^5}$ and $\Delta t=0.001$ with $\theta=\frac{1}{2}$. The conclusion is that the discrete energy is conserved, more or less up to a factor of order $10^{-11}$, as it is illustrated by Figure \ref{linear_ABCD}. 
\begin{figure}[h]
\centering\epsfig{figure=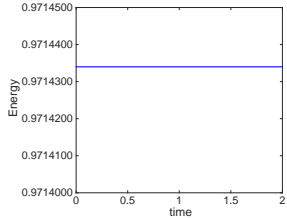,width=.3\linewidth}
\caption{Energy conservation in the linear case \label{linear_ABCD}}
\end{figure}

\subsection{Case $b>0$, $d>0$.}
This section is divided in two parts. In the first part, we will illustrate experimentally the rates of accuracy obtained in Theorem \ref{Teorema_Numerica1}. In the second part, we investigate in details the traveling-wave solutions.

\subsubsection*{Numerical convergence rates}
We first take a look at the BBM-BBM system with $a=0$, $b=\frac{1}{6}$, $c=0$,
$d=\frac{1}{6}$ for which two different exact solutions are known, see \cite{BonaChen1998} and\cite{Chen1998}. We perform a first experiment for the following family of exact solutions:
\[
(A)\left\{
\begin{split}
&  \eta(t,x)=\frac{15}{4}\left\{  -2+\mathrm{cosh}\left(  3\sqrt{\frac{2}{5}%
}\left(  x-\frac{L}{2}-\frac{5}{2}t\right)  \right)  \right\}  \mathrm{sech}%
^{4}\left(  \frac{3}{\sqrt{10}}\left(  x-\frac{L}{2}-\frac{5}{2}t\right)
\right)  ,\\
&  u(t,x)=\frac{15}{2}\mathrm{sech}^{2}\left(  \frac{3}{\sqrt{10}}\left(
x-\frac{L}{2}-\frac{5}{2}t\right)  \right)  .
\end{split}
\right.
\]
The results are represented in Figure \ref{fig_1}.

\begin{figure}[h]
\begin{minipage}[t]{1\linewidth}
\begin{minipage}[b]{.5\linewidth}
\centering\epsfig{figure=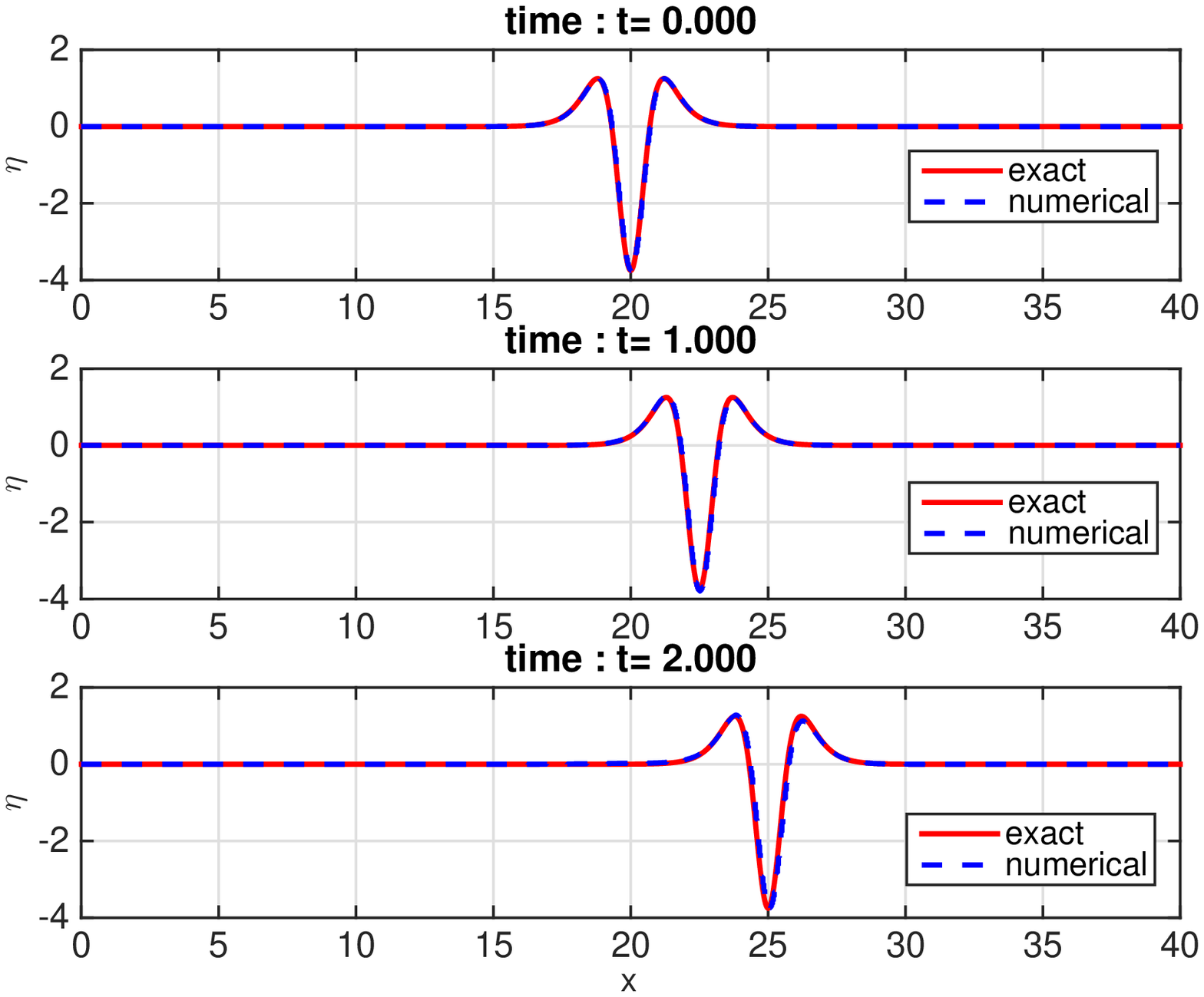,width=\linewidth}
\end{minipage}\hspace*{-0.2cm}
\begin{minipage}[b]{.5\linewidth}
\centering\epsfig{figure=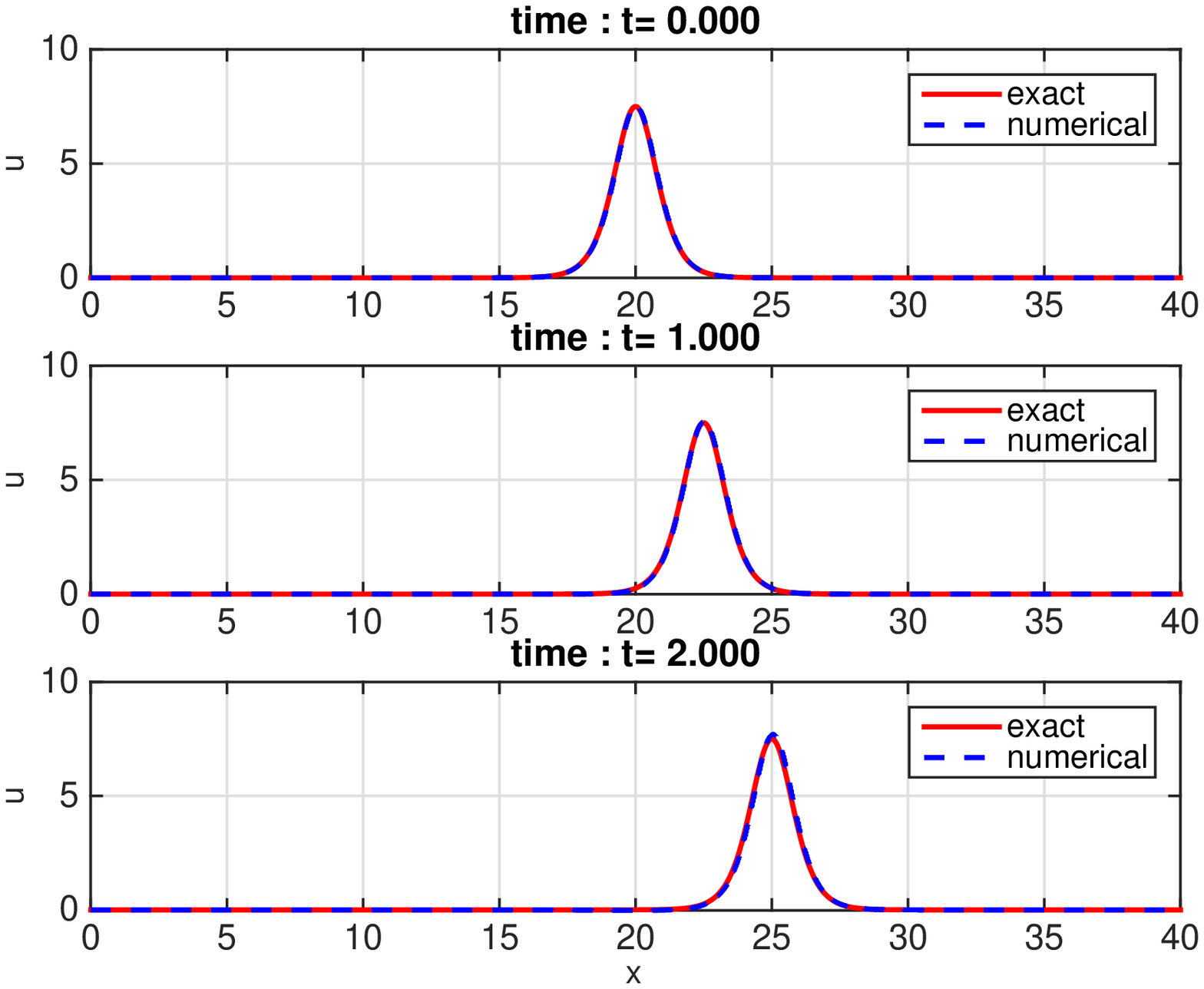,width=\linewidth}
\end{minipage}\vspace*{0.1cm}
\caption{Case $(A)$ where $a=0$, $b=\frac{1}{6}$, $c=0$, $d=\frac{1}{6}$ : results for $\eta$ (left) and $u$ (right) \label{fig_1}}
\end{minipage}
\end{figure}

We perform a second experiment for the following family of solutions:
\[
(B)\left\{
\begin{split}
&  \eta(t,x)=-1,\\
&  u(t,x)=C_{s}\left(1-\frac{\rho}{6}\right)+\frac{C_{s}\rho}{2}\mathrm{sech}^{2}\left(
\frac{\sqrt{\rho}}{2}\left(  x-\frac{L}{2}-C_{s}t\right)  \right)  ,
\end{split}
\right.
\]
with $C_{s}=2$ and $\rho=1.1$. The corresponding result can be found in Figure \ref{fig_2}.\\
\begin{figure}[h]
\begin{minipage}[t]{1\linewidth}
\begin{minipage}[b]{.5\linewidth}
\centering\epsfig{figure=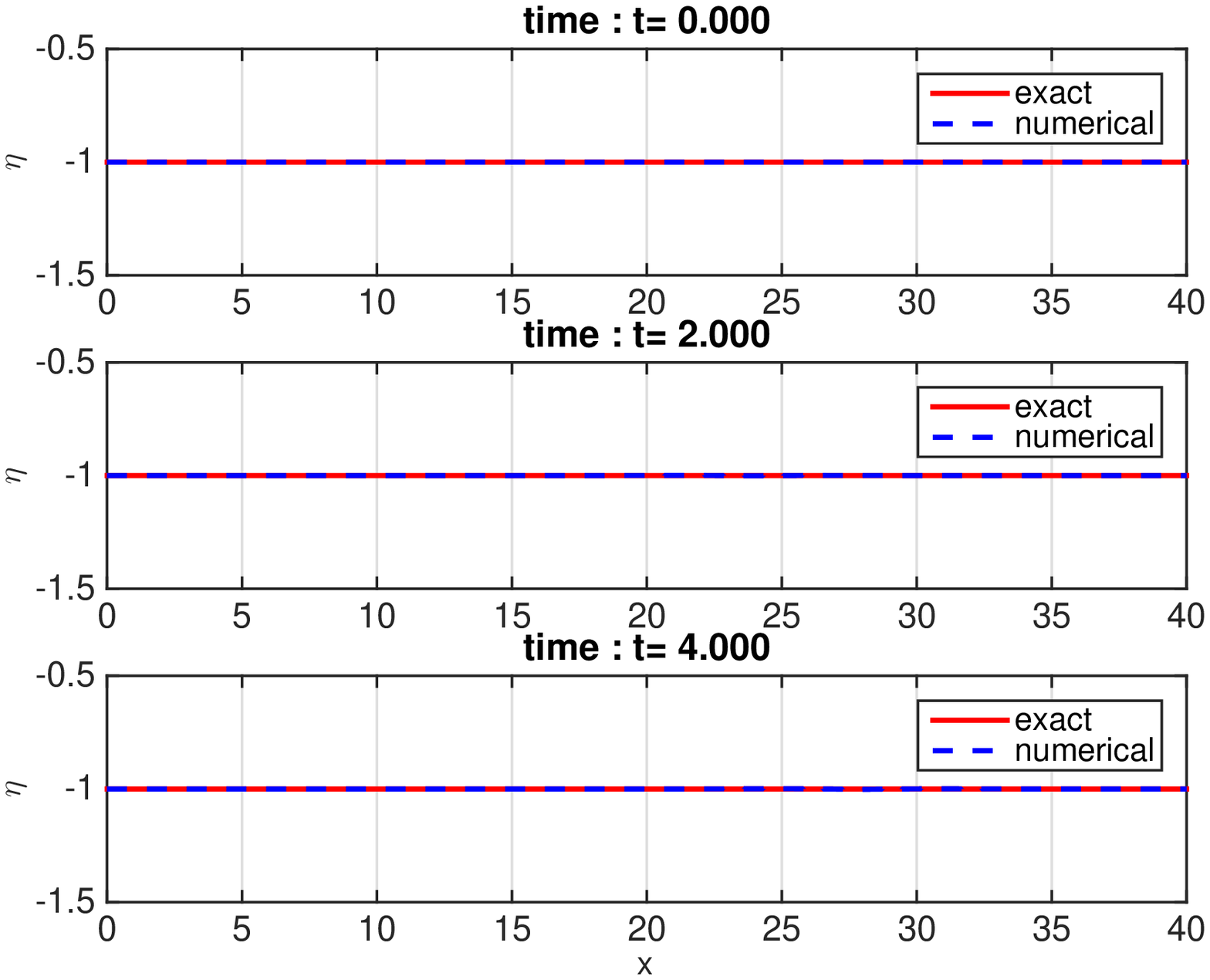,width=\linewidth}
\end{minipage}\hspace*{-0.2cm}
\begin{minipage}[b]{.5\linewidth}
\centering\epsfig{figure=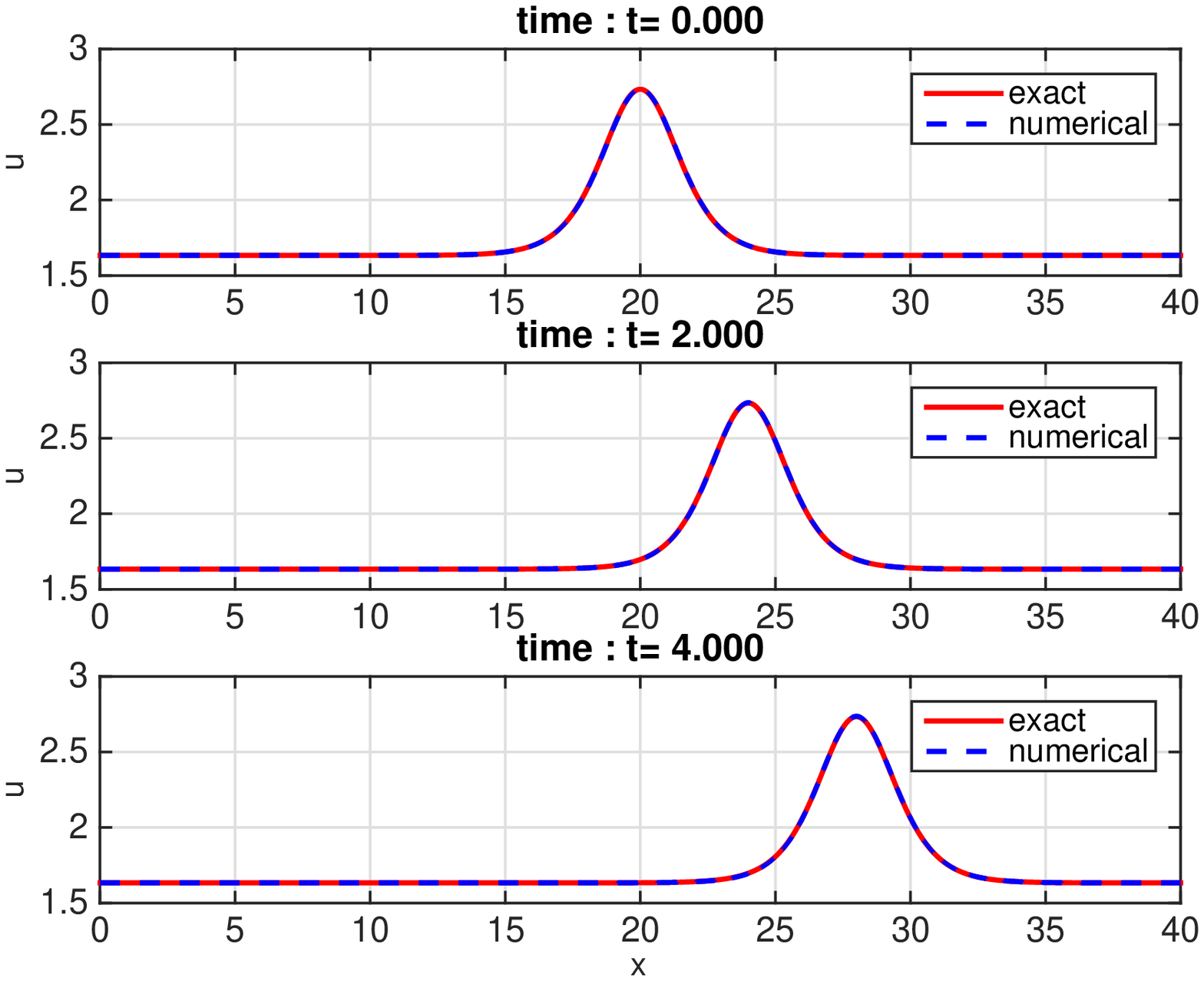,width=\linewidth}
\end{minipage}\vspace*{0.1cm}
\caption{Case $(B)$ where $a=0$, $b=\frac{1}{6}$, $c=0$, $d=\frac{1}{6}$ : results for $\eta$ (left) and $u$ (right) \label{fig_2}}
\end{minipage}
\end{figure}
In order to compute \textit{a numerical rate of convergence}, we perform computations with increasingly smaller space meshes.
The results are gathered in Table \ref{TABLE_CASES_A_B}.

\begin{table}[b]
\begin{center}
\hspace*{-0.7cm}
\begin{tabular}{|c|c|c|c|c|}
\hline
&\multicolumn{2}{c|}{Case $(A)$}&\multicolumn{2}{c|}{Case $(B)$}\\
\cline{2-5}
$\Delta x$&&&&\\
&energy error&exp. rate&energy error&exp. rate\\
&&&&\\
\hline
&&&&\\
$6.25000.10^{-2}$&$4.48993.10^{0\ \ }$&&$8.51815.10^{-2}$&\\
$3.12500.10^{-2}$&$2.05132.10^{0\ \ }$&$1.13270$&$4.14750.10^{-2}$&$1.03830$\\
$1.56250.10^{-2}$&$9.80969.10^{-1}$&$1.06450$&$2.04332.10^{-2}$&$1.02133$\\
$7.81250.10^{-3}$&$4.79738.10^{-1}$&$1.03181$&$1.01409.10^{-2}$&$1.01073$\\
$3.90625.10^{-3}$&$2.37234.10^{-1}$&$1.01580$&$5.05189.10^{-3}$&$1.00529$\\
&&&&\\
\hline
\end{tabular}
\caption{Experimental rates of convergence for $b,d\neq0$}
\label{TABLE_CASES_A_B}
\end{center}
\end{table}

\clearpage
A third example that we treat concerns the case when $a=-\frac{7}{30}$, $b=\frac{7}{15}$, $c=-\frac{2}{5}$ and $d=\frac{1}{2}$ which is discussed in \cite{Chen1998} for which the exact solution reads:
\[
(C)\left\{
\begin{split}
&  \eta(t,x)=\frac{3}{8}\mathrm{sech}^{2}\left(  \frac{1}{2}\sqrt{\frac{5}{7}%
}\left(  x-\frac{L}{2}-5\frac{\sqrt{2}}{6}t\right)  \right)  ,\\
&  u(t,x)=\frac{1}{2\sqrt{2}}\mathrm{sech}^{2}\left(  \frac{1}{2}\sqrt
{\frac{5}{7}}\left(  x-\frac{L}{2}-5\frac{\sqrt{2}}{6}t\right)  \right)  .
\end{split}
\right.
\]
The results are gathered
in Figure \ref{fig_4} and Table \ref{TABLE_CASES_C}.
\begin{figure}[h]
\begin{minipage}[t]{1\linewidth}
\begin{minipage}[b]{.5\linewidth}
\centering\epsfig{figure=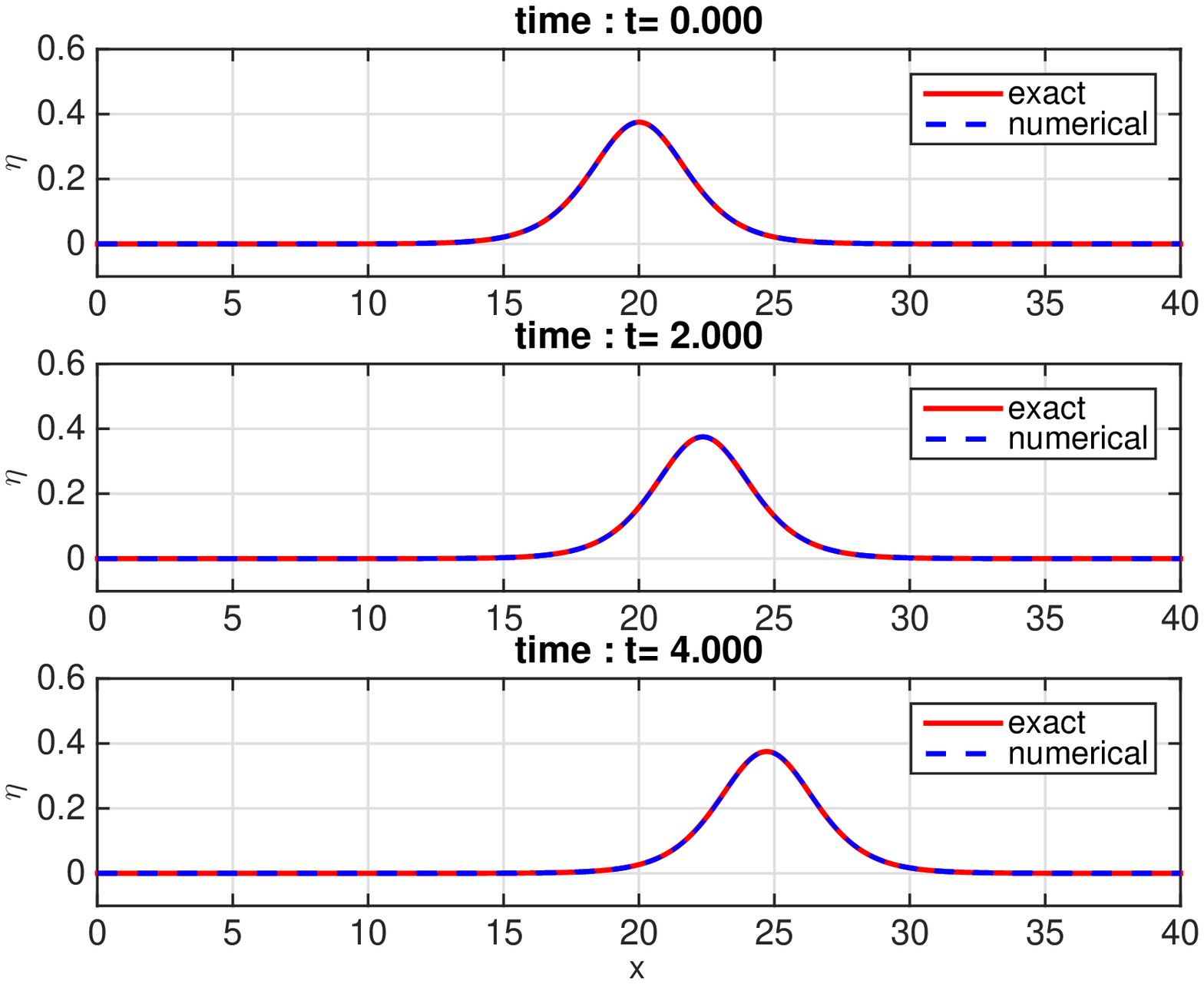,width=\linewidth}
\end{minipage}\hspace*{-0.2cm}
\begin{minipage}[b]{.5\linewidth}
\centering\epsfig{figure=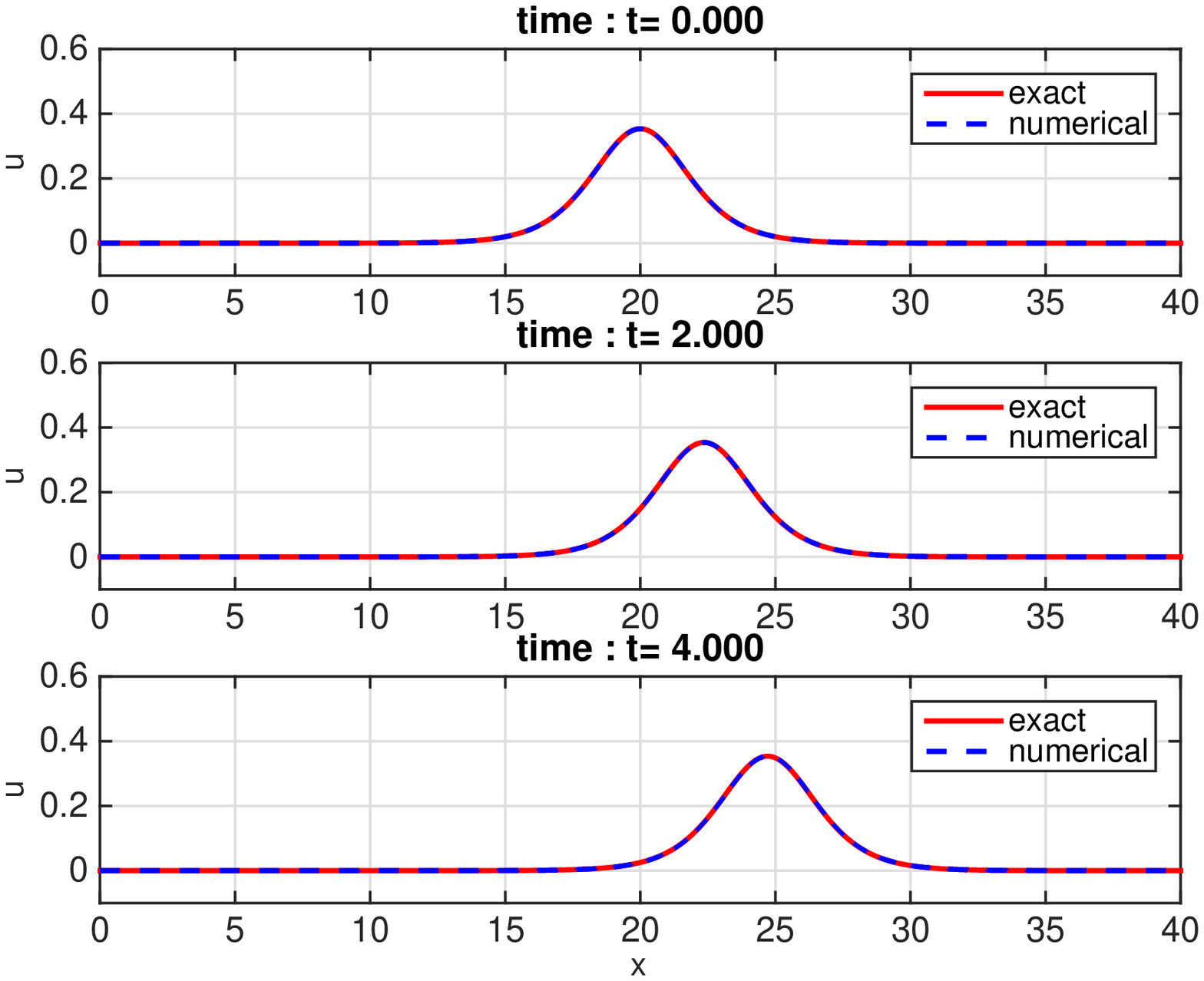,width=\linewidth}
\end{minipage}
\caption{Case $(C)$ where $a=-\frac{7}{30}$, $b=\frac{7}{15}$, $c=-\frac{2}{5}$, $d=\frac{1}{2}$ : results for $\eta$ (left) and $u$ (right) \label{fig_4}}
\end{minipage}
\end{figure}

\begin{table}[h]
\begin{center}
\hspace*{-0.7cm}
\begin{tabular}{|c|c|c|}
\hline
&\multicolumn{2}{c|}{Case $(C)$}\\
\cline{2-3}
$\Delta x$&&\\
&energy error&exp. rate\\
&&\\
\hline
&&\\
$6.25000.10^{-2}$&$2.27860.10^{-2}$&\\
$3.12500.10^{-2}$&$1.126019.10^{-2}$&$1.01692$\\
$1.56250.10^{-2}$&$5.612993.10^{-3}$&$1.00439$\\
$7.81250.10^{-3}$&$2.847910.10^{-3}$&$0.97887$\\
&&\\
\hline
\end{tabular}
\caption{Experimental rates of convergence for $a,b,c,d\neq0$}
\label{TABLE_CASES_C}
\end{center}
\end{table}

The fourth and last example that we present in this paragraph is the case when $a=0$, $b=\frac{1}{3}$, $c=-\frac{1}{3}$
and $d=\frac{1}{3}$, for which the exact solution writes:
\[
(D)\left\{
\begin{split}
&  \eta(t, x)=-1,\\
&  u(t, x)=(1-\frac{\rho}{3})C_{s}+C_{s}\rho\mathrm{sech}^{2}\left(
\frac{\sqrt{\rho}}{2}\left(  x-\frac{L}{2}-C_{s}t\right)  \right)  .
\end{split}
\right.
\]
We take $C_{s}=3$ and $\rho=2$ in order to perform our computations. The results are gathered in Figure \ref{fig_5} and Table \ref{TABLE_CASES_D}.
\begin{figure}[h]
\begin{minipage}[t]{1\linewidth}
\begin{minipage}[b]{.5\linewidth}
\centering\epsfig{figure=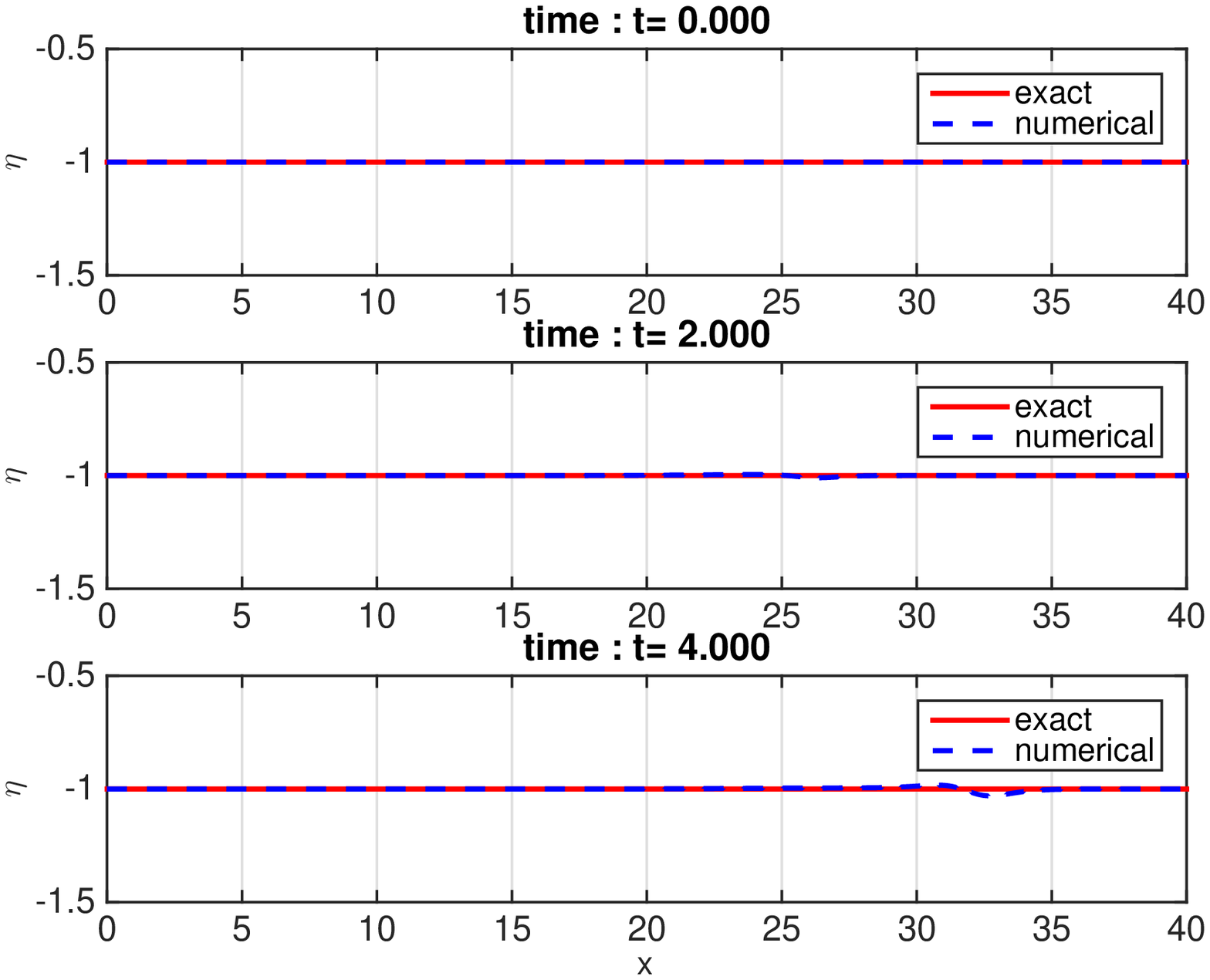,width=\linewidth}
\end{minipage}\hspace*{-0.2cm}
\begin{minipage}[b]{.5\linewidth}
\centering\epsfig{figure=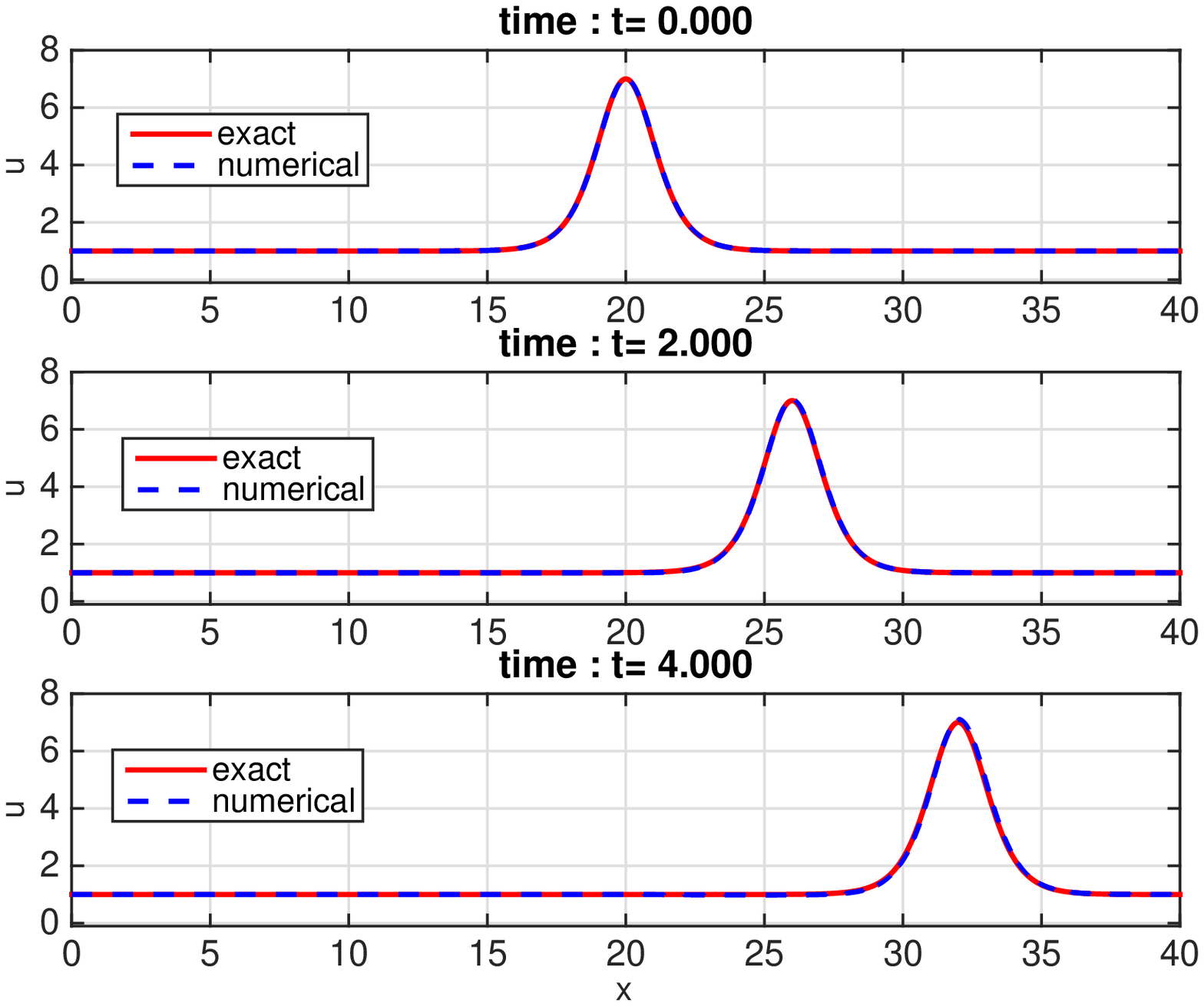,width=\linewidth}
\end{minipage}\vspace*{-0.2cm}
\caption{Case $(D)$ where $a=0$, $b=\frac{1}{3}$, $c=-\frac{1}{3}$, $d=\frac{1}{3}$ : results for $\eta$ (left) and $u$ (right) \label{fig_5}}
\end{minipage}
\end{figure}

\begin{table}[h]
\begin{center}
\hspace*{-0.7cm}
\begin{tabular}{|c|c|c|}
\hline
&\multicolumn{2}{c|}{Case $(D)$}\\
\cline{2-3}
$\Delta x$&&\\
&energy error&exp. rate\\
&&\\
\hline
&&\\
$6.25000.10^{-2}$&$6.39353.10^{-1}$&\\
$3.12500.10^{-2}$&$3.09159.10^{-1}$&$1.04826$\\
$1.56250.10^{-2}$&$1.52031.10^{-1}$&$1.02399$\\
$7.81250.10^{-3}$&$7.53884.10^{-2}$&$1.01195$\\
$3.90625.10^{-3}$&$3.75388.10^{-2}$&$1.00596$\\
&&\\
\hline
\end{tabular}
\caption{Experimental rates of convergence for $b,c,d\neq0$}
\label{TABLE_CASES_D}\vspace*{-0.5cm}
\end{center}
\end{table}

\begin{remark}
In these four examples, our numerical schemes do no contain artificial viscosity i.e. $\tau_1=0$ and $\tau_2=0$. As we explained in the introduction, the parameters $b,d$ enable us to control and stabilize the scheme as the results of Figures \ref{fig_1}-\ref{fig_5} show.\\
The schemes used to perform these experiments are $O(\Delta t+\Delta x^2)$ accurate so, if we take $\Delta t=\Delta x^2$, we should be able to observe a second order convergence rate. The results when performing the above experiments with $\Delta t=\Delta x^2$ are gathered in Table \ref{TABLE_SECOND_ORDER} and confirm our intuition.

\begin{table}[h]
\begin{center}
\hspace*{-0.7cm}
\begin{tabular}{|c|c|c|c|c|}
\hline
&\multicolumn{2}{c|}{Case $(A)$}&\multicolumn{2}{c|}{Case $(C)$}\\
\cline{2-5}
$\Delta x$&&&&\\
&energy error&exp. rate&energy error&exp. rate\\
&&&&\\
\hline
&&&&\\
$2.5000.10^{-1}$&$3.29137.10^{\ 0}$&&$2.52768.10^{-2}$&\\
$1.2500.10^{-1}$&$7.75742.10^{-1}$&$2.08504$&$6.08893.10^{-3}$&$2.05355$\\
$6.2500.10^{-2}$&$1.90828.10^{-1}$&$2.02330$&$1.50692.10^{-3}$&$2.01459$\\
$3.1250.10^{-2}$&$4.75112.10^{-2}$&$2.00594$&$3.81640.10^{-4}$&$1.98131$\\
&&&&\\
\hline
\end{tabular}
\caption{Experimental rates of convergence with $\Delta t=\Delta x^2$}
\label{TABLE_SECOND_ORDER}
\end{center}
\end{table}

\end{remark}
\subsubsection*{Behavior of the numerical scheme for traveling wave solutions} All the test cases studied previously are in fact traveling wave solutions and it is worth pushing forward our analysis for such particular solutions with the numerical schemes. First of all, since $bd\geq0$, the Rusanov coefficients $\tau_1$ and $\tau_2$ are taken equal to 0, which restricts the numerical diffusion of the scheme and provides a relatively good numerical solution in long temporal intervals, as seen in Figures \ref{cas_B_4_fig} and \ref{casC_3_bis_fig}. For test case $(B)$, we have chosen $\Delta x=0.01$ and a space domain $[0,L]=[0,122]$, whereas for test case $(C)$, we have chosen $\Delta x=0.05$ and $[0,L]=[0,400]$. 
In both simulations, the changes in amplitude are limited : the relative error on the maximum amplitude of $u$ is equal to $1.8732\%$ at $t=34$ for the case $(B)$ and  $1.4632\%$ at $t=100$ for the case $(C)$.

\begin{figure}[h!]
\begin{minipage}[t]{1\linewidth}
\begin{minipage}[b]{.5\linewidth}
\centering\epsfig{figure=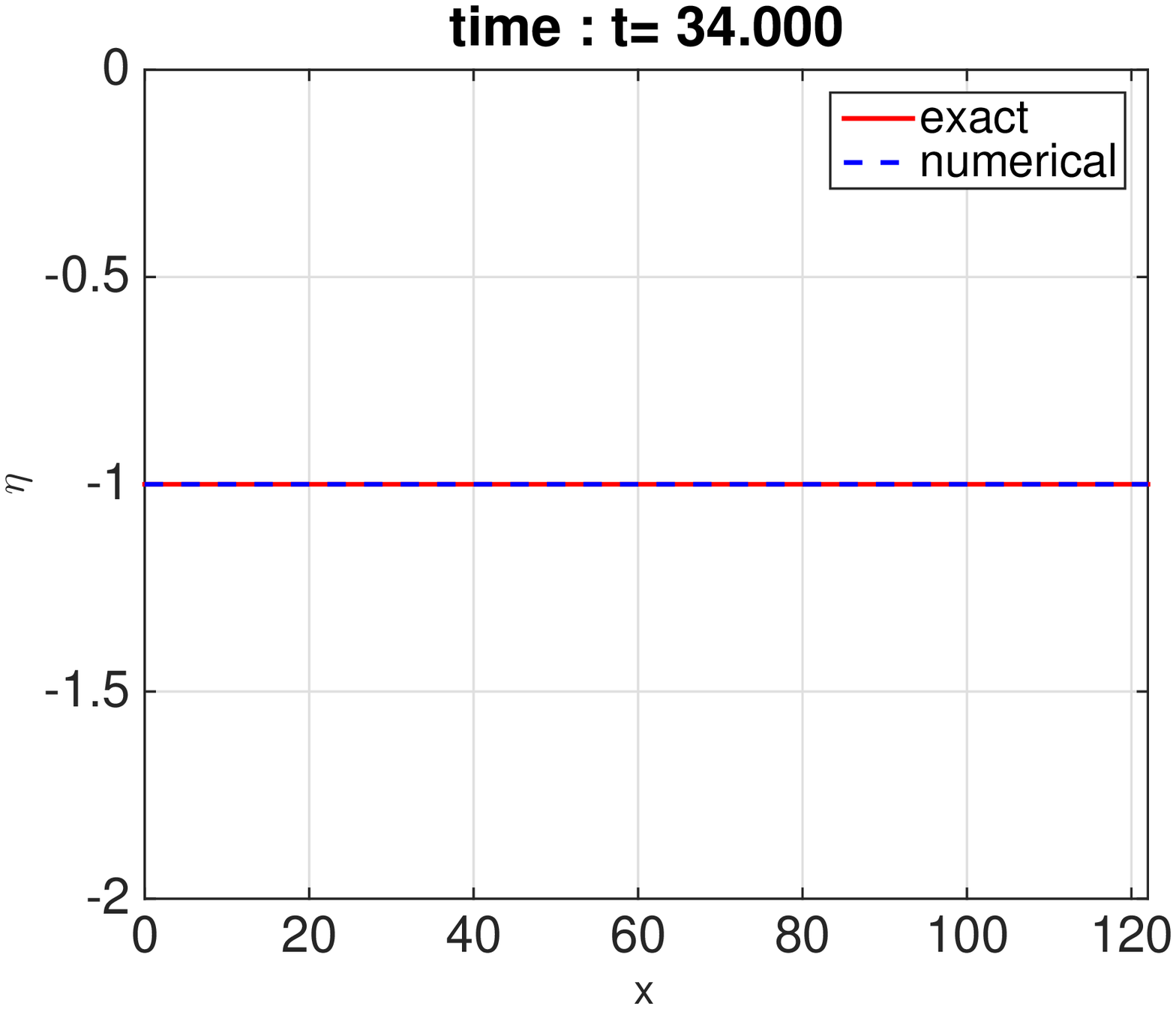,width=0.76\linewidth}
\end{minipage}\hspace*{-0.2cm}
\begin{minipage}[b]{.5\linewidth}
\centering\epsfig{figure=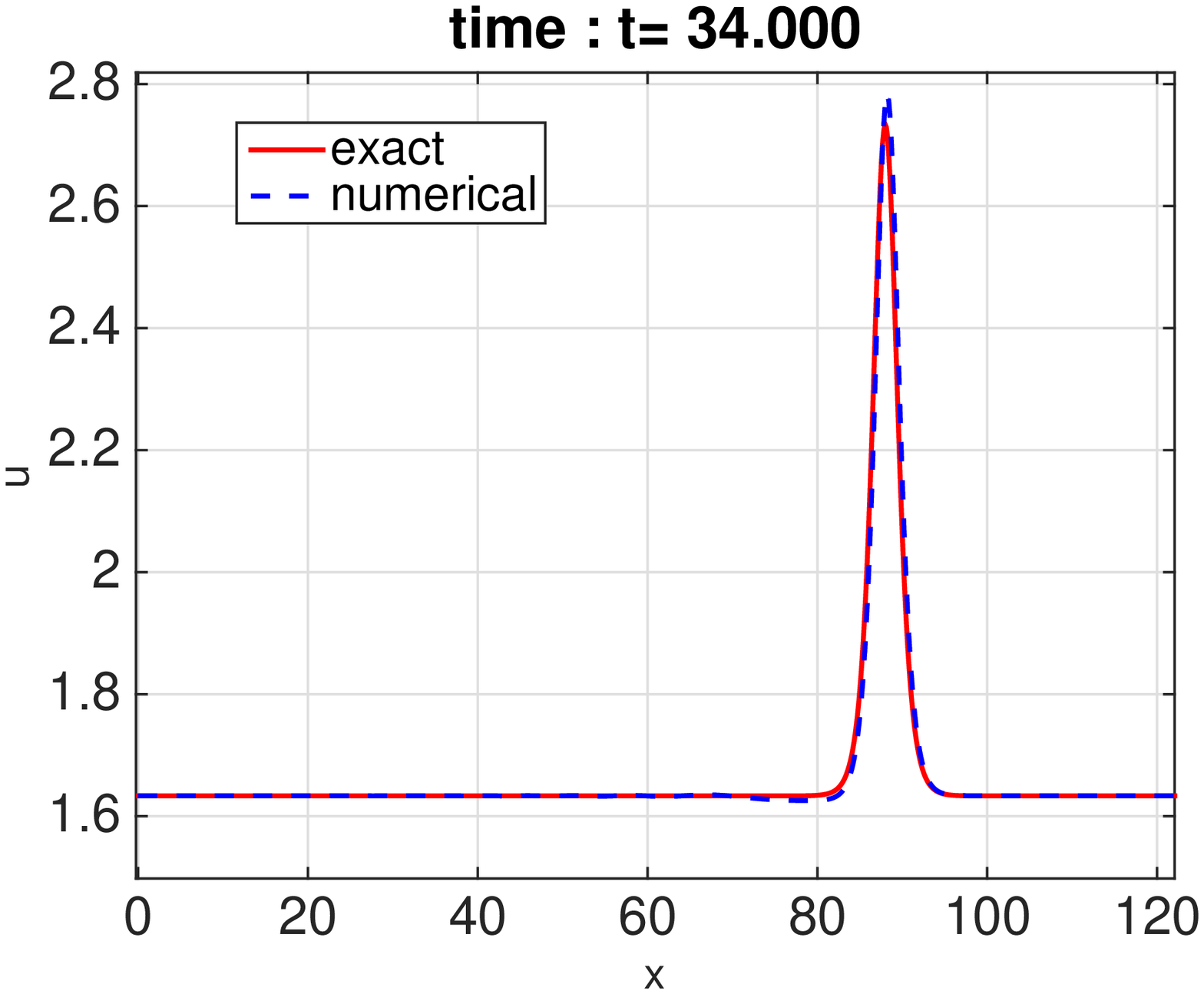,width=0.8\linewidth}
\end{minipage}
\caption{Long time behavior for case $(B)$ with $a=0$, $b=\frac{1}{6},$ $c=0$, $d=\frac{1}{6}$: results for $\eta$ (left) and $u$ (right) \label{cas_B_4_fig}}
\end{minipage}
\end{figure}

\begin{figure}[h!]
\begin{minipage}[t]{1\linewidth}
\begin{minipage}[b]{.5\linewidth}
\centering\epsfig{figure=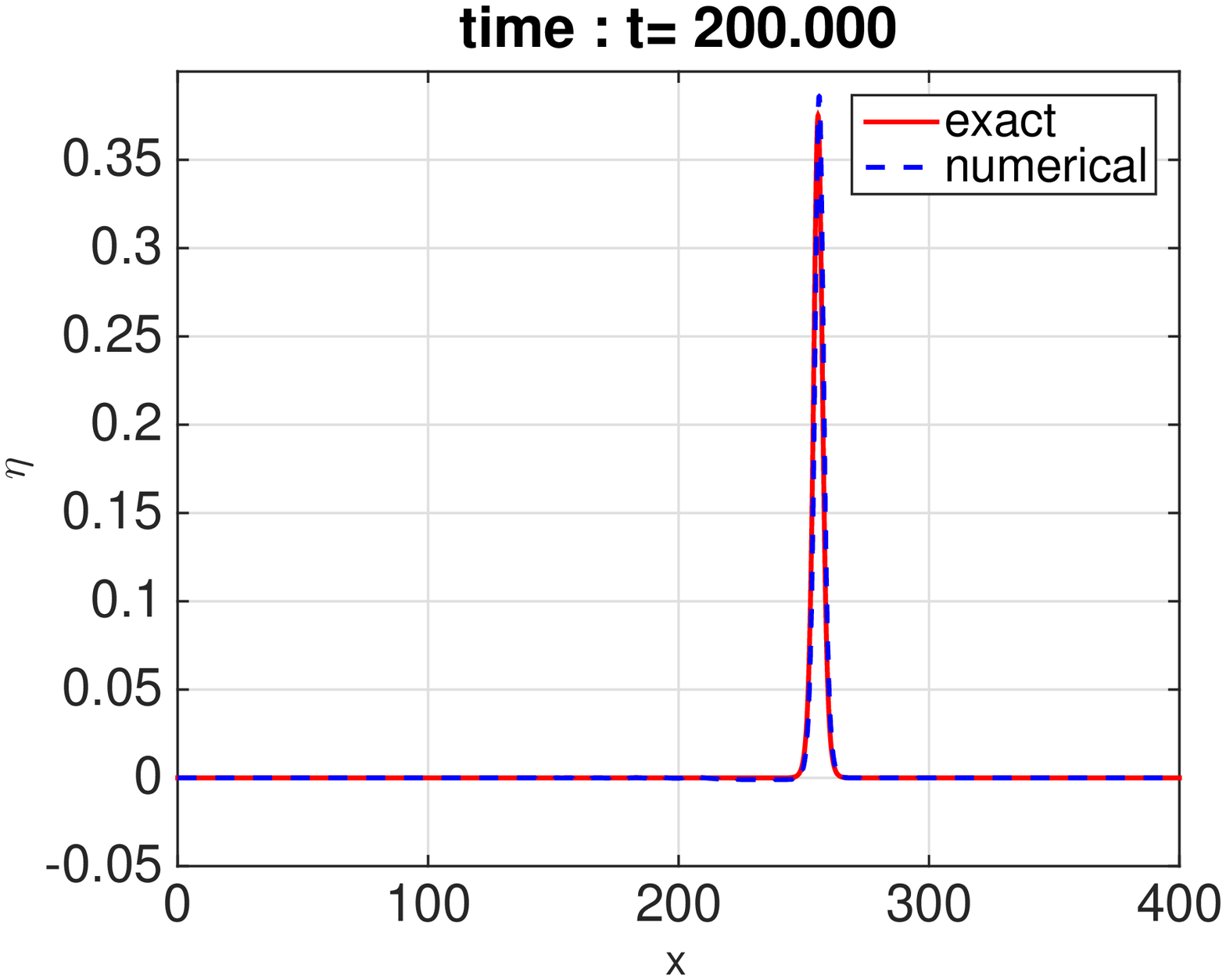,width=0.8\linewidth}
\end{minipage}\hspace*{-0.2cm}
\begin{minipage}[b]{.5\linewidth}
\centering\epsfig{figure=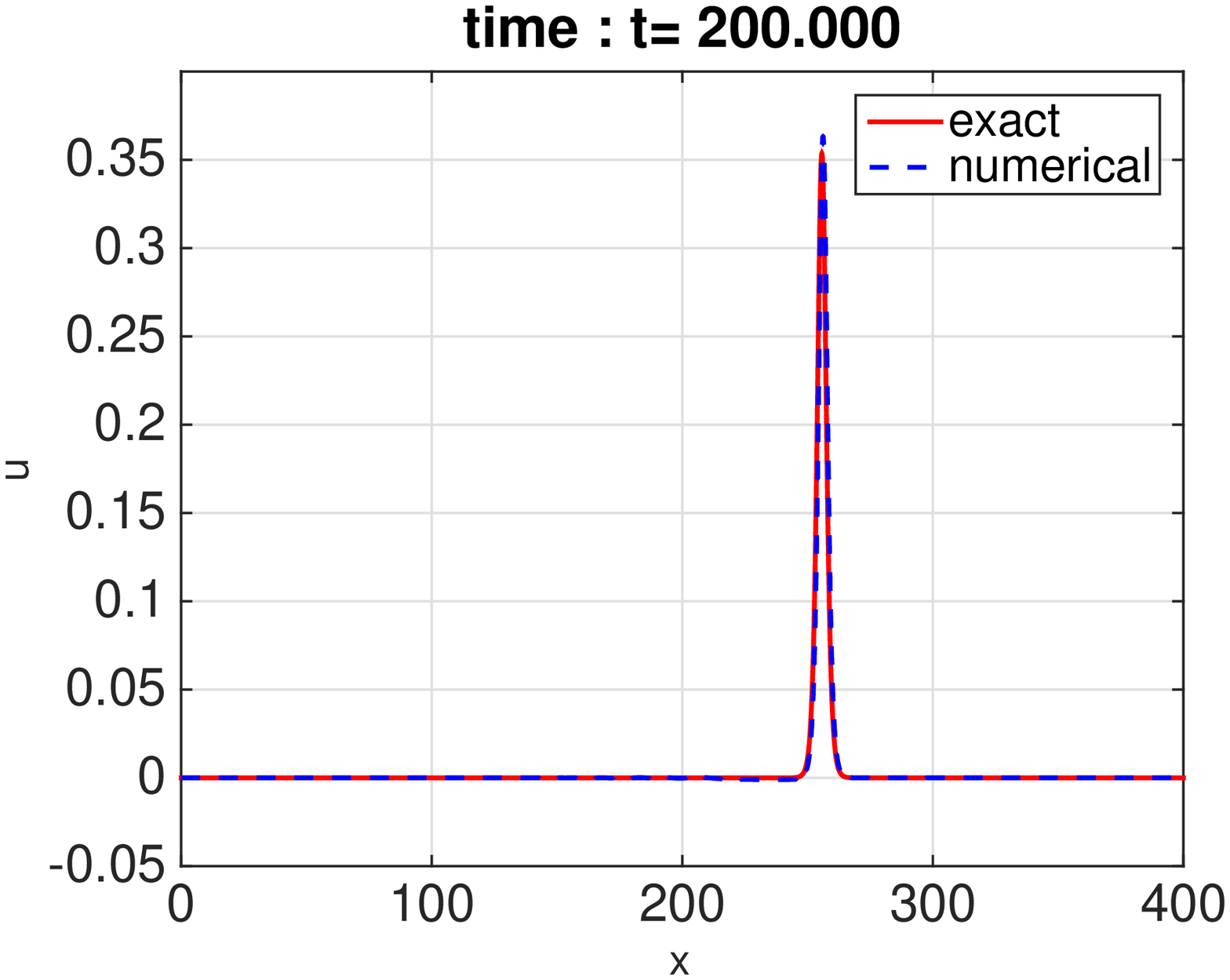,width=0.8\linewidth}
\end{minipage}
\caption{Long time behavior for case $(C)$ with $a=-\frac{1}{6}$, $b=c=0$, $d=\frac{1}{2}$: results for $\eta$ (left) and $u$ (right) \label{casC_3_bis_fig}}
\end{minipage}
\end{figure}

The oscillatory dispersive tails which may be visible in a zoom after the solitary wave are a numerical artifact. Thinner is the space mesh grid $\Delta x$, smaller is the amplitude of these oscillations. For instance, the amplitude of this tail is divided by approximately 2.15 when $\Delta x$ is halved, see Figure \ref{dispersive_tail_temps_long_fig}.

\begin{figure}[h!]
\begin{minipage}[t]{1\linewidth}
\begin{minipage}[b]{.5\linewidth}
\centering\epsfig{figure=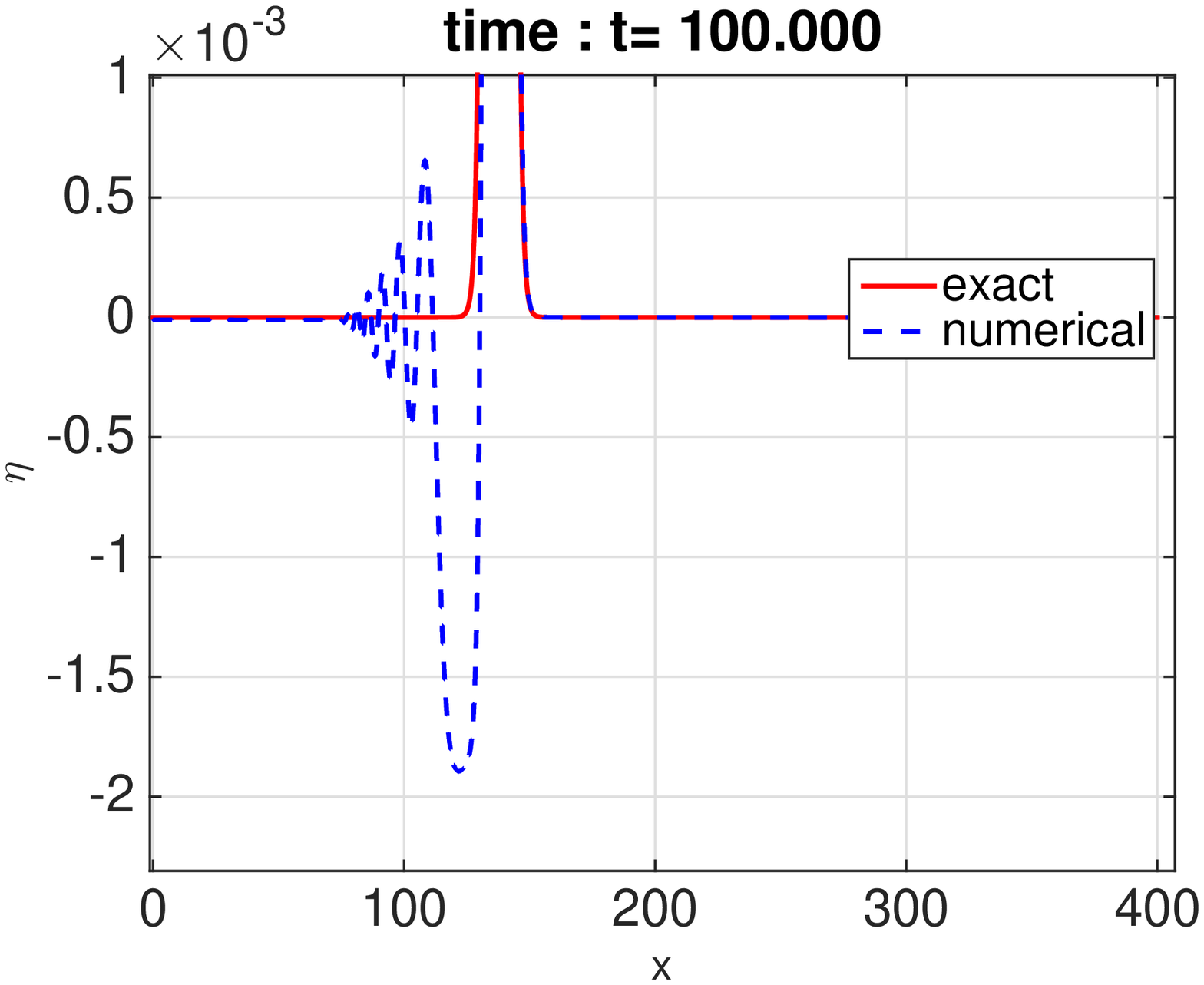,width=0.8\linewidth}
\end{minipage}\hspace*{-0.2cm}
\begin{minipage}[b]{.5\linewidth}
\centering\epsfig{figure=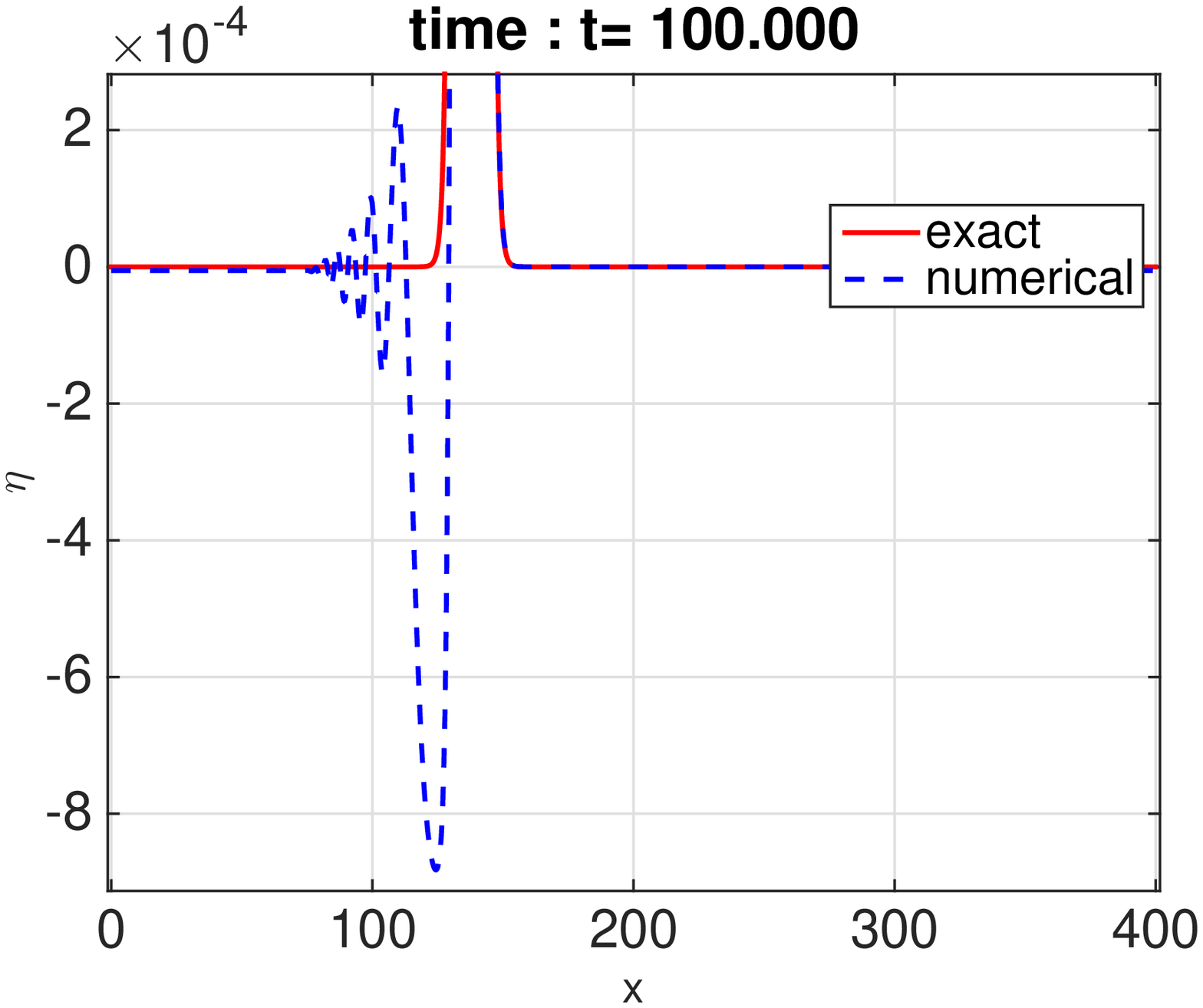,width=0.8\linewidth}
\end{minipage}
\caption{Dispersive tails for case $(C)$ due to a numerical artifact : results for $\Delta x=0.1$ (left) and $\Delta x=0.05$ (right) \label{dispersive_tail_temps_long_fig}}
\end{minipage}
\end{figure}

\subsection{Case $bd=0$.}\ \ \ We perform the same kind of numerical simulations : on the one hand, the validation of the convergence rates and on the other hand the behavior of the numerical schemes.
\subsubsection*{Numerical convergence rates}

In the following two examples, we have chosen $b=0$ while $d>0$. First, we consider the case when $a=b=c=0$, $d=\frac{1}{6}$. The exact
solution obtained in \cite{Chen1998} writes:
\[
(E)\left\{
\begin{split}
&  \eta(t, x)=-1,\\
&  u(t, x)=\left(  1-\frac{\rho}{6}\right)  C_{s}+\frac{C_{s}\rho}%
{2}\mathrm{sech}^{2}\left(  \frac{\sqrt{\rho}}{2}\left(  x-\frac{L}{2}%
-C_{s}t\right)  \right)  .
\end{split}
\right.
\]
Our computations are done with $C_{s}=1$ and $\rho=2$. We represent the numerical and the exact solutions in Figure \ref{fig_6}.
\begin{figure}[h]
\begin{minipage}[t]{1\linewidth}
\begin{minipage}[b]{.5\linewidth}
\centering\epsfig{figure=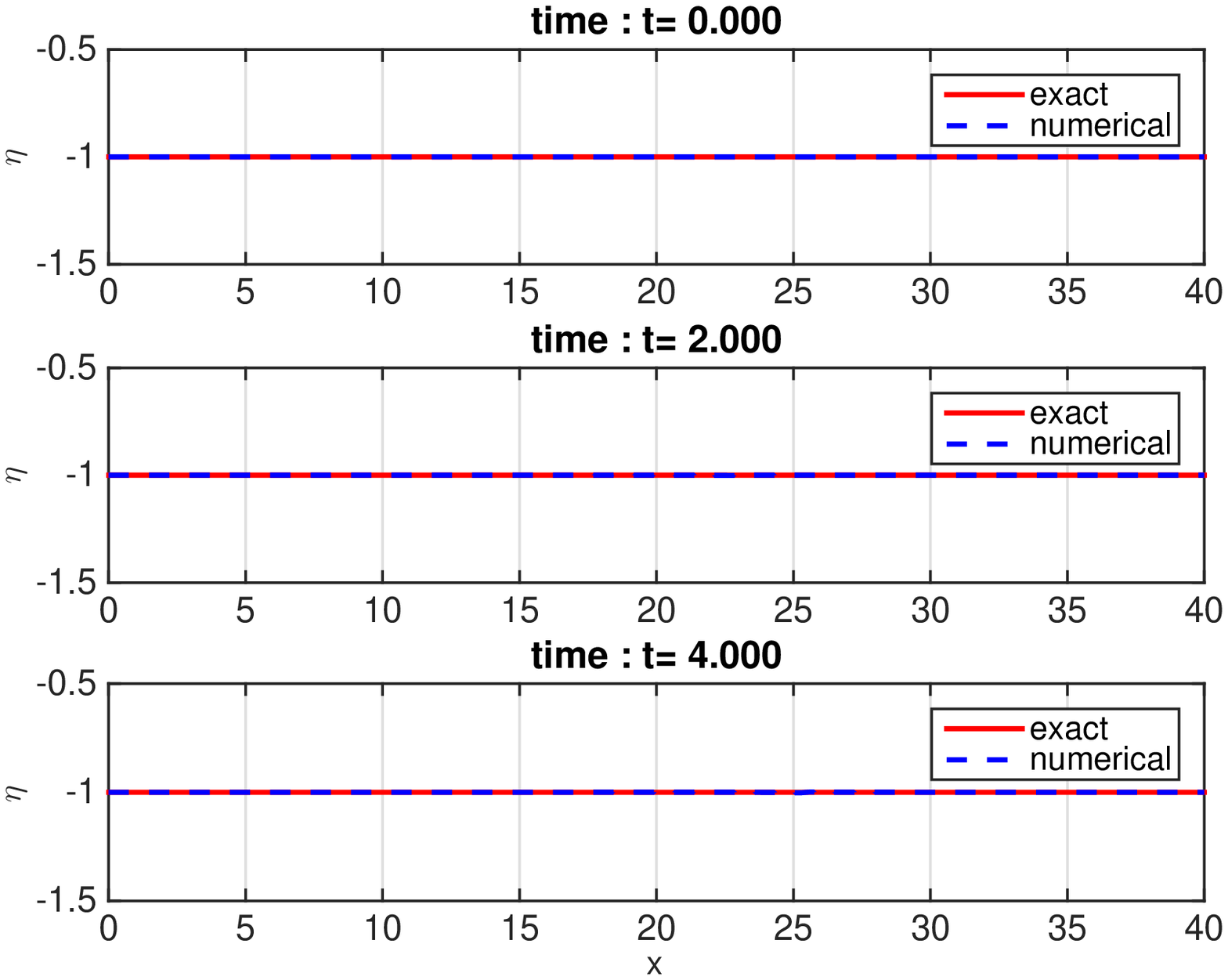,width=\linewidth}
\end{minipage}\hspace*{-0.2cm}
\begin{minipage}[b]{.5\linewidth}
\centering\epsfig{figure=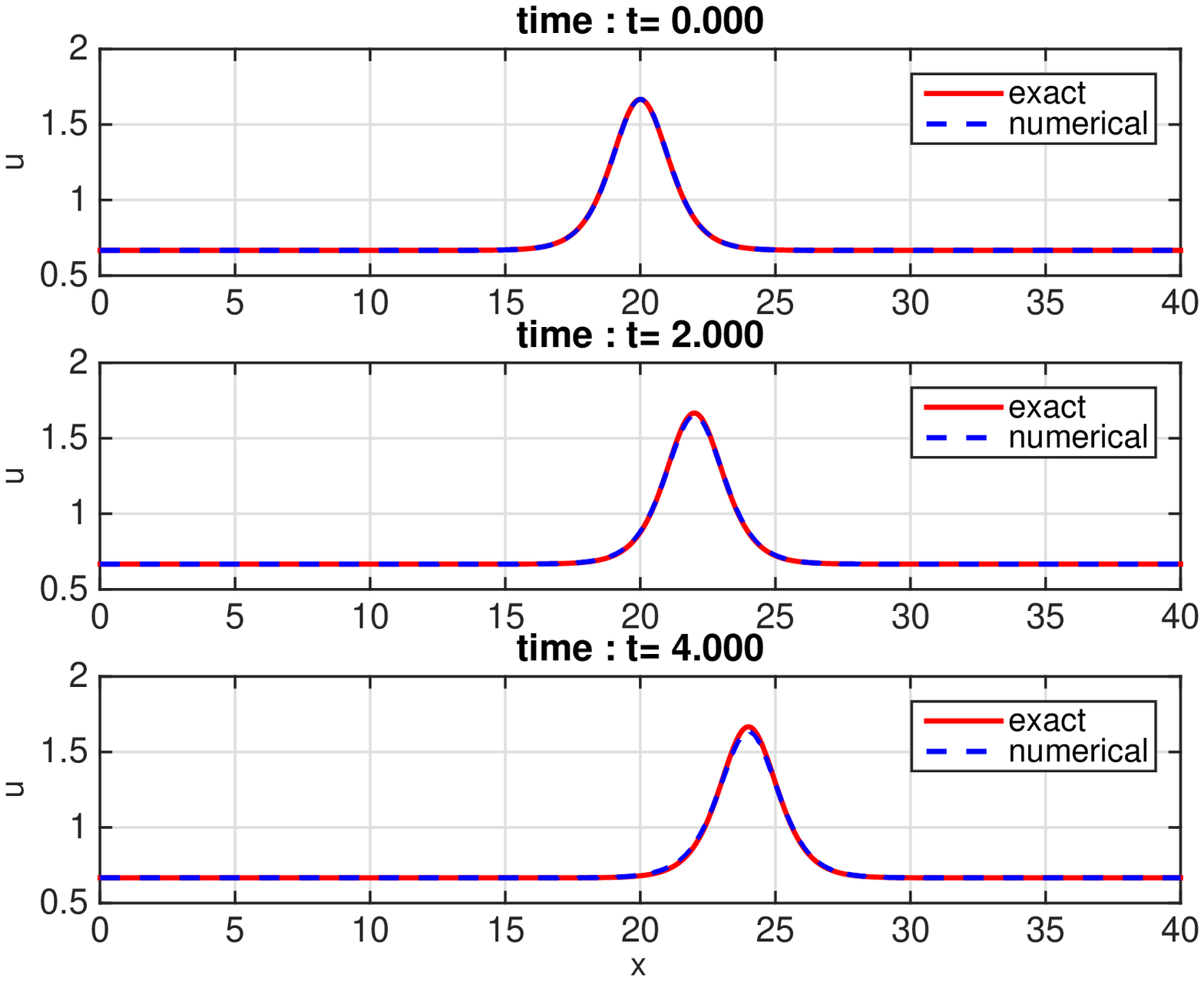,width=\linewidth}
\end{minipage}
\caption{Case $(E)$ where $a=b=c=0$, $d=\frac{1}{6}$ : results for $\eta$ (left) and $u$ (right) \label{fig_6}}
\end{minipage}
\end{figure}

In the second example, we treat the case
$a=-\frac{1}{6}$, $b=c=0$ and $d=\frac{1}{2}$, for which the exact
solution obtained in \cite{Chen1998} is:
\[
(F)\left\{
\begin{split}
&  \eta(t, x)=-\frac{7}{4}\mathrm{sech}^{2}\left(  \frac{\sqrt{7}}{2}\left(
x-\frac{L}{2}+\frac{1}{\sqrt{15}}t\right)  \right)  ,\\
&  u(t, x)=-\frac{7}{2}\sqrt{\frac{3}{5}}\mathrm{sech}^{2}\left(  \frac
{\sqrt{7}}{2}\left(  x-\frac{L}{2}+\frac{1}{\sqrt{15}}t\right)  \right)  .
\end{split}
\right.
\]
We obtain Figure \ref{fig_7}. 
\begin{figure}[h]
\begin{minipage}[t]{1\linewidth}
\begin{minipage}[b]{.5\linewidth}
\centering\epsfig{figure=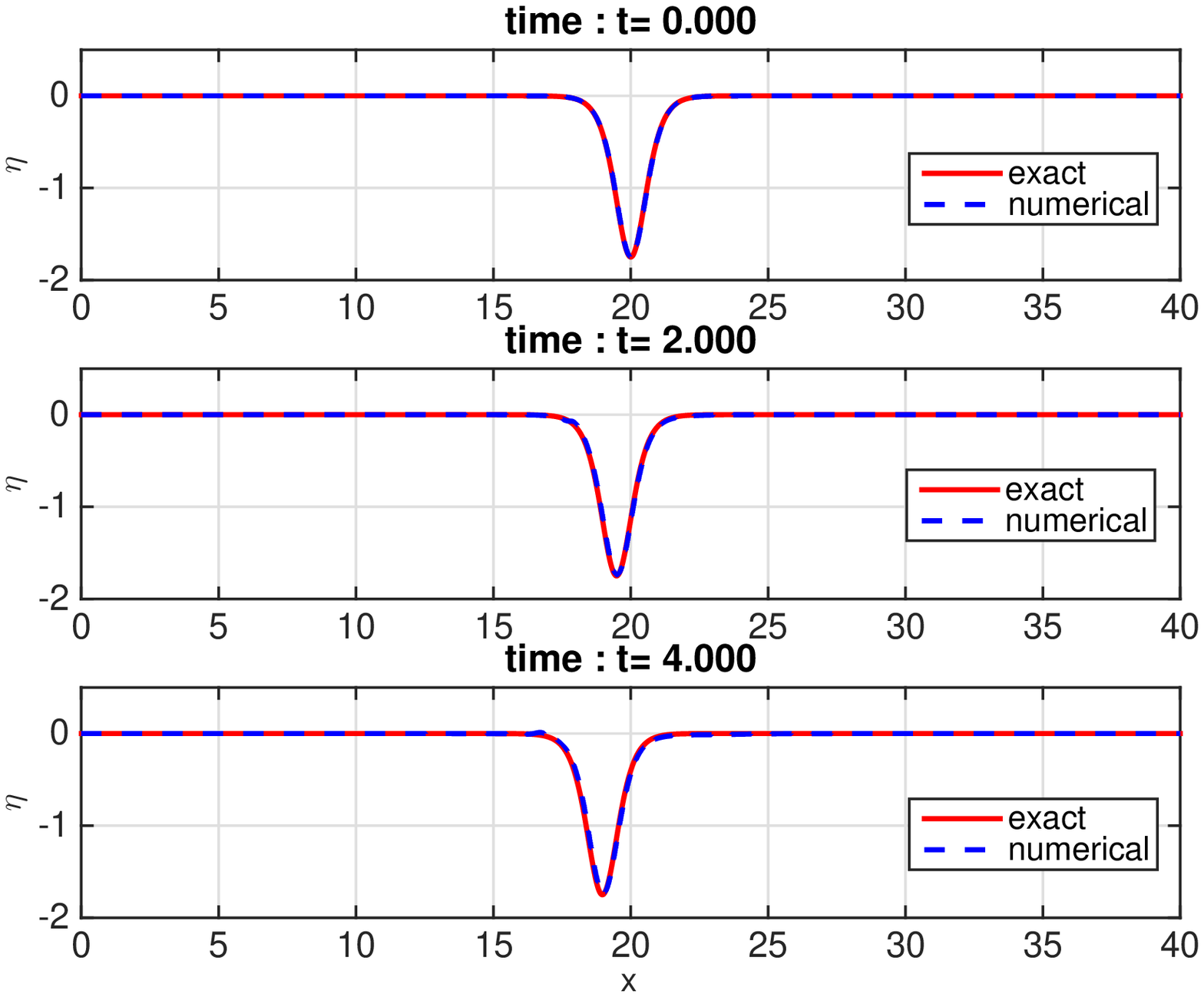,width=\linewidth}
\end{minipage}\hspace*{-0.2cm}
\begin{minipage}[b]{.5\linewidth}
\centering\epsfig{figure=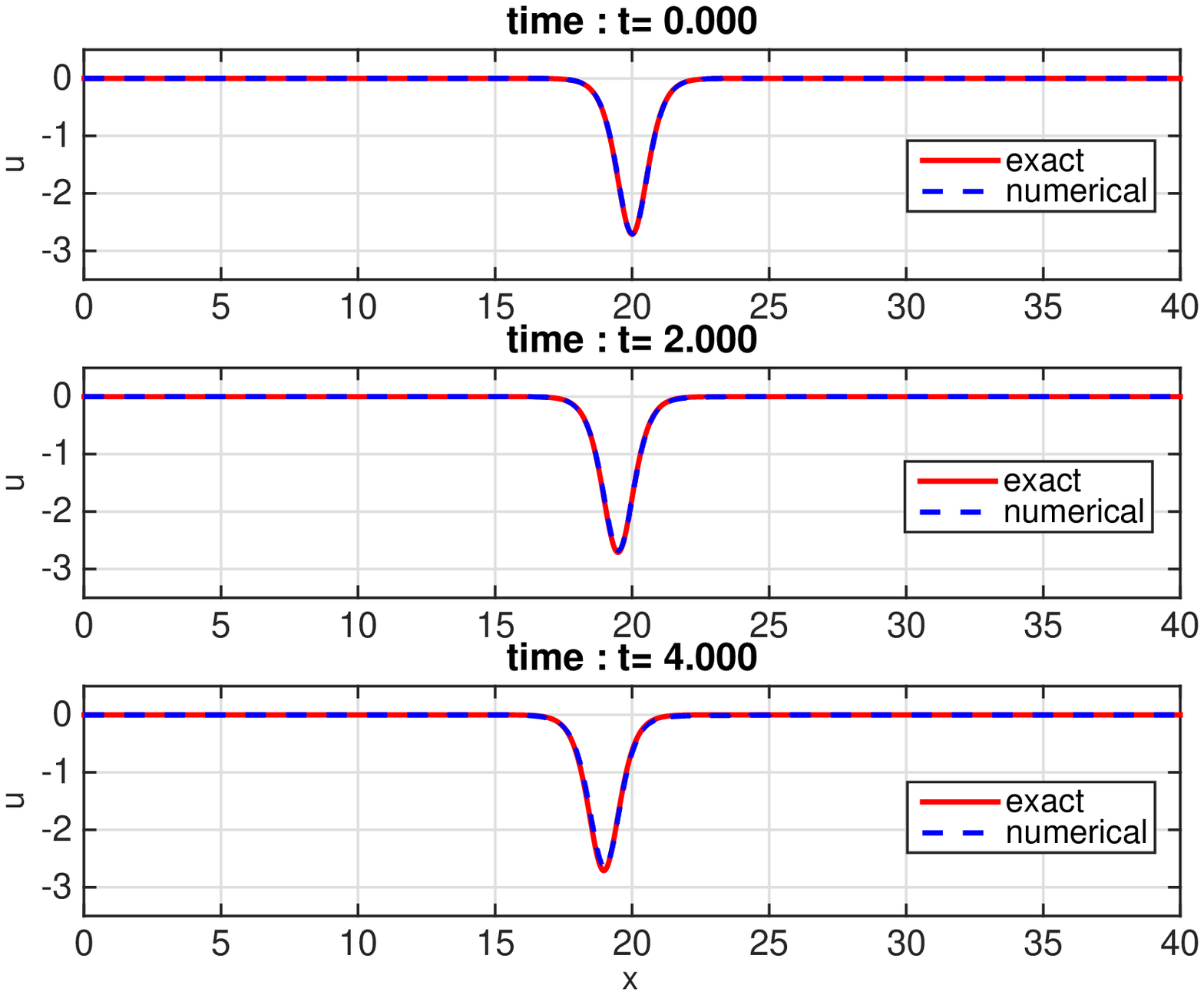,width=\linewidth}
\end{minipage}
\caption{Case $(F)$ where $a=-\frac{1}{6}$, $b=c=0$, $d=\frac{1}{2}$ : results for $\eta$ (left) and $u$ (right) \label{fig_7}}
\end{minipage}
\end{figure}

The experimental rates of convergence of the two previous cases with $b=0$ are gathered in Table \ref{TABLE_CASES_E_F}
\begin{table}[t]
\begin{center}
\begin{tabular}{|c|c|c|c|c|}
\hline
&\multicolumn{2}{c|}{Case $(E)$}&\multicolumn{2}{c|}{Case $(F)$}\\
\cline{2-5}
$\Delta x$&&&&\\
&energy error &exp. rate&energy error &exp. rate\\
&&&&\\
\hline
&&&&\\
$6.25000.10^{-2}$&$5.94214.10^{-2}$&&$3.62176.10^{-1}$&\\
$3.12500.10^{-2}$&$3.01052.10^{-2}$&$0.98097$&$1.92823.10^{-1}$&$0.93164$\\
$1.56250.10^{-2}$&$1.51573.10^{-2}$&$0.99000$&$1.00366.10^{-1}$&$0.94780$\\
$7.81250.10^{-3}$&$7.60581.10^{-3}$&$0.99483$&$5.16267.10^{-2}$&$0.95784$\\
$3.90625.10^{-3}$&$3.80985.10^{-3}$&$0.99737$&$2.63453.10^{-2}$&$0.96715$\\
&&&&\\
\hline
\end{tabular}
\caption{Experimental rates of convergence when $b=0$ and $d>0$}
\label{TABLE_CASES_E_F}
\end{center}
\end{table}

Observe that the first order convergence is recovered which, of course, is in accordance with the theoretical results.
\newpage
\subsubsection*{Behavior of the numerical scheme for traveling wave solutions} Because of the numerical diffusion, the scheme creates light shifts of the position and of the amplitude of the traveling wave solution in long time. We detail below these aspects for the test case $(F)$, where the numerical solution is computed with $\Delta x=0.005$ up to the final time $T=20$. Quantitative results concerning the relative errors in position and amplitude are gathered in Figure \ref{Cas_F_long_fig} (respectively in Table \ref{TABLE_LONG_BEHAVIOR_F}).
 \begin{figure}[h]
\begin{minipage}[t]{1\linewidth}
\begin{minipage}[b]{.5\linewidth}
\centering\epsfig{figure=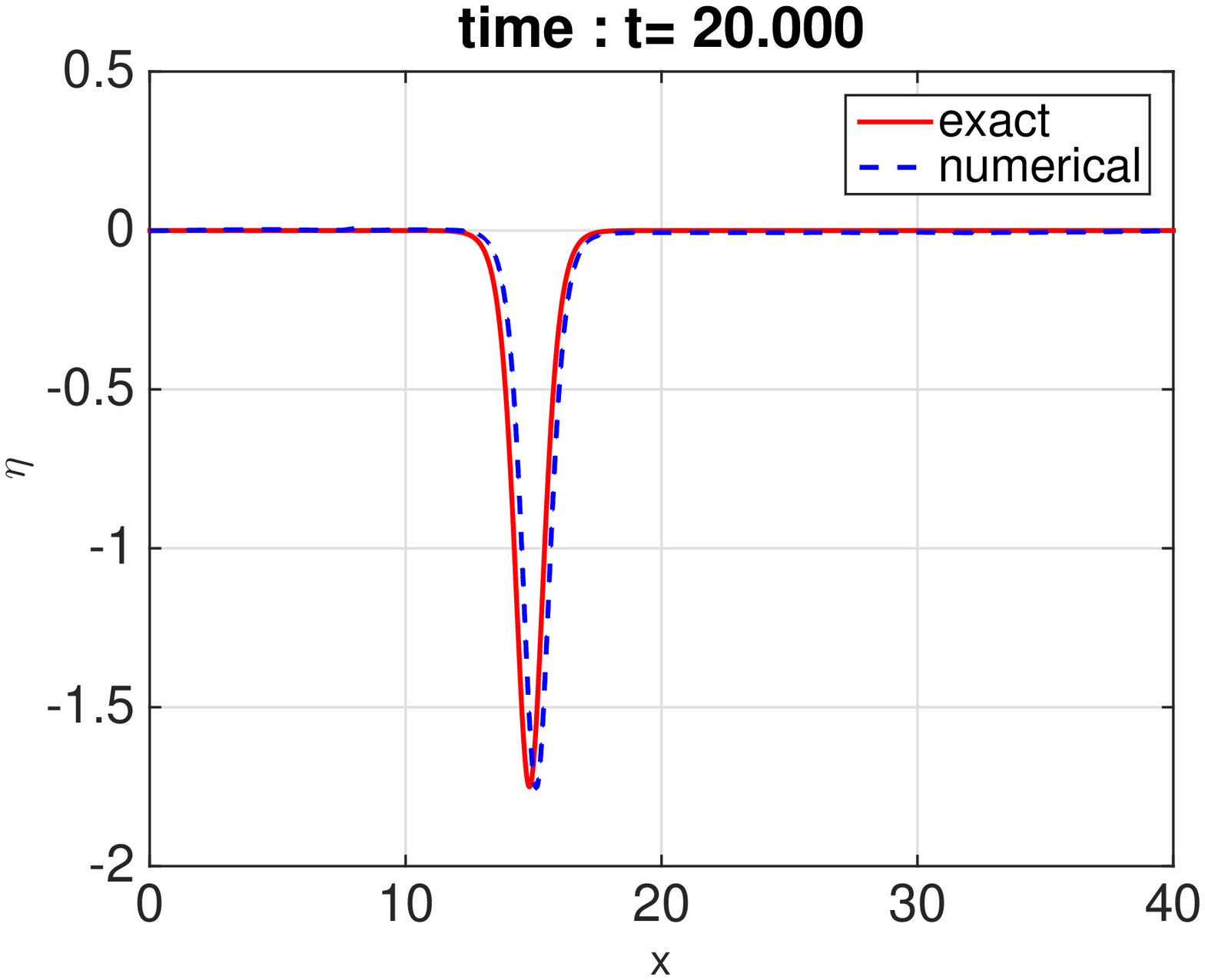,width=0.8\linewidth}
\end{minipage}\hspace*{-0.2cm}
\begin{minipage}[b]{.5\linewidth}
\centering\epsfig{figure=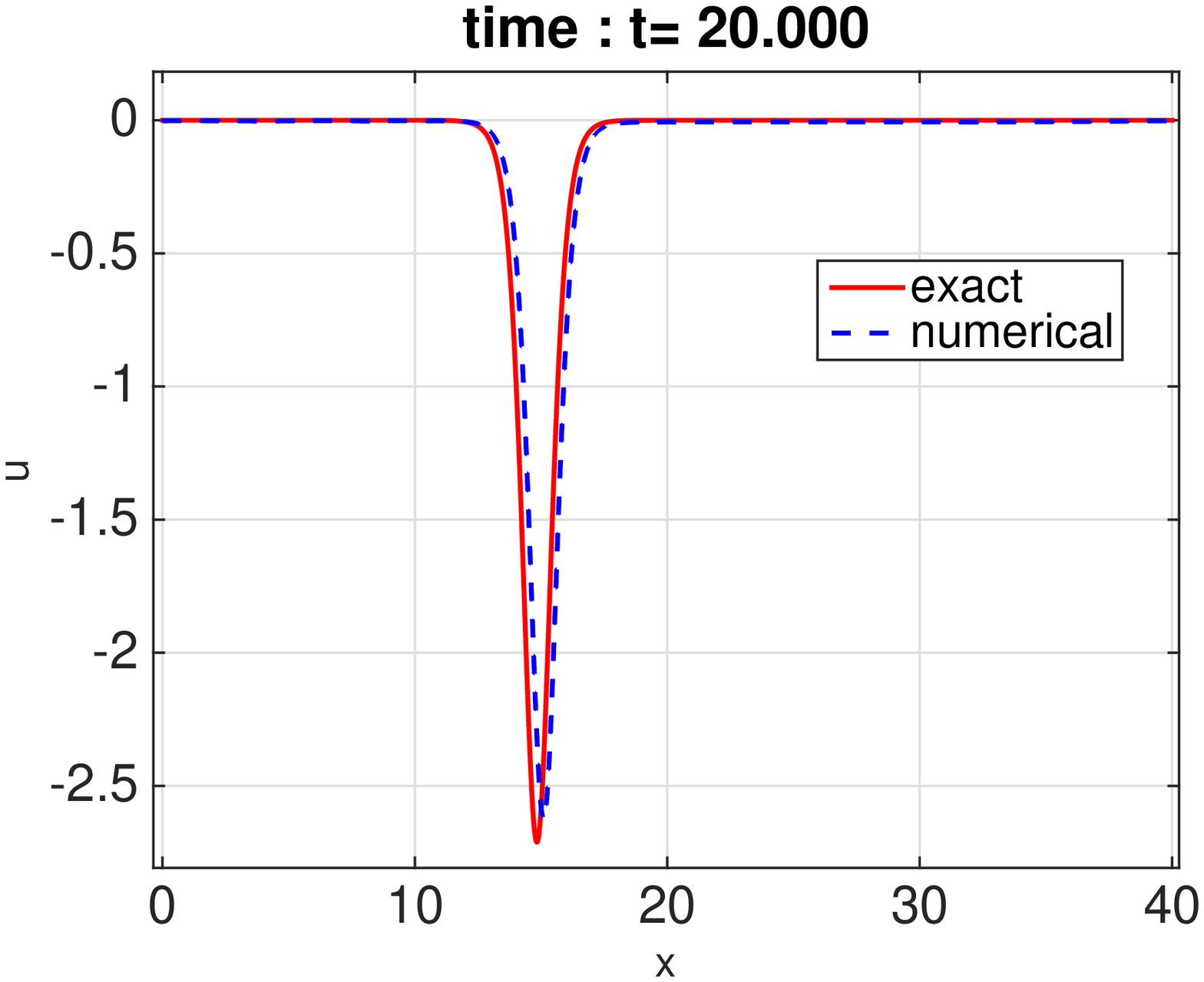,width=0.8\linewidth}
\end{minipage}\vspace*{-0.5cm}
\caption{Long time behavior for case $(F)$ with $a=-\frac{1}{6}$, $b=0$, $c=0$, $d=\frac{1}{2}$ : results for $\eta$ (left) and $u$ (right) \label{Cas_F_long_fig}}
\end{minipage}
\end{figure}

\begin{table}[h]
\begin{center}
\hspace*{-0.7cm}
\begin{tabular}{|c|c|c|c|c|c|c|}
\hline
&\multicolumn{3}{c|}{Position}&\multicolumn{3}{c|}{Amplitude}\\
\hline
&Numerical&Exact&Relative error&Numerical&Exact&Relative error\\
\hline
$\eta$&15.105&14.840&1.7857\%&1.7544&1.7500&0.2501\%\\
\hline
$u$&15.100&14.840&1.7520\%&2.6227&2.7111&3.2594\%\\
\hline
\end{tabular}
\caption{Relative errors on position and amplitude of the traveling wave in long time behavior $(T=20)$ for case $(F)$}
\label{TABLE_LONG_BEHAVIOR_F}
\end{center}
\end{table}

\newpage

\section{Traveling waves collision\label{travel_waves_collision}}

Recently, in \cite{BonaChen2016}, the authors simulate the collision of two
traveling waves moving in opposite directions in $[-L,L]$ in the BBM-BBM case
($a=0$, $b=\frac{1}{6}$, $c=0$, $d=\frac{1}{6}$). Motivated by their results,
we simulated the same phenomenon but for different values of the $abcd$ parameters. \\
Two behaviors are observable: 
\begin{itemize}
\item the collision leads to phase shifts and visible dispersive tails which follow each solitary wave,
\item the collision suggests a possible blow-up of the $L^\infty$-norm in finite time either on the density $\eta$ or on the derivative of $u$.
 \end{itemize}
Our numerical study is restricted to one type of traveling waves collision : the \emph{head-on} collision when the solitary waves travel in opposite directions and collide. We do not study the \emph{overtaking} collision (when the solitary waves propagate in the same direction) and we refer the reader for example to \cite{AntonopoulosDougalis2012} for a review of such collisions in the case $a=b=c=0, d=\frac{1}{3}$.

\subsection{Finite time blow-up.}
First, we used our numerical results and performed the same experiment
described in \cite{BonaChen2016} for ($a=0$, $b=\frac{1}{6}$, $c=0$, $d=\frac{1}{6}$). The initial condition is fixed to
\[
\left\{
\begin{split}
&  \eta(t,x)=\eta_{+}(t,x)+\eta_{-}(t,x),\\
&  u(t,x)=u_{+}(t,x)+u_{-}(t,x),
\end{split}
\right.
\]
with
\[(G)
\left\{
\begin{split}
&  \eta_{\pm}(t,x)=\frac{15}{2}\mathrm{sech}^{2}\left(  \frac{3}{\sqrt{10}%
}\left(  x-x_{\pm}\pm\frac{5}{2}t\right)  \right)  -\frac{45}{4}%
\mathrm{sech}^{4}\left(  \frac{3}{\sqrt{10}}\left(  x-x_{\pm}\pm\frac{5}%
{2}t\right)  \right)  ,\\
&  u_{\pm}(t,x)=\mp\frac{15}{2}\mathrm{sech}^{2}\left(  \frac{3}{\sqrt{10}}\left(
x-x_{\pm}\pm\frac{5}{2}t\right)  \right)  ,
\end{split}
\right.
\]
where $x_{\pm}=\pm\frac{L}{2}$.\newline The space domain is fixed at
$[-14,14]$ and initially, the traveling-waves are centered in $x_{+}=7$ and
$x_{-}=-7$. We choose the same space size and time step as in
\cite{BonaChen2016}, namely $\Delta x=0.02$ and $\Delta t=0.0001$. The
simulation suggests that a blow-up occurs while the explosion time appears to be
around $t=4.5$ as shown in Figure \ref{fig_9}, result which is very close to
the one obtained in \cite{BonaChen2016}. 
\begin{figure}[h]
\begin{minipage}[t]{1\linewidth}
\begin{minipage}[b]{.5\linewidth}
\centering\epsfig{figure=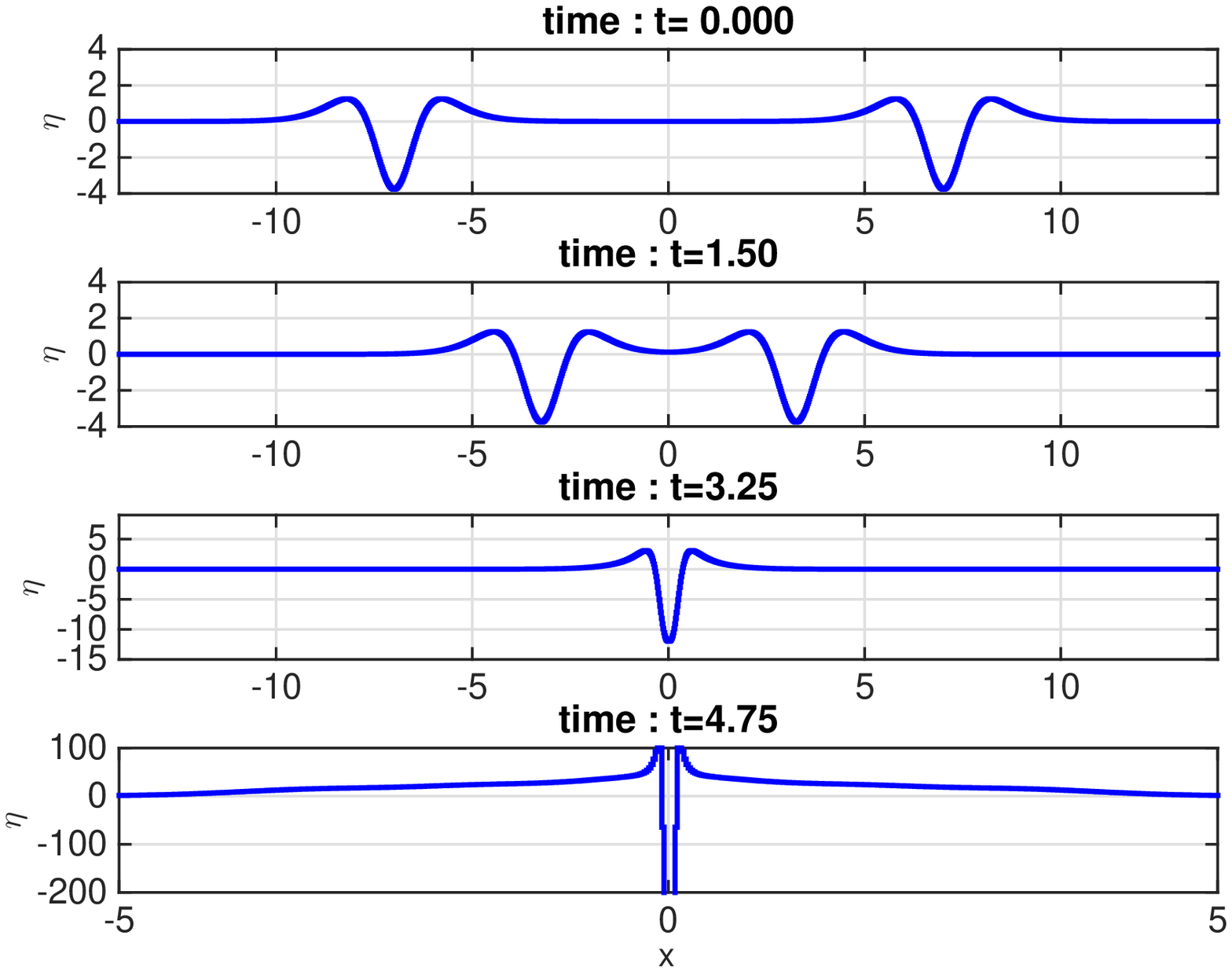,width=\linewidth}
\end{minipage}\hspace*{-0.2cm}
\begin{minipage}[b]{.5\linewidth}
\centering\epsfig{figure=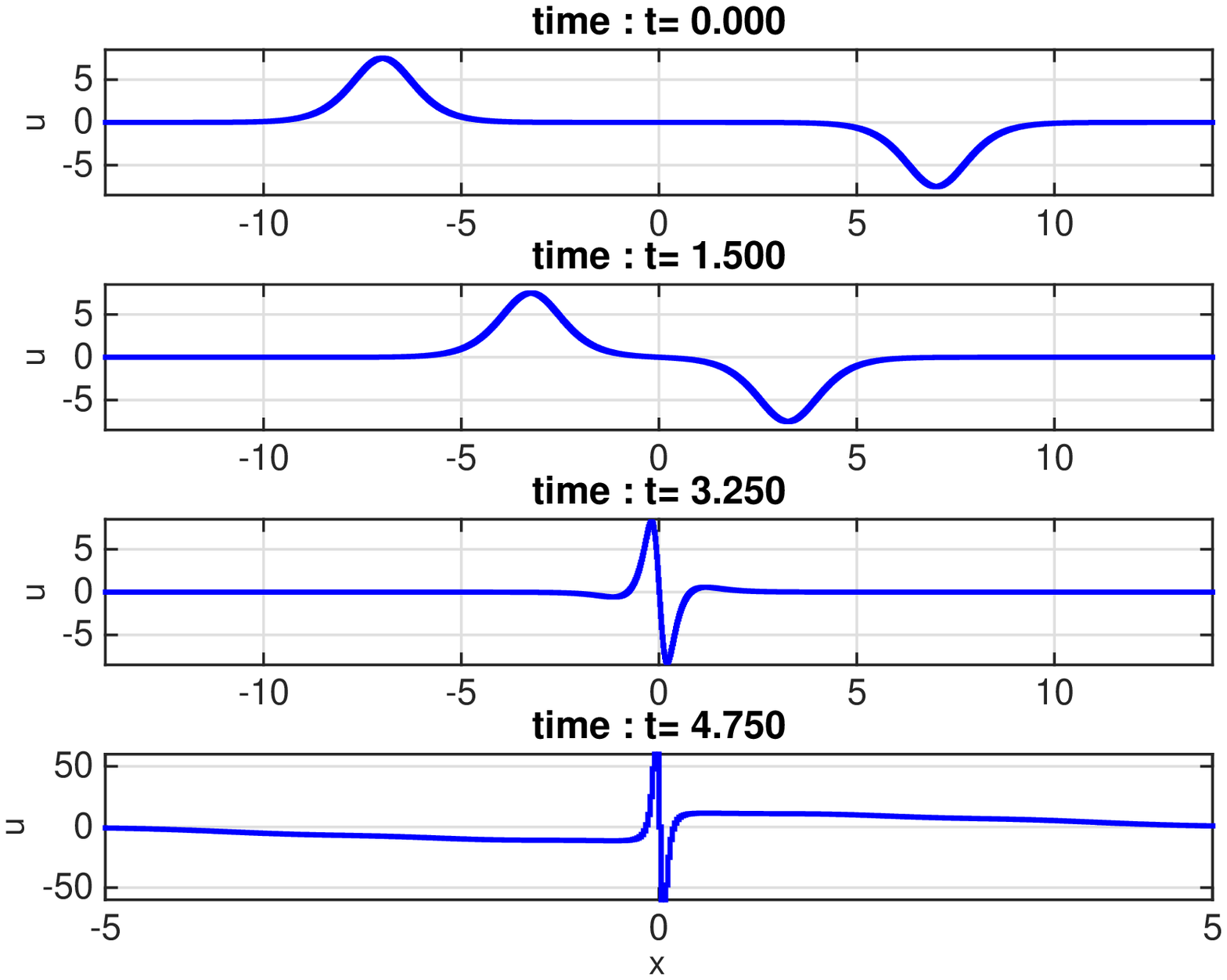,width=\linewidth}
\end{minipage}\vspace*{-0.4cm}
\caption{Explosion for case $(G)$ with $a=0$, $b=\frac{1}{6}$, $c=0$, $d=\frac{1}{6}$ : results for $\eta$ (left) and $u$ (right) \label{fig_9}}
\end{minipage}
\end{figure}

In a second time, we have observed that for the case $a=0, b=\frac{3}{5}, c=0, d=0$, solutions of a similar structure as those above are available. The experiment is performed with $\Delta t=0.001$, $\Delta x=0.01$ and taking the initial value
\[
\left\{
\begin{split}
&  \eta(0,x)=\eta_{+}(0,x)+\eta_{-}(0,x),\\
&  u(0,x)=u_{+}(0,x)+u_{-}(0,x),
\end{split}
\right.
\]
with\vspace*{-0.4cm}
\[(H)
\left\{
\begin{split}
&  \eta_{\pm}(t,x)=\frac{15}{2}\mathrm{sech}^{2}\left(  \frac{1}{2}\left(  x-x_{\pm}\mp\frac{\sqrt{10}}{2}t\right)  \right)-\frac{45}{4}\mathrm{sech}^{4}\left(  \frac{1}{2}\left(  x-x_{\pm}\mp\frac{\sqrt{10}}{2}t\right)  \right)  ,\\
&  u_{\pm}(t,x)=\pm\frac{3\sqrt{10}}{2}\mathrm{sech}^{2}\left(  \frac{1}%
{2}\left(  x-x_{\pm}\mp\frac{\sqrt{10}}{2}t\right)  \right)
,
\end{split}
\right.\vspace*{-0.2cm}
\]
where $x_{\pm}=\pm5$. The results are gathered in Figure \ref{fig_11_fin}. We interpret this figure as a possible blow-up of the $L^\infty$-norm on the derivative of $u$.
\begin{figure}[h!]
\begin{minipage}[t]{1\linewidth}
\begin{minipage}[b]{.5\linewidth}
\centering\epsfig{figure=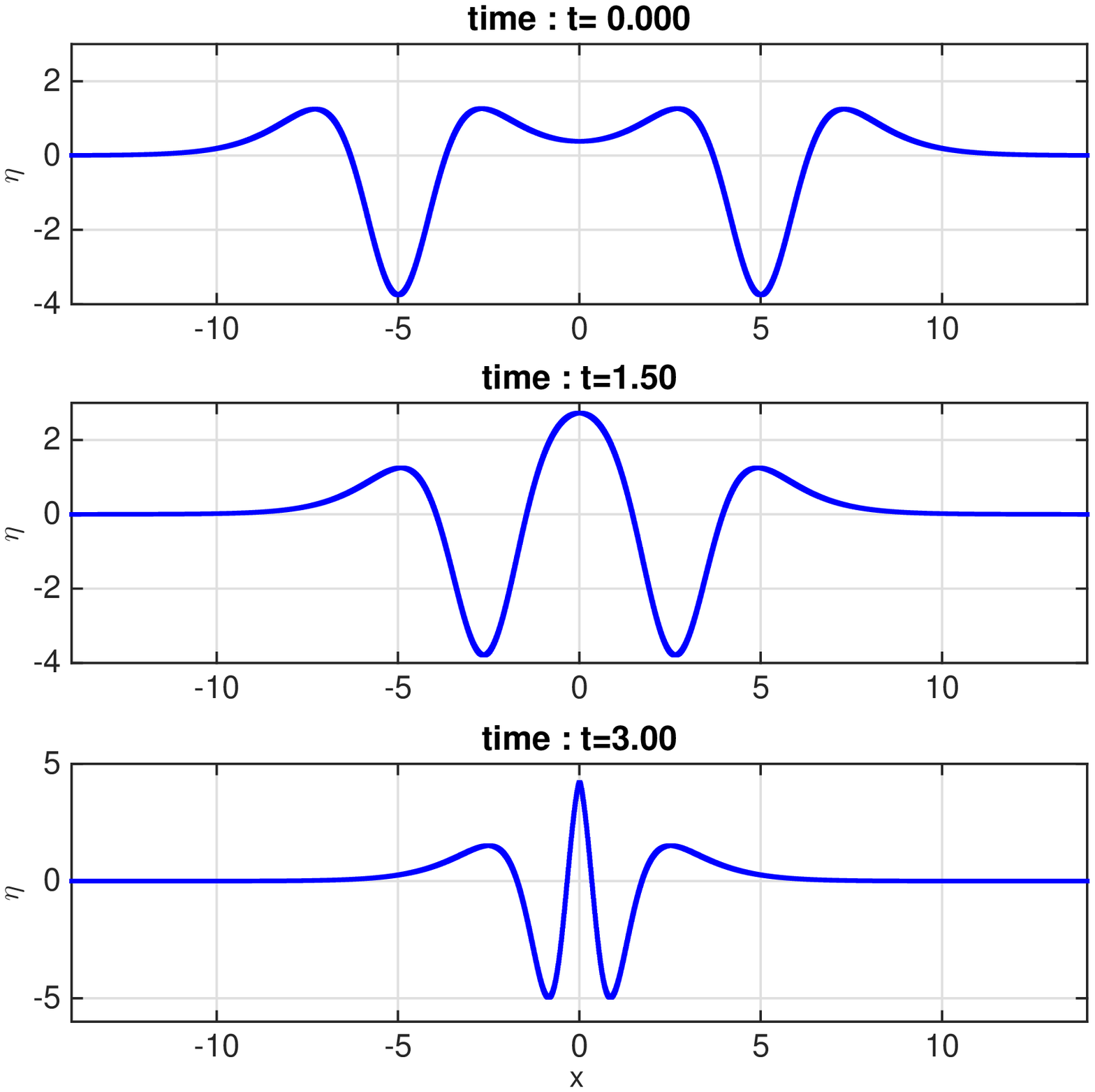,width=\linewidth}\vspace*{-0.3cm}
\centering\epsfig{figure=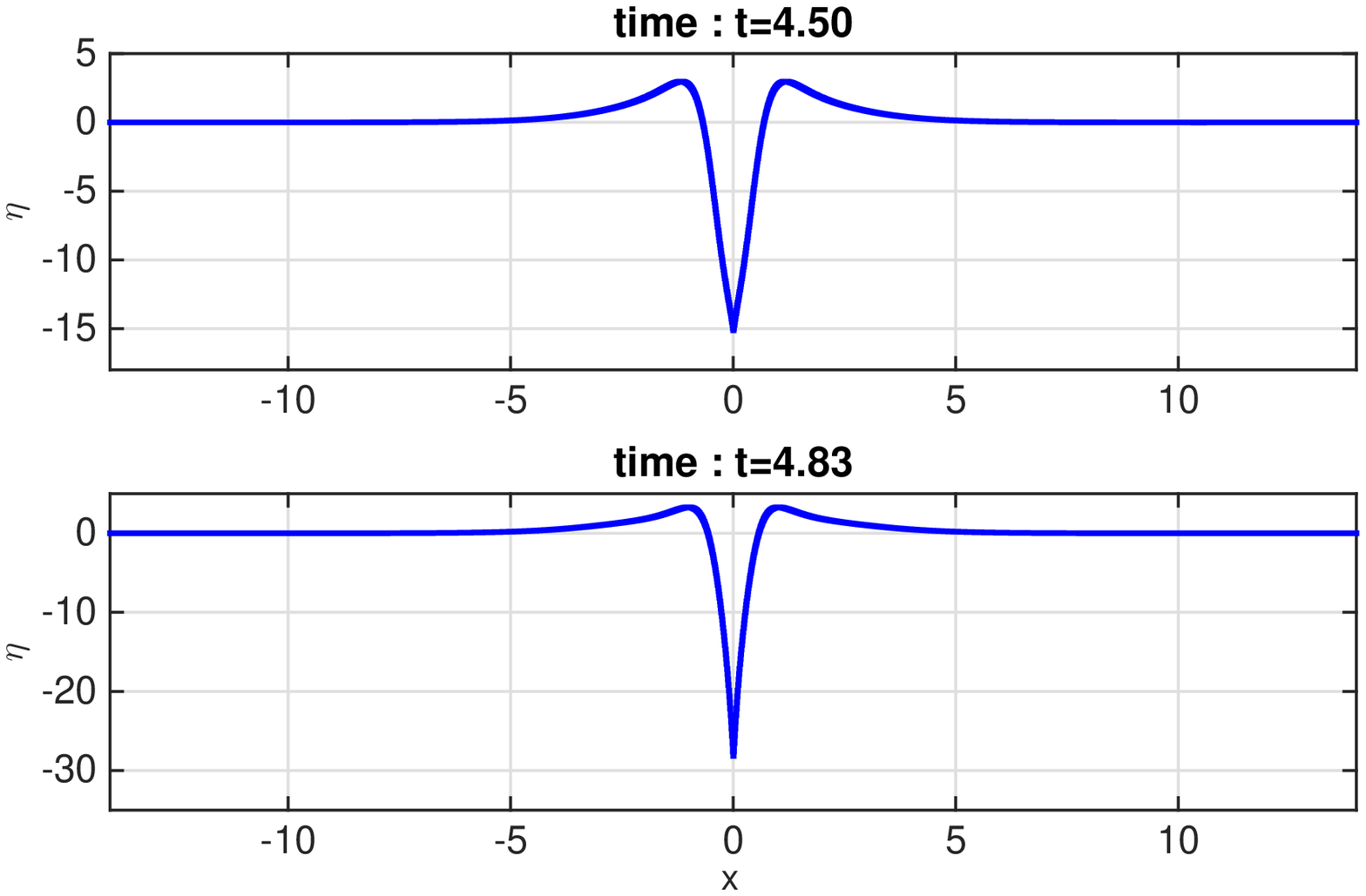,width=\linewidth}
\end{minipage}\hspace*{-0.2cm}\vspace*{-0.2cm}
\begin{minipage}[b]{.5\linewidth}
\centering\epsfig{figure=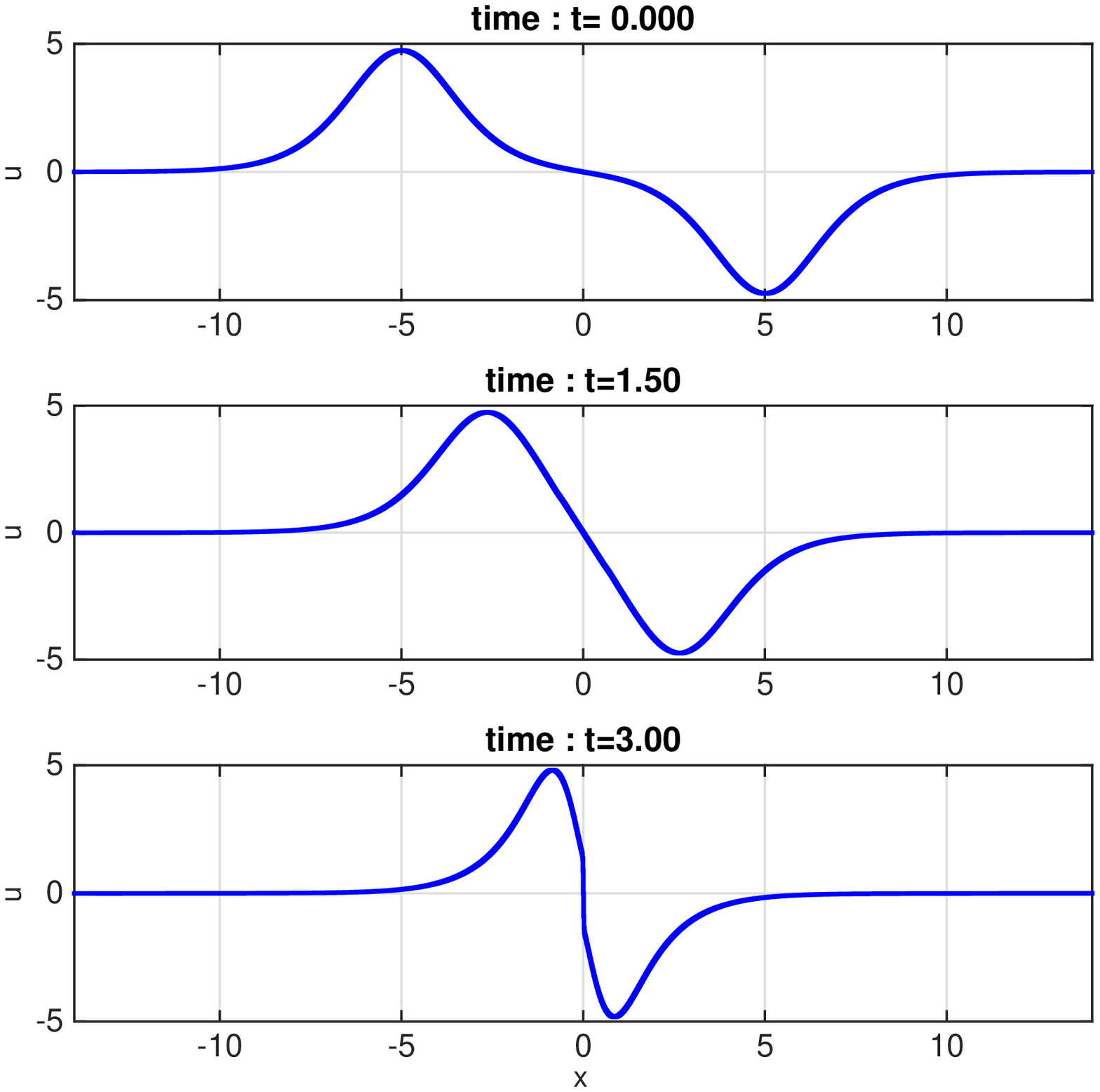,width=\linewidth}\vspace*{-0.3cm}
\centering\epsfig{figure=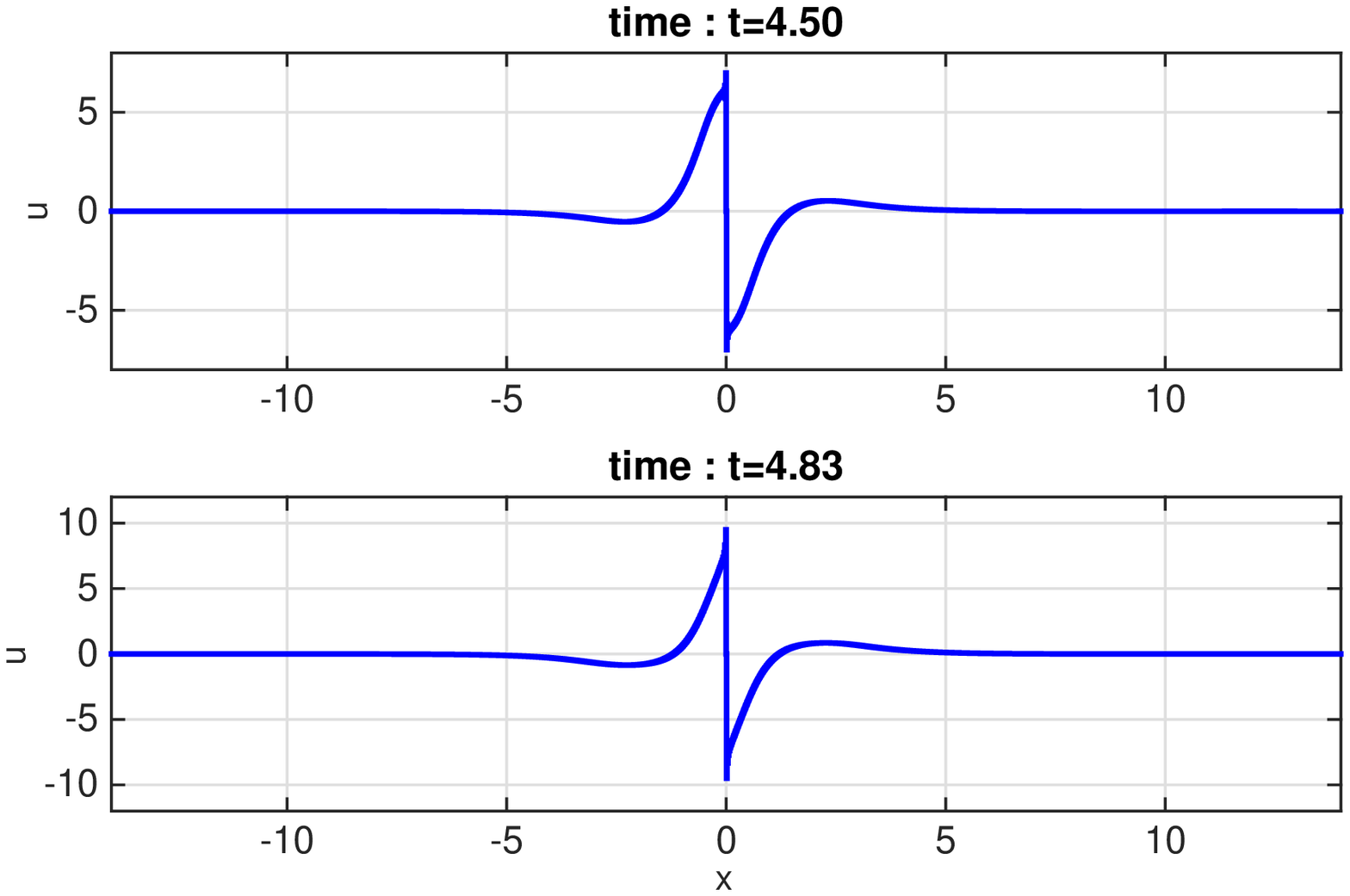,width=\linewidth}
\end{minipage}
\caption{Evidence suggesting the explosion of the derivative of $u$ for the case $(H)$ with $a=0$, $b=\frac{3}{5}$, $c=0$, $d=0$: results for $\eta$ (left) and $u$ (right) \label{fig_11_fin}}
\end{minipage}
\end{figure}

\newpage
\begin{remark}
The initial data leading to the ''numerical'' blow-up are not physical because the quantity $1+\eta$, which is the height of the water column, should be a positive quantity.
\end{remark}
\subsection{Head-on collision.}
In this subsection, we studied two cases where the collision of traveling waves leads to phase shifts and visible dispersive tails which follow each of the waves. 

For ($a=-\frac{7}{30},\ b=\frac{7}%
{15},\ c=-\frac{2}{5},\ d=\frac{1}{2}$), the
experiment is performed on the space domain $[-L,L]=[-14,14]$, the space- and time-meshes $\Delta x=0.01$, $\Delta t=0.001$ and the following initial value 
\[
\left\{
\begin{split}
&  \eta(0,x)=\eta_{+}(0,x)+\eta_{-}(0,x),\\
&  u(0,x)=u_{+}(0,x)+u_{-}(0,x),
\end{split}
\right.
\]
with
\[(I)
\left\{
\begin{split}
&  \eta_{\pm}(t,x)=\frac{3}{8}\mathrm{sech}^{2}\left(  \frac{1}{2}\sqrt
{\frac{5}{7}}\left(  x-x_{\pm}\mp\frac{5\sqrt{2}}{6}t\right)  \right)  ,\\
&  u_{\pm}(t,x)=\pm\frac{1}{2\sqrt{2}}\mathrm{sech}^{2}\left(  \frac{1}%
{2}\sqrt{\frac{5}{7}}\left(  x-x_{\pm}\mp\frac{5\sqrt{2}}{6}t\right)  \right)
,
\end{split}
\right.
\]
where $x_{\pm}=\pm\frac{L}{2}$. 
\begin{figure}[h]
\begin{minipage}[t]{1\linewidth}
\begin{minipage}[b]{.5\linewidth}
\centering\epsfig{figure=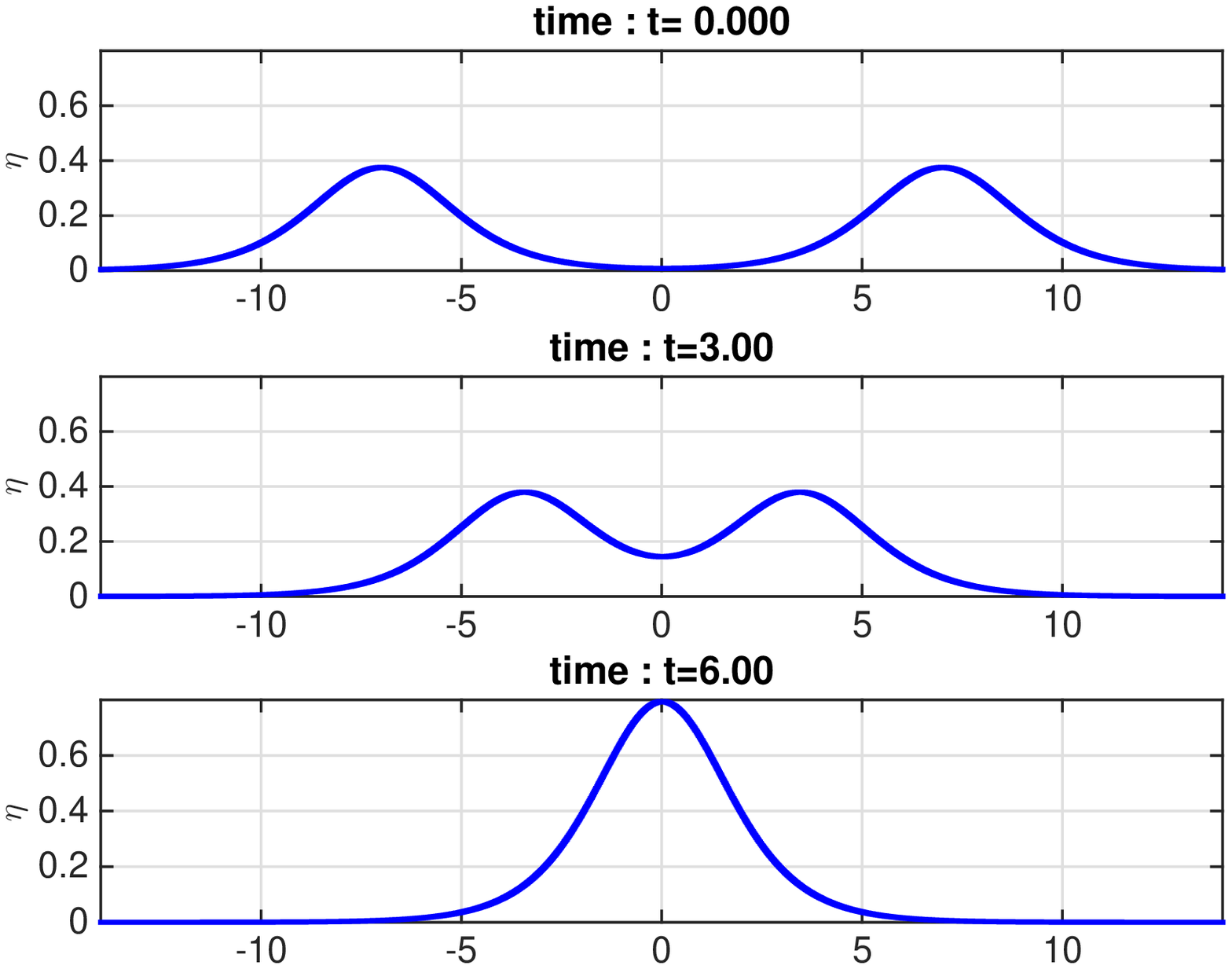,width=\linewidth}\vspace*{-0.4cm}
\centering\epsfig{figure=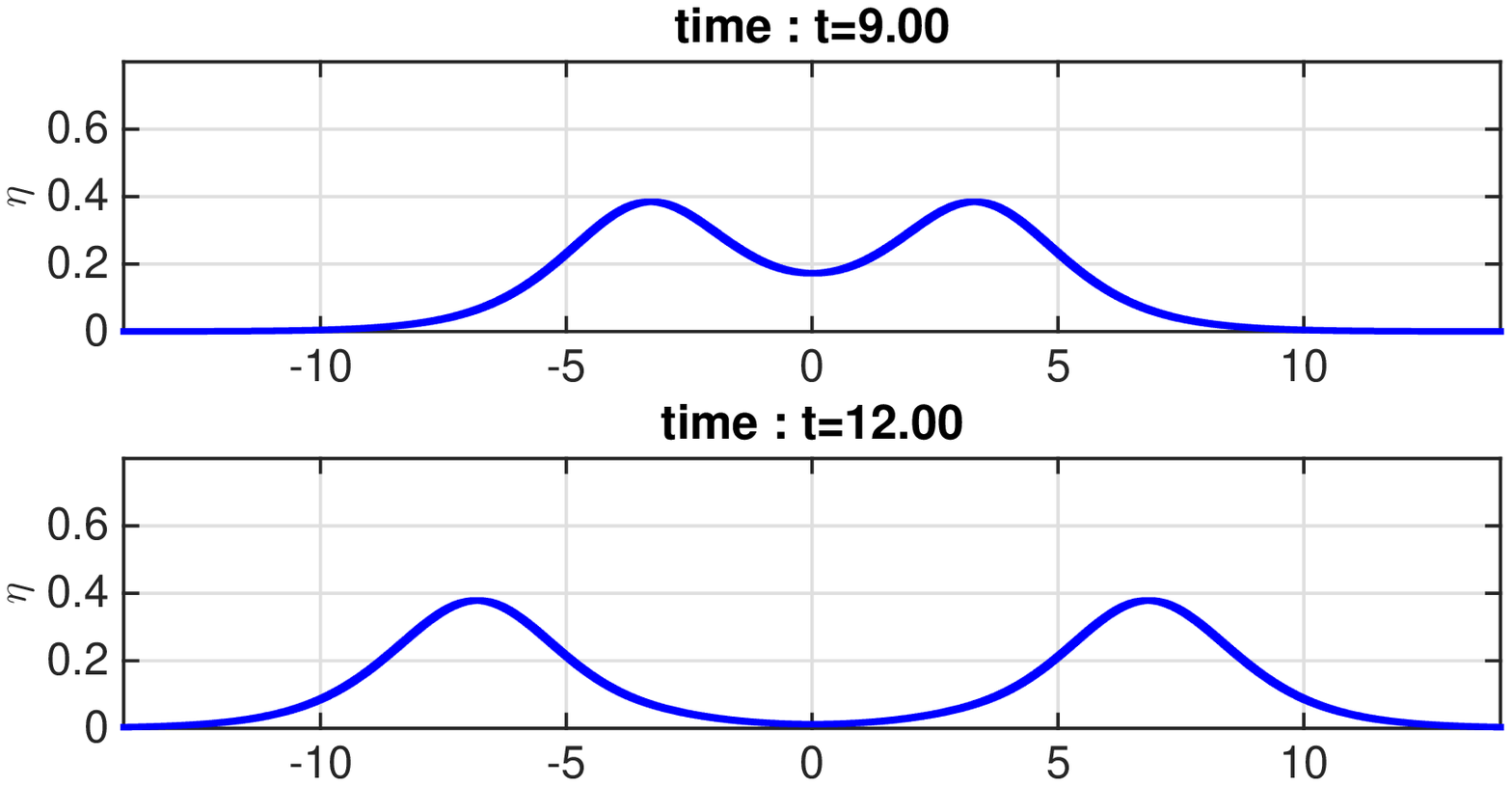,width=\linewidth}
\end{minipage}\hspace*{-0.2cm}
\begin{minipage}[b]{.5\linewidth}
\centering\epsfig{figure=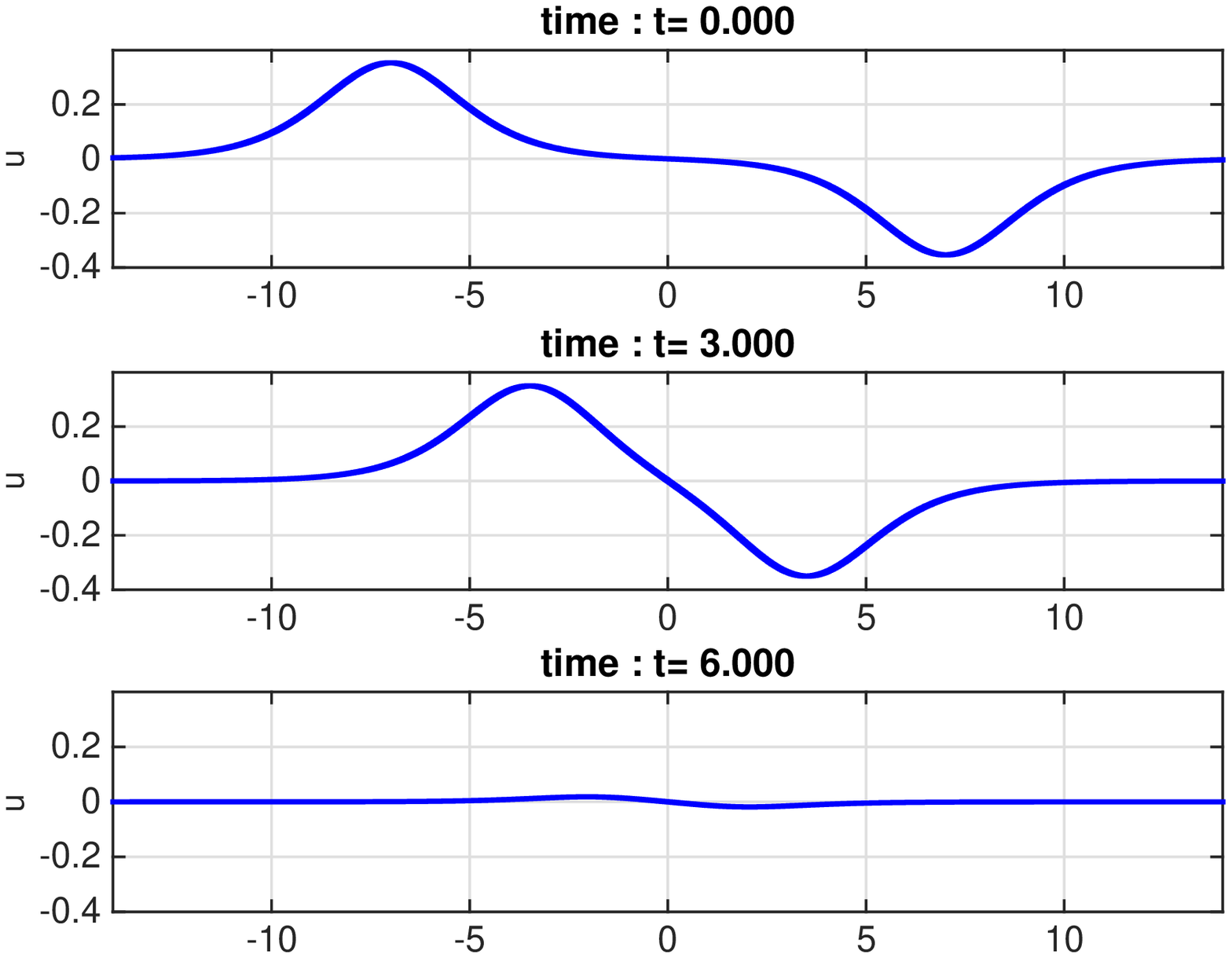,width=\linewidth}\vspace*{-0.4cm}
\centering\epsfig{figure=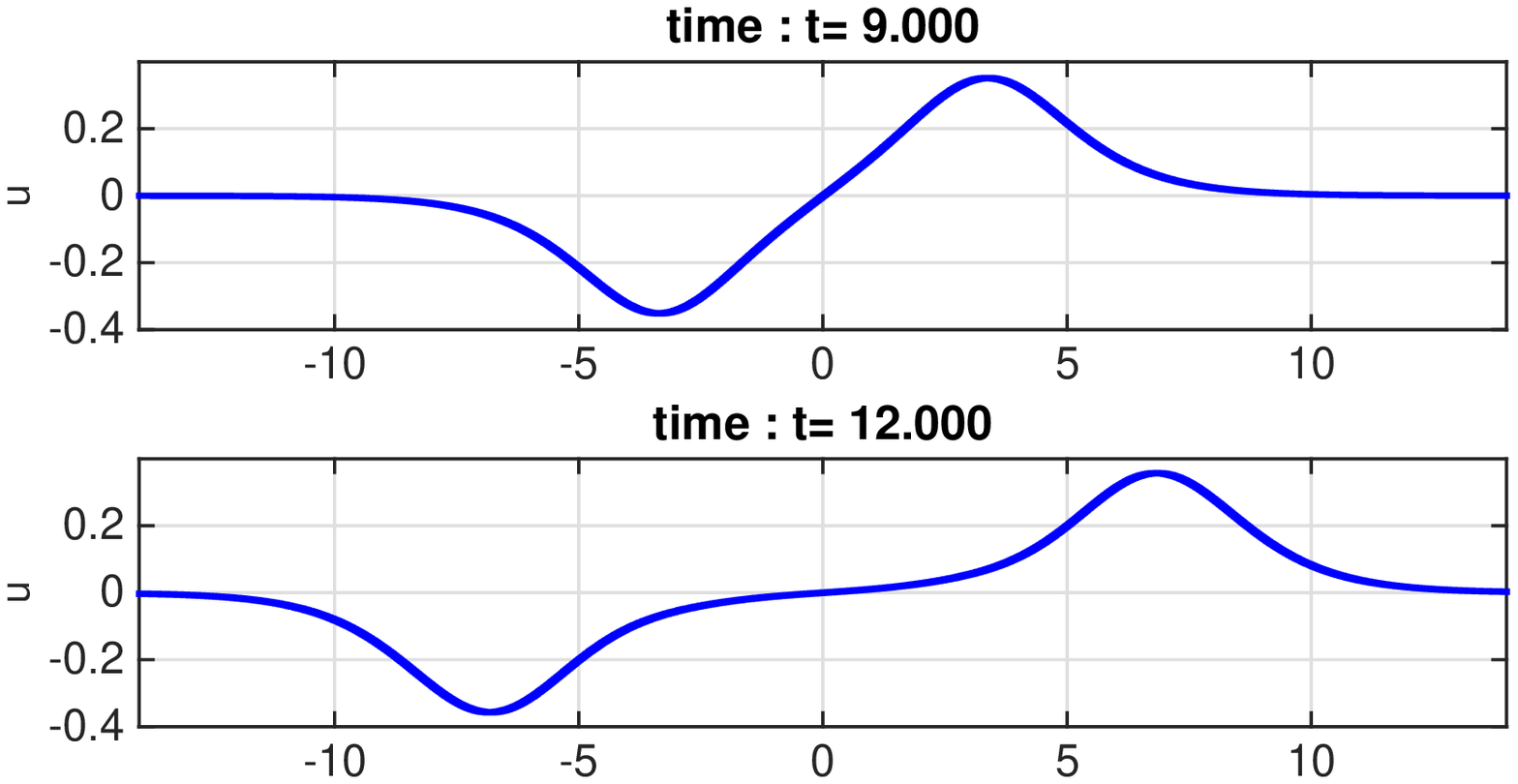,width=\linewidth}
\end{minipage}
\vspace*{-2cm}
\caption{No explosion for case $(I)$ with $a=-\frac{7}{30}$, $b=\frac{7}{15}$, $c=-\frac{2}{5}$, $d=\frac{1}{6}$ : results for $\eta$ (left) and $u$ (right) \label{fig_10}}
\end{minipage}
\end{figure}

Figure \ref{fig_10} is an overview of the situation, which clearly illustrates that traveling waves emerge from the collision (which occurs at $t\approx5.94$) without major changes. In reality, some local changes are expected to be produced after the interaction of the waves such as phase-shifts, slight changes in amplitude and dispersive tails. 
Figures \ref{Phase_shift_2_fig}-\ref{magnificent_fig} illustrate all this expected consequences. 

More precisely, Figure \ref{Phase_shift_2_fig} illustrates the phase-shift which arises from the collision. The paths of the maximum amplitude of both traveling waves are represented in the $(t,x)$-plane, with or without collision. As expected, the traveling waves are delayed by the head-on collision since the phase-shifts are in the opposite direction of their motion. These phase-shifts are characterized by a relative error in velocity equals to $4.8383\%$ for the traveling waves moving from the right to the left and a relative error in velocity equals to $4.1043\%$ for the traveling wave moving from the left to the right.

To visualize the other expected behaviors caused by the head-on collision, we have increased the final time of the numerical simulations. The parameters are now $L=200$, $x_{\pm}=\pm 50$ and the final time $T=150$. With these new parameters, the collision occurs at $t\approx42.43$.
A little modification in the amplitude of the traveling waves is expected from the head-on collision and numerical simulations match this expectation as shown in Figure \ref{amplitude_max_temps_fig}. Indeed, the maximum amplitude of $\eta$ for each traveling wave is lower after the collision than before. For instance, the relative error in maximum amplitude at $t=47$ is $1.1003\%$. It is worth noting also that the maximum amplitude at the collision ($t\approx 42.43$) is bigger than the sum of the maximum amplitudes of the two traveling waves before the collision. The maximum elevation during the collision is about $7.29\%$ bigger than the sum.

Eventually, the collision is inelastic and thus produces some oscillatory dispersive tails, visible in Figure \ref{temps_long_1_fig} which are magnified in the close up of Figure \ref{magnificent_fig}. We point out that this dispersive tail is not due to an instability of the numerical scheme because its amplitude remains constant, regardless of $\Delta t$ and $\Delta x$.

\begin{figure}[h]
\begin{minipage}[t]{1\linewidth}
\begin{minipage}[b]{.5\linewidth}
\centering\epsfig{figure=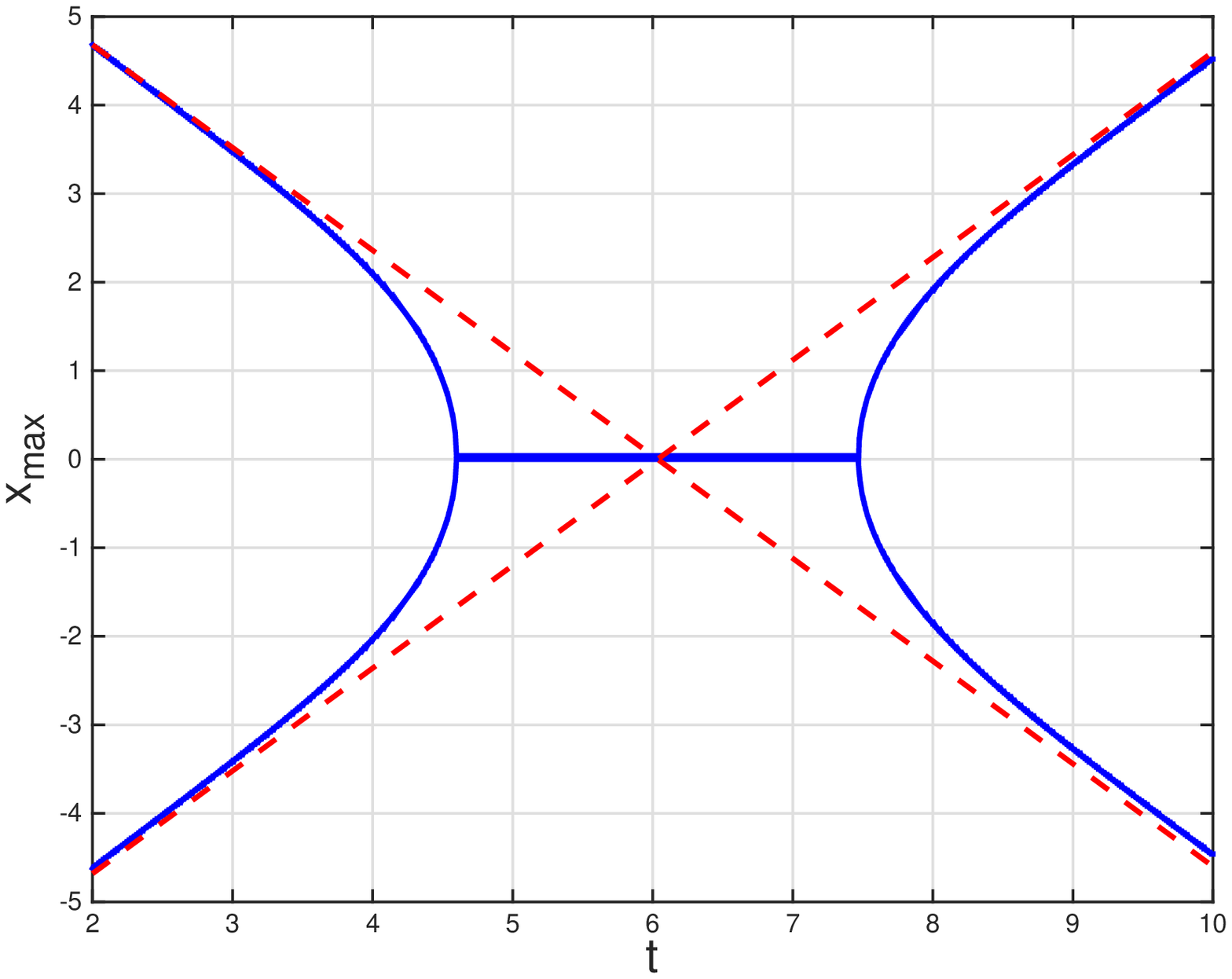,width=\linewidth}
\begin{footnotesize}\emph{Paths of the traveling waves in the $(t,x)$-plane}\end{footnotesize}
\end{minipage}\hspace*{-0.2cm}
\begin{minipage}[b]{.5\linewidth}
\centering\epsfig{figure=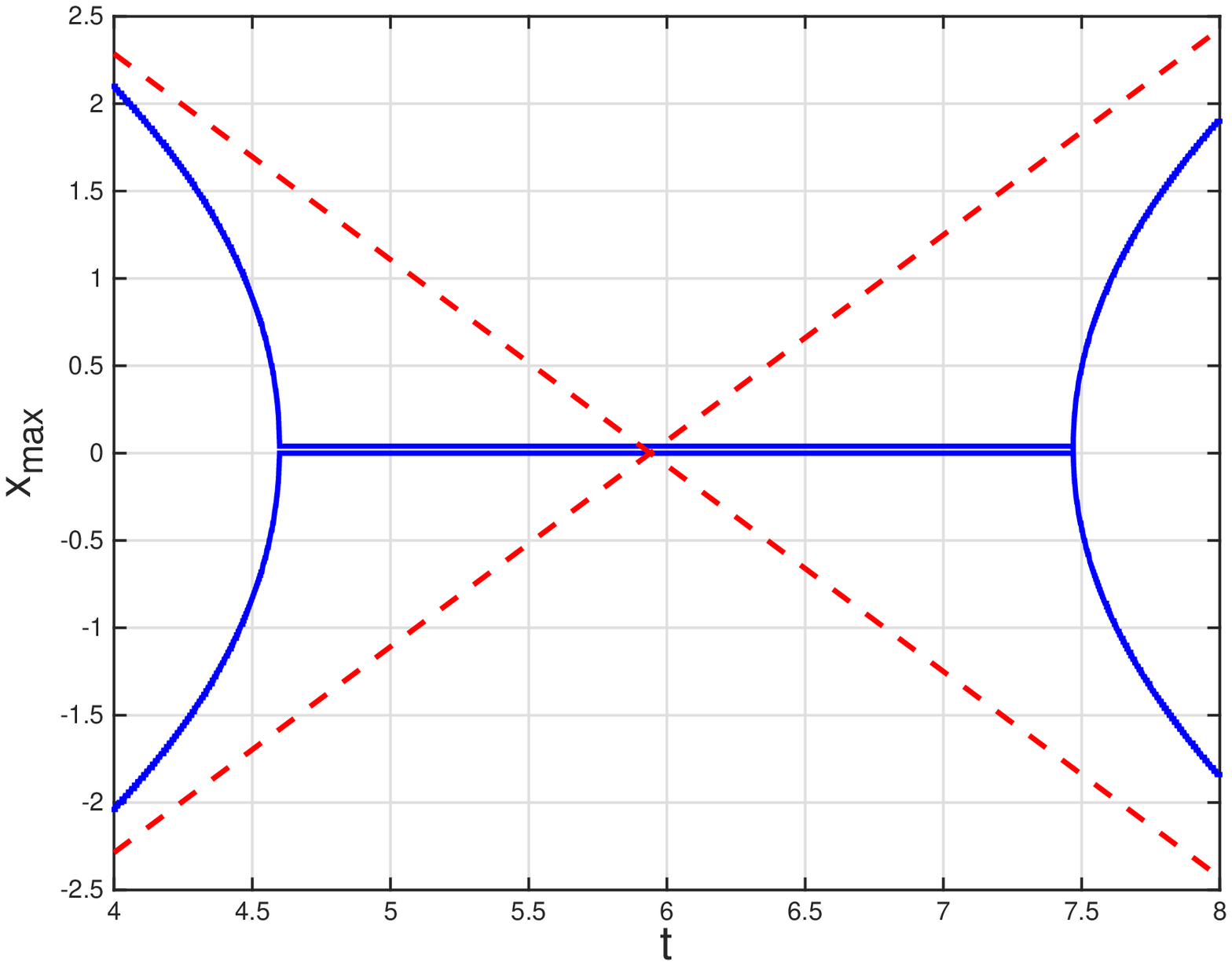,width=\linewidth}
\begin{footnotesize}\emph{Magnification around the collision time $(t\approx 5.94)$}\end{footnotesize}
\end{minipage}
\caption{The phase shift of the traveling waves caused by the head-on collision case $(I)$ (dashed line : both paths without collision, solid line : both paths with collision)\label{Phase_shift_2_fig}}
\end{minipage}
\end{figure}

\begin{figure}[h]
\begin{minipage}[t]{1\linewidth}
\begin{minipage}[b]{.5\linewidth}
\centering\epsfig{figure=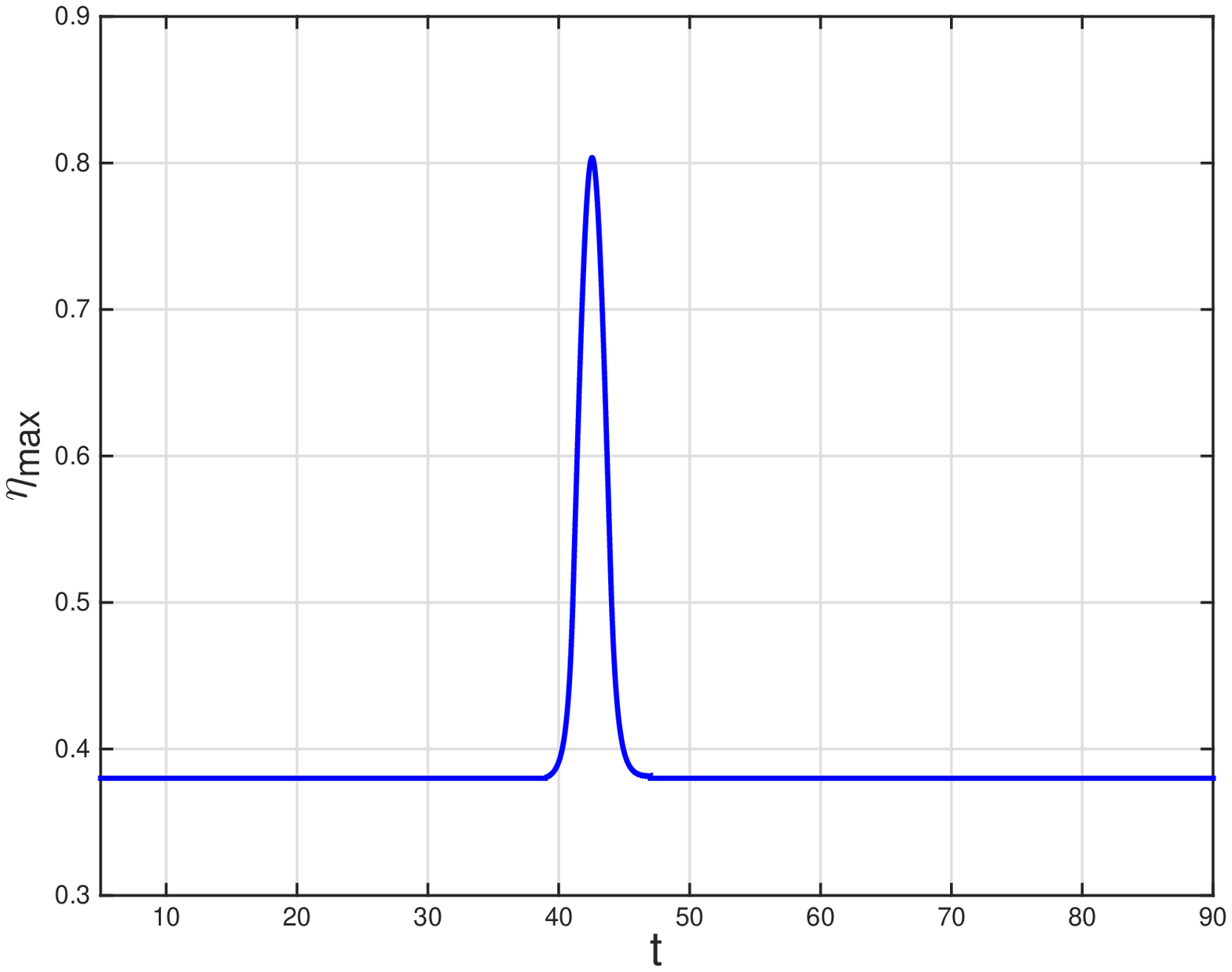,width=\linewidth}
\begin{footnotesize}\emph{Maximum amplitude of $\eta$ against time}\end{footnotesize}
\end{minipage}\hspace*{-0.2cm}
\begin{minipage}[b]{.5\linewidth}
\centering\epsfig{figure=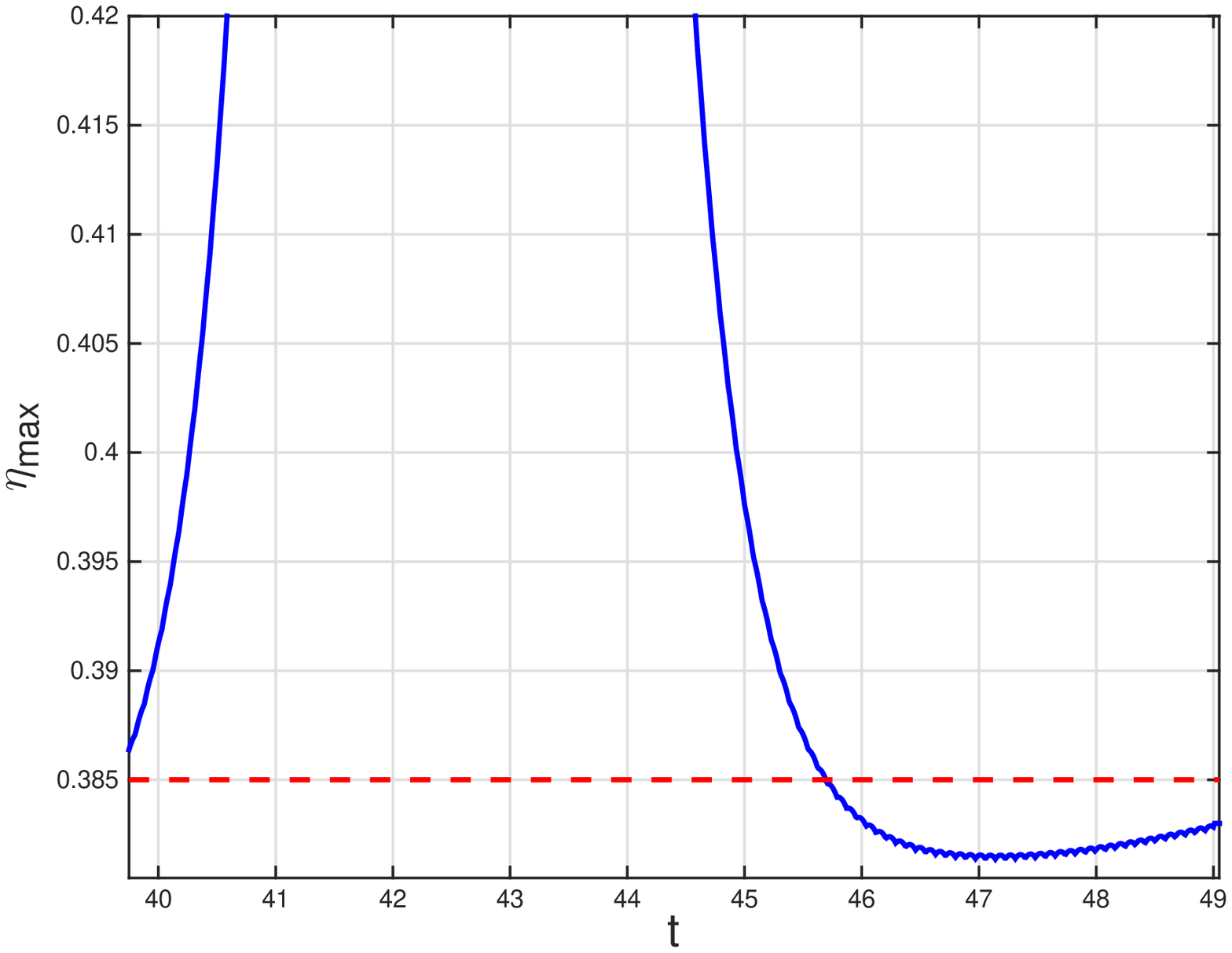,width=\linewidth}
\begin{footnotesize}\emph{Magnification around the collision time}\end{footnotesize}
\end{minipage}
\caption{Changes in amplitude of $\eta$ caused by the head-on collision case $(I)$\label{amplitude_max_temps_fig}}
\end{minipage}
\end{figure}

\begin{figure}[h]
\begin{minipage}[t]{1\linewidth}
\begin{minipage}[b]{.335\linewidth}
\centering\epsfig{figure=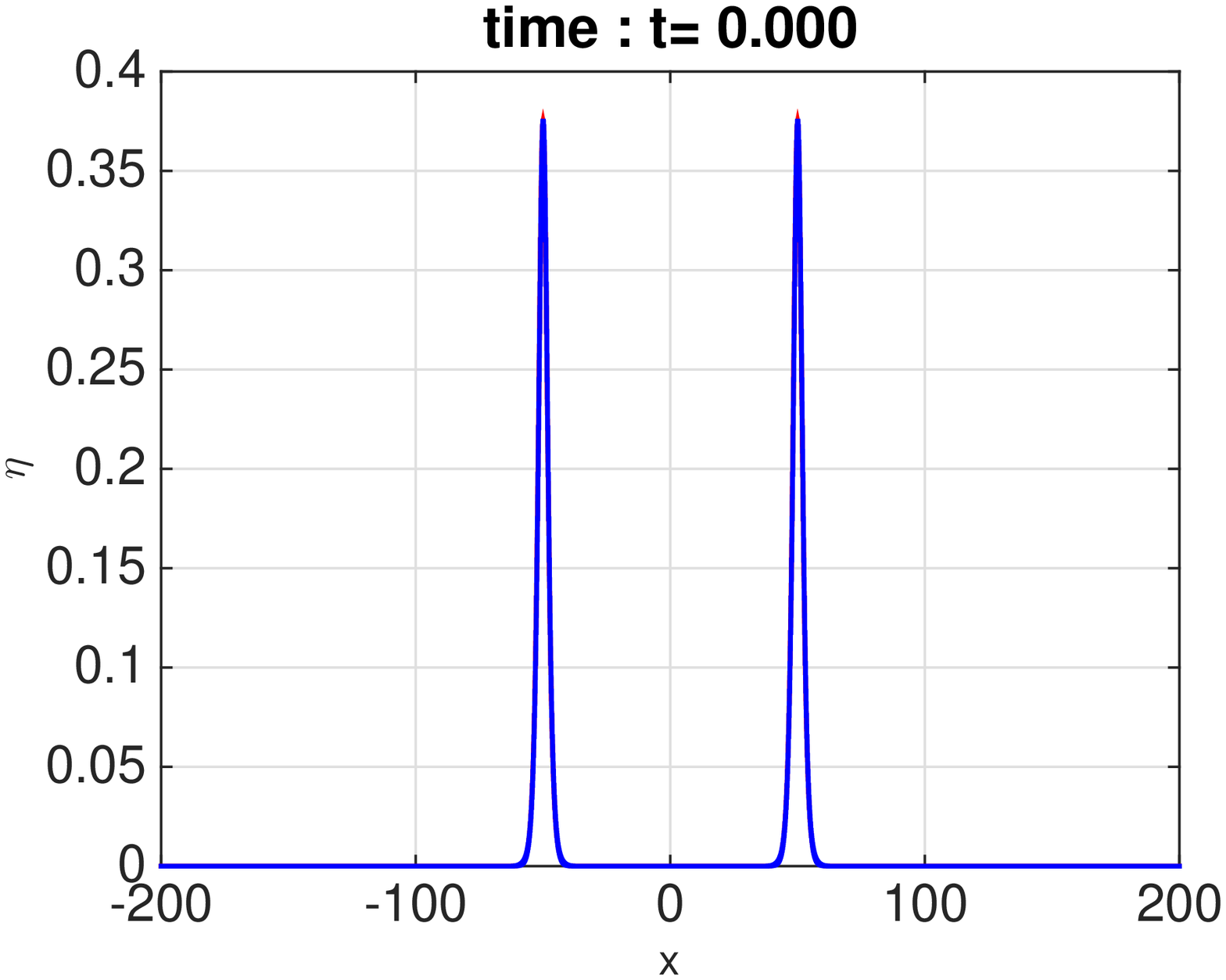,width=\linewidth}
\end{minipage}\hspace*{-0.2cm}
\begin{minipage}[b]{.335\linewidth}
\centering\epsfig{figure=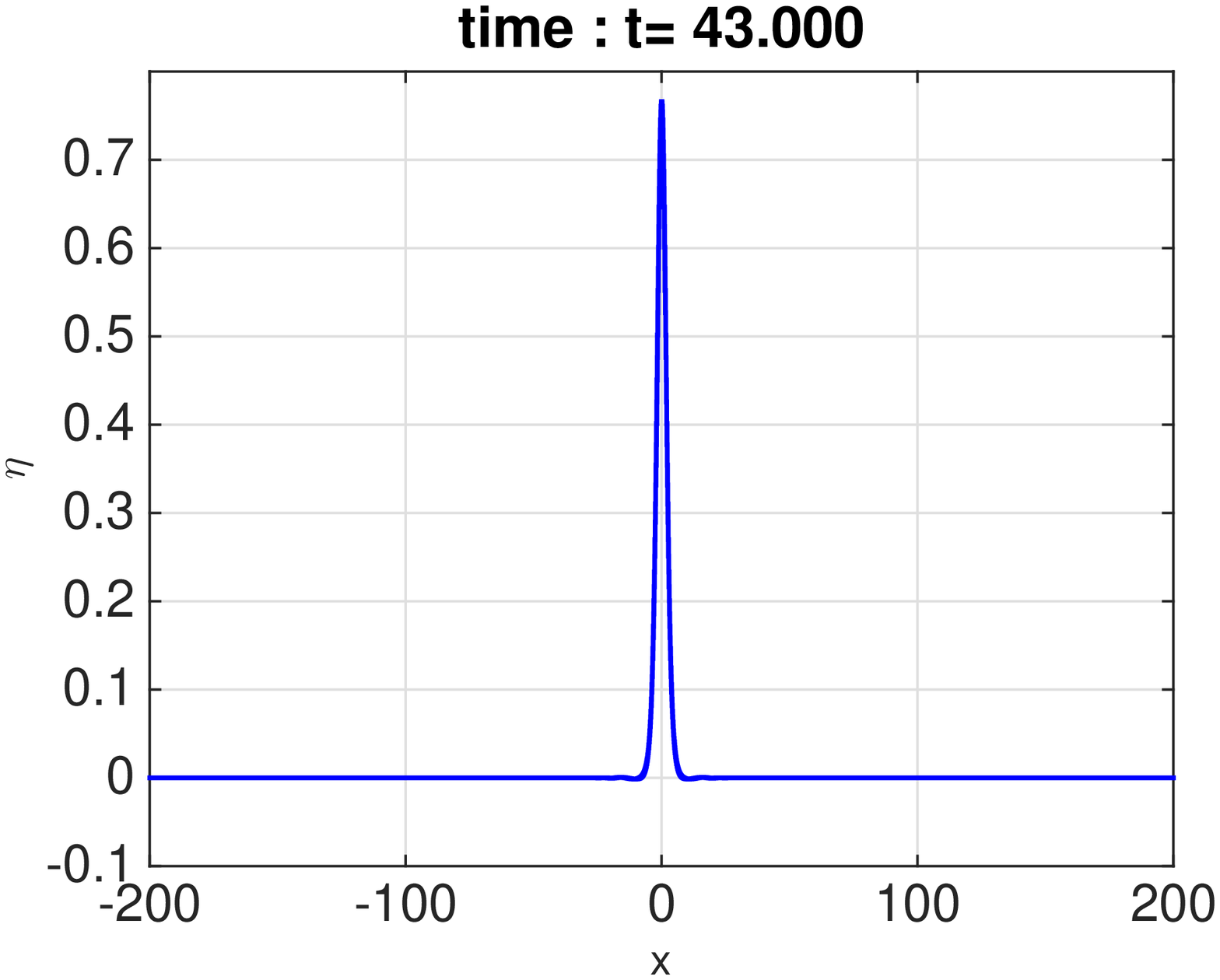,width=\linewidth}
\end{minipage}\hspace*{-0.2cm}
\begin{minipage}[b]{.335\linewidth}
\centering\epsfig{figure=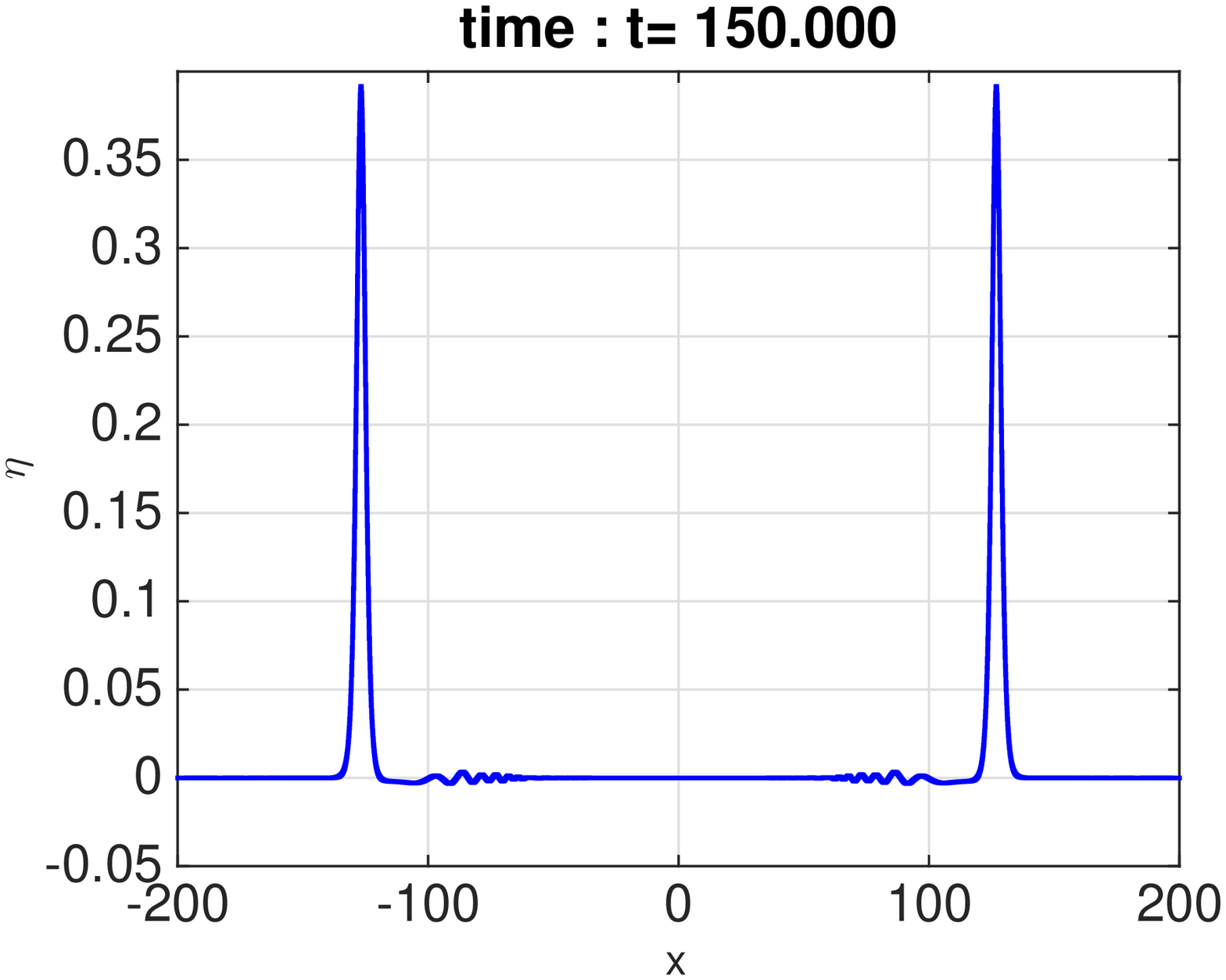,width=\linewidth}
\end{minipage}
\caption{Dispersive tails following the traveling waves and created by the head-on collision case $(I)$\label{temps_long_1_fig}}
\end{minipage}
\end{figure}

\begin{figure}[h!]
\begin{center}
\begin{minipage}[c]{0.4\linewidth}
\centering\epsfig{figure=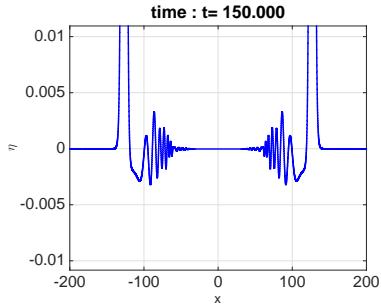,width=\linewidth}
\end{minipage}
\end{center}
\caption{Magnification of the dispersive tails for case $(I)$\label{magnificent_fig}}
\end{figure}

Last but not least, we have performed a numerical experiment to illustrate the head-on collision for $(a=0, b=\frac{1}{6}, c=0, d=\frac{1}{6})$. This test case is drawn from \cite{BonaChen1998}. The numerical parameters are fixed to $L=75$, $\Delta x=\frac{0.5}{32}$, $\Delta t=0.002$ and the initial value :
\begin{equation*}
\left\{
\begin{split}
&\eta(0,x)=\eta_+(0,x)+\eta_-(0,x),\\
&u(0,x)=u_+(0,x)+u_-(0,x),
\end{split}
\right.
\end{equation*}
with 
\begin{equation*}
(J)\left\{
\begin{split}
&\eta_{\pm}(0,x)=\frac{1}{2}\text{sech}^2\left(\frac{1}{2}\sqrt{\frac{6}{5}}\left(x-x_{\pm}\right)\right),\\
&u_{\pm}(0,x)=\pm\frac{1}{2}\text{sech}^2\left(\frac{1}{2}\sqrt{\frac{6}{5}}\left(x-x_{\pm}\right)\right)\mp\frac{1}{16}\text{sech}^4\left(\frac{1}{2}\sqrt{\frac{6}{5}}\left(x-x_{\pm}\right)\right),
\end{split}
\right.
\end{equation*}
where $x_{\pm}=\pm67$. The traveling waves collide at $t\approx54.88$ and emerge with only few changes in phase, amplitude and shape. Figure \ref{phase_shift_fig} illustrates for instance the phase-shift by a representation in the $(t,x)$-plane of the paths of the maximum amplitude of both waves with (solid line) or without (dashed line) the collision. Once again, a consequence of the collision is a delay of the wave propagation because the phase-shift is oriented in the opposite direction of the motion. The relative error on the velocity of the traveling wave moving from the left to the right is $0.1883\%$, whereas the relative error on the velocity of the traveling wave moving from the right to the left is equal to $0.2353\%$.

\begin{figure}[h]
\begin{center}
\begin{minipage}[c]{0.4\linewidth}
\centering\epsfig{figure=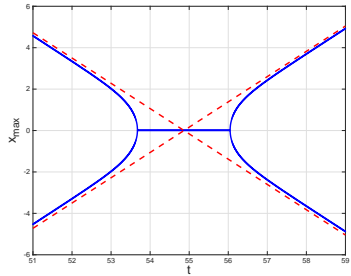,width=\linewidth}
\end{minipage}
\end{center}
\caption{Phase-shift due to the head-on collision case $(J)$ in the $(t,x)$-plane (dashed line : paths without collision and solid line : paths with collision)\label{phase_shift_fig}}
\end{figure}

The changes in maximum amplitude of $\eta$ are summarized in Figure \ref{amplitude_max_Bona_Chen_fig}. On one hand, both traveling waves have a smaller maximal elevation after the collision (the relative error of maximum amplitude at $t=62$ for instance is equal to $0.6538\%$). Secondly, the maximal elevation during the collision is bigger than the sum of both maximum amplitudes ($10.28\%$ bigger than the sum).
\begin{figure}[h]
\begin{minipage}[t]{1\linewidth}
\begin{minipage}[b]{.5\linewidth}
\centering\epsfig{figure=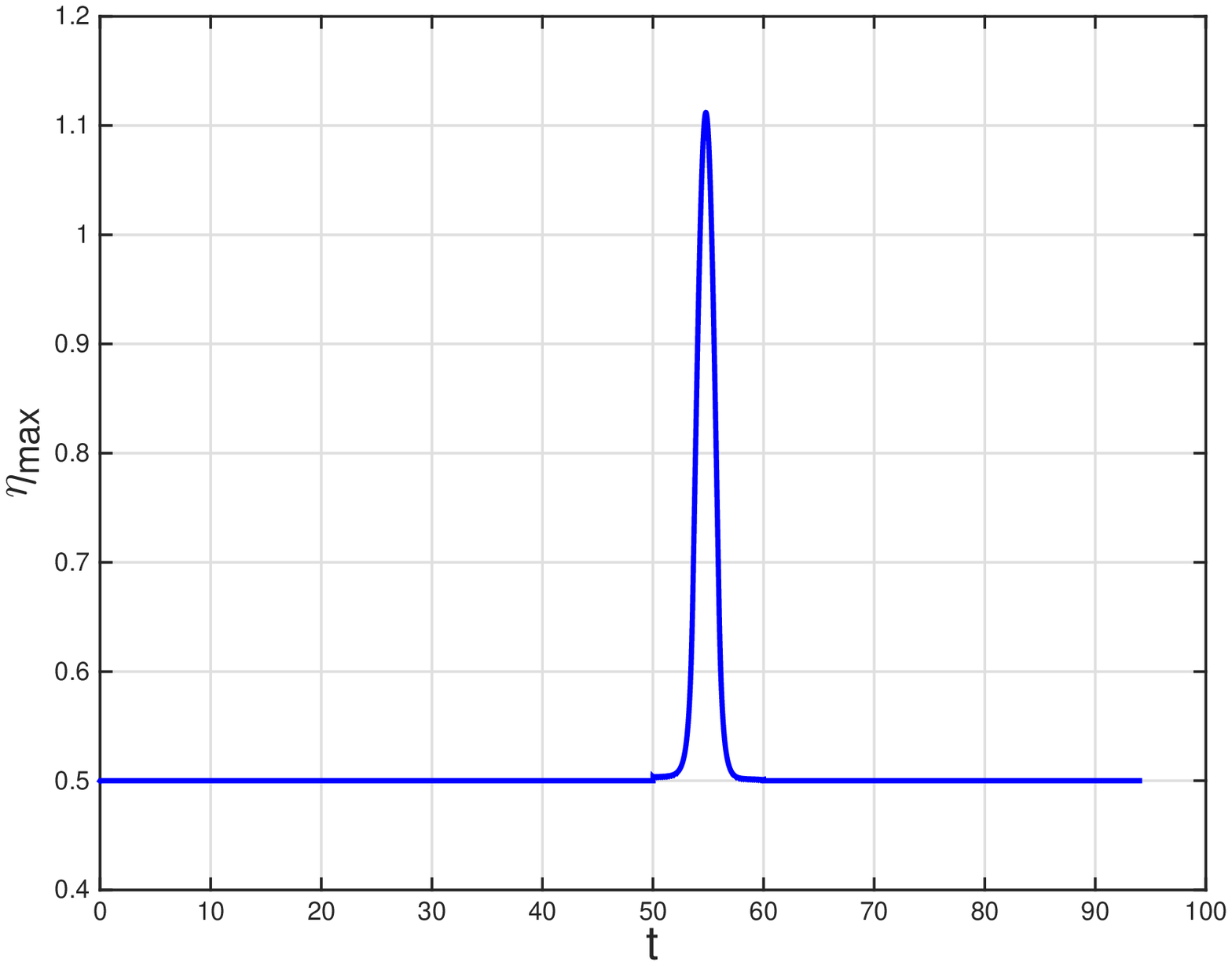,width=\linewidth}
\begin{footnotesize}\emph{Maximum amplitude of $\eta$ against time}\end{footnotesize}
\end{minipage}\hspace*{-0.2cm}
\begin{minipage}[b]{.5\linewidth}
\centering\epsfig{figure=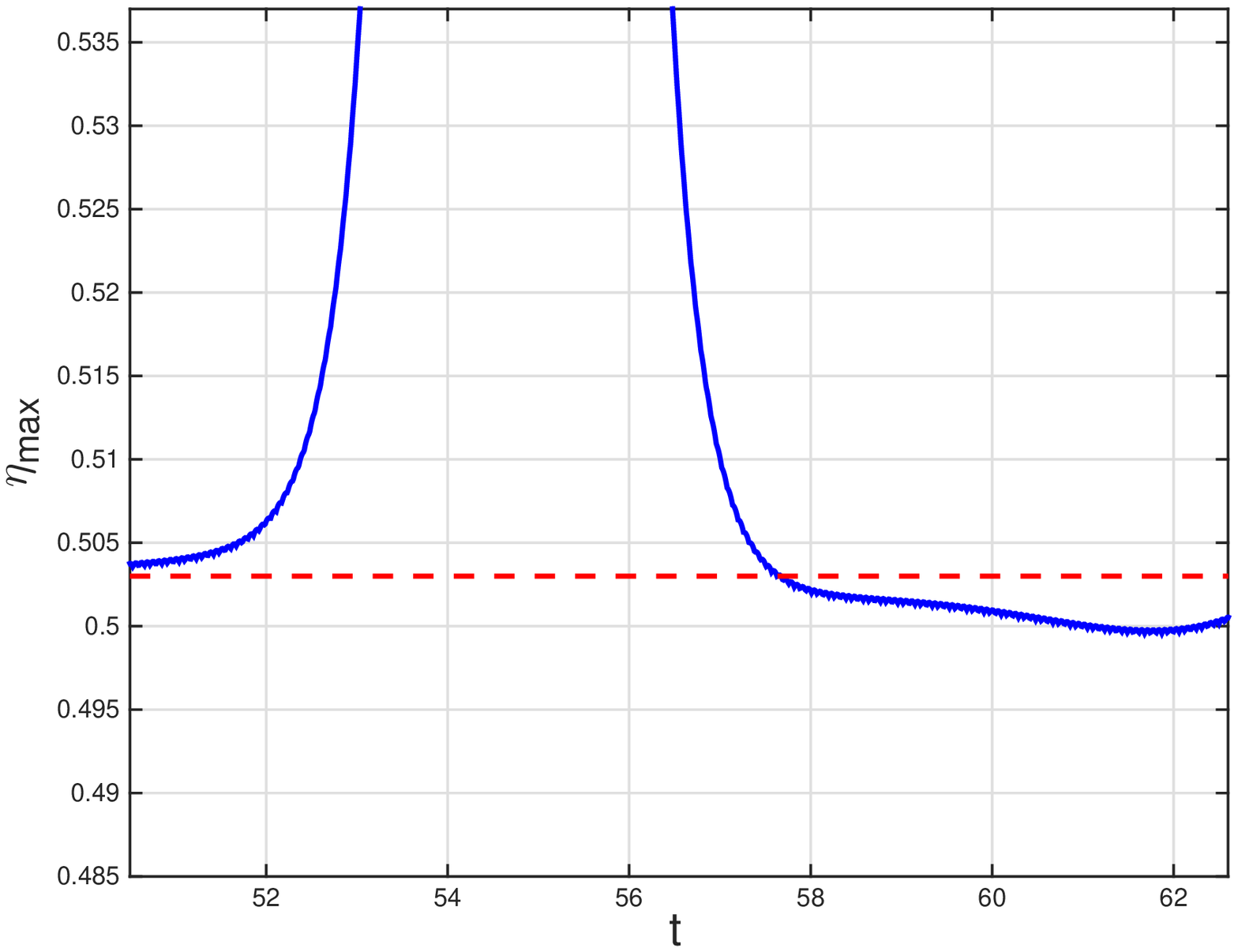,width=\linewidth}
\begin{footnotesize}\emph{Magnification around the collision time}\end{footnotesize}
\end{minipage}
\caption{Changes in amplitude of $\eta$ caused by the head-on collision case $(J)$\label{amplitude_max_Bona_Chen_fig}}
\end{minipage}
\end{figure}
 Eventually, the dispersive tails resulting from the head-on collision is illustrated in Figure \ref{Dispersive_tail_fig} at final time $T=80$.  
\begin{figure}[h]
\begin{minipage}[t]{1\linewidth}
\begin{minipage}[b]{.5\linewidth}
\centering\epsfig{figure=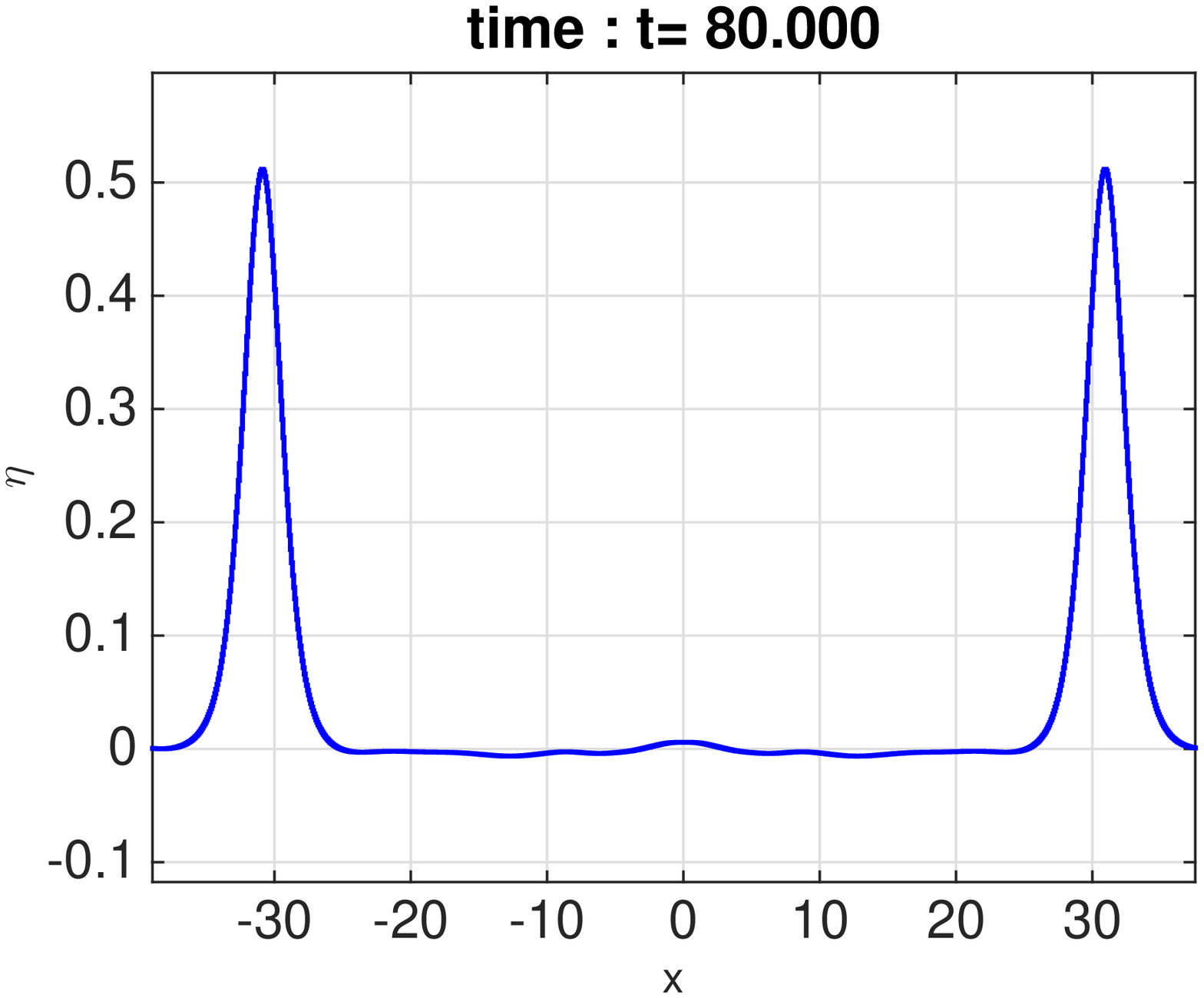,width=\linewidth}
\end{minipage}\hspace*{-0.2cm}
\begin{minipage}[b]{.5\linewidth}
\centering\epsfig{figure=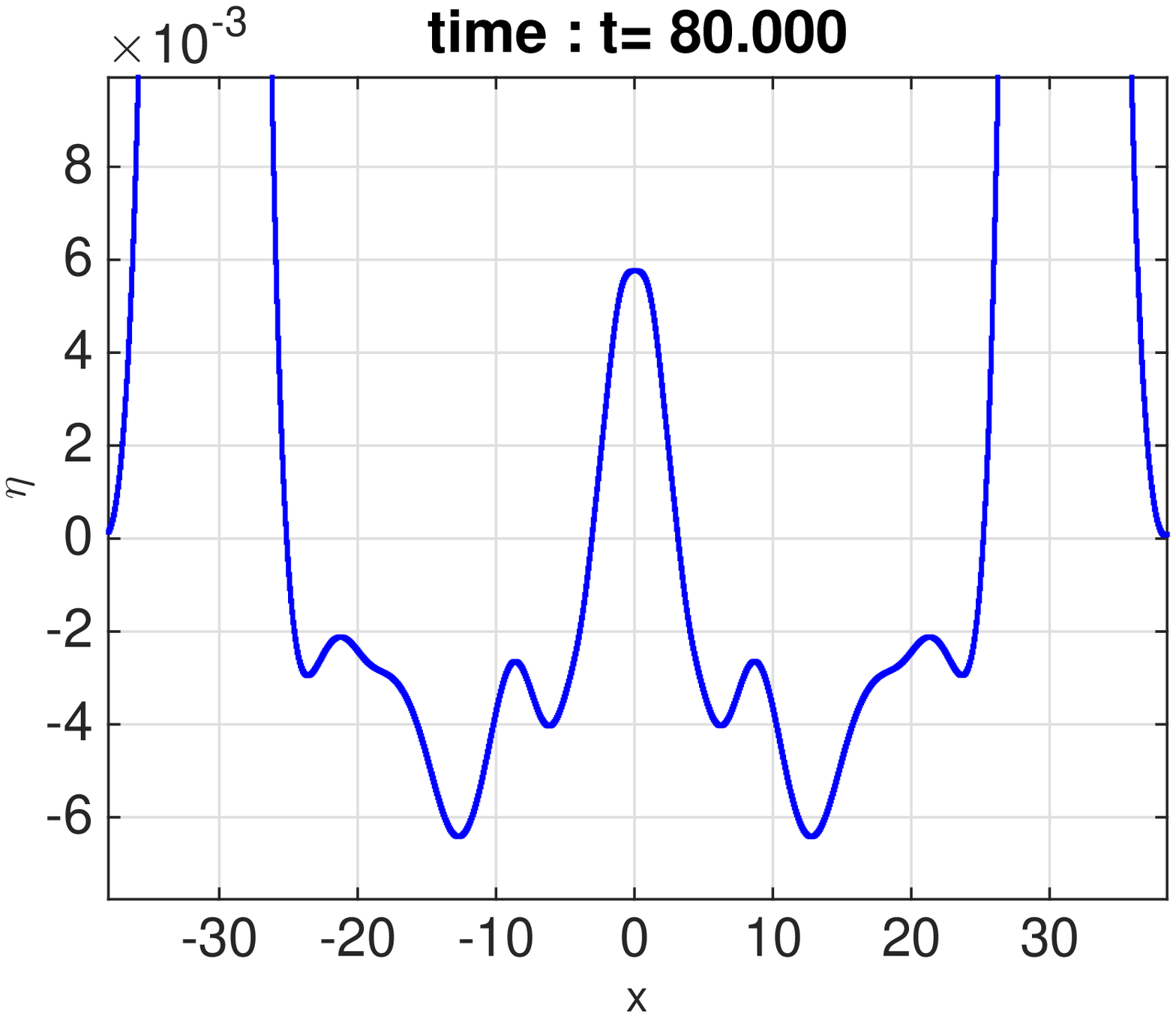,width=\linewidth}
\end{minipage}\hspace*{-0.2cm}
\caption{Dispersive tails following the traveling waves and created by the head-on collision case $(J)$\label{Dispersive_tail_fig}}
\end{minipage}
\end{figure}

\newpage

\appendix
\section{Consistency error}
\label{Section_consistency}
\subsection{Consistency error $\epsilon_{1}^{n}$.}
By definition of the consistency error, one has
\begin{equation*}
\begin{split}
\epsilon_{1}^{n}&=\left(  I-bD_{+}D_{-}\right)  \left(  \frac{\eta_{\Delta
}^{n+1}-\eta_{\Delta}^{n}}{\Delta t}\right)  +\left(  I+aD_{+}D_{-}\right)
D\left(  \theta u_{\Delta}^{n+1}+(1-\theta)u_{\Delta}^{n}\right)  +D\left(
\eta_{\Delta}^{n}u_{\Delta}^{n}\right) \\
& -\frac{\tau_{1}}{2}\Delta
xD_{+}D_{-}\eta_{\Delta}^{n}.
\end{split}
\end{equation*}
We define
\[
\left\{
\begin{split}
&  E_{\mathrm{time}}=\left(  I-bD_{+}D_{-}\right)  \left(  \frac{\eta_{\Delta
}^{n+1}-\eta_{\Delta}^{n}}{\Delta t}\right)  ,\\
&  E_{\mathrm{linear}}=\left(  I+aD_{+}D_{-}\right)  D\left(  \theta
u_{\Delta}^{n+1}+(1-\theta)u_{\Delta}^{n}\right)  ,\\
&  E_{\mathrm{non\ linear}}=D\left(  \eta_{\Delta}^{n}u_{\Delta}^{n}\right)
,\\
&  E_{\mathrm{viscosity}}=\frac{\tau_{1}}{2}\Delta xD_{+}D_{-}\eta
_{\Delta}^{n}.
\end{split}
\right.
\]

We define $Q=[x_{j}, x_{j+1}]\times[t^{n}, t^{n+1}]$. We will only develop the
non linear part and enonce the results for the other parts. By Taylor
expansions and Cauchy-Schwarz inequality, one has
\begin{equation*}
\begin{split}
E_{\mathrm{time}}&\lesssim\frac{1}{\Delta t\Delta x}\int_{x_{j}}^{x_{j+1}}%
\int_{t^{n}}^{t^{n+1}}\partial_{t} \eta(s, y)-b\partial_{x}^{2}\partial
_{t}\eta(s, y)dsdy \\
&+\sqrt{\frac{\Delta t}{\Delta x}}||\partial_{t}^{2}%
\eta||_{ L^{2}(Q)}+b\Delta x\sqrt{\frac{\Delta x}{\Delta t}}%
||\partial_{x}^{4}\partial_{t} \eta||_{ L^{2}(Q)}+b\sqrt{\frac{\Delta
t}{\Delta x}}||\partial_{x}^{2}\partial_{t}^{2}\eta||_{ L^{2}(Q)}.
\end{split}
\end{equation*}
In the same way, we develop the $E_{\mathrm{linear}}$-term.
\[%
\begin{split}
E_{\mathrm{linear}}  &  \lesssim\frac{1}{\Delta t\Delta x}\int_{x_{j}%
}^{x_{j+1}}\int_{t^{n}}^{t^{n+1}}\partial_{x}u(s, y)+a\partial_{x}^{3}u(s,
y)dsdy+\theta\sqrt{\frac{\Delta t}{\Delta x}}||\partial_{x}\partial
_{t}u||_{ L^{2}(Q)}\\
&+\Delta x\sqrt{\frac{\Delta x}{\Delta t}}%
||\partial_{x}^{3}u||_{ L^{2}(Q)}  +|a|\Delta x\sqrt{\frac{\Delta x}{\Delta t}}||\partial_{x}^{5}%
u||_{ L^{2}(Q)}+|a|\theta\sqrt{\frac{\Delta t}{\Delta x}}%
||\partial_{t}\partial_{x}^{3}u||_{ L^{2}(Q)}\\
&+|a|\Delta x\sqrt
{\frac{\Delta x}{\Delta t}}||\partial_{x}^{5}u||_{ L^{2}(Q)}.
\end{split}
\]
We will develop the non linear part. We denote $K$ the following function on $[0,1]$
$$
K(\nu)=\int_{x_{j}}^{x_{j+1}}\int_{t^{n}}^{t^{n+1}}\int_{x_{j}}^{x_{j+1}}%
\int_{t^{n}}^{t^{n+1}}\eta(s, y+\nu\Delta x)u(t, x+\nu\Delta x)dxdydsdt.
$$
Thus,
\begin{align*}
&K^{\prime}(\nu)    =\int_{x_{j}}^{x_{j+1}}\int_{t^{n}}^{t^{n+1}}\int_{x_{j}%
}^{x_{j+1}}\int_{t^{n}}^{t^{n+1}}\Delta x\partial_{x}\eta(s, y+\nu\Delta
x)u(t, x+\nu\Delta x)\\
&+\Delta x\eta(s, y+\nu\Delta x)\partial_{x} u(t,
x+\nu\Delta x)dxdydsdt\\
&  =\int_{x_{j}}^{x_{j+1}}\int_{t^{n}}^{t^{n+1}}\int_{x_{j}}^{x_{j+1}}%
\int_{t^{n}}^{t^{n+1}}\Delta x\partial_{x}\eta(s, y+\nu\Delta x)\left[  u(s,
x+\nu\Delta x)+\int_{s}%
^{t}\partial_{t} u(\tau, x+\nu\Delta x)d\tau\right]\\
&  +\Delta x\partial_{x} u(t, x+\nu\Delta x)\left[  \eta(t, y+\nu\Delta
x)+\int_{t}^{s}\partial_{t}
\eta(\tau, y+\nu\Delta x)d\tau\right]  dxdydsdt\\
&  =\int_{x_{j}}^{x_{j+1}}\int_{t^{n}}^{t^{n+1}}\int_{x_{j}}^{x_{j+1}}%
\int_{t^{n}}^{t^{n+1}}\Delta x\partial_{x}\eta(s, y+\nu\Delta x)\left[  u(s,
y+\nu\Delta x)\right.\\
&\left.-(y-x)\partial_xu(s,x+\nu\Delta x)-\int_{x}^{y}\partial_{x}^2 u(s, z+\nu\Delta x)(y-z)dz+\int_{s}%
^{t}\partial_{t} u(\tau, x+\nu\Delta x)d\tau\right] \\
&  +\Delta x\partial_{x} u(t, x+\nu\Delta x)\left[  \eta(t, x+\nu\Delta
x)-(x-y)\partial_x\eta(t, y+\nu\Delta x)\right.\\
&\left.-\int_{y}^{x}\partial_{x}^2 \eta(t, z+\nu\Delta x)(x-z)dz+\int_{t}^{s}\partial_{t}
\eta(\tau, y+\nu\Delta x)d\tau\right]  dxdydsdt\\
&  =\Delta x^{2}\Delta t\int_{x_{j}}^{x_{j+1}}\int_{t^{n}}^{t^{n+1}}%
\partial_{x}(\eta u)(s, y+\nu\Delta x)dsdy\\
&  -\Delta x\Delta t\int_{x_{j}}^{x_{j+1}}\int_{t^{n}}^{t^{n+1}}\int_{x_{j}%
}^{x_{j+1}}\int_{x}^{y}\partial_{x}\eta(s, y+\nu\Delta x)\partial_{x}^2u(s,
z+\nu\Delta x)(y-z)dzdxdyds\\
&  -\Delta x\Delta t\int_{x_{j}}^{x_{j+1}}\int_{t^{n}}^{t^{n+1}}\int_{x_{j}%
}^{x_{j+1}}\int_{y}^{x}\partial_{x}u(t, x+\nu\Delta x)\partial_{x}^2\eta(t,
z+\nu\Delta x)(x-z)dzdxdyds\\
&  +\Delta x\int_{x_{j}}^{x_{j+1}}\int_{t^{n}}^{t^{n+1}}\int_{x_{j}}^{x_{j+1}%
}\int_{t^{n}}^{t^{n+1}}\int_{s}^{t}\partial_{x}\eta(s, y+\nu\Delta
x)\partial_{t}u(\tau, x+\nu\Delta x)d\tau dxdyds dt\\
&  +\Delta x\int_{x_{j}}^{x_{j+1}}\int_{t^{n}}^{t^{n+1}}\int_{x_{j}}^{x_{j+1}%
}\int_{t^{n}}^{t^{n+1}}\int_{t}^{s}\partial_{x}u(t, x+\nu\Delta x)\partial
_{t}\eta(\tau, y+\nu\Delta x)d\tau dxdyds dt
\end{align*}
Moreover,
\begin{align*}%
&  K^{\prime\prime}(\nu)=\Delta x^2\Delta t\int_{x_{j}}^{x_{j+1}}\int_{t^{n}}^{t^{n+1}}\left[2\Delta
x\left(  \partial_{x}u\partial_{x}\eta\right)  (s, y+\nu\Delta x)\right.\\
&\left.+\Delta
x\left(  u\partial_{x}^{2}\eta\right)  (s, y+\nu\Delta x)+\Delta x\left(
\eta\partial_{x}^{2}u\right)  (s, y+\nu\Delta x)\right]dsdy\\
&  +\Delta x\Delta t\int_{x_{j}}^{x_{j+1}}\int_{t^{n}}^{t^{n+1}}\int_{x_{j}%
}^{x_{j+1}}\int_{y}^{x}\left[\Delta x\partial_{x}^{2}\eta(s, y+\nu\Delta
x)\partial_{x}u(s, z+\nu\Delta x)\right.\\
&\left.+\Delta x\partial_{x}\eta(s, y+\nu\Delta
x)\partial_{x}^{2}u(s, z+\nu\Delta x)\right]dzdxdyds\\
&  +\Delta x\Delta t\int_{x_{j}}^{x_{j+1}}\int_{t^{n}}^{t^{n+1}}\int_{x_{j}%
}^{x_{j+1}}\int_{x}^{y}\left[\Delta x\partial_{x}^{2}u(t, s+\nu\Delta x)\partial
_{x}\eta(t, z+\nu\Delta x)\right.\\
&\left.+\Delta x\partial_{x}u(t, x+\nu\Delta x)\partial
_{x}^{2}\eta(t, z+\nu\Delta x)\right]dzdxdyds\\
&  +\Delta x\int_{x_{j}}^{x_{j+1}}\int_{t^{n}}^{t^{n+1}}%
\int_{x_{j}}^{x_{j+1}}\int_{t^{n}}^{t^{n+1}}\int_{s}^{t}\left[\Delta x\partial
_{x}^{2}\eta(s, y+\nu\Delta x)\partial_{t}u(\tau, x+\nu\Delta x)\right.\\
&\left.+\Delta
x\partial_{x}\eta(s, y+\nu\Delta x)\partial_{tx}u(\tau, x+\nu\Delta x)\right]d\tau
dxdydsdt\\
&  +\Delta x\int_{x_{j}}^{x_{j+1}}\int_{t^{n}}^{t^{n+1}}%
\int_{x_{j}}^{x_{j+1}}\int_{t^{n}}^{t^{n+1}}\int_{t}^{s}\left[\Delta x\partial
_{x}^{2}u(t, x+\nu\Delta x)\partial_{t}\eta(\tau, y+\nu\Delta x)\right.\\
&\left.+\Delta
x\partial_{x}u(t, x+\nu\Delta x)\partial_{tx}\eta(\tau, y+\nu\Delta x)\right]d\tau
dxdydsdt.
\end{align*}
We have the same type of equality for $K'''$.
Applying once again the Cauchy-Schwarz inequality gives
\begin{align*}
&  |K^{\prime\prime\prime}(\nu)|\lesssim\Delta x^4\Delta t||\partial_{x}^2u||_{ L^{2}%
(Q)}||\partial_{x} \eta||_{ L^{2}(Q)}+\Delta x^4\Delta t||\partial_{x}u||_{ L^{2}%
(Q)}||\partial_{x}^2 \eta||_{ L^{2}(Q)}\\
&+\Delta x^4\Delta t||u||_{ L^{2}%
(Q)}||\partial_{x}^3 \eta||_{ L^{2}(Q)}+\Delta x^4\Delta t||\partial_{x}^3u||_{ L^{2}%
(Q)}|| \eta||_{ L^{2}(Q)}\\
&+\Delta x^5\Delta t||\partial_{x}u||_{ L^{2}%
(Q)}||\partial_{x}^3 \eta||_{ L^{2}(Q)}+\Delta x^5\Delta t||\partial_{x}^2u||_{ L^{2}%
(Q)}||\partial_{x}^2 \eta||_{ L^{2}(Q)}\\
&+\Delta x^5\Delta t||\partial_{x}^3u||_{ L^{2}%
(Q)}||\partial_{x}\eta||_{ L^{2}(Q)}+\Delta x^4\Delta t^2||\partial_{xt}u||_{ L^{2}%
(Q)}||\partial_{x}^2 \eta||_{ L^{2}(Q)}\\
&+\Delta x^4\Delta t^2||\partial_{t}u||_{ L^{2}%
(Q)}||\partial_{x}^3 \eta||_{ L^{2}(Q)}+\Delta x^4\Delta t^2||\partial_{t}\partial_x^2u||_{ L^{2}%
(Q)}||\partial_{x} \eta||_{ L^{2}(Q)}\\
&+\Delta x^4\Delta t^2||\partial_{x}^3u||_{ L^{2}%
(Q)}||\partial_{t} \eta||_{ L^{2}(Q)}+\Delta x^4\Delta t^2||\partial_{x}^2u||_{ L^{2}%
(Q)}||\partial_{xt} \eta||_{ L^{2}(Q)}\\
&+\Delta x^4\Delta t^2||\partial_{x}u||_{ L^{2}%
(Q)}||\partial_{t}\partial_x^2 \eta||_{ L^{2}(Q)}
\end{align*}
Thus, the $E_{\mathrm{non\ linear}}$-term rewrites
\begin{align*}%
&  E_{\mathrm{non\ linear}}\lesssim\frac{1}{2\Delta x^{3}\Delta t^{2}}\left(
K(1)-K(-1)\right) \\
& =\frac{1}{2\Delta x^{3}\Delta t^{2}}\left(  2K^{\prime
}(0)+\int_{0}^{1}K^{\prime\prime\prime}(w) \frac{(1-w)^2}{2}dw+\int_{0}^{1}K^{\prime\prime\prime
}(-w)\frac{(-1-w)^2}{2}dw\right) \\
&  \leq\frac{1}{\Delta x\Delta t}\int_{x_{j}}^{x_{j+1}}\int_{t^{n}}^{t^{n+1}%
}\partial_{x}(\eta u)(s, y)dsdy+\Delta x\sqrt{\frac{\Delta x}{\Delta t}}||\partial
_{x}\eta||_{ L^{\infty}(Q)}||\partial_{x}^2u||_{ L^{2}(Q)}\\
&+\Delta x\sqrt{\frac{\Delta x}{\Delta t}}||\partial
_{x}^2\eta||_{ L^{2}(Q)}||\partial_{x}u||_{ L^{\infty}(Q)}+\sqrt{\frac{\Delta t}{\Delta x}}||\partial_{x}\eta||_{ L^{\infty}(Q)}||\partial_{t}u||_{ L^{2}(Q)}  \\
&+\sqrt{\frac{\Delta t}{\Delta x}}||\partial_{t}\eta||_{ L^{2}%
(Q)}||\partial_{x}u||_{ L^{\infty}(Q)}+\Delta x\sqrt{\frac{\Delta x}{\Delta t}}||\partial_{x}^2u||_{ L^{2}%
(Q)}||\partial_{x} \eta||_{ L^{\infty}(Q)}\\
&+\Delta x\sqrt{\frac{\Delta x}{\Delta t}}||\partial_{x}u||_{ L^{\infty}%
(Q)}||\partial_{x}^2 \eta||_{ L^{2}(Q)}+\Delta x\sqrt{\frac{\Delta x}{\Delta t}}||u||_{ L^{\infty}%
(Q)}||\partial_{x}^3 \eta||_{ L^{2}(Q)}\\
&+\Delta x\sqrt{\frac{\Delta x}{\Delta t}}||\partial_{x}^3u||_{ L^{2}%
(Q)}|| \eta||_{ L^{\infty}(Q)}+\Delta x^2\sqrt{\frac{\Delta x}{\Delta t}}||\partial_{x}u||_{ L^{\infty}%
(Q)}||\partial_{x}^3 \eta||_{ L^{2}(Q)}\\
&+\Delta x^2\sqrt{\frac{\Delta x}{\Delta t}}||\partial_{x}^2u||_{ L^{\infty}%
(Q)}||\partial_{x}^2 \eta||_{ L^{2}(Q)}+\Delta x^2\sqrt{\frac{\Delta x}{\Delta t}}||\partial_{x}^3u||_{ L^{2}%
(Q)}||\partial_{x}\eta||_{ L^{\infty}(Q)}\\
&+\Delta x\sqrt{\Delta x\Delta t}||\partial_{xt}u||_{ L^{2}%
(Q)}||\partial_{x}^2 \eta||_{ L^{\infty}(Q)}+\Delta x\sqrt{\Delta x\Delta t}||\partial_{t}u||_{ L^{2}%
(Q)}||\partial_{x}^3 \eta||_{ L^{\infty}(Q)}\\
&+\Delta x\sqrt{\Delta x\Delta t}||\partial_{t}\partial_x^2u||_{ L^{2}%
(Q)}||\partial_{x} \eta||_{ L^{\infty}(Q)}+\Delta x\sqrt{\Delta x\Delta t}||\partial_{x}^3u||_{ L^{\infty}%
(Q)}||\partial_{t} \eta||_{ L^{2}(Q)}\\
&+\Delta x\sqrt{\Delta x\Delta t}||\partial_{x}^2u||_{ L^{\infty}%
(Q)}||\partial_{xt} \eta||_{ L^{2}(Q)}+\Delta x\sqrt{\Delta x\Delta t}||\partial_{x}u||_{ L^{\infty}%
(Q)}||\partial_{t}\partial_x^2 \eta||_{ L^{2}(Q)}.
\end{align*}
Finally, one has
\[
E_{\mathrm{viscosity}}\lesssim\frac{\tau_{1}\sqrt{\Delta x}}{2\sqrt{\Delta
t}}||\partial_{x}^{2}\eta||_{ L^{2}(Q)}.
\]
Then, when we sum up all the previous results, we obtain
\begin{small}
\begin{align*}
&  \epsilon_{1}^{n}=\frac{1}{\Delta t\Delta x}\int_{x_{j}}^{x_{j+1}}%
\int_{t^{n}}^{t^{n+1}}\partial_{t} \eta(s, y)-b\partial_{x}^{2}\partial
_{t}\eta(s, y)+\partial_{x}u(s, y)+a\partial_{x}^{3}u(s, y)+\partial_{x}(\eta
u)(s, y)dsdy\\
&  +\sqrt{\frac{\Delta t}{\Delta x}}||\partial_{t}^{2}\eta||_{ L%
^{2}(Q)}+b\Delta x\sqrt{\frac{\Delta x}{\Delta t}}||\partial_{x}^{4}%
\partial_{t} \eta||_{ L^{2}(Q)}+b\sqrt{\frac{\Delta t}{\Delta x}%
}||\partial_{x}^{2}\partial_{t}^{2}\eta||_{ L^{2}(Q)}\\
&+\theta
\sqrt{\frac{\Delta t}{\Delta x}}||\partial_{x}\partial_{t}u||_{ L%
^{2}(Q)}+\Delta x\sqrt{\frac{\Delta x}{\Delta t}}||\partial_{x}^{3}%
u||_{ L^{2}(Q)}\\
&  +|a|\Delta x\sqrt{\frac{\Delta x}{\Delta t}}||\partial_{x}^{5}%
u||_{ L^{2}(Q)}+|a|\theta\sqrt{\frac{\Delta t}{\Delta x}}%
||\partial_{t}\partial_{x}^{3}u||_{ L^{2}(Q)}+|a|\Delta x\sqrt
{\frac{\Delta x}{\Delta t}}||\partial_{x}^{5}u||_{ L^{2}(Q)}\\
&+\Delta x\sqrt{\frac{\Delta x}{\Delta t}}||\partial
_{x}\eta||_{ L^{\infty}(Q)}||\partial_{x}^2u||_{ L^{2}(Q)}+\Delta x\sqrt{\frac{\Delta x}{\Delta t}}||\partial
_{x}^2\eta||_{ L^{2}(Q)}||\partial_{x}u||_{ L^{\infty}(Q)}\\%
&+\sqrt{\frac{\Delta t}{\Delta x}}||\partial_{x}\eta||_{ L^{\infty}%
(Q)}||\partial_{t}u||_{ L^{2}(Q)} +\sqrt{\frac{\Delta t}{\Delta x}}||\partial_{t}\eta||_{ L^{2}%
(Q)}||\partial_{x}u||_{ L^{\infty}(Q)}\\%
&+\Delta x\sqrt{\frac{\Delta x}{\Delta t}}||\partial_{x}^2u||_{ L^{2}%
(Q)}||\partial_{x} \eta||_{ L^{\infty}(Q)}+\Delta x\sqrt{\frac{\Delta x}{\Delta t}}||\partial_{x}u||_{ L^{\infty}%
(Q)}||\partial_{x}^2 \eta||_{ L^{2}(Q)}\\
&+\Delta x\sqrt{\frac{\Delta x}{\Delta t}}||u||_{ L^{\infty}%
(Q)}||\partial_{x}^3 \eta||_{ L^{2}(Q)}+\Delta x\sqrt{\frac{\Delta x}{\Delta t}}||\partial_{x}^3u||_{ L^{2}%
(Q)}|| \eta||_{ L^{\infty}(Q)}\\
&+\Delta x^2\sqrt{\frac{\Delta x}{\Delta t}}||\partial_{x}u||_{ L^{\infty}%
(Q)}||\partial_{x}^3 \eta||_{ L^{2}(Q)}+\Delta x^2\sqrt{\frac{\Delta x}{\Delta t}}||\partial_{x}^2u||_{ L^{\infty}%
(Q)}||\partial_{x}^2 \eta||_{ L^{2}(Q)}\\
&+\Delta x^2\sqrt{\frac{\Delta x}{\Delta t}}||\partial_{x}^3u||_{ L^{2}%
(Q)}||\partial_{x}\eta||_{ L^{\infty}(Q)}+\Delta x\sqrt{\Delta x\Delta t}||\partial_{xt}u||_{ L^{2}%
(Q)}||\partial_{x}^2 \eta||_{ L^{\infty}(Q)}\\
&+\Delta x\sqrt{\Delta x\Delta t}||\partial_{t}u||_{ L^{2}%
(Q)}||\partial_{x}^3 \eta||_{ L^{\infty}(Q)}+\Delta x\sqrt{\Delta x\Delta t}||\partial_{t}\partial_x^2u||_{ L^{2}%
(Q)}||\partial_{x} \eta||_{ L^{\infty}(Q)}\\
&+\Delta x\sqrt{\Delta x\Delta t}||\partial_{x}^3u||_{ L^{\infty}%
(Q)}||\partial_{t} \eta||_{ L^{2}(Q)}+\Delta x\sqrt{\Delta x\Delta t}||\partial_{x}^2u||_{ L^{\infty}%
(Q)}||\partial_{xt} \eta||_{ L^{2}(Q)}\\
&+\Delta x\sqrt{\Delta x\Delta t}||\partial_{x}u||_{ L^{\infty}%
(Q)}||\partial_{t}\partial_x^2 \eta||_{ L^{2}(Q)}+\frac{\tau_{1}%
\sqrt{\Delta x}}{2\sqrt{\Delta t}}||\partial_{x}^{2}\eta||_{ L^{2}%
(Q)}.
\end{align*}
\end{small}
We recognize the initial equation on the first line. We recall the relation
\begin{small}
\begin{equation}
\sum_{j\in\mathbb{Z}}\Delta x||f||^{2}_{ L^{2}(Q)}=\Delta x\int
_{t^{n}}^{t^{n+1}}\int_{\mathbb{R}}|f(s, y)|^{2}dyds\leq\Delta x\Delta
t\underset{t\in[0,T]}{\mathrm{sup}}||f(t,.)||_{ L^{2}(\mathbb{R})}%
^{2}=\Delta x\Delta t||f(t,.)||_{ L^{\infty}_{t} L^{2}_{x}%
}^{2}. \label{RAPPEL}%
\end{equation}
\end{small}
Eventually, when we compute the $\ell^{2}_{\Delta}$-norm, we obtain
\begin{align*}
&  ||\epsilon_{1}^{n}||_{\ell^{2}_{\Delta}}^{2}=\Delta t^{2}||\partial_{t}%
^{2}\eta||^{2}_{ L^{\infty}_{t} L^{2}_{x}}+b^{2}\Delta
x^{4}||\partial_{x}^{4}\partial_{t} \eta||^{2}_{ L^{\infty}%
_{t} L^{2}_{x}}+b^{2}\Delta t^{2}||\partial_{x}^{2}\partial_{t}%
^{2}\eta||^{2}_{ L^{\infty}_{t} L^{2}_{x}}\\
&+\theta^{2}\Delta
t^{2}||\partial_{x}\partial_{t}u||^{2}_{ L^{\infty}_{t} L%
^{2}_{x}}+\Delta x^{4}||\partial_{x}^{3}u||^{2}_{ L^{\infty}%
_{t} L^{2}_{x}} +|a|^{2}\Delta x^{4}||\partial_{x}^{5}u||^{2}_{ L^{\infty}%
_{t} L^{2}_{x}}\\
&+|a|^{2}\theta^{2}\Delta t^{2}||\partial_{t}%
\partial_{x}^{3}u||^{2}_{ L^{\infty}_{t} L^{2}_{x}}
+|a|^{2}\Delta x^{4}||\partial_{x}^{5}u||^{2}_{ L^{\infty}%
_{t} L^{2}_{x}}+\Delta x^4||\partial_x\eta||_{L^{\infty}_tL^{\infty}_x}||\partial_x^2u||_{L^{\infty}_tL^2_x}\\
&+\Delta x^4||\partial_x^2\eta||_{L^{\infty}_tL^{2}_x}||\partial_xu||_{L^{\infty}_tL^{\infty}_x}+\Delta t^2||\partial_x\eta||_{L^{\infty}_tL^{\infty}_x}||\partial_tu||_{L^{\infty}_tL^2_x}\\
&+\Delta t^2||\partial_xu||_{L^{\infty}_tL^{\infty}_x}||\partial_t\eta||_{L^{\infty}_tL^2_x}+\Delta x^4||\partial_{x}^2u||_{ L^{\infty}_tL^{2}_x%
}||\partial_{x} \eta||_{ L^{\infty}_tL^{\infty}_x}\\
&+\Delta x^4||\partial_{x}u||_{ L^{\infty}_tL^{\infty}_x}||\partial_{x}^2 \eta||_{ L^{\infty}_tL^{2}_x}+\Delta x^4||u||_{ L^{\infty}_tL^{\infty}_x}||\partial_{x}^3 \eta||_{ L^{\infty}_tL^{2}_x}\\
&+\Delta x^4||\partial_{x}^3u||_{ L^{\infty}_tL^{2}_x}|| \eta||_{ L^{\infty}_tL^{\infty}_x}+\Delta x^6||\partial_{x}u||_{ L^{\infty}_tL^{\infty}_x}||\partial_{x}^3 \eta||_{ L^{\infty}_tL^{2}_x}\\
&+\Delta x^6||\partial_{x}^2u||_{ L^{\infty}_tL^{\infty}_x}||\partial_{x}^2 \eta||_{ L^{\infty}_tL^{2}_x}+\Delta x^6||\partial_{x}^3u||_{L^{\infty}_t L^{2}_x}||\partial_{x}\eta||_{ L^{\infty}_tL^{\infty}_x}\\
&+\Delta x^4\Delta t^2||\partial_{xt}u||_{L^{\infty}_t L^{2}_x}||\partial_{x}^2 \eta||_{ L^{\infty}_tL^{\infty}_x}+\Delta x^4\Delta t^2||\partial_{t}u||_{L^{\infty}_t L^{2}_x}||\partial_{x}^3 \eta||_{L^{\infty}_t L^{\infty}_x}\\
&+\Delta x^4\Delta t^2||\partial_{t}\partial_x^2u||_{ L^{\infty}_tL^{2}_x}||\partial_{x} \eta||_{ L^{\infty}_tL^{\infty}_x}+\Delta x^4\Delta t^2||\partial_{x}^3u||_{ L^{\infty}_tL^{\infty}_x}||\partial_{t} \eta||_{ L^{\infty}_tL^{2}_x}\\
&+\Delta x^4\Delta t^2||\partial_{x}^2u||_{L^{\infty}_t L^{\infty}_x}||\partial_{xt} \eta||_{ L^{\infty}_tL^{2}_x}+\Delta x^4\Delta t^2||\partial_{x}u||_{ L^{\infty}_tL^{\infty}_x}||\partial_{t}\partial_x^2 \eta||_{L^{\infty}_t L^{2}_x}  \\
&+\left(  \tau_{1}\right)  ^{2}\Delta x^{2}||\partial_{x}^{2}\eta
||^{2}_{ L^{\infty}_{t} L^{2}_{x}}.
\end{align*}

\noindent Thus, one has
\begin{equation*}||\epsilon_1^n||_{\ell^2_{\Delta}}\leq \left\{\begin{split}
&C(\Delta t+\Delta x),\text{\ \ if\ }\tau_1\neq0,\\
&C(\Delta t+\Delta x^2),\text{\ if\ }\tau_1=0,\end{split}\right.
\end{equation*}
with $C$ a constant depending of $u$, $\eta$ and their derivatives. 

In some cases, $D_{+}\epsilon^{n}_{1}$ is needed. To obtain an upper bound, we
perform the same computations and find%

\begin{align*}
&  ||D_{+}\epsilon_{1}^{n}||_{\ell^{2}_{\Delta}}^{2}\lesssim\Delta
x^{2}||\partial_{x}^{2}\partial_{t}\eta||^{2}_{ L^{\infty}%
_{t} L^{2}_{x}}+\Delta t^{2}||\partial_{x}\partial_{t}^{2}\eta
||^{2}_{ L^{\infty}_{t} L^{2}_{x}}+b^{2}\Delta x^{4}%
||\partial_{x}^{4}\partial_{t} \eta||^{2}_{ L^{\infty}_{t}%
 L^{2}_{x}}\\
 &+b^{2}\Delta t^{2}||\partial_{x}^{3}\partial_{t}^{2}%
\eta||^{2}_{ L^{\infty}_{t} L^{2}_{x}} +\Delta x^{2}||\partial_{x}^{3}u||^{2}_{ L^{\infty}_{t}%
 L^{2}_{x}}+|a|^{2}\Delta x^{2}||\partial_{x}^{5}u||^{2}%
_{ L^{\infty}_{t} L^{2}_{x}}+\theta^{2}\Delta t^{2}%
||\partial_{x}^{2}\partial_{t}u||^{2}_{ L^{\infty}_{t} L%
^{2}_{x}}\\
&+|a|^{2}\theta^{2}\Delta t^{2}||\partial_{t}\partial_{x}^{4}%
u||^{2}_{ L^{\infty}_{t} L^{2}_{x}}+\Delta x^{4}||\partial
_{x}^{4}u||^{2}_{ L^{\infty}_{t} L^{2}_{x}} +|a|^{2}\Delta x^{4}||\partial_{x}^{6}u||^{2}_{ L^{\infty}%
_{t} L^{2}_{x}}\\
&+\Delta x^{2}||\partial_{x}^{2}\eta||^{2}%
_{ L^{\infty}_{t} L^{2}_{x}}||\partial_{x}u||_{ L%
^{\infty}(Q)}+\Delta x^{2}||\partial_{x}\eta||^{2}_{ L^{\infty}%
_{t} L^{\infty}(Q)}||\partial_{x}^{2}u||^{2}_{ L^{\infty}%
_{t} L^{2}_{x}}\\
&  +\Delta t^{2}||\partial_{x}^{2}\eta||^{2}_{ L^{\infty}%
_{t} L^{\infty}_{x}}||\partial_{t}u||^{x}_{ L^{\infty}%
_{t} L^{2}_{x}}+\Delta t^{2}||\partial_{x}\eta||^{2}_{ L%
^{\infty}_{t} L^{\infty}_{x}}||\partial_{tx}u||^{2}_{ L%
^{\infty}_{t} L^{2}_{x}}\\
&+\Delta t^{2}|\partial_{x}^{2}u||^{2}%
_{ L^{\infty}_{t} L^{\infty}_{x}}||\partial_{t}\eta
||^{2}_{ L^{\infty}_{t} L^{2}_{x}} +\Delta t^{2}||\partial_{x}u||^{2}_{ L^{\infty}_{t} L%
^{\infty}_{x}}||\partial_{tx}\eta||^{2}_{ L^{\infty}_{t} L%
^{2}_{x}}\\
&+\Delta x^{2}||\partial_{x}^{2}u||^{2}_{ L^{\infty}%
_{t} L^{2}_{x}}||\partial_{x} \eta||^{2}_{ L^{\infty}%
_{t} L^{\infty}_{x}}  +\Delta x^{2}||\partial_{x}u||^{2}_{ L^{\infty}_{t} L%
^{\infty}_{x}}||\partial_{x}^{2} \eta||^{2}_{ L^{\infty}_{t}%
 L^{2}_{x}}\\
 &+\Delta x^{2}||u||^{2}_{ L^{\infty}_{t}%
 L^{\infty}_{x}}||\partial_{x}^{3}\eta||^{2}_{ L^{\infty}%
_{t} L^{2}_{x}} +\Delta x^{2}||\eta||^{2}_{ L^{\infty}_{t} L^{\infty}_{x}%
}||\partial_{x}^{3}u||^{2}_{ L^{\infty}_{t} L^{2}_{x}}\\
&+\Delta
x^{4}||\partial_{x}^{3}\eta||^{2}_{ L^{\infty}_{t} L^{2}_{x}%
}||\partial_{x}u||^{2}_{ L^{\infty}_{t} L^{\infty}_{x}}+\Delta
x^{4}||\partial_{x}^{2}u||^{2}_{ L^{\infty}_{t} L^{2}_{x}%
}||\partial_{x}^{2}\eta||^{2}_{ L^{\infty}_{t} L^{\infty}_{x}%
}\\
&  +\Delta x^{4}||\partial_{x}\eta||^{2}_{ L^{\infty}_{t}%
 L^{\infty}_{x}}||\partial_{x}^{3}u||^{2}_{ L^{\infty}%
_{t} L^{2}_{x}}+\Delta t^{2}\Delta x^{2}||\partial_{x}^{3}\eta
||^{2}_{ L^{\infty}_{t} L^{\infty}_{x}}||\partial_{t}%
u||^{2}_{ L^{\infty}_{t} L^{2}_{x}}\\
&+\Delta t^{2}\Delta
x^{2}||\partial_{x}^{2}\eta||^{2}_{ L^{\infty}_{t} L^{\infty
}_{x}}||\partial_{t}\partial_{x}u||^{2}_{ L^{\infty}_{t} L%
^{2}_{x}}  +\Delta t^{2}\Delta x^{2}||\partial_{x}\eta||^{2}_{ L^{\infty}%
_{t} L^{\infty}_{x}}||\partial_{t}\partial_{x}^{2}u||^{2}%
_{ L^{\infty}_{t} L^{2}_{x}}\\
&+\Delta t^{2}\Delta x^{2}%
||\partial_{t}\eta||^{2}_{ L^{\infty}_{t} L^{2}_{x}}%
||\partial_{x}^{3}u||^{2}_{ L^{\infty}_{t} L^{\infty}_{x}%
}+\Delta t^{2}\Delta x^{2}||\partial_{t}\partial_{x}\eta||^{2}_{ L%
^{\infty}_{t} L^{2}_{x}}||\partial_{x}^{2}u||^{2}_{ L^{\infty
}_{t} L^{\infty}_{x}}\\
&  +\Delta t^{2}\Delta x^{2}||\partial_{x}u||^{2}_{ L^{\infty}%
_{t} L^{\infty}_{x}}||\partial_{t}\partial_{x}^{2}\eta||^{2}%
_{ L^{\infty}_{t} L^{2}_{x}}+\left(  \tau_{1}\right)
^{2}\Delta x^{2}||\partial_{x}^{3}\eta||^{2}_{ L^{\infty}%
_{t} L^{2}_{x}}.
\end{align*}

\subsection{Consistency error $\epsilon_{2}^{n}$.}
By definition of the consistency error, on has
\begin{equation*}
\begin{split}
\epsilon_{2}^{n}&=\left(  I-dD_{+}D_{-}\right)  \left(  \frac{u_{\Delta}%
^{n+1}-u_{\Delta}^{n}}{\Delta t}\right)  +\left(  I+cD_{+}D_{-}\right)
D\left(  \theta\eta_{\Delta}^{n+1}+(1-\theta)\eta_{\Delta}^{n}\right)
+D\left(  \frac{\left(  u_{\Delta}^{n}\right)  ^{2}}{2}\right) \\
& -\frac
{\tau_{2}}{2}\Delta xD_{+}D_{-}u_{\Delta}^{n}.
\end{split}
\end{equation*}
We adapt the previous computations with $(d, c, \eta_{\Delta}^{n}, u_{\Delta
}^{n})$ instead of $(b, a, u_{\Delta}^{n}, \eta_{\Delta}^{n})$. The only
difference is concerning the non linear term
\[
K_{2}(\nu)=\frac{1}{2}\int_{x_{j}}^{x_{j+1}}\int_{t^{n}}^{t^{n+1}}\int_{x_{j}%
}^{x_{j+1}}\int_{t^{n}}^{t^{n+1}}u(s, y+\nu\Delta x)u(t, x+\nu\Delta
x)dxdydsdt.
\]

So one has

\[%
\begin{split}
&  E_{\mathrm{non\ linear}}\lesssim\frac{1}{\Delta x\Delta t}\int_{x_{j}}^{x_{j+1}}\int_{t^{n}}^{t^{n+1}%
}\partial_{x}(\frac{u^2}{2})(s, y)dsdy+\Delta x\sqrt{\frac{\Delta x}{\Delta t}}||\partial
_{x}u||_{ L^{\infty}(Q)}||\partial_{x}^2u||_{ L^{2}(Q)}%
\\
&+\sqrt{\frac{\Delta t}{\Delta x}}||\partial_{x}u||_{ L^{\infty}%
(Q)}||\partial_{t}u||_{ L^{2}(Q)}  +\Delta x\sqrt{\frac{\Delta x}{\Delta t}}||\partial_{x}^2u||_{ L^{2}%
(Q)}||\partial_{x} u||_{ L^{\infty}(Q)}\\
&+\Delta x\sqrt{\frac{\Delta x}{\Delta t}}||u||_{ L^{\infty}%
(Q)}||\partial_{x}^3 u||_{ L^{2}(Q)}+\Delta x^2\sqrt{\frac{\Delta x}{\Delta t}}||\partial_{x}u||_{ L^{\infty}%
(Q)}||\partial_{x}^3 u||_{ L^{2}(Q)}\\
&+\Delta x^2\sqrt{\frac{\Delta x}{\Delta t}}||\partial_{x}^2u||_{ L^{\infty}%
(Q)}||\partial_{x}^2 u||_{ L^{2}(Q)}+\Delta x\sqrt{\Delta x\Delta t}||\partial_{xt}u||_{ L^{2}%
(Q)}||\partial_{x}^2 u||_{ L^{\infty}(Q)}\\
&+\Delta x\sqrt{\Delta x\Delta t}||\partial_{t}u||_{ L^{2}%
(Q)}||\partial_{x}^3 u||_{ L^{\infty}(Q)}+\Delta x\sqrt{\Delta x\Delta t}||\partial_{t}\partial_x^2u||_{ L^{2}%
(Q)}||\partial_{x} u||_{ L^{\infty}(Q)}.
\end{split}
\]

The consistency error verifies
\begin{align*}
&  \epsilon_{2}^{n}\lesssim\frac{1}{\Delta t\Delta x}\int_{x_{j}}^{x_{j+1}%
}\int_{t^{n}}^{t^{n+1}}\left[\partial_{t} u(s, y)-d\partial_{x}^{2}\partial_{t}u(s,
y)+\partial_{x}\eta(s, y)+c\partial_{x}^{3}\eta(s, y)\right.\\
&\left.+\partial_{x}\frac{u^{2}%
}{2}(s, y)\right]dsdy  +\sqrt{\frac{\Delta t}{\Delta x}}||\partial_{t}^{2}u||_{ L^{2}%
(Q)}+d\Delta x\sqrt{\frac{\Delta x}{\Delta t}}||\partial_{x}^{4}\partial_{t}
u||_{ L^{2}(Q)}\\
&+d\sqrt{\frac{\Delta t}{\Delta x}}||\partial_{x}%
^{2}\partial_{t}^{2}u||_{ L^{2}(Q)}+\theta\sqrt{\frac{\Delta t}{\Delta
x}}||\partial_{x}\partial_{t}\eta||_{ L^{2}(Q)}+\Delta x\sqrt
{\frac{\Delta x}{\Delta t}}||\partial_{x}^{3}\eta||_{ L^{2}(Q)}\\
&  +|c|\Delta x\sqrt{\frac{\Delta x}{\Delta t}}||\partial_{x}^{5}%
\eta||_{ L^{2}(Q)}+|c|\theta\sqrt{\frac{\Delta t}{\Delta x}}%
||\partial_{t}\partial_{x}^{3}\eta||_{ L^{2}(Q)}\\
&+|c|\Delta
x\sqrt{\frac{\Delta x}{\Delta t}}||\partial_{x}^{5}\eta||_{ L^{2}%
(Q)}+\Delta x\sqrt{\frac{\Delta x}{\Delta t}}||\partial_{x}^2u||_{ L^{2}%
(Q)}||\partial_{x}u||_{ L^{\infty}(Q)}\\
&  +\sqrt{\frac{\Delta t}{\Delta x}}||\partial_{x}u||_{ L^{\infty}%
(Q)}||\partial_{t}u||_{ L^{2}(Q)} +\Delta x\sqrt{\frac{\Delta x}{\Delta t}}||\partial_{x}^2u||_{ L^{2}%
(Q)}||\partial_{x} u||_{ L^{\infty}(Q)}\\
&+\Delta x\sqrt{\frac{\Delta x}{\Delta t}}||u||_{ L^{\infty}%
(Q)}||\partial_{x}^3 u||_{ L^{2}(Q)}+\Delta x^2\sqrt{\frac{\Delta x}{\Delta t}}||\partial_{x}u||_{ L^{\infty}%
(Q)}||\partial_{x}^3 u||_{ L^{2}(Q)}\\
&+\Delta x^2\sqrt{\frac{\Delta x}{\Delta t}}||\partial_{x}^2u||_{ L^{\infty}%
(Q)}||\partial_{x}^2 u||_{ L^{2}(Q)}\\
&+\Delta x\sqrt{\Delta x\Delta t}||\partial_{xt}u||_{ L^{2}%
(Q)}||\partial_{x}^2 u||_{ L^{\infty}(Q)}+\Delta x\sqrt{\Delta x\Delta t}||\partial_{t}u||_{ L^{2}%
(Q)}||\partial_{x}^3 u||_{ L^{\infty}(Q)}\\
&+\Delta x\sqrt{\Delta x\Delta t}||\partial_{t}\partial_x^2u||_{ L^{2}%
(Q)}||\partial_{x} u||_{ L^{\infty}(Q)}+\frac{\tau_{2}\sqrt{\Delta x}}{2\sqrt{\Delta
t}}||\partial_{x}^{2}u||_{ L^{2}(Q)}.
\end{align*}
Finally, one has, thanks to the relation \eqref{RAPPEL}
\begin{align*}
&  ||\epsilon_{2}^{n}||_{\ell^{2}_{\Delta}}^{2}\lesssim\Delta t^{2}%
||\partial_{t}^{2}u||^{2}_{ L^{\infty}_{t} L^{2}_{x}}%
+d^{2}\Delta x^{4}||\partial_{x}^{4}\partial_{t} u||^{2}_{ L^{\infty
}_{t} L^{2}_{x}}+d^{2}\Delta t^{2}||\partial_{x}^{2}\partial_{t}%
^{2}u||^{2}_{ L^{\infty}_{t} L^{2}_{x}}\\
&+\theta^{2}\Delta
t^{2}||\partial_{x}\partial_{t}\eta||^{2}_{ L^{\infty}_{t}%
 L^{2}_{x}}+\Delta x^{4}||\partial_{x}^{3}\eta||^{2}_{ L%
^{\infty}_{t} L^{2}_{x}} +|c|^{2}\Delta x^{4}||\partial_{x}^{5}\eta||^{2}_{ L^{\infty}%
_{t} L^{2}_{x}}\\
&+|c|^{2}\theta^{2}\Delta t^{2}||\partial_{t}%
\partial_{x}^{3}\eta||^{2}_{ L^{\infty}_{t} L^{2}_{x}}%
+|c|^{2}\Delta x^{4}||\partial_{x}^{5}\eta||^{2}_{ L^{\infty}%
_{t} L^{2}_{x}}+\Delta x^{4}||\partial_{x}^2u||^{2}_{ L^{\infty
}_{t} L^{2}_{x}}||\partial_{x}u||^{2}_{ L^{\infty}%
_{t} L^{\infty}_{x}}\\
&  +\Delta t^{2}||\partial_{x}u||^{2}_{ L^{\infty}_{t} L%
^{\infty}_{x}}||\partial_{t}u||^{2}_{ L^{\infty}_{t} L^{2}%
_{x}}+\Delta x^4||\partial_{x}^2u||_{L^{\infty}_t L^{2}_x}||\partial_{x} u||_{L^{\infty}_t L^{\infty}_x}\\
&+\Delta x^4||u||_{ L^{\infty}_tL^{\infty}_x}||\partial_{x}^3 u||_{ L^{\infty}_tL^{2}_x}+\Delta x^6||\partial_{x}u||_{ L^{\infty}_tL^{\infty}_x}||\partial_{x}^3 u||_{L^{\infty}_t L^{2}_x}\\
&+\Delta x^6||\partial_{x}^2u||_{ L^{\infty}_tL^{\infty}_x}||\partial_{x}^2 u||_{ L^{\infty}_tL^{2}_x}+\Delta x^4\Delta t^2||\partial_{xt}u||_{L^{\infty}_t L^{2}_x}||\partial_{x}^2 u||_{ L^{\infty}_tL^{\infty}_x}\\
&+\Delta x^4\Delta t^2||\partial_{t}u||_{ L^{\infty}_tL^{2}_x}||\partial_{x}^3 u||_{ L^{\infty}_tL^{\infty}_x}+\Delta x^4\Delta t^2||\partial_{t}\partial_x^2u||_{ L^{\infty}_tL^{2}_x}||\partial_{x} u||_{L^{\infty}_t L^{\infty}_x}\\
&+\left(  \tau_{2}%
\right)  ^{2}\Delta x^{2}||\partial_{x}^{2}u||^{2}_{ L^{\infty
}_{t} L^{2}_{x}}.
\end{align*}

As for the $\epsilon_1^n$ case, there exists a constant $C$ depending on $u$, $\eta$ and their derivatives such that
\begin{equation*}||\epsilon_2^n||_{\ell^2_{\Delta}}\leq \left\{\begin{split}
&C(\Delta t+\Delta x),\text{\ \ if\ }\tau_2\neq0,\\
&C(\Delta t+\Delta x^2),\text{\ if\ }\tau_2=0.\end{split}\right.
\end{equation*}

For $D_{+}\epsilon^{n}_{2}$, the results are similar to those for
$D_{+}\epsilon_{1}^{n}$.

\subsubsection*{Acknowledgement} This work was performed within the framework of the LABEX MILYON (ANR-10- LABX0070) of Universit\'e de Lyon, within the program "Investissements d'Avenir" (ANR-11-IDEX-0007) operated by the French National Research Agency (ANR).

\bibliography{Biblio}

\end{document}